\documentclass[a4paper]{amsart}

\usepackage{amsfonts,amssymb,mathrsfs,bbold,circledsteps}
\usepackage{stmaryrd,adjustbox,circuitikz}
\usepackage[a4paper,hmargin=26mm,vmargin=28mm]{geometry}

\usepackage{enumitem}
\setlist[enumerate]{label=\upshape(\alph*)., ref=\alph*}

\usepackage[svgnames]{xcolor}
\usepackage{booktabs,upgreek,tcolorbox,caption}
\usepackage{xparse,mathtools,etoolbox,bbm,bm}
\usepackage{tikz}
\usepackage{tikz-cd}
\usetikzlibrary{arrows,decorations.markings,calc,backgrounds}

\usepackage{todonotes}
\usepackage{dynkin-diagrams}
\usepackage{atableau}

\usepackage{cite}

\usepackage[pagebackref=false]{hyperref}
\makeatletter
\hypersetup{%
  pdftitle={\@title},%
  pdfauthor={Tao Qin},%
  colorlinks=true,%
  linkcolor=blue,%
  anchorcolor=red,%
  citecolor=blue,%
  urlcolor=blue%
}
\makeatother

\newcommand\enumref[2]{\hyperref[#2]{\autoref*{#1}\textup{(\ref*{#2})}}}
\title{Subdivision and Runner Removal Theorems}
\author{Tao Qin}

\subjclass[2020]{Primary 20C08; Secondary 05E10, 16G20, 17B37}
\keywords{Cyclotomic KLR algebras, KLRW algebras, subdivision map, runner removal, Specht modules, canonical bases, categorification}
\let\<=\langle
\let\>=\rangle

\newcommand{\Sym}{\mathfrak S}
\DeclareMathOperator{\Gar}{Gar}
\newcommand{\Z}{\mathbb{Z}}

\newcommand{\Par}[1][\Lambda]{\mathscr{P}^{#1}}

\newcommand\bT{{\mathscr{T}}}
\newcommand\blam{{\boldsymbol\lambda}}
\newcommand\bmu{{\boldsymbol\mu}}
\newcommand\bnu{{\boldsymbol\nu}}

\newcommand{\bms}{\bm{s}}
\newcommand{\bmr}{\bm{r}}
\NewDocumentCommand{\subdatum}{ O{\Lambda} O{\bkappa} O{k} }{
    (e,I,#1,\alpha,#2,#3)
}
\NewDocumentCommand{\absubdatum}{ O{a} O{c} O{d} O{a'} }{%
    (#1,#2,#3,#4)%
}

\newcommand\bk{\mathbb{k}}
\newcommand\bbd{\boldsymbol{d}}
\newcommand\height{\text{ht}}
\newcommand\oalpha{\overline{\alpha}}
\newcommand\okappa{\overline{\kappa}}
\newcommand\oLambda{\overline{\Lambda}}
\newcommand\olambda{\overline{\lambda}}
\newcommand\oGamma{\overline{\Gamma}}

\newcommand{\unorder}[1][\oalpha]{\widebar{I}^{#1}_{\text{un}}}
\newcommand{\wellorder}[1][\oalpha]{\widebar{I}^{#1}_{\text{ord}}}
\newcommand{\almostorder}[1][\oalpha]{\widebar{I}^{#1}_{\text{al}}}

\newcommand\badideal{\mathfrak{J}}

\NewDocumentCommand{\Aba}{O{a} O{e}}{\mathrm{Ab}^{#1}_{#2}}

\DeclarePairedDelimiterX{\set}[1]{\{}{\}}{\setargs{#1}}
\NewDocumentCommand{\setargs}{>{\SplitArgument{1}{|}}m}{\setargsaux#1}
\NewDocumentCommand{\setargsaux}{mm}
{\IfNoValueTF{#2}{#1}{#1\,\delimsize|\,\mathopen{}#2}}

\renewcommand{\pmod}[1]{\text{ }(\textrm{mod } #1)\,}

\usepackage{aliascnt}

\newtheoremstyle{noparens}
  {}          
  {}          
  {\itshape}  
  {}          
  {\bfseries} 
  {.}         
  {.5em}      
  {\thmname{#1}~\thmnumber{#2}\thmnote{ \textnormal{#3}}} 

\def\NewTheorem#1{%
  \newaliascnt{#1}{equation}%
  \newtheorem{#1}[#1]{#1}%
  \aliascntresetthe{#1}%
  \expandafter\def\csname #1autorefname\endcsname{#1}%
}

\def\equationautorefname~#1\null{(#1)\null}
\def\itemautorefname~#1\null{(#1)\null}

\numberwithin{equation}{section}

\theoremstyle{noparens}
\NewTheorem{Proposition}
\NewTheorem{Theorem}
\NewTheorem{Conjecture}
\NewTheorem{Corollary}
\NewTheorem{Lemma}
\NewTheorem{Question}

\NewTheorem{Definition}
\theoremstyle{definition}
\NewTheorem{Notation}
\NewTheorem{Example}
\AtEndEnvironment{Example}{\null\hfill$\Diamond$}%
\NewTheorem{Examples}
\AtEndEnvironment{Examples}{\null\hfill$\Diamond$}%
\theoremstyle{remark}
\NewTheorem{Remark}
\AtEndEnvironment{Remark}{\null\hfill$\Diamond$}%
\NewTheorem{Claim}

\makeatletter
\newcommand*\rel@kern[1]{\kern#1\dimexpr\macc@kerna}
\newcommand*\widebar[1]{%
  \begingroup
  \def\mathaccent##1##2{%
    \rel@kern{0.8}%
    \overline{\rel@kern{-0.8}\macc@nucleus\rel@kern{0.2}}%
    \rel@kern{-0.2}%
  }%
  \macc@depth\@ne
  \let\math@bgroup\@empty \let\math@egroup\macc@set@skewchar
  \mathsurround\z@ \frozen@everymath{\mathgroup\macc@group\relax}%
  \macc@set@skewchar\relax
  \let\mathaccentV\macc@nested@a
  \macc@nested@a\relax111{#1}%
  \endgroup
}

\newcommand\bi{\mathbf{i}}
\newcommand\bj{\mathbf{j}}
\newcommand{\bkappa}{\boldsymbol{\kappa}}\newcommand\be{\mathbbm{e}}
\newcommand{\defe}{\operatorname{def}}
\newcommand{\DeclareMyOperator}[1]{%
  \expandafter\DeclareMathOperator\csname #1\endcsname{#1}
}
\forcsvlist{\DeclareMyOperator}{%
  Mat,Sh,Shaded,cont,Add,
  ch,diag,End,END,head,Hom,HOM,im,inv,Irr,Deg,Ind,Rad,Rem,rad,Res,res,
  soc,supp,Shape,Std,ST,CST,Top
}

\newcommand{\RST}[1]{\operatorname{RStd}(#1)}

\newcommand\ShT[1][l]{\mathop{ShT}}

\DeclareMathOperator\arrow{\text{---}}
\let\kill\relax
\DeclarePairedDelimiterX\kill[2]{(}{)}{#1|#2}

\newcommand\Ainfty{A_{\infty}}
\newcommand\Aone[1][e+1]{A^{(1)}_{#1}}
\newcommand\Cone[1][e]{C^{(1)}_{#1}}
\newcommand\Atwo[1][2e]{A^{(2)}_{#1}}
\newcommand\Dtwo[1][e+1]{D^{(2)}_{#1}}

\newcommand\h{\mathfrak{h}}

\renewcommand{\phi}{\varphi}

\tikzset{
  centered/.style = {
     baseline = {([yshift=#1]current bounding box.center)}
  },
  centered/.default={-0.5ex},
  ->-/.style={
    decoration={
      markings,
      mark=at position 0.6 with {\arrow[thin,scale=2]{>}}
    },
    postaction={decorate}
  },
  -<-/.style={
    decoration={
      markings,
      mark=at position 0.6 with {\arrow[thin,scale=2]{<}}
    },
    postaction={decorate}
  },
  circled/.style = {fill=Tan},
  domstyle/.style = {
    thick,
    scale=#1,
  },
  vertex/.style = {
    circle,
    ball color=MidnightBlue,
    font=\small,
    inner sep=0pt,
    minimum size=2mm
  },
  pics/domm/.style = {
    code = {
      \draw[domstyle=#1](0.8,0)--++(0.6,0.4)--++(0.6,-0.4)--++(-0.6,-0.4)--++(-0.6,0.4);
    }
  },
  pics/dom/.style = {
    code = {
      \draw[domstyle=#1](0.2,0)--(0.8,0)
        --++(0.6,0.4)--++(0.6,-0.4)--++(-0.6,-0.4)--++(-0.6,0.4);
    }
  },
  pics/domeq/.style = {
    code = {
      \pic at (0,0){dom=#1};
      \draw[scale=#1](0.8,-0.5)--++(1.2,0);
    }
  },
  pics/domneq/.style = {
    code = {
      \pic at (0,0){domeq=#1};
      \draw[scale=#1](0.9,-0.75)--++(1.0,0.5);
    }
  },
  pics/mod/.style = {
    code = {
      \draw[domstyle=#1](0,0)--++(-0.6,0)--++(-0.6,-0.4)--++(-0.6,0.4)--++(0.6,0.4)--++(0.6,-0.4);
    }
  },
  pics/Sdom/.style = {
    code = {
      \draw[domstyle=#1](0.2,0)--(0.8,0);
      \draw[scale=#1,fill=black](0.8,0)--++(0.6,0.4)--++(0.6,-0.4)--++(-0.6,-0.4)--++(-0.6,0.4);
    }
  },
  pics/Sdomeq/.style = {
    code = {
      \pic at (0,0){Sdom=#1};
      \draw[scale=#1](0.8,-0.5)--++(1.2,0);
    }
  },
  pics/Sdomneq/.style = {
    code = {
      \pic at (0,0){Sdomeq=#1};
      \draw[scale=#1](0.9,-0.75)--++(1.0,0.5);
    }
  },
}

\let\realItem\item
\NewDocumentCommand\centeredItem{so}{%
   \IfNoValueTF{#2}{\realItem}{\realItem[#2]}%
   \IfBooleanF{#1}{\hfil}%
}
\newlist{Enumerate}{enumerate}{1}
\setlist[Enumerate]{%
  label=\textup{\alph*)},
  ref=\theequation\alph*,
}
\newlist{relations}{enumerate}{1}
\setlist[relations]{%
  label=$(\text{KLR}_{\arabic*})$,
  before=\let\item\centeredItem
}

\begin{document}
\begin{abstract}
    We develop a combinatorial framework for the subdivision map—introduced by Maksimau, Mathas and Tubbenhauer—between the KLR(W) algebras of types $\Aone[e-1]$ and $\Aone[e]$, which provides a partial categorification of the runner removal theorems.
\end{abstract}
\maketitle
\setcounter{tocdepth}{1}
\tableofcontents

\section{Introduction}
Khovanov, Lauda, and Rouquier introduced KLR algebras to categorify quantum groups; see \cite{khovanovlauda-klr-1,khovanovlauda-klr-3,rouquier-2kacmoody}. Webster \cite{webster-rouquier-conjecture-diag-algebra,webster-weighted-klr} subsequently introduced KLRW algebras (weighted KLR algebras) as a generalization of KLR algebras.

There are many well-known results concerning KLR and KLRW algebras. Notably, the Brundan–Kleshchev isomorphism \cite{bk-blocks-iso} relates Ariki–Koike algebras and cyclotomic KLR algebras of type $\Aone[e-1]$. Kang and Kashiwara \cite{kangkashiwara-klr-categorification} proved that cyclotomic KLR algebras of any symmetrizable type categorify the highest weight modules of the corresponding quantum group. Additionally, Hu and Mathas \cite{humathas-graded-cellular} constructed a graded cellular basis for cyclotomic KLR algebras of type $\Aone[e-1]$, and showed that the cell modules are isomorphic to the graded Specht modules constructed by Brundan, Kleshchev, and Wang \cite{bkw-graded-specht}. Later, Evseev and Mathas \cite{evseevmathas-klr-deformation} introduced a deformation method to construct cellular bases for KLR algebras of types $\Aone[e-1]$ and $\Cone[e]$. It is also worth mentioning that Bowman \cite{bowman-many-cellular} used idempotent truncation of KLRW algebras to construct a family of diagrammatic cellular bases for KLR algebras in type $\Aone[e-1]$. Mathas and Tubbenhauer further investigated the (sandwiched) cellularity of KLRW algebras in types $\Cone[e]$, $\Atwo[2e]$, and $\Dtwo[e+1]$ \cite{mathastubbenhauer-klrw-ac,mathastubbenhauer-klrw-bad}, as well as in finite types \cite{mathastubbenhauer-klrw-finite}.

The subdivision map for KLR algebras of type $\Aone[e-1]$ was introduced by Maksimau in \cite{maksimau-subdivision} to relate categorical representations of $\widehat{\mathfrak{sl}_e}$ and $\widehat{\mathfrak{sl}_{e+1}}$. Mathas and Tubbenhauer \cite{mathastubbenhauer-klrw-ac} subsequently extended this construction to KLRW algebras, thereby generalizing Maksimau's result. In \cite{qin-subdivision-klrw}, we attempted to generalize this map to cyclotomic KLRW algebras. In this paper, we further extend the combinatorial framework of \cite{qin-subdivision-klrw} to full generality.

More precisely, KLR algebras have three types of generators: idempotents $e(\bi)$, polynomial generators (or dots) $y_i$, and permutation generators $\psi_i$; see \autoref{subsec:klr-algebras}. The subdivision map for KLR algebras is a map between a KLR algebra of type $\Aone[e-1]$ and a quotient of an idempotent truncation of a KLR algebra of type $\Aone[e]$; see \autoref{subsec:subdivision-KLR}.

The first goal in understanding this map is to describe the images of its generators. In \cite{qin-subdivision-klrw}, a subdivision map on partitions was introduced (in a restricted specialization) in order to describe the image of idempotents under the subdivision map $\Phi_k$ for KLRW algebras. This map admits two equivalent combinatorial descriptions: via Young diagrams and via James' abaci.

In the present work, we streamline both descriptions by distinguishing two kinds of strips in the Young-diagram model (see \autoref{subsec:subdivision-young-diagram}) and by using the standard finite abaci in the abacus model (see \autoref{subsec:subdivision-abacus}). We prove that these simplified definitions are equivalent (see \autoref{thm:equivalence-of-two-definitions-subdivision}). Moreover, we extend the construction from partitions to row-standard tableaux (see \autoref{subsec:subdivision-standard-tableaux}) and prove degree preservation for standard tableaux \autoref{thm:degree-invariance-standard-tableau}. This tableau-level refinement is used later to describe the images of the basis vectors $\psi^T$ of permutation modules, as in the proof of \autoref{thm:image-of-permutation-module}.

Any cyclotomic KLR algebra has a distinguished family of modules called Specht modules. There is a subfamily of Specht modules whose heads give all irreducible modules of the corresponding cyclotomic KLR algebra; see \cite{humathas-graded-cellular} for more details. One motivation for this project is that the subdivision map for KLR algebras may relate the Specht modules of two KLR algebras and hence provide information about the corresponding graded decomposition numbers. In this paper, we show that, under a certain condition (see \autoref{def:k-horizontal}), the Specht modules are related in the most natural way; see \autoref{thm:image-of-specht-module}. The proof relies on the highest-weight presentation of Specht modules from \cite{kmr-universal-specht-type-A}, and, as an intermediate step, we also prove that the permutation modules are related naturally; see \autoref{thm:image-of-permutation-module} and \autoref{cor:image-of-permutation-module}.

Mathas suggested that subdivision for KLR(W) algebras should be connected to the runner removal theorems of James and Mathas \cite[Theorem 4.5]{jamesmathas-empty-runner-removal} and of Fayers \cite[Theorem 4.1]{fayers-full-runner-removal}, \cite[Theorem 3.4]{fayers-general-runner-removal} for Hecke and $q$-Schur algebras. We confirm this connection in this paper by showing that the abacus combinatorics used to construct the subdivision map on partitions can be interpreted as runner addition (see \autoref{subsec:subdivision-abacus}), in agreement with the constructions in \cite{fayers-full-runner-removal,alice-full-runner-removal,alice-empty-runner-removal}; see \autoref{sec:categorification}.

Recently, Dell’Arciprete and Putignano extended the empty runner removal theorem \cite{jamesmathas-empty-runner-removal} and the full runner removal theorem \cite{fayers-full-runner-removal} to higher levels, i.e.\ to Ariki--Koike algebras; see \cite{alice-full-runner-removal,alice-empty-runner-removal}. However, the general runner removal theorem of \cite{fayers-general-runner-removal} has not yet been generalized. At the end of this paper, we formulate some natural conjectures extending this result and present supporting evidence; see \autoref{conj:general-runner-removal-high-level}, \autoref{conj:general-runner-removal-level-two}, and \autoref{eg:general-runner-removal-level-2}.

In view of the categorical properties of the subdivision maps, whose combinatorial construction agrees with that of the runner removal theorems, subdivision can be viewed as a categorification of the latter. Moreover, the proofs of the runner removal theorems above are based on calculations in the Fock space and therefore work only in characteristic zero, and they do not readily extend to positive characteristic. The advantage of the subdivision approach is that it works in positive characteristic, as in \cite{chuangmiyachi-runner-removal}. However, in this paper we do not address conditions under which subdivision is exact as a functor on module categories, which is a key ingredient for extending \cite{chuangmiyachi-runner-removal}. We expect that this is related to \cite{mt-face-functors-klr} and leave it for future work.

The paper is organized as follows. In \autoref{sec:preliminary} we recall the Lie-theoretic setup and the combinatorics used throughout, and in \autoref{sec:klr-specht} we review the KLR algebras of type $\Aone[e-1]$ together with the universal graded Specht modules of \cite{kmr-universal-specht-type-A}; readers familiar with these foundations may safely skip both sections. In \autoref{sec:algebraic-subdivision} we give a self-contained definition of the subdivision map $\Phi_k$ on the affine quiver and the associated data (positive roots, words, dominant weights, KLR algebras, and their cyclotomic quotients), providing a base and extended version of \cite{maksimau-subdivision}. In \autoref{sec:combinatorial-subdivision} we develop the corresponding subdivision on partitions in two parallel languages—Young diagrams and $e$-abaci—and prove that the two definitions agree. Furthermore, we extend the map to the set of row-standard tableaux. In \autoref{sec:categorical-subdivisions} we translate these combinatorial results back to the algebraic setting by analyzing the images of idempotents under $\Phi_k$. Moreover, we relate permutation modules and (universal) Specht modules in the expected way (\autoref{subsec:subdivision-permutation-modules} and \autoref{subsec:subdivision-specht-modules}), giving our first categorical consequences, while leaving exactness issues for future work. Finally, in \autoref{sec:categorification} we connect subdivision with the runner removal theorems, treating both the level-one and higher-level cases, and we end with two conjectures on equalities of decomposition numbers in higher level; an appendix records explicit canonical-basis expansions that provide computational evidence.
\section{Lie Data and Combinatorics}\label{sec:preliminary}
\subsection{Cartan data}\label{subsec:Cartan-data}
Let $e > 2$ be an integer. In this paper, we consider the quiver $\Aone[e-1]$:

\begin{center}
\begin{tikzpicture}[
    scale=1,
    every node/.style={transform shape},
    dynnode/.style={circle, draw, thick, minimum size=3mm, inner sep=0pt, fill=white},
    label_style/.style={yshift=-2mm, font=\small}
]

    \node[dynnode, label={[yshift=1mm]above:$0$}] (n0) at (2, 1) {};


    \node[dynnode, label={[label_style]below:$1$}] (n1) at (0,0) {};
    \node[dynnode, label={[label_style]below:$2$}] (n2) at (1,0) {};

    \node (dots) at (2,0) {$\cdots$};

    \node[dynnode, label={[label_style]below:$e-2$}] (ne_2) at (3,0) {};
    \node[dynnode, label={[label_style]below:$e-1$}] (ne_1) at (4,0) {};

    
    \draw[thick] (n1) -- (n2) -- (dots) -- (ne_2) -- (ne_1);

    \draw[thick] (n0) -- (n1);
    \draw[thick] (n0) -- (ne_1);

\end{tikzpicture}
\end{center}

Let $\Gamma$ be the quiver of type $\Aone[e-1]$. It has vertex set $I=\{0,1,\dots,e-1\}$, which we identify with $\mathbb{Z}/e\mathbb{Z}$. In particular, we identify $e$ with $0$. Throughout this paper, we fix the cyclic orientation $i\rightarrow i+1$ for each $i \in I$.

The affine Cartan matrix $(a_{ij})_{i,j\in I}$ of type $\Aone[e-1]$ has $2$ on the diagonal.
Each $i\in I$ is adjacent to exactly two neighbours, namely $i-1$ and $i+1$ (with indices taken modulo $e$). For these neighbors we have $a_{i,i-1}=a_{i,i+1}=-1$, and all other off-diagonal entries are $0$.


The simple roots are $\{\alpha_i\mid i\in I\}$ and $Q^{+} := \bigoplus_{i \in I} \Z_{\geq 0} \alpha_i$ is the positive part of the root lattice. For $\alpha \in Q^{+}$, let $\height(\alpha)$ denote the \emph{height of~$\alpha$}; that is, $\height(\alpha)$ is the sum of the coefficients when $\alpha$ is expanded in terms of the simple roots $\alpha_i$.

Let~$\Sym_n$ be the symmetric group on~$n$ letters and let $\sigma_r = (r, r+1)$, for $1\leq r < n$, be the simple transpositions of~$\Sym_n$. Then~$\Sym_n$ acts on the set~$I^n$ on the left by place permutations.

If $\bi = (\bi_1, \dots , \bi_n) \in I^n$, then we define the associated positive root $\alpha(\bi)$ by:
\begin{equation}\label{eq:associated-positive-root}
    \alpha(\bi) := \alpha_{i_1} + \dots + \alpha_{i_n} \in Q^+    
\end{equation}
The $\Sym_n$-orbits on $I^n$ are the sets
\begin{equation*}
  I^\alpha := \{\bi \in I^n\mid \alpha=\alpha(\bi)\},
\end{equation*}
which are parametrized by $\alpha \in Q^+$ of height $n$.

Let $\Lambda_0,\dots,\Lambda_{e-1}$ be the fundamental weights. The dominant weight lattice is defined as
\begin{equation*}
    P^+:=\bigoplus_{i \in I} \Z_{\geq 0} \Lambda_i.
\end{equation*}
Any element $\Lambda=\sum_{i \in I} a_i\Lambda_i\in P^+$ is called a dominant weight, and the sum of the coefficients $\sum_{i \in I} a_i$ is the \emph{level} of $\Lambda$.

We denote the set of simple coroots by $\{\alpha_i^\vee \mid i \in I\}$. The coroot lattice is defined as $Q^\vee := \bigoplus_{i \in I} \Z \alpha_i^\vee$.
Let $\mathfrak{h}$ be the affine Cartan subalgebra satisfying $Q^\vee \subset \mathfrak{h}$. The dual Cartan subalgebra $\mathfrak{h}^*$ contains the root lattice $Q$ and the dominant weight lattice $P^+$.
There is a canonical bilinear pairing $\langle \cdot, \cdot \rangle : \mathfrak{h}^* \times \mathfrak{h} \rightarrow \mathbb{C}$ satisfying
\begin{equation*}
  \langle \alpha_j, \alpha_i^\vee \rangle = a_{ji} \quad \text{and} \quad \langle \Lambda_j, \alpha_i^\vee \rangle = \delta_{ji}
\end{equation*}
for all $i,j \in I$. Note that since we are in type $\Aone[e-1]$, the Cartan matrix $(a_{ij})$ is symmetric. Consequently, we can identify $\alpha_i$ with $\alpha_i^\vee$ to define a symmetric bilinear form $(\cdot | \cdot)$ on $\mathfrak{h}^*$. This form is determined by the values:
\begin{equation*}
  (\alpha_i | \alpha_j) = a_{ij} \quad \text{and} \quad (\Lambda_i | \alpha_j) = \delta_{ij}
\end{equation*}
for all $i,j \in I$.
\subsection{Young diagrams and Abaci}\label{subsec:young-diagram-abaci}
A \emph{partition} $\lambda$ is a finite weakly decreasing sequence of positive integers
$(\lambda_1,\ldots,\lambda_r)$. It is often convenient to regard $\lambda$ as a longer (or even infinite)
sequence $(\lambda_1,\cdots,\lambda_{r+1},\cdots)$ by setting $\lambda_i=0$ for all $i>r$. Each $\lambda_i$ is
called a \emph{part} of $\lambda$.

The \emph{length} $\ell(\lambda)$ is the number of nonzero parts, and the \emph{size} is
$|\lambda|=\sum_i \lambda_i$. Let $\Par[]_n$ be the set of partitions of $n$, and set
$\Par[]:=\bigsqcup_{n\ge 0}\Par_n$.

Similarly, a \emph{composition} $\mu$ is a finite sequence of positive integers $(\mu_1,\ldots,\mu_r)$. Its length and size are defined analogously. Every partition is a composition, but not
conversely.

A \emph{Young diagram} is a finite collection of square boxes arranged in left-justified rows.
Given a composition $\mu=(\mu_1,\ldots,\mu_r)$, its Young diagram $[\mu]$ is obtained by drawing $\mu_i$ boxes in the $i$th row. Conversely, counting the number of boxes in each row from top to bottom recovers the composition. Thus, we may identify a composition $\mu$ with its Young diagram $[\mu]$. Let $[\mu]_j$ be the $j$th row of $[\mu]$.
 
A \emph{node} is a square box of the Young diagram $[\lambda]$, specified by its coordinates $(r,c)$, where $r$ and $c$ denote the row and column in which the node lies. Its \emph{content} is defined to be
$c-r$. If there are two nodes $A=(r,c)$ and $B=(r',c')$ in one Young diagram, we say $A$ is \emph{above} $B$, or $B$ is
\emph{below} $A$ if $r<r'$.

\smallskip 
Fix a positive integer $e$. An \emph{$e$-abacus} has $e$ \emph{runners}, arranged from left to right and labeled
$0,1,\ldots,e-1$. The positions are indexed by $\mathbb{Z}_{\ge 0}$ and arranged on the runners as follows:
if $x\in\mathbb{Z}_{\ge 0}$ is written uniquely as $x=ae+b$ with $a\in\mathbb{Z}_{\ge 0}$ and
$b\in\{0,1,\ldots,e-1\}$, then position $x$ lies on the $b$-runner in row $a$. Rows increase downward.

An \emph{$e$-abacus with $M$ beads} is a subset $B\subset\mathbb{Z}_{\ge 0}$ of
cardinality $|B|=M$. Elements of $B$ are called \emph{bead positions}, and all other positions are \emph{gaps}.
We depict the $M$-abacus by placing a bead at each position in $B$ and leaving all other positions empty. A runner is called \emph{flush} if all the beads on that runner are as high as possible. Equivalently, if $c\in B$ is a bead on this runner, then for any $k\in\Z_{>0}$ such that $c-ke\geq 0$, we have $c-ke\in B$.

Let $\lambda=(\lambda_1,\lambda_2,\ldots)$ be a partition. Fix an integer $M\ge \ell(\lambda)$. The \emph{$M$-beta numbers} of $\lambda$ are
\[
  \beta_i^M(\lambda)=\lambda_i-i+M \qquad (1\le i\le M),
\]
and the corresponding \emph{$M$-beta set} is
\begin{equation}\label{eq:beta-set}
      B(\lambda;M)=\{\beta_i^M(\lambda)\mid 1\le i\le M\}\subset\mathbb{Z}_{\ge 0}
\end{equation}
The \emph{$e$-abacus of $\lambda$ with $M$ beads}, denoted $\Aba[M](\lambda)$, is the $M$-abacus having beads
precisely at the positions in $B(\lambda;M)$.

If $\beta_1>\beta_2>\cdots>\beta_M$ are the elements of $B(\lambda;M)$ in decreasing order, then
\[
  \lambda_i=\beta_i-M+i \qquad (1\le i\le M),
\]
so $B(\lambda;M)$ determines $\lambda$ uniquely (discard trailing zeros).

\begin{Example}\label{eg:basic-partition-abacus}
    Take $\lambda=(4,3,2,2)$. This partition has length $\ell(\lambda)=4$ and size $|\lambda|=11$.
    The Young diagram $[\lambda]$ is
    \begin{center}
        \Diagram[entries=contents,no border]{4,3,2,2}
    \end{center}
    where each node is filled with its content. 
 Choose $M=\ell(\lambda)=4$ and form the $M$–beta numbers $\beta^M_{i}(\lambda)$:
  \begin{center}
        $7,5,3,2$
  \end{center}
  and the $e$–abacus of $\lambda$ with $M$ beads is
   \begin{center}
        \Abacus[runner labels={0,1,2}]{3}{4,3,2,2}\
   \end{center}
\end{Example}
\subsection{Multipartitions, charges and residues}\label{subsec:multipartition-charge-residue}
The Young diagram has a natural filling of the contents defined in the last section, in this section, we introduce a more general notion of content, residue, which depends on the type $\Aone[e-1]$. 

Fix type $\Aone[e-1]$ and a dominant weight $\Lambda=\Lambda_i$. Let $\lambda$ be a partition and let $[\lambda]$ be its Young diagram. We fill each node of $[\lambda]$ with its \emph{residue}, defined by
\begin{equation}\label{eq:residue-of-node}
  \res_{\Lambda}(r,c)\equiv c-r+i \pmod e.
\end{equation}
A node with residue $i$ is called an $i$-node. The Young diagram filled with residues is denoted $[\lambda]_{\Lambda}$. When $\Lambda$ is clear from the context, we simply write $[\lambda]$ and $\res$ in place of $[\lambda]_{\Lambda}$ and $\res_{\Lambda}$, respectively.

Following the definition from \cite{bkw-graded-specht}, for each partition $\lambda$, we define the \emph{residue content} of $\lambda$ to be the positive root $\alpha_{\lambda}\in Q^+$ given by
\begin{equation}\label{eq:residue-content}
  \alpha_\lambda:=\sum_{A\in [\lambda]}\alpha_{\res A} \in Q^+.
\end{equation}

Let $\Par[\Lambda]$ be the set of partitions equipped with this residue labelling. For $\alpha\in Q^+$, we also set
\begin{equation}\label{eq:partition-with-fixed-residue-content}
  \Par[\Lambda]_\alpha:=\{\lambda\in\Par[\Lambda]\mid \alpha_{\lambda}=\alpha\}.
\end{equation}
\begin{Example}\label{eg:residue-sequence-pattern}
    Take $\Lambda=\Lambda_0$ and $e=5$, and consider the positive root $\alpha=\alpha_0+\alpha_1+\alpha_2+\alpha_3+\alpha_4$. Then $\Par[\Lambda]_\alpha$ consists of the partitions with the following Young diagrams:
    \begin{center}
        \Diagram[no border,e=5,entries=residues]{5}\quad \Diagram[no border,e=5,entries=residues]{4,1}\quad \Diagram[no border,e=5,entries=residues]{3,1,1}\quad \Diagram[no border,e=5,entries=residues]{2,1^3}\quad \Diagram[no border,e=5,entries=residues]{1^5}
    \end{center}
    In contrast, if we take $\beta=\alpha_1$, then $\Par[\Lambda]_\beta=\emptyset$.
\end{Example}

A multipartition with $\ell$ components or simply an \emph{$\ell$-partition} of $n$ is an $\ell$-tuple
$\blam=(\blam^{(1)},\ldots,\blam^{(\ell)})$ of partitions such that
\[
\sum_{m=1}^{\ell}\bigl|\blam^{(m)}\bigr|=n.
\]
For each $m\in\{1,\ldots,\ell\}$, let
$[\blam^{(m)}]$ be the Young diagram of $\blam^{(m)}$, and let $\blam^{(m)}_r$ be the $r$th part of $\blam^{(m)}$.
The Young diagram of $\blam$ is the disjoint union
\[
    [\blam]\;:=\;\bigsqcup_{m=1}^{\ell}\{m\}\times[\blam^{(m)}]
    \;=\;\{(m,r,c)\in\mathbb{Z}_{>0}^3 \mid 1\le m\le \ell,\ 1\le c\le \blam^{(m)}_r\}.
\]
As in the partition case, a \emph{node} of $[\blam]$ is an element $A=(m,r,c)\in[\blam]$, where $A$ is the square box in row $r$ and column $c$ of the $m$th component $[\blam^{(m)}]$. 

Similarly, $\ell$-compositions and the corresponding notions are defined in the same way.

Fix a dominant weight $\Lambda$ of level $\ell$. To define residues of nodes, we use the notion of a charge. A \emph{charge of $\Lambda$} is an $\ell$-tuple $\bkappa=(\kappa_1,\ldots,\kappa_\ell)\in I^\ell$ such that $\Lambda=\sum\limits_{m=1}^{\ell}\Lambda_{\kappa_m}$. Given such a charge and a node $A=(m,r,c)\in[\blam]$, we define its residue by
\[
  \res_{\bkappa}(A)\equiv \kappa_m+c-r \pmod e.
\]
Equivalently, the residue function on the $m$th component $[\blam^{(m)}]$ is $\res_{\Lambda_{\kappa_m}}$. The residue content of an $\ell$-partition is the sum of the residue contents of its components, namely
$\alpha_{\blam}=\sum_{1\le m\le \ell}\alpha_{\blam^{(m)}}$.

Let $\Par[\bkappa]_n$ be the set of $\ell$-partitions of $n$, with residues defined with respect to the charge $\bkappa$, and set $\Par[\bkappa]=\bigsqcup\limits_{n\ge 0}\Par[\bkappa]_n$. For $\alpha\in Q^+$, let $\Par[\bkappa]_{\alpha}$ denote the set of $\ell$-partitions of residue content $\alpha$. We remark that if $\ell=1$ and $\kappa\in I$, then $\Par[\Lambda_{\kappa}]_{\alpha}$ coincides with $\Par[\kappa]_{\alpha}$ for any $\alpha\in Q^+$.
\begin{Example}\label{eg:multi-partition-charge}
    Fix type $\Aone[2]$, $\Lambda=\Lambda_2+\Lambda_1$. Consider the $2$-partition $\blam=((3,2,1),(5,1,1))$. Take a charge of $\Lambda$ to be $\bkappa=(1,2)$, then $[\blam]$ is the following:
    \begin{center}
        \Multidiagram[no border,e=3,entries=residues,charge={1,2}]{3,2,1|5,1,1}
    \end{center}
    The residue content of $\blam$ is $\beta:=5\alpha_0+4\alpha_1+4\alpha_2$ and hence $\blam\in\Par[\bkappa]_{\beta}$.
    Take another charge of $\Lambda$ to be $\bkappa'=(2,1)$, then $[\blam]$ is the following:
    \begin{center}
        \Multidiagram[no border,e=3,entries=residues,charge={2,1}]{3,2,1|5,1,1}
    \end{center}
    The residue content of $\blam$ is $\alpha_{\blam}:=4\alpha_0+4\alpha_1+5\alpha_2$ and hence $\blam\in\Par[\bkappa']_{\gamma}$.
\end{Example}
\subsection{Tableaux}\label{subsec:tableaux}
Fix the quiver of type $\Aone[e-1]$ and let $I$ be the vertex set. Fix $\ell\in\mathbb{Z}_{>0}$ and a multicharge $\bkappa=(\kappa_1,\dots,\kappa_\ell)\in I^\ell$.
Let $\blam=(\blam^{(1)},\dots,\blam^{(\ell)})\in\Par[\bkappa]_{n}$ be an $\ell$-multipartition of $n$.

A \emph{$\blam$-tableau}, or \emph{(multi)tableau of shape $\blam$}, is a bijection
\[
    T:[\blam]\xrightarrow{\ \sim\ }\{1,2,\dots,n\}.
\]

The \emph{shape} of a tableau $T$, written $\Shape(T)$, is the $\ell$-partition $\blam$. A $\blam$-tableau $T$ can be viewed as a filling of the nodes of the Young diagram $[\blam]$ with the numbers in $\{1,2,\dots,n\}$ without repetitions.

Let $r,s\in\{1,2,\cdots,n \}$. If $T^{-1}(r)$ and $T^{-1}(s)$ are horizontally adjacent or vertically adjacent in $[\blam]$, we write $r\rightarrow_{T} s$ or $r\downarrow_{T} s$, respectively.

For $1\le m\le n$, write $T\downarrow m$ for the tableau obtained from $T$ by deleting all nodes occupied by entries $>m$. In other words, $T\downarrow m$ is the restriction of $T$ to the $\ell$-composition $T^{-1}(\{1,2,\dots,m\})$.

For $r\in\{1,\dots,n\}$, let $A_r:=T^{-1}(r)\in[\blam]$ and define the residue of $r$ in $T$ by
\[
    \res_r(T)\;:=\;\res_{\bkappa}(A_r)\in I.
\]
The \emph{residue sequence} of $T$ is
\begin{equation}\label{eq:residue-sequence}
    \bi^{T}\;=\;\res(T)\;:=\;\bigl(\res_1(T),\dots,\res_n(T)\bigr)\in I^n.
\end{equation}

A tableau $T$ of shape $\blam$ is \emph{row-standard} if, for each component $\blam^{(m)}$, its entries increase strictly from left to right
along each row of $\blam^{(m)}$. It is \emph{column-standard} if, for each component $\blam^{(m)}$, its entries increase strictly
from top to bottom along each column of $\blam^{(m)}$. It is \emph{standard} if it is both row-standard and column-standard.
Equivalently, $T$ is standard if and only if, for each $d\in\{1,\dots,n\}$, the $\Shape(T\downarrow d)$ is an $\ell$-partition.
Let $\RST{\blam}$ and $\Std(\blam)$ be the sets of row-standard $\blam$-tableaux and standard $\blam$-tableaux, respectively.

Equip $[\blam]$ with the \emph{row-reading order} $\prec$ defined by
\[
    (m,a,b)\prec(m',a',b')
    \quad\Longleftrightarrow\quad
    \bigl(m<m'\bigr)\ \text{or}\ \bigl(m=m'\ \text{and}\ (a<a'\ \text{or}\ (a=a'\ \text{and}\ b<b'))\bigr).
\]
Informally, we read the nodes of $[\blam]$ from the first component to the last component, and within each component we read from left to right along each row, taking rows from top to bottom.

Define the \emph{initial tableau} $T^{\blam}$ to be the unique $\blam$-tableau whose entries
increase strictly along $\prec$. Set
\begin{equation}\label{eq:initial-residue-sequence}
    \bi^{\blam}\;:=\;\bi\bigl(T^{\blam}\bigr)\in I^n.
\end{equation}

The symmetric group $\Sym_n$ acts on the set of $\blam$-tableaux from the left by permuting entries:
\[
    (\sigma\cdot T)(A)\;:=\;\sigma\bigl(T(A)\bigr)\qquad(\sigma\in\Sym_n,\ A\in[\blam]).
\]
For each $\blam$-tableau $T$, define $w^T\in\Sym_n$ by the condition
\begin{equation}\label{eq:wt-def}
    w^T\cdot T^{\blam}\;=\;T.
\end{equation}
\begin{Example}\label{eg:multi-tableau-residue-sequence}
    Continue with \autoref{eg:multi-partition-charge}, the initial $\blam$-tableau $T^{\blam}$ is the following:
    \begin{center}
        \Multitableau[]{123,45,6 | 789{10}{11},{12},{13} }
    \end{center}
    If the charge is $\bkappa=(1,2)$, then the residue sequence $\bi^{\blam}$ of $T^{\blam}$ is $(1200122012010)$. Take $T$ to be the following tableau:
    \begin{center}
        \Multitableau[]{127,4{12},9 | 368{10}{11},{5},{13} }
    \end{center}
    the residue sequence $\bi^{T}$ is $(1220100122010)$ and
    \[ 
        w^T=\sigma_{5}\sigma_{6}\sigma_{7}\sigma_{8}\sigma_{9}\sigma_{10}\sigma_{11}\sigma_{10}\sigma_{9}\sigma_{7}\sigma_{8}\sigma_{5}\sigma_{6}\sigma_{7}\sigma_{3}\sigma_{4}\sigma_{5}\sigma_{6}\sigma_{5}\sigma_{3}
    \] 
    where $\sigma_i$ is the simple transposition $(i,i+1)\in\Sym_{13}$. 
\end{Example}
\subsection{Dominance order and Bruhat order on tableaux}\label{subsec:orders-on-tableaux}
For $\ell$-compositions $\blam,\bmu$ of $n$, write $\blam\unrhd \bmu$ if for every $1\le t\le \ell$ and every $k\ge 1$ one has
\[
\sum_{m=1}^{t-1}\bigl|\blam^{(m)}\bigr|+\sum_{i=1}^{k}\blam^{(t)}_i
\;\ge\;
\sum_{m=1}^{t-1}\bigl|\bmu^{(m)}\bigr|+\sum_{i=1}^{k}\bmu^{(t)}_i.
\]
Write $\blam\triangleright \bmu$ if $\blam\unrhd \bmu$ and $\blam\neq \bmu$.

Fix charge $\bkappa$ and positive root $\alpha$ with $\height(\alpha)=n$. Let $\bnu\in\Par[\bkappa]_{\alpha}$, and let $S,T$ be row-standard $\bnu$-tableaux. Write $S\unrhd T$ if
\[
\Shape(S\downarrow m)\unrhd \Shape(T\downarrow m)
\qquad\text{for all }m=1,2,\dots,n,
\]
and write $S\triangleright T$ if $S\unrhd T$ and $S\neq T$.

Let \(\le\) denote the (strong) Bruhat order on \(\Sym_n\), with respect to the Coxeter generators \(\sigma_1,\dots,\sigma_{n-1}\).
Thus, for \(u,w\in\Sym_n\), one has \(u\le w\) if and only if there exists a reduced expression
\(w=\sigma_{r_1}\cdots \sigma_{r_m}\) and indices \(1\le a_1<\cdots<a_b\le m\) such that
$u=\sigma_{r_{a_1}}\cdots \sigma_{r_{a_b}}$.
See, for example, \cite[Section~5.10]{humphreys-reflection-groups}. In particular, \(1\le w\) for all \(w\in\Sym_n\).

\begin{Lemma}\label{lem:dominance-vs-bruhat-rowstandard}
Let $S,T$ be row-standard $\bnu$-tableaux. Then
\[
S\unrhd T \quad\Longleftrightarrow\quad w^{S}\le w^{T}.
\]
\end{Lemma}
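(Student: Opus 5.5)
The plan is to reduce the statement to the classical tableau criterion for Bruhat order on the minimal coset representatives of a Young subgroup. First we flatten the multicomposition. List the rows of $[\bnu]$ in the row-reading order $\prec$ as $R_1,R_2,\dots,R_N$, where $N$ is the total number of rows, and let $\nu'=(|R_1|,\dots,|R_N|)$. The partial sums $\sum_{m<t}|\blam^{(m)}|+\sum_{i\le k}\blam^{(t)}_i$ in the definition of $\unrhd$ are exactly the partial sums of the flattened composition. Hence $\blam\unrhd\bmu$ for $\ell$-compositions with nodes inside $[\bnu]$ is ordinary dominance of the flattened compositions. For a tableau $T$ define the row word $\rho_T=(\rho_T(1),\dots,\rho_T(n))$ by $\rho_T(m)=j$ if $T^{-1}(m)\in R_j$. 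Then $S\unrhd T$ is equivalent to
\[
\#\{k\le m\mid \rho_S(k)\le j\}\ \ge\ \#\{k\le m\mid \rho_T(k)\le j\}\qquad\text{for all } m,j.
\]

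Next we identify $w^T$. By \autoref{eq:wt-def}, $w^T$ sends the entry of $T^{\bnu}$ at a node $A$ to $T(A)$. Row $R_j$ of $T^{\bnu}$ contains the consecutive block $B_j$ of integers. So $T$ is row-standard if and only if $w^T$ is increasing on each block $B_j$, that is, $w^T$ is the minimal length representative of the coset $w^T\Sym_{\nu'}$, where $\Sym_{\nu'}$ is the Young subgroup. Moreover $\rho_T(w^T(b))=j$ for $b\in B_j$. So $\rho_T=\rho_{T^{\bnu}}\circ (w^T)^{-1}$, where $\rho_{T^{\bnu}}$ is the weakly increasing word $1^{|R_1|}2^{|R_2|}\cdots$. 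This gives a bijection between row-standard $\bnu$-tableaux and the minimal coset representatives $\Sym_n^{\nu'}$, and also between these and the rearrangements of the multiset word $1^{|R_1|}\cdots N^{|R_N|}$.

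The core step is the known description of Bruhat order on $\Sym_n^{\nu'}$. For minimal coset representatives $u,w$ we have $u\le w$ if and only if $u\Sym_{\nu'}\le w\Sym_{\nu'}$ in the induced order on $\Sym_n/\Sym_{\nu'}$ (Deodhar). Under the bijection above, this order is given by Ehresmann's tableau criterion applied to words: for all $m$, the sorted multiset $\{\rho(1),\dots,\rho(m)\}$ of $u$ is entrywise $\le$ that of $w$. Equivalently, the counts $\#\{k\le m\mid\rho(k)\le j\}$ for $u$ dominate those for $w$. The identity corresponds to the sorted word $\rho_{T^{\bnu}}$, which maximizes all these counts, so the direction is consistent: $T^{\bnu}$ is the $\unrhd$-maximum and $w^{T^{\bnu}}=1$ is the Bruhat minimum. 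Combining this with the first paragraph gives $S\unrhd T\iff w^S\le w^T$.

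The main obstacle is getting the conventions right. I must check that the left action by place permutation and the inverse in $\rho_T=\rho_{T^{\bnu}}\circ(w^T)^{-1}$ put us in the left-coset setting where Ehresmann's criterion applies directly. If it instead lands on the inverses, I will use that $u\le w\iff u^{-1}\le w^{-1}$. If a direct citation is undesirable, I would instead prove the parabolic criterion by induction on $\ell(w^T)$. The two ingredients are: multiplying on the left by a transposition $(a,b)$ that increases length corresponds to swapping entries $a<b$ of $T$ with $a$ in an earlier row, which strictly lowers dominance; and conversely, if $S\rhd T$, one can find such a swap in $T$ producing $T'$ with $S\unrhd T'\rhd T$. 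The existence of this swap is the delicate combinatorial point.
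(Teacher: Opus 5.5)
Your proposal is correct and takes the same route as the paper, which simply invokes the Ehresmann--James theorem (Mathas, Theorem~3.8). Your flattening of the multicomposition into one composition via the row-reading order, together with your identification of row-standard tableaux with minimal length coset representatives of $\Sym_{\nu'}$, makes explicit the reduction that the paper's citation leaves implicit. The convention issue you flag (left versus right action, possibly landing on inverses) is harmless, because Bruhat order is invariant under inversion.
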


\begin{proof}
This is the Ehresmann--James theorem; see, for example, \cite[Theorem~3.8]{mathas-iwahori-hecke}.
\end{proof}
\subsection{Degree of tableaux}\label{subsec:degree-of-tableaux}

Following~\cite{bkw-graded-specht}, we recall the combinatorial degree on standard tableaux. 

Fix the quiver $\Aone[e-1]$ and a charge $\bkappa\in I^{\ell}$.
Let $\blam\in\Par[\bkappa]$ be an $\ell$-multipartition of $n$.
A node $A\in[\blam]$ is \emph{removable} if $[\blam]\setminus\{A\}$ is the Young diagram of an
$\ell$-multipartition, and a node $B\notin[\blam]$ is \emph{addable} if $[\blam]\cup\{B\}$ is the Young
diagram of an $\ell$-multipartition. If $\res(A)=i$ and $A$ is removable, then $A$ is called a removable $i$-node of $[\blam]$.
Similarly, if $\res(B)=i$ (in $[\blam]\cup \{B\}$) and $B$ is addable, then $B$ is called an addable $i$-node of $[\blam]$.

We order nodes by declaring $B=(m',r',c')$ to be \emph{below} $A=(m,r,c)$ if either
$m'>m$, or $m'=m$ and $r'>r$.
For a removable $i$-node $A$ of $\blam$, set
\begin{equation}\label{eq:degree-of-last-node}
    d_A(\blam)
  :=
  \#\{\text{addable $i$-nodes of $[\blam]$ below $A$}\}
  -
  \#\{\text{removable $i$-nodes of $[\blam]$ below $A$}\}.
\end{equation}

Now let $T\in\Std(\blam)$.  Define $\deg(T)$ inductively on $n$ by $\deg(\varnothing):=0$, and,
for $n>0$,
\[
  \deg(T) := d_A(\blam) + \deg\!\bigl(T\downarrow(n-1)\bigr),
\]
where $A$ is the node occupied by $n$ in $T$, and $T\downarrow(n-1)$ is the tableau obtained by
deleting the entry $n$. This definition is valid because, by definition of a standard tableau, the shape of $T\downarrow (n-1)$ is an $\ell$-partition, and $T\downarrow (n-1)$ is again standard.
\begin{Example}\label{eg:degree-of-tableaux}
    Continue with \autoref{eg:multi-tableau-residue-sequence}, the degrees are $\deg T^{\blam}=4$ and $\deg T=6$.
\end{Example}

\subsection{Garnir tableaux}\label{subsec:garnir-tableaux}
In this section, following \cite[Section 5]{kmr-universal-specht-type-A}, we introduce the Garnir combinatorics needed later. Let $\blam=(\blam^{(1)},\cdots,\blam^{(\ell)})$ be an $\ell$-partition, and let $[\blam]$ be its Young diagram.

\begin{Definition}\label{dfn:garnir}
    A node $A=(m,r,c)\in[\blam]$ is a \textbf{Garnir node} of $\blam$ if $(m,r+1,c)\in[\blam]$. The \textbf{Garnir belt} of $A$ is the set $\mathcal{B}^A$ of nodes of $[\blam]$ consisting of $A$ and all nodes directly to the right of $A$, together with the node directly below $A$ and all nodes directly to the left of that node in the same component. Explicitly,
    \[
    \mathcal{B}^A=\{(m,r,z)\in[\blam]\mid c\le z\le \blam^{(m)}_r\}\cup \{(m,r+1,z)\in[\blam]\mid 1\le z\le c\}.
    \]
\end{Definition}
\begin{Definition}\label{dfn:garnir-tableau}
    Let $A\in[\blam]$ be a Garnir node. The \textbf{Garnir tableau} $G^A$ is the unique row-standard tableau satisfying:
    \begin{itemize}
        \item it agrees with $T^{\blam}$ on all nodes outside the Garnir belt $\mathcal{B}^A$,
        \item its entries in $\mathcal{B}^A$ increase from the bottom-left to the top-right.
    \end{itemize}
\end{Definition}
\begin{Example}\label{eg:garnir-tableau}
   Consider the $2$-partition $\blam=(5,4,3,3,1\mid 14,10,2)$. We color the Garnir nodes by {\color{cyan}cyan}:
   \begin{center}
        \tikzset{
        C/.style={fill=cyan,text=white},
        O/.style={fill=OrangeRed, text=white}
            }

       \Multitableau[]{[C]1[C]2[C]3[C]45,[C]6[C]7[C]89,[C]{10}[C]{11}[C]{12},[C]{13}{14}{15},{16} | [C]{17}[C]{18}[C]{19}[C]{20}[C]{21}[C]{22}[C]{23}[C]{24}[C]{25}[C]{26}{27}{28}{29}{30},[C]{31}[C]{32}{33}{34}{35}{36}{37}{38}{39}{40},{41}{42} }
    \end{center}
    Let $A=(2,1,8)$, then $T(A)=24$ and the Garnir belt $\mathcal{B}^A$ consists of the following {\color{OrangeRed}orange} nodes:
    \begin{center}
        \tikzset{
        C/.style={fill=cyan,text=white},
        O/.style={fill=OrangeRed, text=white}
            }

        \Multitableau[]{12345,6789,{10}{11}{12},{13}{14}{15},{16} | {17}{18}{19}{20}{21}{22}{23}[O]{24}[O]{25}[O]{26}[O]{27}[O]{28}[O]{29}[O]{30},[O]{31}[O]{32}[O]{33}[O]{34}[O]{35}[O]{36}[O]{37}[O]{38}{39}{40},{41}{42} } 
   \end{center}
    and the Garnir tableau $G^A$ is the following:
    \begin{center}
        \tikzset{
        C/.style={fill=cyan,text=white},
        O/.style={fill=OrangeRed, text=white}
            }

        \Multitableau[]{12345,6789,{10}{11}{12},{13}{14}{15},{16} | {17}{18}{19}{20}{21}{22}{23}[O]{32}[O]{33}[O]{34}[O]{35}[O]{36}[O]{37}[O]{38},[O]{24}[O]{25}[O]{26}[O]{27}[O]{28}[O]{29}[O]{30}[O]{31}{39}{40},{41}{42} } 
   \end{center}
\end{Example}
Fix the quiver of type $\Aone[e-1]$ and let $I$ be its vertex set. Let $\Lambda\in P^+$ and let $\bkappa=(\kappa_1,\cdots,\kappa_\ell)\in I^{\ell}$ be a charge of $\Lambda$. Fix $\alpha\in Q^+$ of height $n$ and assume $\blam\in \Par[\bkappa]_{\alpha}$. In particular, we can assign a residue to each node, as discussed in \autoref{subsec:tableaux}.

Fix a Garnir node $A=(m,r,c)\in[\blam]$ and write $\mathcal{B}^A$ for its Garnir belt.

\begin{Definition}\label{dfn:A-brick}
    A \textbf{(row) $A$-brick} is a subset $B\subseteq \mathcal{B}^A$ of the form
    \[
        B=\{(m,x,z),(m,x,z+1),\dots,(m,x,z+e-1)\}
    \]
    for some $x\in\{r,r+1\}$ and some $z\in\mathbb{Z}_{>0}$, such that $\res(m,x,z)=\res(A)$.
\end{Definition}

Let $k=k^A$ be the number of row $A$-bricks contained in $\mathcal{B}^A$.
We always list these bricks as $B^A_1,B^A_2,\dots,B^A_k$ in the following order: first, the bricks contained in row $r+1$ of the $m$th component, ordered from left to right; then the bricks contained in row $r$ of the $m$th component, ordered from left to right.

\begin{Definition}\label{dfn:brick-group}
    Let $A\in[\blam]$ be a Garnir node and write $\mathcal{B}^A$ for its Garnir belt. Let
    $B^A_1,\dots,B^A_{k^A}$ be the row $A$-bricks in $\mathcal{B}^A$, listed as above.
    For each $1\le t\le k^A$, set
    $n^A_t:=\min\{\,G^A(x)\mid x\in B^A_t\,\}$.
    For $1\le t<k^A$, define the \emph{brick transposition} $w^A_t\in\Sym_n$ by
    \begin{equation}\label{eq:brick-transposition}
    w^A_t\;:=\;\prod_{a=0}^{e-1}\bigl(n^A_t+a,\;n^A_t+e+a\bigr).
    \end{equation}
    The \emph{brick permutation group} of $A$ is the subgroup
    $\mathfrak{S}^A:=\langle\,w^A_1,\dots,w^A_{k^A-1}\,\rangle\ \le\ \Sym_n$,
    with the convention $\mathfrak{S}^A=\{1\}$ if $k^A\le 1$.
\end{Definition}

The group \(\mathfrak{S}^A\) acts on the set of \(\blam\)-tableaux by permuting entries, hence it acts on the Garnir tableau \(G^A\).
Define the \emph{Garnir set} and the associated residue sequence by
\[
\Gar^{A}\;:=\;\{\,w\cdot G^A\mid w\in\mathfrak{S}^A\,\},
\qquad
\bi^{A}\;:=\;\bi^{G^A}\in I^n.
\]

\begin{Lemma}\label{lem:Gar-orbit-characterisation}
Suppose that \(\blam\in \Par[\bkappa]_{\alpha}\) and that \(A\in[\blam]\) is a Garnir node. Then
\[
\Gar^{A}\setminus\{G^A\}
=
\Bigl\{
T\in \Std(\blam)
\ \Bigm|\
T\unrhd G^{A}
\text{ and }
\bi^{T}=\bi^{A}
\Bigr\}.
\]
\end{Lemma}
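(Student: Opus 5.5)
The plan is to prove the two inclusions separately. Throughout I use that $G^A$ agrees with $T^{\blam}$ outside $\mathcal{B}^A$, that the belt entries form the interval $\{N+1,\dots,N+|\mathcal{B}^A|\}$ with $N$ the number of nodes preceding the belt in $\prec$, and \autoref{lem:dominance-vs-bruhat-rowstandard}.

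\emph{Inclusion $\subseteq$.} Let $w\in\mathfrak{S}^A$ with $w\ne 1$, and set $T=w\cdot G^A$.

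First, the entries of each brick $B^A_t$ in $G^A$ are the $e$ consecutive integers $n^A_t,\dots,n^A_t+e-1$, and their residues are $\res(A),\res(A)+1,\dots,\res(A)+e-1$. This holds because the entries of $G^A$ increase along $\mathcal{B}^A$ from bottom-left to top-right, and each brick starts at a node of residue $\res(A)$.

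Second, the brick transposition $w^A_t$ swaps the contents of $B^A_t$ and $B^A_{t+1}$ block by block. Since adjacent bricks carry the same residue pattern, $\bi^{w\cdot G^A}=\bi^{A}$ for every $w$. I also expect $\mathfrak{S}^A\cong\Sym_{k^A}$, acting by permuting which block of consecutive integers sits in which brick.

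Third, $T$ is row-standard because each row of the belt receives increasing blocks in positions compatible with their order. This needs a short check that non-brick belt nodes keep their entries and are bounded appropriately.

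Fourth, $T$ is column-standard. Outside the belt nothing changes. Inside the belt, the only column where row $r$ and row $r+1$ both lie in $\mathcal{B}^A$ is column $c$, and for $z<c$ the entry in $(r+1,z)$ is compared with an entry of row $r$ that lies outside the belt and is smaller. In $G^A$ the failure is exactly at column $c$. Any $w\ne1$ moves at least one lower-row brick's block into row $r$ and an upper block down, and this makes the entry at $A$ smaller than the entry at $(m,r+1,c)$.

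Finally, $T\unrhd G^A$ by \autoref{lem:dominance-vs-bruhat-rowstandard}. Since $G^A$ places the smallest belt entries in row $r+1$, we get $w^{T}=w\,w^{G^A}$ with $w^{T}\le w^{G^A}$. Equivalently, one checks directly that moving larger entries to row $r+1$ only increases each $\Shape(T\downarrow m)$ in dominance.

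\emph{Inclusion $\supseteq$.} Let $T\in\Std(\blam)$ with $T\unrhd G^A$ and $\bi^T=\bi^A$.

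Step 1 is to show $T$ agrees with $T^{\blam}$, hence with $G^A$, outside the belt. The entries $1,\dots,N$ occupy the first $N$ nodes in $\prec$ in $G^A$, and dominance $\Shape(T\downarrow m)\unrhd\Shape(G^A\downarrow m)$ for $m\le N$ forces $T$ to place them there too, because $T^{\blam}$ is the dominance-maximal filling. The belt entries must then go to the belt. For the entries after the belt, I would argue from the other end, using that $T$ is standard and $\Shape(T\downarrow m)$ is sandwiched between $\Shape(G^A\downarrow m)$ and $\Shape(T^{\blam}\downarrow m)$, which agree for $m\ge N+|\mathcal{B}^A|$.

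Step 2 reduces everything to a two-row problem on $\mathcal{B}^A$. There, $T$ is standard and row-standard with the prescribed residue sequence.

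Step 3 is the main obstacle. Read the belt entries in increasing order. Equality of residue sequences means the entry $N+j$ must sit at a node whose residue equals that of the $j$-th node of $G^A$ along the belt. Consecutive belt nodes of $G^A$ in the same row have consecutive residues, so each time $T$ switches from filling row $r+1$ to filling row $r$ (or back), the residues must match. In row $r$ the residue at column $z$ is $\res(A)+z-c$; in row $r+1$ it is $\res(A)+z-c-1$. I would show by induction that switches can only occur at multiples of $e$ measured from brick boundaries, so the sets of entries in each row are unions of full brick blocks in the same positions as in $G^A$. The non-brick remnant nodes at the row ends stay fixed, since their residues cannot be matched otherwise.

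This identifies $T$ as $w\cdot G^A$ for some $w\in\mathfrak{S}^A$, and $w\ne1$ because $G^A$ is not standard. The delicate bookkeeping is in Step 3, in handling the partial (non-brick) segments of the belt at the start of row $r+1$ and the end of row $r$. I would try first to set up the residue-matching argument as a lattice-path statement about when a path may switch rows.
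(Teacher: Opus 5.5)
Your inclusion $\supseteq$ is sound, but the inclusion $\subseteq$, as you set it up, rests on two false claims.

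\textbf{The gap in $\subseteq$.} You take an arbitrary $w\neq 1$ in $\mathfrak{S}^A$ and claim two things: that $w\cdot G^A$ is row-standard, and that $w$ moves some block from row $r+1$ into row $r$. Both claims fail whenever one row of the belt contains two bricks. In \autoref{eg:garnir-bricks-garnir-sets}, the transposition $w^A_1=(26,29)(27,30)(28,31)$ swaps the two bricks in the second row of the second component. That row of $w^A_1\cdot G^A$ reads $24,25,29,30,31,26,27,28,39,40$: it is not row-standard, and no block changes row. So if $\Gar^A$ is read literally as the full orbit $\mathfrak{S}^A\cdot G^A$, the lemma is false.

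\textbf{The fix.} The intended set (as in \cite{kmr-universal-specht-type-A}, and as \eqref{eq:GarA-param} indicates) is the set of \emph{row-standard} tableaux in the orbit. These are the $\binom{k}{f}$ shuffles $w\cdot T^A$ with $w\in\mathscr{D}^A$, in which the blocks increase along each row. Restrict to these; then your third step holds by definition. Your fourth step becomes the following. If both rows contain bricks, $T(m,r,c)$ is the first entry of the smallest block placed in row $r$, and $T(m,r+1,c)$ is the last entry of the largest block placed in row $r+1$. Any shuffle other than $G^A$ places some row-$r$ block that is smaller than some row-$(r+1)$ block, so $T(m,r,c)<T(m,r+1,c)$. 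If one row contains no brick, $G^A$ is the only shuffle. Your direct dominance comparison (more belt entries in row $r$, fewer in row $r+1$) then finishes $\subseteq$. Drop the unjustified Bruhat claim $w^{T}\le w^{G^A}$ and keep the direct comparison.

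\textbf{The direction $\supseteq$.} Your Step 1 is correct. For $m\le N$ and for $m\ge N+|\mathcal{B}^A|$, the shape $\Shape(G^A\downarrow m)=\Shape(T^{\blam}\downarrow m)$ is the unique dominance-maximal shape of $m$ nodes inside $[\blam]$. Hence a row-standard $T\unrhd G^A$ agrees with $G^A$ off the belt.

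Step 3 becomes rigorous with the following criterion. Suppose $p$ belt entries have been placed in row $r+1$ and $q$ in row $r$. The next entry needs residue $\res(A)-c+p+q$. The next free node in row $r+1$ has residue $\res(A)+p-c$, and the next free node in row $r$ has residue $\res(A)+q$. So the next entry may go to row $r+1$ if and only if $q\equiv 0\pmod e$, and to row $r$ if and only if $p\equiv c\pmod e$. This forces the head first, then an interleaving of full $e$-blocks, then the tail. These are exactly the row-standard orbit elements, and standardness excludes $G^A$.

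The paper gives no argument here and simply cites \cite[Lemma~5.5]{kmr-universal-specht-type-A}. Once the $\subseteq$ direction is corrected as above, your route is a self-contained proof of that lemma.
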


\begin{proof}
This is \cite[Lemma~5.5]{kmr-universal-specht-type-A}.
\end{proof}

In particular, with respect to the partial order $\unrhd$ defined in \autoref{subsec:orders-on-tableaux}, $G^A$ is the unique minimal element in $\Gar{A}$. There is also a unique maximal element $T^A$ in $\Gar{A}$, obtained by rearranging the row $A$-bricks in $\mathcal{B}^A$ in row-reading order.

Let $f=f^A$ be the number of row $A$-bricks in row $r$ of the Garnir belt $\mathcal{B}^A$. Define $\mathscr{D}^A$ to be the set of minimal-length left coset representatives of $\mathfrak{S}_f\times\mathfrak{S}_{k-f}$ in $\mathfrak{S}^A\cong\mathfrak{S}_k$.
Note that $\mathfrak{S}^A\le \Sym_n$, hence $\mathscr{D}^A\subseteq \Sym_n$ and its elements act on $\blam$-tableaux. Moreover,
\begin{equation}\label{eq:GarA-param}
\Gar^{A}\;=\;\{\,w\cdot T^A \mid w\in\mathscr{D}^A\,\},
\end{equation}

\begin{Example}\label{eg:garnir-bricks-garnir-sets}
    Continue with \autoref{eg:garnir-tableau} and take $A=(2,1,8)$. Fix type $\Aone[2]$ and the charge $(0,1)$. The Young diagram $[\blam]$, filled with residues, is as follows:
    \begin{center}
        \Multidiagram[e=3,entries=residues,charge={0,1}]{5,4,3,3,1|14,10,2} 
   \end{center}
    
    The row $A$-bricks are colored as below. The bricks $B^A_1,B^A_2,B^A_3,B^A_4$ are colored {\color{pink}pink}, {\color{green}green}, {\color{cyan}cyan}, and {\color{OrangeRed}orange}, respectively.
    \begin{center}
        \tikzset{
            C/.style={fill=cyan,text=white},
            O/.style={fill=OrangeRed, text=white}, 
            G/.style={fill=pink,text=black},
            Y/.style={fill=green, text=black}
        }
        \Multitableau[]{12345,6789,{10}{11}{12},{13}{14}{15},{16} | {17}{18}{19}{20}{21}{22}{23}[C]{32}[C]{33}[C]{34}[O]{35}[O]{36}[O]{37}{38},{24}{25}[G]{26}[G]{27}[G]{28}[Y]{29}[Y]{30}[Y]{31}{39}{40},{41}{42} } 
   \end{center}
    So $k^A=4$ and $f^A=2$. For $1\le i\le 3$, let $w^A_i$ be the brick transposition. Then
    the brick permutation group is $\Sym^A=\langle w^A_1,w^A_2,w^A_3\rangle$, and $\mathscr{D}^A$ is
        \[
        \mathscr{D}^A=\{1,w^A_2,w^A_1w^A_2,w^A_3w^A_2,w^A_1w^A_3w^A_2,w^A_2w^A_1w^A_3w^A_2\}
    \]
    Apply $w^A_2$ to the Garnir tableau, we get the following:
    \begin{center}
        \tikzset{
            C/.style={fill=cyan,text=white},
            O/.style={fill=OrangeRed, text=white}, 
            G/.style={fill=pink,text=black},
            Y/.style={fill=green, text=black}
        }
        \Multitableau[]{12345,6789,{10}{11}{12},{13}{14}{15},{16} | {17}{18}{19}{20}{21}{22}{23}[Y]{29}[Y]{30}[Y]{31}[O]{35}[O]{36}[O]{37}{38},{24}{25}[G]{26}[G]{27}[G]{28}[C]{32}[C]{33}[C]{34}{39}{40},{41}{42} } 
   \end{center}
    The unique maximal element $T^A$ in $\Gar^A$ is the following:
    \begin{center}
        \tikzset{
            C/.style={fill=cyan,text=white},
            O/.style={fill=OrangeRed, text=white}, 
            G/.style={fill=pink,text=black},
            Y/.style={fill=green, text=black}
        }
        \Multitableau[]{12345,6789,{10}{11}{12},{13}{14}{15},{16} | {17}{18}{19}{20}{21}{22}{23}[G]{26}[G]{27}[G]{28}[Y]{29}[Y]{30}[Y]{31}{38},{24}{25}[C]{32}[C]{33}[C]{34}[O]{35}[O]{36}[O]{37}{39}{40},{41}{42} } 
   \end{center}
    The Garnir set is then $\Gar^{A}=\{w\cdot T^A\mid w\in\mathscr{D}^A\}$. In particular, $G^A=(w^A_2w^A_1w^A_3w^A_2)\cdot T^A$.
\end{Example}
\section{KLR Algebras and Universal Specht Module}\label{sec:klr-specht}
In this section, we introduce KLR algebras, universal Specht modules, and the results we need to state our main result. For more details, readers are welcome to refer to \cite{kmr-universal-specht-type-A}.
\subsection{KLR algebras}\label{subsec:klr-algebras}
Let $\bk$ be a field, and fix $\alpha\in Q^+$ such that $\height(\alpha)=n$.

\begin{Definition}[\cite{khovanovlauda-klr-1,khovanovlauda-klr-3,rouquier-2kacmoody}]\label{def:klr-algebras}
    The KLR algebra $R_{\alpha}$ of type $\Aone[e-1](e\geq 2)$ is the unital $\bk$-algebra generated by the elements:
    \begin{equation}\label{klr-generating-elements}
        \{e(\bi)|\bi\in I^\alpha\}\cup \{y_1,\cdots,y_n\}\cup\{\psi_1,\cdots,\psi_{n-1}\}
    \end{equation}
    subject only to the following relations:
    \begin{align}
        e(\bi)e(\bj) &= \delta_{\bi,\bj} e(\bi),  \sum\limits_{\bi \in I^\alpha} e(\bi) = 1 \label{eq:idempotents} \\ 
        y_r e(\bi) &= e(\bi) y_r, \psi_r e(\bi) = e(\sigma_r \bi) \psi_r \label{eq:commute_y_psi_e} \\
        y_r y_s &= y_s y_r \label{eq:y_commute} \\
        \psi_r y_s &= y_s \psi_r \quad \text{if } s \ne r, r+1 \label{eq:psi_y_commute} \\
        \psi_r \psi_s &= \psi_s \psi_r \quad \text{if } |r - s| > 1 \label{eq:psi_commute_far} \\
        \psi_r y_{r+1} e(\bi) &= (y_r \psi_r + \delta_{\bi_r, \bi_{r+1}}) e(\bi) \label{eq:psi_y_shift_left} \\
        y_{r+1} \psi_r e(\bi) &= (\psi_r y_r + \delta_{\bi_r, \bi_{r+1}}) e(\bi) \label{eq:y_psi_shift_right} \\
        \psi_r^2 e(\bi) &= Q_{\bi_r, \bi_{r+1}}(y_r, y_{r+1}) e(\bi) \label{eq:psi_square} \\
        \psi_r \psi_{r+1} \psi_r e(\bi) &= \psi_{r+1} \psi_r \psi_{r+1} e(\bi) + Q_{\bi_r, \bi_{r+1}, \bi_{r+2}}(y_r, y_{r+1}, y_{r+2}) e(\bi) \label{eq:braid_relation}
    \end{align}
    where
    \[Q_{\bi_r, \bi_{r+1}}(y_r, y_{r+1})=\begin{cases}0 &\text{ if }\bi_r=\bi_{r+1}\\1& \text{ if }\bi_{r+1}\neq \bi_r,\bi_{r}\pm 1\\ y_{r+1}-y_r & \text{ if }\bi_r\rightarrow \bi_{r+1}\\ y_r-y_{r+1}&\text{ if }\bi_r\leftarrow \bi_{r+1}\end{cases}\]
    and 
    \[Q_{\bi_r, \bi_{r+1},\bi_{r+2}}(y_r, y_{r+1},y_{r+2})=\begin{cases}1 &\text{ if }\bi_r=\bi_{r+2}\to \bi_{r+1}\\-1& \text{ if }\bi_r=\bi_{r+2}\leftarrow \bi_{r+1}\\ 0&\text{ else }\end{cases}\]
\end{Definition}
Given any dominant weight $\Lambda\in P^+$, the corresponding cyclotomic KLR algebra $R^\Lambda_\alpha$ is generated by the same elements \autoref{klr-generating-elements} subject only to the above relations with the additional cyclotomic relations
\begin{equation}\label{eq:cyclotomic}
    y_1^{(\Lambda\mid \alpha_{\bi_1})}e(\bi)=0
    \qquad\text{for all }\bi=(\bi_1,\dots,\bi_n)\in I^\alpha.
\end{equation}

Most importantly, $R_\alpha$ and $R^\Lambda_\alpha$ have $\mathbb{Z}$-gradings determined by setting $e(\bi)$ to be of degree $0$, $y_r$ of degree $2$, and $\psi_re(\bi)$ of degree $-a_{\bi_r,\bi_{r+1}}$ for all $r$ and $\bi\in I^\alpha$.

Each $w\in \Sym_n$ can be written as $w=\prod\limits_{1\leq j\leq m} \sigma_{i_j}$ for some $m$. If the expression is reduced (that is, of minimal length), then we define $\psi_w:=\psi_{i_1}\cdots\psi_{i_m}$. However, this definition depends on the choice of reduced expression of $w$, since \autoref{eq:braid_relation} breaks the usual braid relation. We fix, for each $w\in \Sym_n$, a choice of reduced expression, and define $\psi_w$ with respect to that reduced expression.

\begin{Definition}\label{def:psi-tableau}
    Fix quiver type $\Aone[e-1]$ with vertex set $I$, and let $\Lambda\in P^+$ and $\alpha\in Q^+$. Let $\bkappa=(\kappa_1,\cdots,\kappa_\ell)\in I^\ell$ be a charge of $\Lambda$. Take $\blam\in\Par[\bkappa]_\alpha$ and $T\in\RST{\blam}$ a row-standard $\blam$-tableau. Define $\psi^T:=\psi_{w^T}$ for the fixed choice of reduced expression of $w^T$, where $w^T$ is defined in \autoref{eq:wt-def}.
\end{Definition}
The degree of standard tableaux defined in \autoref{subsec:degree-of-tableaux} is compatible with the grading of KLR algebras in the following sense:
\begin{Lemma}[{\cite[Corollary 3.14]{bkw-graded-specht}}]\label{lm:degree-match-psi-tableau}
    With the same setting as in \autoref{def:psi-tableau}, let $T\in\Std(\blam)$ be a standard $\blam$-tableau, then
    \[
        \deg \big(\psi^T e(\bi^{\blam})\big)=\deg T-\deg T^{\blam}
    \]
\end{Lemma}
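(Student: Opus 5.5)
The statement is \cite[Corollary 3.14]{bkw-graded-specht}, so in the paper it is cited rather than proved. If I had to reprove it, the plan is an induction on the length of $w^T$, comparing one simple transposition at a time.

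First, the degree of $\psi^T e(\bi^{\blam})$ is additive along the fixed reduced expression $w^T=\sigma_{r_1}\cdots\sigma_{r_m}$. By \autoref{eq:commute_y_psi_e} we have
\[
\psi^T e(\bi^{\blam})=\psi_{r_1}e(\sigma_{r_2}\cdots\sigma_{r_m}\bi^{\blam})\,\psi_{r_2}\cdots\psi_{r_m}e(\bi^{\blam}).
\]
Hence the degree equals $\sum_j -a_{\bj_{r_j},\bj_{r_j+1}}$, where $\bj=\sigma_{r_{j+1}}\cdots\sigma_{r_m}\bi^{\blam}$. Since $\bj$ is the residue sequence of $\sigma_{r_{j+1}}\cdots\sigma_{r_m}\cdot T^{\blam}$, this quantity does not depend on which reduced expression was fixed.

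Second, I would choose a convenient reduced expression that passes only through standard tableaux. For standard $T\neq T^{\blam}$, there is a standard $S$ with $T=\sigma_r S$, $\ell(w^T)=\ell(w^S)+1$, and $r,r+1$ in neither the same row nor the same column of $S$. To see this, use \autoref{lem:dominance-vs-bruhat-rowstandard}: $T^{\blam}$ is the dominance-maximal standard tableau, so one can move toward it by swapping an adjacent pair $r+1,r$ that is out of reading order in $T$. Such a pair is never row- or column-adjacent, so the swap preserves standardness. Writing $w^T=\sigma_r w^S$ then reduces the claim to the single-step identity
\[
\deg T-\deg S=-a_{i_r,i_{r+1}},\qquad \text{where } (i_r,i_{r+1})=(\res_r(S),\res_{r+1}(S)).
\]
This is the step I expect to be the main obstacle.

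To prove the single-step identity, I would use the inductive definition of degree. Truncating at $r+1$ reduces to the case where $r$ and $r+1$ are the last two entries. Let $\bmu=\Shape(S\downarrow(r-1))$, and let $A$ and $B$ be the nodes of $r$ and $r+1$ in $S$; swapping them gives $T$. Then
\[
\deg T-\deg S = d_B(\bmu\cup B)+d_A(\bmu\cup A\cup B)-d_A(\bmu\cup A)-d_B(\bmu\cup A\cup B).
\]
Here $A$ lies above $B$ in the node order of \autoref{eq:degree-of-last-node}. I would expand each term in \autoref{eq:degree-of-last-node}, which counts addable minus removable nodes of a given residue below the node. Nodes other than $A,B$ contribute equally to both sides, so only the effect of adding $A$ on $B$'s count, and of adding $B$ on $A$'s count, remains.

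That leaves a case check on the residues $i=\res A$ and $j=\res B$:
\begin{itemize}
\item if $i=j$, then $A$ and $B$ are not adjacent, and adding $B$ turns an addable $i$-node below $A$ into a removable one, changing the count by $2$;
\item if $j=i\pm1$, the addable/removable status of neighbouring nodes shifts by $1$, giving $-a_{ij}=1$;
\item otherwise the difference is $0$.
\end{itemize}
The sign conventions, including which of $A$ and $B$ is below the other and whether they lie in the same component, need care. Summing these single-step identities along the chain of standard tableaux from $T^{\blam}$ to $T$ gives $\deg(\psi^Te(\bi^{\blam}))=\deg T-\deg T^{\blam}$.
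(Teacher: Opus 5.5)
Your proposal is correct and is essentially the standard argument behind \cite[Corollary~3.14]{bkw-graded-specht}, which the paper only cites: you walk from $T^{\blam}$ to $T$ through standard tableaux by simple transpositions, and check the one-step identity $\deg T-\deg S=-a_{\res A,\res B}$. In that check, adding $A$ does not affect the count below $B$, and adding $B$ changes the value of $d_A$ by $-2$, $+1$ or $0$ according as $\res B=\res A$, $\res B=\res A\pm1$, or otherwise.

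The one loose spot is your reason for independence of the reduced word, since the intermediate tableaux do depend on the word. The correct reason is that along any reduced word of $w^T$ the swapped position pairs are exactly the inversions of $w^T$. This also shows that every reduced word, in particular the fixed one defining $\psi^T$, already passes only through standard tableaux, so no special choice of word is needed.
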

\begin{Remark}\label{rmk:degree-of-row-standard-tableau}
    The definition of the degree of standard tableaux extends to row-standard tableaux by \autoref{lm:degree-match-psi-tableau}.
\end{Remark}
We define the KLR algebra of rank $n$ to be $R_n:=\bigoplus\limits_{\alpha\in Q^+_n}R_{\alpha}$, where $Q^+_n=\{\alpha\in Q^+\mid \height(\alpha)=n\}$.
\begin{Theorem}[{\cite[Theorem 2.5]{khovanovlauda-klr-1}}]\label{thm:base-of-klr}
    The algebra $R_n$ is free as a $\bk$-module with basis $\{\psi_w y_1^{m_1} \dots y_n^{m_n} 1_{\mathbf{i}} \mid w \in \mathfrak{S}_n, m_1, \dots, m_n \in \mathbb{Z}_{\geq 0}, \mathbf{i} \in I^n\}$.
\end{Theorem}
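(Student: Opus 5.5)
The statement is the Khovanov--Lauda basis theorem for $R_n$. The plan has two halves: a spanning argument from the defining relations, and a linear independence argument via a faithful polynomial representation.

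\emph{Spanning.} Let $W$ be the span of all $\psi_w y_1^{m_1}\cdots y_n^{m_n}e(\bi)$. Since $1\in W$, it suffices to show $W$ is stable under left multiplication by the generators. This reduces to two claims.
\begin{enumerate}
\item Any monomial in the $y$'s and $\psi$'s times $e(\bi)$ can be rewritten so that all $y$'s are moved to the right. This follows from \autoref{eq:psi_y_shift_left}, \autoref{eq:y_psi_shift_right} and \autoref{eq:psi_y_commute}, with error terms that contain strictly fewer $\psi$'s.
\item Any word $\psi_{r_1}\cdots\psi_{r_m}e(\bi)$ equals $\psi_w$ (for the fixed reduced expression) plus terms with fewer $\psi$'s, or else it lies in lower terms.
\end{enumerate}
Claim (b) is proved by induction on $m$, using Matsumoto's theorem: a non-reduced word can be brought by braid and commutation moves to one containing $\psi_r^2$. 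Relation \autoref{eq:braid_relation} changes a braid move only by a polynomial in the $y$'s, and \autoref{eq:psi_square} replaces $\psi_r^2e(\bi)$ by a polynomial. Likewise, two reduced expressions of $w$ give $\psi$-words that differ by terms $\psi_u\cdot(\text{poly})$ with $\ell(u)<\ell(w)$. Induction on the number of $\psi$'s, and then on length, gives spanning.

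\emph{Linear independence.} Construct the polynomial representation $\mathcal{P}=\bigoplus_{\bi\in I^n}\bk[x_1,\dots,x_n]1_\bi$. In it, $e(\bi)$ acts as projection onto the $\bi$-summand, and $y_r$ acts as multiplication by $x_r$. The generator $\psi_r$ acts on $f1_\bi$ as follows:
\begin{itemize}
\item the divided difference $\frac{f-{}^{\sigma_r}f}{x_r-x_{r+1}}1_\bi$ if $\bi_r=\bi_{r+1}$;
\item ${}^{\sigma_r}f\,1_{\sigma_r\bi}$ or $(x_{r}-x_{r+1})\,{}^{\sigma_r}f\,1_{\sigma_r\bi}$, with the sign and side matched to the orientation $i\to i+1$, when $\bi_r\neq\bi_{r+1}$, so that the rule reproduces $Q_{\bi_r,\bi_{r+1}}$.
\end{itemize}
Then check all the relations. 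This is routine except for the braid relation \autoref{eq:braid_relation}, which needs case analysis on $(\bi_r,\bi_{r+1},\bi_{r+2})$. I expect this verification to be the main obstacle. In particular, the cases $\bi_r=\bi_{r+2}$ adjacent to $\bi_{r+1}$, which produce the $\pm1$ correction, must be checked carefully, using the fact that $e>2$ rules out the double-edge situation.

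Finally, show the images of the claimed basis elements are independent. Extend scalars to the fraction field $\bk(x_1,\dots,x_n)$. Then $\psi_w y^{m}e(\bi)$ acts on $g1_\bi$ as $c_w\,{}^{w}(x^m g)1_{w\bi}+\sum_{u<w}(\text{rational})\,{}^{u}(\cdot)$, with $c_w$ a nonzero rational function; this follows by induction on $\ell(w)$, tracking leading terms. Suppose a relation $\sum_{w,m,\bi}a_{w,m,\bi}\psi_wy^me(\bi)=0$ holds. Fix $\bi$, and choose $w$ maximal in Bruhat order among terms with nonzero coefficient. The coefficient of the operator $g\mapsto{}^w g$ in the resulting element of the twisted group algebra $\bk(x)\rtimes\Sym_n$ is $c_w\,{}^{w}\bigl(\sum_m a_{w,m,\bi}x^m\bigr)$. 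By Artin independence of field automorphisms this coefficient must vanish, which forces $a_{w,m,\bi}=0$. Descending induction on $w$ then completes the proof.
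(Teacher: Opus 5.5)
Your proposal is correct and is essentially the standard Khovanov--Lauda argument, which the paper invokes by citation without giving a proof of its own. Spanning comes from straightening words using the defining relations. Linear independence comes from the faithful polynomial representation: $\psi_w y^{m}e(\bi)$ acts as a nonzero multiple of $w$ plus Bruhat-lower terms in $\bk(x_1,\dots,x_n)\rtimes\Sym_n$, and the independence of the field automorphisms ${}^{u}(\cdot)$ then finishes the argument.

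One sign needs fixing when you verify the relations. With the paper's convention $\psi_r y_{r+1}e(\bi)=(y_r\psi_r+1)e(\bi)$ for $\bi_r=\bi_{r+1}$, the equal-label action must be $f\mapsto\frac{{}^{\sigma_r}f-f}{x_r-x_{r+1}}$. Your choice $\frac{f-{}^{\sigma_r}f}{x_r-x_{r+1}}$ gives $-1$ in both dot-sliding relations, and it also flips the sign of the $\pm1$ correction in the braid relation.
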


Fix a charge $\bkappa=(\kappa_1,\cdots,\kappa_{\ell})\in I^\ell$ of $\Lambda$.
For each $\ell$-partition $\blam\in\Par[\bkappa]_\alpha$, recall from \autoref{eq:initial-residue-sequence} that $\bi^{\blam}$ is the residue sequence of the initial tableau. We define the idempotent $e(\blam)$ by
\begin{equation}\label{eq:idempotent-of-partition}
    e_{\blam}:=e(\bi^{\blam}),
\end{equation}

By definition, the elements $\{e(\bi)\mid \bi\in I^\alpha\}$ are pairwise orthogonal idempotents, and
the KLR algebra $R_\alpha$ is unital with identity element $1_{\alpha}\ :=\ \sum_{\bi\in I^\alpha} e(\bi)$.
In particular, for any $R_\alpha$-module $M$ one has the decomposition $M=\bigoplus_{\bi\in I^\alpha} e(\bi)M$.

\subsection{Module category}\label{subsec:module-category}
Throughout, we work with $\mathbb Z$-graded modules and (unless explicitly stated otherwise) homogeneous
homomorphisms of degree $0$.

Let $A=\bigoplus_{a\in\mathbb Z}A_a$ be a $\mathbb Z$-graded $\bk$-algebra. A left $A$-module $M$ is
\emph{$\mathbb Z$-graded} if $M=\bigoplus_{d\in\mathbb Z}M_d$ and, for homogeneous $x\in A_a$, one has
$xM_d\subseteq M_{a+d}$ for all $d\in\mathbb Z$. A homomorphism $f:M\to N$ of graded $A$-modules is
\emph{homogeneous of degree $k\in\mathbb Z$} if $f(M_d)\subseteq N_{d+k}$ for all $d\in\mathbb Z$. For
$k\in\mathbb Z$, the \emph{grading shift} $M\langle k\rangle$ is defined by $(M\langle k\rangle)_d:=M_{d-k}$.

Specializing to $A=R_\alpha$, write $R_\alpha\textup{-}\mathrm{mod}$ for the category of finitely generated
graded left $R_\alpha$-modules with degree $0$ homomorphisms.

Let $\alpha,\beta\in Q^+$. Set $R_{\alpha,\beta}:=R_\alpha\otimes_\bk R_\beta$ with tensor product grading. If
$M\in R_\alpha\textup{-}\mathrm{mod}$ and $N\in R_\beta\textup{-}\mathrm{mod}$, their \emph{outer tensor product}
is $M\boxtimes N:=M\otimes_\bk N$, regarded as a graded $R_{\alpha,\beta}$-module via
$(x\otimes y)(m\otimes n)=(xm)\otimes(yn)$.

There is a canonical injective homogeneous (non-unital) algebra homomorphism
$\iota_{\alpha,\beta}:R_{\alpha,\beta}\hookrightarrow R_{\alpha+\beta}$ sending
$e(\bi)\otimes e(\bj)\mapsto e(\bi\bj)$, where $\bi\bj$ denotes concatenation. The image of
$1_\alpha\otimes 1_\beta$ is the idempotent
\[
e_{\alpha,\beta}:=\iota_{\alpha,\beta}(1_\alpha\otimes 1_\beta)
=\sum_{\bi\in I^\alpha,\ \bj\in I^\beta} e(\bi\bj)\ \in R_{\alpha+\beta}.
\]

Define the induction functor
\[
\Ind^{\alpha+\beta}_{\alpha,\beta}
:=R_{\alpha+\beta}e_{\alpha,\beta}\otimes_{R_{\alpha,\beta}}-
\;:\;R_{\alpha,\beta}\textup{-}\mathrm{mod}\to R_{\alpha+\beta}\textup{-}\mathrm{mod}.
\]
Moreover, $\Ind^{\alpha+\beta}_{\alpha,\beta}$ is exact by \cite[Proposition~2.16]{khovanovlauda-klr-1}.

\medskip
These constructions generalize to $n\ge2$ factors. Given $\beta_1,\dots,\beta_n\in Q^+$, set
$R_{\beta_1,\dots,\beta_n}:=R_{\beta_1}\otimes_\bk\cdots\otimes_\bk R_{\beta_n}$ and
$\beta:=\beta_1+\cdots+\beta_n$. Let $R_{\beta_1,\dots,\beta_n}\hookrightarrow R_\beta$ be the canonical
injective homogeneous (non-unital) map. The image of
$1_{\beta_1}\otimes\cdots\otimes 1_{\beta_n}$ is the idempotent
\[
e_{\beta_1,\dots,\beta_n}
:=\sum_{\bi^{(1)}\in I^{\beta_1},\dots,\bi^{(n)}\in I^{\beta_n}}
e\!\bigl(\bi^{(1)}\cdots \bi^{(n)}\bigr)\ \in R_\beta.
\]
Define
\[
\Ind^{\beta}_{\beta_1,\dots,\beta_n}
:=R_\beta e_{\beta_1,\dots,\beta_n}\otimes_{R_{\beta_1,\dots,\beta_n}}-
\;:\;R_{\beta_1,\dots,\beta_n}\textup{-}\mathrm{mod}\to R_\beta\textup{-}\mathrm{mod}.
\]
Finally, if $M_a\in R_{\beta_a}\textup{-}\mathrm{mod}$ for $a=1,\dots,n$, define the induced product by
\[
M_1\circ\cdots\circ M_n
:=\Ind^{\beta}_{\beta_1,\dots,\beta_n}(M_1\boxtimes\cdots\boxtimes M_n).
\]
\subsection{Universal Specht module}\label{subsec:universal-specht}
In this section we recall the universal graded (row) Specht modules following
\cite{kmr-universal-specht-type-A}. The key point is that the corresponding permutation modules
are most naturally defined \emph{by induction} from one-dimensional segment modules
(\cite[\S3.6]{kmr-universal-specht-type-A}), and then identified with a cyclic
(highest-weight-type) presentation (\cite[\S5.3--\S5.4]{kmr-universal-specht-type-A}).
We then impose the homogeneous Garnir relations to obtain the universal Specht module.

Fix the quiver of type $A_\infty$ (i.e.\ $e=0$) or $A^{(1)}_{e-1}$ (i.e.\ $e>0$), with vertex set
$I=\mathbb Z/e\mathbb Z$. Fix $\Lambda\in P^+$ and
$\bkappa=(\kappa_1,\dots,\kappa_\ell)\in I^\ell$ a charge of $\Lambda$. Fix $\alpha\in Q^+$ and set $d=\height(\alpha)$.
Let $R_\alpha=R_\alpha^\Lambda$ be the cyclotomic KLR algebra of weight $\Lambda$.
For $\blam\in\Par[\bkappa]_\alpha$, let $T^{\blam}$ be the initial tableau and
$\bi^{\blam}:=\bi^{T^{\blam}}\in I^d$ its residue sequence.

\subsubsection{Permutation modules}
For $i\in I$ and $N\in\mathbb{Z}_{>0}$, define the \emph{segment}
\[
  s(i,N):=(i,i+1,\dots,i+N-1)\in I^N,
\]
where entries are taken modulo $e$ if $e>0$. Let $\alpha_{i,N}\in Q^+$ be the positive root associated to this segment, that is,
$\alpha_{i,N}=\alpha\big(s(i,N)\big)$; see \autoref{eq:associated-positive-root}.

Let $s=s(i,N)$ and set $\alpha_s=\alpha_{i,N}$. The \emph{segment module} $M(s)$ is the rank-one graded
$R_{\alpha_s}$-module generated by $m(s)$ in degree $0$, with action
\[
  e(\bj)m(s)=\delta_{\bj,s}\,m(s),\qquad y_r m(s)=0,\qquad \psi_r m(s)=0,
\]
for all admissible $\bj$ and $r$.

Let $\bms=(s(1),\dots,s(n))$ be an ordered tuple of segments. Write $\alpha_r$ for the positive root associated to $s(r)$ and set
$\alpha:=\alpha_1+\cdots+\alpha_n$. Define
\[
  M(\bms):=M(s(1))\circ\cdots\circ M(s(n))
  :=\Ind^{R_\alpha}_{R_{\alpha_1}\otimes\cdots\otimes R_{\alpha_n}}
  \bigl(M(s(1))\boxtimes\cdots\boxtimes M(s(n))\bigr),
\]
with cyclic generator
\[
  m(\bms):=1\otimes m(s(1))\otimes\cdots\otimes m(s(n)).
\]
Let $j(\bms):=s(1)\cdots s(n)\in I^d$ be the concatenation of the segments.

Now fix $\blam\in\Par[\bkappa]_\alpha$. List the nonempty rows of $\blam$ as
$R_1,\dots,R_g$ (from the first component to the $\ell$th component, and within each component from top to bottom).
If $R_a$ has length $N_a$ and the leftmost node of $R_a$ has residue $i_a$, set
\[
  r(a):=s(i_a,N_a).
\]
Define the associated row tuple
\[
  \bmr(\blam):=(r(1),\dots,r(g)).
\]

\begin{Definition}\label{def:row-permutation-module}
Define the \emph{row permutation module}
\[
  M^{\blam}:=M\big(\bmr(\blam)\big)\,\langle \deg(T^{\blam})\rangle,
\]
and write $m^{\blam}$ for its (shifted) cyclic generator.
\end{Definition}
\subsubsection{Homogeneous Garnir relations}
Fix a Garnir node $A\in[\blam]$. Let $\mathcal{B}^A$ be the Garnir belt of $A$, decomposed into
$A$-bricks as in \autoref{subsec:garnir-tableaux}. Let $k=k^A$ be the number of bricks in $\mathcal{B}^A$,
and let $\Sym^A\cong\Sym_k$ be the brick permutation group, with Coxeter generators
$w_1^A,\dots,w_{k-1}^A$ defined as \autoref{eq:brick-transposition} swapping adjacent bricks. Let $f=f^A$ be the number of bricks lying in the
first-row part of $\mathcal B^A$, so that $\Sym_f\times\Sym_{k-f}\le \Sym^A$.
Let $\mathscr D^A$ be the set of minimal-length left coset representatives of
$\Sym_f\times\Sym_{k-f}$ in $\Sym^A$.

Let $T^A$ be the unique maximal row-standard tableau in the Garnir set $\Gar^{A}$. Set
\[
  \bi^A:=\bi^{T^A}\in I^d,\qquad m^A:=\psi^{T^A}m^{\blam}\in M^{\blam}.
\]

Following \cite[\S5.4]{kmr-universal-specht-type-A}, we define the brick operators as follows.

\begin{Definition}\label{def:brick-operators-sigma-tau}
   For $1\le r\le k-1$, set
    \[
      \sigma_r^A:=\psi_{w_r^A}\,e(\bi^A),
      \qquad
      \tau_r^A:=(\sigma_r^A+1)\,e(\bi^A).
    \]
    If $u\in \Sym^A$ and $u=w_{r_1}^A\cdots w_{r_a}^A$ is a chosen reduced expression, define
    \[
      \sigma_u^A:=\sigma_{r_1}^A\cdots\sigma_{r_a}^A,
      \qquad
      \tau_u^A:=\tau_{r_1}^A\cdots\tau_{r_a}^A.
    \]
\end{Definition}

\begin{Lemma}[{\cite[\S5.4]{kmr-universal-specht-type-A}}]\label{lm:tau-well-defined}
    For every $u\in\mathscr D^A$, the element $\tau_u^A$ is independent of the choice of reduced expression of $u$.
\end{Lemma}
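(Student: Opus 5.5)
The plan is to reduce the statement to the braid relations among the brick operators $\sigma^A_r$ acting on $e(\bi^A)$, following \cite[\S5.4]{kmr-universal-specht-type-A}. By Matsumoto's theorem, any two reduced expressions of $u\in\Sym^A\cong\Sym_k$ are related by a sequence of commutation moves $w^A_rw^A_s=w^A_sw^A_r$ ($|r-s|>1$) and braid moves $w^A_rw^A_{r+1}w^A_r=w^A_{r+1}w^A_rw^A_{r+1}$. It therefore suffices to show that the $\tau^A_r$ satisfy the commutation and braid relations, at least when applied in the positions relevant to reduced words of elements of $\mathscr D^A$.

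First I would observe that every brick has the same residue sequence $s(\res A,e)$, a full segment of length $e$. Hence each $w^A_r$ fixes $\bi^A$, and $\sigma^A_r$ is an endomorphism of $e(\bi^A)$. For $|r-s|>1$ the permutations $w^A_r$ and $w^A_s$ have disjoint supports, so $\psi_{w^A_r}$ and $\psi_{w^A_s}$ are products of $\psi_i$ with $|i-j|>1$ pairwise. They commute by \autoref{eq:psi_commute_far}, provided the fixed reduced expression of $w^A_r$ is chosen as a product of generators supported in $[n^A_r,n^A_r+2e-1]$. I would fix such a choice, so commutation is immediate.

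The braid relation is the main obstacle. I would compute $\sigma^A_r\sigma^A_{r+1}\sigma^A_r-\sigma^A_{r+1}\sigma^A_r\sigma^A_{r+1}$ on $e(\bi^A)$ by moving strands of three consecutive bricks with residues $i,i+1,\dots,i+e-1$ past each other. The error terms come from \autoref{eq:braid_relation} whenever a triple of strands has residues $(j,j\pm1,j)$. The KMR computation shows that the brick elements satisfy $(\sigma^A_r)^2=-2\sigma^A_r$ up to terms, and more precisely that $\tau_r=\sigma_r+1$ behave like the generators of $\Sym_k$ up to an error $\sigma_r-\sigma_{r+1}$-type correction. This explains why $\tau$ rather than $\sigma$ is used: the corrections cancel and the $\tau^A_r$ satisfy the genuine braid relations modulo the ideal of $y$'s killing $m^{\blam}$-type vectors. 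If an exact identity in $R_\alpha$ is too hard to obtain, I would instead prove the weaker statement that is needed: $\tau_u^A e(\bi^A)$ is independent of the reduced word after acting on $m^{\blam}$. There $y$-terms vanish because the segment modules have $y_r m(s)=0$, and the terms where $\psi$'s cross strands of equal residue are handled using \autoref{eq:psi_square}.

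Concretely, I would do the $k=3$ brick computation by induction on $e$, peeling off one strand column at a time and using the degree count from \autoref{lm:degree-match-psi-tableau} to discard terms of the wrong degree. I would then conclude via Matsumoto for general $u\in\mathscr D^A$.
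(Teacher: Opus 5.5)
\textbf{The gap.} Your reduction via Matsumoto's theorem and your commutation argument are fine. However, the step you single out as the main obstacle, the braid relation $\tau^A_r\tau^A_{r+1}\tau^A_r=\tau^A_{r+1}\tau^A_r\tau^A_{r+1}$, is never proved, and the ways you propose to reach it do not work as stated.
\begin{itemize}
\item Appealing to ``the KMR computation'' with $(\sigma^A_r)^2=-2\sigma^A_r$ ``up to terms'' essentially cites what is to be shown.
\item A degree count cannot discard anything. The error term produced by a braid move on strands with residues $(j,j\pm1,j)$ has degree $0$, exactly like $\sigma^A_r$.
\item The fallback changes the statement, which is an identity in $R_\alpha$. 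Acting on $m^{\blam}$ is also vacuous: $\tau^A_u$ ends with the idempotent $e(\bi^A)$, and $e(\bi^A)m^{\blam}=0$ unless $\bi^A=\bi^{\blam}$. On $m^A=\psi^{T^A}m^{\blam}$, the $y_r$ need not act by zero.
\end{itemize}

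\textbf{The missing idea.} No braid move is ever needed, which is why the lemma is stated only for $u\in\mathscr D^A$.
\begin{itemize}
\item Identify $w^A_r$ with $s_r\in\mathfrak S_k$. An element $u\in\mathscr D^A$ is a minimal length left coset representative of $\mathfrak S_f\times\mathfrak S_{k-f}$, so $u(j)<u(j+1)$ for all $j\neq f$. It therefore has at most one descent, hence is $321$-avoiding, i.e.\ fully commutative.
\item For such elements, all reduced expressions are connected by commutation moves alone (Billey--Jockusch--Stanley, Stembridge). A factor $s_rs_{r+1}s_r$ in a reduced word would force three mutually crossing strands, i.e.\ a $321$-pattern.
\item So the only relation required is $\tau^A_r\tau^A_s=\tau^A_s\tau^A_r$ for $|r-s|>1$, which your second paragraph already proves. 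All bricks carry the same full segment of residues, so $w^A_r\bi^A=\bi^A$ and $e(\bi^A)$ commutes with $\psi_{w^A_r}$. For $|r-s|\ge 2$, the generators $\psi_j$ occurring in $\psi_{w^A_r}$ and $\psi_{w^A_s}$ have indices differing by at least $2$.
\end{itemize}
No choice of reduced expression for $w^A_r$ is needed here. Every reduced word of an element of the parabolic subgroup on $[n^A_r,n^A_r+2e-1]$ uses only generators from that interval. Since the block swap $w^A_r$ is itself fully commutative, $\psi_{w^A_r}$ does not even depend on the choice. With this observation your first two paragraphs form a complete proof, and the braid-relation computation can be dropped.
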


\begin{Definition}\label{def:garnir-relation}
    The homogeneous Garnir element associated to $A$ is
    \[
      g^A:=\sum_{u\in\mathscr D^A}\tau_u^A\,\psi^{T^A}\ \in R_\alpha.
    \]
    Equivalently, the corresponding \textbf{Garnir relation} in $M^{\blam}$ may be written as
    \[
      g^A m^{\blam}=\sum_{u\in\mathscr D^A}\tau_u^A\,m^A.
    \]
\end{Definition}

The KLR algebras admit a diagrammatic description; see \cite[Section 2]{khovanovlauda-klr-1}. We briefly introduce this in \autoref{subsec:subdivision-KLR}. We end this section with an example using this diagrammatic description to describe $\sigma_r^A$ and $\psi_{w^{T^A}}$.

\begin{Example}\label{eg:psi-permutation-of-garnir-maximal-tableau}
    Continue with \autoref{eg:garnir-bricks-garnir-sets}, $\sigma^A_{2}:=\psi_{w_r^A}e(\bi^A)$ is the following string diagram:
    \begin{center}
        \resizebox{0.9\linewidth}{!}{%
        \begin{tikzpicture}[scale=1.2, thick, line cap=round]
        
        \def\yb{0}
        \def\yt{3}
        
        \coordinate (B26) at (-11,\yb); \coordinate (T26) at (-11,\yt);
        \coordinate (B27) at ( -9,\yb); \coordinate (T27) at ( -9,\yt);
        \coordinate (B28) at ( -7,\yb); \coordinate (T28) at ( -7,\yt);
        \coordinate (B29) at ( -5,\yb); \coordinate (T29) at ( -5,\yt);
        \coordinate (B30) at ( -3,\yb); \coordinate (T30) at ( -3,\yt);
        \coordinate (B31) at ( -1,\yb); \coordinate (T31) at ( -1,\yt);
        \coordinate (B32) at (  1,\yb); \coordinate (T32) at (  1,\yt);
        \coordinate (B33) at (  3,\yb); \coordinate (T33) at (  3,\yt);
        \coordinate (B34) at (  5,\yb); \coordinate (T34) at (  5,\yt);
        \coordinate (B35) at (  7,\yb); \coordinate (T35) at (  7,\yt);
        \coordinate (B36) at (  9,\yb); \coordinate (T36) at (  9,\yt);
        \coordinate (B37) at ( 11,\yb); \coordinate (T37) at ( 11,\yt);
        
        \fill (B26) circle (1.2pt); \fill (T26) circle (1.2pt);
        \node[below] at (B26) {\(2\)}; \node[above] at (T26) {\(2\)};
        
        \fill (B27) circle (1.2pt); \fill (T27) circle (1.2pt);
        \node[below] at (B27) {\(0\)}; \node[above] at (T27) {\(0\)};
        
        \fill (B28) circle (1.2pt); \fill (T28) circle (1.2pt);
        \node[below] at (B28) {\(1\)}; \node[above] at (T28) {\(1\)};
        
        \fill (B29) circle (1.2pt); \fill (T29) circle (1.2pt);
        \node[below] at (B29) {\(2\)}; \node[above] at (T29) {\(2\)};
        
        \fill (B30) circle (1.2pt); \fill (T30) circle (1.2pt);
        \node[below] at (B30) {\(0\)}; \node[above] at (T30) {\(0\)};
        
        \fill (B31) circle (1.2pt); \fill (T31) circle (1.2pt);
        \node[below] at (B31) {\(1\)}; \node[above] at (T31) {\(1\)};
        
        \fill (B32) circle (1.2pt); \fill (T32) circle (1.2pt);
        \node[below] at (B32) {\(2\)}; \node[above] at (T32) {\(2\)};
        
        \fill (B33) circle (1.2pt); \fill (T33) circle (1.2pt);
        \node[below] at (B33) {\(0\)}; \node[above] at (T33) {\(0\)};
        
        \fill (B34) circle (1.2pt); \fill (T34) circle (1.2pt);
        \node[below] at (B34) {\(1\)}; \node[above] at (T34) {\(1\)};
        
        \fill (B35) circle (1.2pt); \fill (T35) circle (1.2pt);
        \node[below] at (B35) {\(2\)}; \node[above] at (T35) {\(2\)};
        
        \fill (B36) circle (1.2pt); \fill (T36) circle (1.2pt);
        \node[below] at (B36) {\(0\)}; \node[above] at (T36) {\(0\)};
        
        \fill (B37) circle (1.2pt); \fill (T37) circle (1.2pt);
        \node[below] at (B37) {\(1\)}; \node[above] at (T37) {\(1\)};
        
        \draw (B26) -- (T26);
        \draw (B27) -- (T27);
        \draw (B28) -- (T28);
        \draw (B35) -- (T35);
        \draw (B36) -- (T36);
        \draw (B37) -- (T37);
        
        \draw (B29) -- (T32);
        \draw (B30) -- (T33);
        \draw (B31) -- (T34);
        
        \draw (B32) -- (T29);
        \draw (B33) -- (T30);
        \draw (B34) -- (T31);
        
        \end{tikzpicture}%
        }
    \end{center}
    These strands correspond to the entries from $26$ to $31$ in $T^{\blam}$, while all undrawn strands are taken to be vertical straight strands. Similarly, $\psi_{w^{T^A}}$ is as follows:

    \begin{center}
    \resizebox{0.9\linewidth}{!}{%
        \begin{tikzpicture}[scale=1.2, thick, line cap=round]
        
            \def\yb{0}
            \def\yt{4}
            
            \coordinate (B24) at (-14,\yb); \coordinate (T24) at (-14,\yt);
            \coordinate (B25) at (-12,\yb); \coordinate (T25) at (-12,\yt);
            \coordinate (B26) at (-10,\yb); \coordinate (T26) at (-10,\yt);
            \coordinate (B27) at ( -8,\yb); \coordinate (T27) at ( -8,\yt);
            \coordinate (B28) at ( -6,\yb); \coordinate (T28) at ( -6,\yt);
            \coordinate (B29) at ( -4,\yb); \coordinate (T29) at ( -4,\yt);
            \coordinate (B30) at ( -2,\yb); \coordinate (T30) at ( -2,\yt);
            \coordinate (B31) at (  0,\yb); \coordinate (T31) at (  0,\yt);
            \coordinate (B32) at (  2,\yb); \coordinate (T32) at (  2,\yt);
            \coordinate (B33) at (  4,\yb); \coordinate (T33) at (  4,\yt);
            \coordinate (B34) at (  6,\yb); \coordinate (T34) at (  6,\yt);
            \coordinate (B35) at (  8,\yb); \coordinate (T35) at (  8,\yt);
            \coordinate (B36) at ( 10,\yb); \coordinate (T36) at ( 10,\yt);
            \coordinate (B37) at ( 12,\yb); \coordinate (T37) at ( 12,\yt);
            \coordinate (B38) at ( 14,\yb); \coordinate (T38) at ( 14,\yt);
            
            \fill (B24) circle (1.2pt); \fill (T24) circle (1.2pt);
            \node[below] at (B24) {\(0\)}; \node[above] at (T24) {\(0\)};
            
            \fill (B25) circle (1.2pt); \fill (T25) circle (1.2pt);
            \node[below] at (B25) {\(1\)}; \node[above] at (T25) {\(1\)};
            
            \fill (B26) circle (1.2pt); \fill (T26) circle (1.2pt);
            \node[below] at (B26) {\(2\)}; \node[above] at (T26) {\(2\)};
            
            \fill (B27) circle (1.2pt); \fill (T27) circle (1.2pt);
            \node[below] at (B27) {\(0\)}; \node[above] at (T27) {\(0\)};
            
            \fill (B28) circle (1.2pt); \fill (T28) circle (1.2pt);
            \node[below] at (B28) {\(1\)}; \node[above] at (T28) {\(1\)};
            
            \fill (B29) circle (1.2pt); \fill (T29) circle (1.2pt);
            \node[below] at (B29) {\(2\)}; \node[above] at (T29) {\(2\)};
            
            \fill (B30) circle (1.2pt); \fill (T30) circle (1.2pt);
            \node[below] at (B30) {\(0\)}; \node[above] at (T30) {\(0\)};
            
            \fill (B31) circle (1.2pt); \fill (T31) circle (1.2pt);
            \node[below] at (B31) {\(1\)}; \node[above] at (T31) {\(1\)};
            
            \fill (B32) circle (1.2pt); \fill (T32) circle (1.2pt);
            \node[below] at (B32) {\(2\)}; \node[above] at (T32) {\(2\)};
            
            \fill (B33) circle (1.2pt); \fill (T33) circle (1.2pt);
            \node[below] at (B33) {\(0\)}; \node[above] at (T33) {\(0\)};
            
            \fill (B34) circle (1.2pt); \fill (T34) circle (1.2pt);
            \node[below] at (B34) {\(1\)}; \node[above] at (T34) {\(1\)};
            
            \fill (B35) circle (1.2pt); \fill (T35) circle (1.2pt);
            \node[below] at (B35) {\(2\)}; \node[above] at (T35) {\(2\)};
            
            \fill (B36) circle (1.2pt); \fill (T36) circle (1.2pt);
            \node[below] at (B36) {\(0\)}; \node[above] at (T36) {\(0\)};
            
            \fill (B37) circle (1.2pt); \fill (T37) circle (1.2pt);
            \node[below] at (B37) {\(1\)}; \node[above] at (T37) {\(1\)};
            
            \fill (B38) circle (1.2pt); \fill (T38) circle (1.2pt);
            \node[below] at (B38) {\(2\)}; \node[above] at (T38) {\(2\)};
            
            \draw (B24) -- (T31);
            \draw (B25) -- (T32);
            
            \draw (B26) -- (T24);
            \draw (B27) -- (T25);
            \draw (B28) -- (T26);
            \draw (B29) -- (T27);
            \draw (B30) -- (T28);
            \draw (B31) -- (T29);
            
            \draw (B32) -- (T33);
            \draw (B33) -- (T34);
            \draw (B34) -- (T35);
            \draw (B35) -- (T36);
            \draw (B36) -- (T37);
            \draw (B37) -- (T38);
            
            \draw (B38) -- (T30);
        
        \end{tikzpicture}%
    }
    \end{center}
    These strands correspond to the entries from $24$ to $38$ in $T^{\blam}$ and $T^{A}$, while, as before, all undrawn strands are taken to be vertical straight strands.
\end{Example}
\subsubsection{Universal Specht modules}
We define the universal graded Specht modules via a highest-weight-module presentation:
\begin{Definition}\label{dfn-universal-specht}
    Let $\blam\in\Par[\bkappa]_\alpha$ and $d=\height(\alpha)$.
    The \emph{universal graded (row) Specht module} $S^{\blam}$ is the graded $R_\alpha$-module generated by
    $z^{\blam}$, which is homogeneous of degree $\deg(T^{\blam})$, subject to the relations
    \begin{enumerate}[label=(\alph*)]
        \item $e(\bj)z^{\blam}=\delta_{\bj,\bi^{\blam}}\,z^{\blam}$ for all $\bj\in I^{\alpha}$;
        \label{rel:idempotent}
    
        \item $y_r z^{\blam}=0$ for $r=1,\dots,d$;
        \label{rel:nilpotent}
    
        \item $\psi_r z^{\blam}=0$ whenever $r\rightarrow_{T^{\blam}}r+1$;
        \label{rel:psi}
    
        \item $g^A z^{\blam}=0$ for every Garnir node $A\in[\blam]$.
        \label{rel:garnir}
    \end{enumerate}
\end{Definition}

\begin{Remark}\label{rmk:highest-weight-presentaiton-of-permutation-module}
    Let $M$ be the cyclic module generated by $m^{\blam}$ subject only to the relations
    (1)--(3) in Definition~\ref{dfn-universal-specht}. Then $M\cong M^{\blam}$ as graded $R_{\alpha}$-modules and there is a natural surjection
    $M^{\blam}\twoheadrightarrow S^{\blam}$ sending $m^{\blam}\mapsto z^{\blam}$, and $S^{\blam}\ \cong\ M^{\blam}\Big/\Big\langle\, g^A m^{\blam}\ \Big|\ \forall A\in[\blam]\text{ is a Garnir node}\,\Big\rangle$.
\end{Remark}

In later sections, we call $m^{\blam}$ and $z^{\blam}$ the \emph{standard cyclic generators} of the modules $M^{\blam}$ and $S^{\blam}$, respectively.

\subsubsection{Bases}
For completeness, we record the standard basis results.

\begin{Theorem}[{\cite[Theorem 5.6]{kmr-universal-specht-type-A}}]\label{thm:basis-permutation}
    The permutation module $M^{\blam}$ has a $\bk$-basis 
    \[
        \{\,\psi^T m^{\blam}\mid T\in\RST{\blam}\,\}
    \]
\end{Theorem}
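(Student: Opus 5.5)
The plan is to realise $M^{\blam}$ as an induced module and read off a basis from the Khovanov--Lauda basis theorem. By \autoref{def:row-permutation-module}, $M^{\blam}$ is a grading shift of $M(\bmr(\blam))=\Ind^{\alpha}_{\beta_1,\dots,\beta_g}\bigl(M(r(1))\boxtimes\cdots\boxtimes M(r(g))\bigr)$, where $\beta_a$ is the root of the row segment $r(a)$. The outer tensor product of the segment modules is one-dimensional, spanned by $m(r(1))\otimes\cdots\otimes m(r(g))$, and every $y_r$ and every $\psi_r$ internal to a block acts on it by zero.

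First, I would use \autoref{thm:base-of-klr} to show that $R_\alpha e_{\beta_1,\dots,\beta_g}$ is a free right $R_{\beta_1,\dots,\beta_g}$-module with basis $\{\psi_w e_{\beta_1,\dots,\beta_g}\}$, where $w$ runs over the minimal length left coset representatives of the Young subgroup $\Sym_{N_1}\times\cdots\times\Sym_{N_g}\le\Sym_d$. Concretely, for each $v\in\Sym_d$ I would choose the reduced expression of $v=w\,u$ with $w$ distinguished and $u$ in the Young subgroup, concatenating reduced words. Then $\psi_v=\psi_w\psi_u$, and the basis $\psi_v y^{m}e(\bi)$ of \autoref{thm:base-of-klr} factors as $\psi_w$ times a basis element of the parabolic subalgebra $R_{\beta_1,\dots,\beta_g}$, which itself has the analogous basis by \autoref{thm:base-of-klr} applied blockwise. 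Tensoring this free module with the one-dimensional module gives a $\bk$-basis $\{\psi_w\, m(\bmr(\blam))\}$ of $M(\bmr(\blam))$, indexed by the distinguished coset representatives.

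Second, I would match the indexing set with row-standard tableaux. The map $T\mapsto w^T$ from \autoref{eq:wt-def} is a bijection from $\RST{\blam}$ to the distinguished left coset representatives of the row stabiliser of $T^{\blam}$, which is exactly the Young subgroup above, since the rows of $T^{\blam}$ are the consecutive intervals of lengths $N_1,\dots,N_g$. This is the standard fact that $w^T$ is distinguished precisely when $w^T$ is increasing on each row interval, that is, when $T$ is row-standard.

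The main obstacle is that $\psi^T$ is defined using a \emph{fixed}, arbitrary reduced expression of $w^T$, not necessarily the one used in the freeness argument. To handle this, I would use the standard consequence of the KLR relations that two choices of $\psi_w$ differ by a linear combination of terms $\psi_{v}f(y)e(\bi)$ with $v<w$ in Bruhat order. Acting on $m(\bmr(\blam))$, the $y$'s can be pushed right past $\psi$'s, modulo further Bruhat-lower error terms, until they act by zero on the generator. Hence the transition matrix between $\{\psi^T m^{\blam}\}$ and the basis from the first step is unitriangular with respect to the order $\unrhd$, via \autoref{lem:dominance-vs-bruhat-rowstandard}. This gives that $\{\psi^T m^{\blam}\mid T\in\RST{\blam}\}$ is again a basis. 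Alternatively, one can simply cite \cite[Theorem 5.6]{kmr-universal-specht-type-A} for this step.
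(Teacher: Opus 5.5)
The paper gives no proof of its own here; it only quotes \cite[Theorem~5.6]{kmr-universal-specht-type-A}. Your proposal is a correct self-contained proof along the standard lines. You derive freeness of $R_\alpha e_{\beta_1,\dots,\beta_g}$ over the parabolic subalgebra from the Khovanov--Lauda basis theorem, and then match the distinguished left coset representatives of the row stabiliser of $T^{\blam}$ with $\{w^T\mid T\in\RST{\blam}\}$. You also rightly work over the non-cyclotomic $R_\alpha$, where the induced module $M^{\blam}$ lives, even though the paper writes $R_\alpha=R^\Lambda_\alpha$ in that subsection.

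What you call the ``main obstacle'' is not really one. The Khovanov--Lauda theorem holds for an arbitrary, independently chosen reduced word for each $v\in\Sym_d$. So in your first step you may take, for $v=wu$, the paper's fixed reduced word for $w$ followed by a blockwise reduced word for $u$; this is reduced since $\ell(wu)=\ell(w)+\ell(u)$. Then $\psi_v=\psi_w\psi_u$ with $\psi_w$ exactly the element defining $\psi^T$, and your first two steps already give the basis $\{\psi^Tm^{\blam}\}$ with no triangularity argument.

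If you keep the third step, it needs one more line. The error terms are multiples of $\psi_vm^{\blam}$ with $v<w^T$, but such $v$ need not be distinguished. So you must also show $\psi_vm^{\blam}\in\operatorname{span}\{\psi_{w'}m^{\blam}\mid w'\text{ distinguished},\ w'\le v\}$. This follows by induction on $\ell(v)$: write $v=v'u$ with $v'$ distinguished, compare $\psi_v$ with $\psi_{v'}\psi_u$ modulo Bruhat-lower terms, and use that $\psi_um^{\blam}=0$ whenever $1\neq u$ lies in the Young subgroup.
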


\begin{Corollary}\label{cor:iso-of-permutation-module}
    Let $\ell,\ell'$ be two positive integers, and take two charges $\bkappa\in I^{\ell}$ and $\bkappa'\in I^{\ell'}$. Let $\blam\in\Par[\bkappa]_{\alpha}$ and $\bmu\in\Par[\bkappa']_{\alpha}$. Assume that $\blam$ and $\bmu$ have the same
    ordered list of row segments (i.e. $\bmr(\blam)=\bmr(\bmu)$). Then there is a canonical graded
    $R_\alpha$-module isomorphism
    \[
        \Phi:\ M^{\blam}\xrightarrow{\ \sim\ } M^{\bmu},\qquad \Phi(r\,m^{\blam})=r\,m^{\bmu}\ \ (r\in R_\alpha).
    \]
    In particular, splitting any component into several consecutive components by cutting off rows (and preserving the row order) does not change the isomorphism class of the corresponding permutation module.
\end{Corollary}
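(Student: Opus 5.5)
The plan is to compare the two modules at the level of the induced modules from which they are built, and then deal with the grading shifts separately.

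\textbf{Step 1: the underlying modules are literally equal.} By \autoref{def:row-permutation-module}, $M^{\blam}=M(\bmr(\blam))\langle\deg T^{\blam}\rangle$ and $M^{\bmu}=M(\bmr(\bmu))\langle\deg T^{\bmu}\rangle$. The module $M(\bms)$ is defined purely from the tuple of segments $\bms$, as $M(s(1))\circ\cdots\circ M(s(g))$ with cyclic generator $m(\bms)=1\otimes m(s(1))\otimes\cdots\otimes m(s(g))$. Neither the level, nor the charge, nor the division of rows into components enters this definition. Hence the hypothesis $\bmr(\blam)=\bmr(\bmu)$ gives an equality of graded $R_\alpha$-modules $M(\bmr(\blam))=M(\bmr(\bmu))$ that matches the cyclic generators. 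The same observation applies to the highest-weight presentation of \autoref{rmk:highest-weight-presentaiton-of-permutation-module}, for the following reasons.
\begin{itemize}
\item $\bi^{\blam}=j(\bmr(\blam))=\bi^{\bmu}$.
\item The relations $y_r m=0$ are the same in both presentations.
\item The condition $r\rightarrow_{T^{\blam}}r+1$ holds exactly when $r$ and $r+1$ lie in the same segment, and this is also the condition for $T^{\bmu}$.
\end{itemize}
Consequently $r\,m^{\blam}\mapsto r\,m^{\bmu}$ is a well-defined $R_\alpha$-module isomorphism with inverse $r\,m^{\bmu}\mapsto r\,m^{\blam}$. By \autoref{thm:basis-permutation} it carries the basis $\{\psi^T m^{\blam}\}$ bijectively onto $\{\psi^{T'}m^{\bmu}\}$, where $T\leftrightarrow T'$ is the evident bijection $\RST{\blam}\leftrightarrow\RST{\bmu}$ obtained by reading rows in order. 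In particular $w^T=w^{T'}$.

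\textbf{Step 2: the grading shifts.} The map of Step 1 is homogeneous of degree $\deg T^{\bmu}-\deg T^{\blam}$. So the statement as worded, a degree-$0$ isomorphism, is equivalent to the identity $\deg T^{\blam}=\deg T^{\bmu}$. I would prove this by induction along the row-reading order. When the node $A$ in row $R_a$ is added to $T^{\blam}\downarrow(m-1)$:
\begin{itemize}
\item no node below $A$ has yet been placed, so there are no removable nodes below $A$;
\item the addable nodes below $A$ are the first node of the next row of the same component, together with the node $(1,1)$ of each later (still empty) component.
\end{itemize}
I would then try to match these addable nodes, residue by residue, with the corresponding addable nodes of the partially built $T^{\bmu}$. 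The matching would use the fact that the residue of the first node of each row $R_b$ equals $i_b$ on both sides.

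\textbf{Main obstacle.} This residue-matching is where I expect the difficulty. Splitting a component turns one addable node $(r+1,1)$ into several empty components, one for each later row that has been split off, and each of these contributes an addable node.

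For example, take $e=2$, $\blam=(1,1,1)$ with charge $(0)$, and $\bmu=((1),(1),(1))$ with charge $(0,1,0)$. Then the first node of $T^{\bmu}$ sees an extra addable $0$-node below it. A direct computation gives $\deg T^{\blam}=0\neq1=\deg T^{\bmu}$.

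So, before completing Step 2, I would check carefully whether the intended normalisation is really $\langle\deg T^{\blam}\rangle$, or whether the corollary should be read as an isomorphism of the unshifted modules $M(\bmr(\cdot))$. Step 1 gives the latter with no further work. If the shift is intended, the proof reduces exactly to the degree identity above. I would attempt that identity only under an additional condition ensuring that the extra empty components contribute no addable nodes of the relevant residue; this is the case when every split-off row is the last row of its original component.
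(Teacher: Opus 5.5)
Your Step~1 is the argument the paper relies on. The paper states this corollary without proof, treating it as immediate from $M^{\blam}=M(\bmr(\blam))\langle\deg T^{\blam}\rangle$ and \autoref{thm:basis-permutation}, and that reasoning only sees the unshifted induced module.

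Your Step~2 objection is correct, and it exposes a genuine error in the statement. The canonical map is homogeneous of degree $\deg T^{\bmu}-\deg T^{\blam}$. What is actually true is therefore $M^{\bmu}\cong M^{\blam}\langle\deg T^{\bmu}-\deg T^{\blam}\rangle$, and the degree-$0$ claim fails in general.

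Your example uses $e=2$, which the paper excludes (it assumes $e>2$). The failure persists for $e\ge 3$:
\begin{itemize}
\item Inserting an empty component already breaks the claim. Take $(1)$ with $\bkappa=(0)$ and $\bigl((1),\emptyset\bigr)$ with $\bkappa'=(0,0)$; their degrees are $0$ and $1$.
\item A genuine split also breaks it. For $e=3$, take $\blam=(3,1)$ with $\bkappa=(0)$ and $\bmu=\bigl((3),(1)\bigr)$ with $\bkappa'=(0,2)$. The node $(1,3)$ has residue $2$. In $\bmu$ it sees two addable $2$-nodes below it, $(1,2,1)$ and $(2,1,1)$; in $\blam$ it sees only $(2,1)$. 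Every other step contributes $0$, so $\deg T^{\blam}=1\neq 2=\deg T^{\bmu}$.
\item The same discrepancy occurs where the paper uses the corollary. For $e=3$, $k=0$, $\Lambda=\Lambda_1$ and $\lambda=(3,1,1)$, one has $\Psi_0(\lambda)=\bigl((3)\mid(1,1)\bigr)$ with charge $(1,0)$. Here $\deg T^{\lambda}=1$ and $\deg T^{\Psi_0(\lambda)}=2$, so \autoref{cor:image-of-permutation-module} also holds only up to a shift.
\end{itemize}

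The one claim in your proposal that is wrong is the suggested repair. Requiring every split-off row to be the last row of its original component does not suffice: the example $(3,1)\to\bigl((3),(1)\bigr)$ splits off a last row and still changes the degree.

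The exact discrepancy follows from the recursion you set up. When the node $A$ in row $a$ of component $m$ is added to $T^{\blam}$, nothing below $A$ is present yet. So there are no removable nodes below $A$, and the addable nodes below $A$ are $(m,a+1,1)$ together with the first node of each later component.

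Now pass to a refinement $\bmu$. The node $(m,a+1,1)$ is matched by the node directly beneath $A$'s row, in the piece containing that row; both have residue one less than the first node of $A$'s row. The only extra addable nodes are the first nodes of later pieces that do not start a component of $\blam$.

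Hence $\deg T^{\bmu}-\deg T^{\blam}$ equals the number of pairs $(A,R)$ where $R$ is a row starting a component of $\bmu$ but not of $\blam$, $A$ lies in an earlier row, and $\res(A)$ equals the residue of the first node of $R$. This count rarely vanishes. The natural correction is to state the corollary with the shift $\langle\deg T^{\bmu}-\deg T^{\blam}\rangle$, which is exactly what your Step~1 proves.
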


\begin{Theorem}[{\cite[Corollary 6.24]{kmr-universal-specht-type-A}}]\label{thm:basis-specht}
    There is a homogeneous degree $0$ isomorphism between $S^{\blam}$ and the graded cell module of
    $R^\Lambda_\alpha$ constructed in \textup{\cite{humathas-graded-cellular}}. Moreover,
    $S^{\blam}$ has a $\bk$-basis $\{\,\psi^T z^{\blam}\mid T\in\Std(\blam)\,\}$.
\end{Theorem}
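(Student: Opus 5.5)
Theorem~\ref{thm:basis-specht} is cited from \cite[Corollary 6.24]{kmr-universal-specht-type-A}. The plan is to pass through the cell module of Hu--Mathas, whose basis and graded dimension are known, rather than to work with $S^{\blam}$ directly.

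\emph{Step 1: a surjection onto the cell module.} Let $C^{\blam}$ be the graded cell module of $R^\Lambda_\alpha$ from \cite{humathas-graded-cellular}. It is cyclic, generated by $v^{\blam}=\psi^{T^{\blam}}$ modulo the more dominant cell ideal, and it has homogeneous basis $\{\psi^T v^{\blam}\mid T\in\Std(\blam)\}$, where $\deg \psi^T v^{\blam}=\deg T$ by \autoref{lm:degree-match-psi-tableau}. We check that $v^{\blam}$ satisfies relations \ref{rel:idempotent}--\ref{rel:garnir} of \autoref{dfn-universal-specht}:
\begin{itemize}
\item Relation \ref{rel:idempotent} holds by construction.
\item Relations \ref{rel:nilpotent} and \ref{rel:psi} hold because $y_r v^{\blam}$ and $\psi_r v^{\blam}$ (for $r\to_{T^{\blam}} r+1$) are $e(\bi^{\blam})$-weight vectors of the wrong degree or lie in the more dominant ideal. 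This is the standard Hu--Mathas computation.
\item Relation \ref{rel:garnir} follows from the homogeneous Garnir relations established in \cite{bkw-graded-specht,kmr-universal-specht-type-A}: $g^A v^{\blam}$ is a combination of $\psi^T v^{\blam}$ with $T\unrhd G^A$ and $\bi^T=\bi^A$, and a degree argument using \autoref{lem:Gar-orbit-characterisation} forces this combination to vanish.
\end{itemize}
These checks give a surjection $S^{\blam}\twoheadrightarrow C^{\blam}$ with $z^{\blam}\mapsto v^{\blam}$.

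\emph{Step 2: a spanning set for $S^{\blam}$.} By \autoref{rmk:highest-weight-presentaiton-of-permutation-module} and \autoref{thm:basis-permutation}, $S^{\blam}$ is spanned by $\psi^T z^{\blam}$ for $T\in\RST{\blam}$. We reduce any non-standard row-standard $T$ to standard ones by induction on the Bruhat order, which we transfer to dominance via \autoref{lem:dominance-vs-bruhat-rowstandard}. If $T$ is not standard, some column descent occurs at a Garnir node $A$. We factor $w^T=u\,w^{T^A}$ with lengths adding, where $u$ moves into the relevant coset for $\mathscr D^A$. Applying $g^A z^{\blam}=0$ then expresses $\psi^T z^{\blam}$ as a combination of $\psi^S z^{\blam}$ with $w^S<w^T$, together with error terms. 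Those error terms come from braid and quadratic corrections, and they involve dots, which die by relation \ref{rel:nilpotent}, or shorter permutations. Hence $\{\psi^T z^{\blam}\mid T\in\Std(\blam)\}$ spans $S^{\blam}$.

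\emph{Step 3: conclusion.} The image of this spanning set in $C^{\blam}$ is a basis, so the set is linearly independent in $S^{\blam}$. Therefore the surjection of Step~1 is an isomorphism, and it is homogeneous of degree $0$ because the degrees of generators match.

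The main obstacle is the straightening in Step~2. The brick operators $\tau^A_u$ are not literally $\psi$-words, and $\psi_w$ depends on the reduced expression. We must therefore show that every correction term lies strictly lower in Bruhat order, using the independence statement \autoref{lm:tau-well-defined} and the fact that the error terms of \eqref{eq:braid_relation} have shorter length. This is exactly the content of \cite[\S5--6]{kmr-universal-specht-type-A}, and I would follow that argument.
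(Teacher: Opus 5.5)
Your architecture (surject $S^{\blam}$ onto the Hu--Mathas cell module $C^{\blam}$, straighten to a standard spanning set, compare dimensions) is the one behind \cite[Corollary~6.24]{kmr-universal-specht-type-A}. The paper quotes that result without proof. However, the step carrying the real content is relation (d) in $C^{\blam}$, and the argument you give for it fails.

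Each brick operator $\sigma^A_r$ is homogeneous of degree $0$. Indeed, a brick crosses a brick with the same residue pattern: the $e$ equal-residue crossings contribute $-2e$ and the $2e$ adjacent-residue crossings contribute $+2e$. Hence $g^Av^{\blam}$ is homogeneous of degree $\deg T^A$ in $e(\bi^A)C^{\blam}$. By triangularity and \autoref{lem:Gar-orbit-characterisation}, it is a combination of the $\psi^Sv^{\blam}$ with $S\in\Gar^A\setminus\{G^A\}$. But passing from $T^A$ to $uT^A$ ($u\in\mathscr D^A$) only changes inversions between whole bricks, so every such $S$ also has $\deg S=\deg T^A$. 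Moreover, $T^A$ itself is standard and lies in this set as soon as $1\le f^A<k^A$. So no degree argument can force the combination to vanish. Degree and triangularity only settle the degenerate case $\Gar^A=\{G^A\}$.

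A minimal example: take $e=3$, $\Lambda=\Lambda_0$, $\lambda=(5,3)$ and $A=(1,3)$. Each row of $\mathcal B^A$ is a single brick with residues $2,0,1$, so $k^A=2$, $f^A=1$, $T^A=T^{\lambda}$ and $\mathscr D^A=\{1,w^A_1\}$. Hence $g^A=\sigma^A_1+2e(\bi^{\lambda})$. Here $T^\lambda$ is the only standard tableau with residue sequence $\bi^\lambda$, so $e(\bi^\lambda)C^\lambda$ is spanned by $v^\lambda$. Relation (d) is therefore the scalar identity $\psi_{w^A_1}v^{\lambda}=-2v^{\lambda}$ between vectors of the same weight and degree. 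It can only come from actually computing how the brick operators act.

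Showing that the specific combination $\sum_{u\in\mathscr D^A}\tau^A_u$, with $\tau^A_r=\sigma^A_r+1$, annihilates $v^{\blam}$ is exactly the homogeneous Garnir theorem of \cite[\S5]{kmr-universal-specht-type-A}. You must import it or reprove it; it, rather than the straightening in Step~2, is the main obstacle.

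Smaller points:
\begin{itemize}
\item The Hu--Mathas generator is $e(\bi^{\blam})y^{\blam}+R^{\triangleright\blam}$, not $\psi^{T^{\blam}}$, which is $1$ in this paper's notation.
\item $\psi_rv^{\blam}$ has weight $\sigma_r\bi^{\blam}$, not $\bi^{\blam}$.
\item Relations (b) and (c) are Hu--Mathas's statements that the corresponding elements lie in the more dominant cell ideal, not degree consequences.
\end{itemize}
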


\begin{Theorem}[{\cite[Theorem 8.2]{kmr-universal-specht-type-A}}]\label{thm:specht-factorisation}
    Suppose that $\blam=(\blam^{(1)},\dots,\blam^{(\ell)})\in\Par[\bkappa]_{\alpha}$. Then
    \begin{equation}\label{eq:specht-factorisation}
    S^{\blam}\ \cong\ S^{\blam^{(1)}}\circ\cdots\circ S^{\blam^{(\ell)}}\langle d_{\blam}\rangle,
    \end{equation}
    as graded $R_{\alpha}$-modules, where
    $d_{\blam}:=\ \deg(T^{\blam})-\deg(T^{\blam^{(1)}})-\cdots-\deg(T^{\blam^{(\ell)}})$. 
\end{Theorem}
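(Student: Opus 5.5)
We prove the isomorphism by the universal property of $S^{\blam}$ and then compare dimensions. All modules are regarded as graded $R_\alpha$-modules, and we use the basis of \autoref{thm:basis-specht} for $S^{\blam}$ as an $R_\alpha$-module. Write $n_m:=|\blam^{(m)}|$ and $\alpha^{(m)}:=\alpha_{\blam^{(m)}}$, computed with charge $\kappa_m$. Set $N:=S^{\blam^{(1)}}\circ\cdots\circ S^{\blam^{(\ell)}}\langle d_{\blam}\rangle$ and
\[
  v:=1\otimes z^{\blam^{(1)}}\otimes\cdots\otimes z^{\blam^{(\ell)}}\in N.
\]
Since each $S^{\blam^{(m)}}$ is cyclic on $z^{\blam^{(m)}}$, the induced module $N$ is generated by $v$ as an $R_\alpha$-module. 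Its degree is $\sum_m\deg(T^{\blam^{(m)}})+d_{\blam}=\deg(T^{\blam})$, which matches the degree of $z^{\blam}$.

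\textbf{Step 1: $v$ satisfies the defining relations of $S^{\blam}$.} We check the relations of \autoref{dfn-universal-specht} one at a time.

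Relation (a). The initial tableau $T^{\blam}$ fills the components in order, so $\bi^{\blam}$ is the concatenation $\bi^{\blam^{(1)}}\cdots\bi^{\blam^{(\ell)}}$. Hence $e(\bj)v=\delta_{\bj,\bi^{\blam}}v$.

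Relations (b) and (c). Each $y_r$ and each $\psi_r$ with $r\rightarrow_{T^{\blam}}r+1$ involves only strands inside a single component block. Such an element is the image under $\iota$ of $1\otimes\cdots\otimes x\otimes\cdots\otimes1$, where $x$ is the corresponding generator of $R_{\alpha^{(m)}}$. This $x$ kills $z^{\blam^{(m)}}$.

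Relation (d). A Garnir node $A$ lies in a single component $m$, and its whole belt $\mathcal{B}^A$ lies in $[\blam^{(m)}]$. Consequently the bricks, the group $\Sym^A$, the set $\mathscr{D}^A$, the tableau $T^A$ and the elements $\tau^A_u$ are all translates by the shift $n_1+\cdots+n_{m-1}$ of the corresponding data for $A$ viewed as a node of $\blam^{(m)}$. Here I would check that the fixed reduced expression of $w^{T^A}$ can be taken inside the $m$th parabolic block, so that $\psi^{T^A}$ is also a translate; \autoref{lm:tau-well-defined} handles the $\tau$'s. Then $g^A=\iota(1\otimes\cdots\otimes g^A_{(m)}\otimes\cdots\otimes1)$ up to the idempotent, so $g^Av=0$.

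Hence there is a surjective degree-$0$ homomorphism $S^{\blam}\twoheadrightarrow N$ with $z^{\blam}\mapsto v$.

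\textbf{Step 2: equal graded dimensions.} By the Khovanov–Lauda basis (\autoref{thm:base-of-klr}), $R_\alpha e_{\alpha^{(1)},\dots,\alpha^{(\ell)}}$ is free as a right $R_{\alpha^{(1)},\dots,\alpha^{(\ell)}}$-module on $\{\psi_w\}$, with $w$ running over minimal length left coset representatives of $\Sym_{n_1}\times\cdots\times\Sym_{n_\ell}$. Combined with \autoref{thm:basis-specht} for each factor, $N$ has basis
\[
  \bigl\{\psi_w\otimes\psi^{T_1}z^{\blam^{(1)}}\otimes\cdots\otimes\psi^{T_\ell}z^{\blam^{(\ell)}}\bigr\}.
\]
Now $T\mapsto(w,T|_1,\dots,T|_\ell)$ is a bijection from $\Std(\blam)$ to such tuples: $w$ records which entries land in each component, and $T|_m$ is the standardised restriction of $T$ to component $m$. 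So both sides have dimension $|\Std(\blam)|$. This count holds as long as the relevant weight spaces are finite dimensional, and a surjection between spaces of equal finite dimension is an isomorphism. (Graded dimensions also agree via \autoref{lm:degree-match-psi-tableau}, but finite ungraded dimension already suffices.)

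\textbf{Main obstacle.} The delicate point is Step 1(d): making sure the Garnir element $g^A$ in $R_\alpha$, built with globally fixed reduced expressions, really equals the image of the component Garnir element. If the fixed reduced expressions for $\psi^{T^A}$ do not respect the block, I would first replace $\psi^{T^A}$ using the fact that $T^A$ and $T^{\blam}$ differ only inside component $m$. The difference is then a sum of terms $\psi^{S}$ with $S\triangleright$-smaller (via \autoref{lem:dominance-vs-bruhat-rowstandard}), which also lie in the parabolic block. I expect this to work because $w^{T^A}$ lies in the parabolic subgroup for the $m$th block, where any reduced word uses only generators of that block.
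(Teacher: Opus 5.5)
Your argument is correct, and the obstacle you flag in Step~1(d) can be closed cleanly, as explained at the end. The paper gives no proof of this statement; it is quoted from \cite{kmr-universal-specht-type-A}. So yours is a self-contained route: the universal property gives a degree-$0$ surjection $S^{\blam}\twoheadrightarrow N$, and the Khovanov--Lauda freeness of $R_\alpha e_{\alpha^{(1)},\dots,\alpha^{(\ell)}}$ together with \autoref{thm:basis-specht} for the factors gives $\dim N=|\Std(\blam)|=\dim S^{\blam}$.

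Make explicit that you use the presentation over the non-cyclotomic $R_\alpha$, since $N$ is a priori only an $R_\alpha$-module. This is the only reading under which \autoref{rmk:highest-weight-presentaiton-of-permutation-module} makes sense, because the permutation modules $M^{\blam}$ are in general not $R^{\Lambda}_\alpha$-modules. It is also the module to which you apply \autoref{thm:basis-specht}.

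There is a shorter route that avoids the dimension count and the basis theorem entirely. Since $\bmr(\blam)$ is the concatenation of the $\bmr(\blam^{(m)})$, one has $M^{\blam}\cong M^{\blam^{(1)}}\circ\cdots\circ M^{\blam^{(\ell)}}\langle d_{\blam}\rangle$ by definition. Induction is exact, so $S^{\blam^{(1)}}\circ\cdots\circ S^{\blam^{(\ell)}}\langle d_{\blam}\rangle$ is the quotient of $M^{\blam}$ by the submodule generated by the images of the component Garnir vectors. By your Step~1(d) these images are exactly the $g^Am^{\blam}$, so the quotient is $S^{\blam}$ by \autoref{rmk:highest-weight-presentaiton-of-permutation-module}. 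Your version uses the basis theorem twice, but in exchange it produces an explicit basis of $N$.

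On the obstacle itself: your fallback, rewriting $\psi^{T^A}$ modulo Bruhat-smaller terms $\psi^S$, would not by itself give $g^Av=0$. Nothing tells you that $\sum_u\tau^A_u$ applied to the correction terms kills $z^{\blam^{(m)}}$. The fallback is also unnecessary, for two reasons.
\begin{enumerate}
\item Every reduced word of an element of a standard parabolic subgroup uses only that subgroup's generators. Hence $\psi^{T^A}$ and each $\psi_{w^A_r}$ are built from $\psi_t$'s internal to the $m$th block.
\item These elements do not depend on the chosen reduced word, because $w^{T^A}$ and $w^A_r$ are $321$-avoiding and hence fully commutative.
\end{enumerate}
To see the second point, read the Garnir belt in row-reading order of positions. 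There $T^{\blam}$ has increasing entries, while $T^A$ has the increasing blocks $E_1,\dots,E_f,\mathrm{Tail},\mathrm{Head},E_{f+1},\dots,E_k$. Here $E_1<\cdots<E_k$ are the entry sets of the bricks of $G^A$, and $\mathrm{Head}<E_1$, $E_k<\mathrm{Tail}$. This one-line word contains no $321$ pattern, and $w^A_r$ simply swaps two adjacent increasing blocks.

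Consequently any two reduced words for these permutations differ only by the relations $\psi_r\psi_s=\psi_s\psi_r$ for $|r-s|>1$, which hold exactly in $R_\alpha$. So $\psi^{T^A}$ and the $\sigma^A_r$ are literally the translates of the corresponding component elements, and $g^Av$ is the image of $z^{\blam^{(1)}}\otimes\cdots\otimes g^{A}_{(m)}z^{\blam^{(m)}}\otimes\cdots\otimes z^{\blam^{(\ell)}}=0$. With this, Step~1(d) and hence the whole argument go through.
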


\section{Algebraic Subdivision}\label{sec:algebraic-subdivision}
In this section, we introduce the subdivision maps for KLR algebras following \cite{maksimau-subdivision}, with minor notational changes adapted to our combinatorial framework.

Since we will define subdivision maps on several kinds of objects---including quivers, positive roots, words, dominant weights, and (cyclotomic) KLR algebras---we will, by abuse of notation, use the same symbol $\Phi$ for all of them. When we apply these maps later, it will be clear from the objects involved which version of $\Phi$ is intended.
We will, however, verify along the way that these definitions are compatible with one another.
\subsection{Subdivision on quiver}\label{subsec:subdivision-quiver}
Assume that the quiver $\Gamma=\Aone[e-1]$ has vertex set $I=\Z/e\Z$, which we identify with $\{0,1,\ldots,e-1\}$ throughout the paper.
\begin{center}
\begin{tikzpicture}[
    scale=0.9,
    every node/.style={transform shape},
    dynnode/.style={circle, draw, thick, minimum size=3mm, inner sep=0pt, fill=white},
    label_style/.style={yshift=-2mm, font=\small}
]

    \node[dynnode, label={[yshift=1mm]above:$0$}] (n0) at (5, 1.5) {};


    \node[dynnode, label={[label_style]below:$1$}] (n1) at (0,0) {};
    \node[dynnode, label={[label_style]below:$2$}] (n2) at (1,0) {};
    \node[dynnode, label={[label_style]below:$3$}] (n3) at (2,0) {};

    \node (dots1) at (3,0) {$\cdots$};

    \node[dynnode, label={[label_style]below:$k{-}1$}] (nk_1) at (4,0) {};
    \node[dynnode, label={[label_style]below:$k$}] (nk) at (5,0) {};
    \node[dynnode, label={[label_style]below:$k{+}1$}] (nk_plus1) at (6,0) {};

    \node (dots2) at (7,0) {$\cdots$};

    \node[dynnode, label={[label_style]below:$e{-}3$}] (ne_3) at (8,0) {};
    \node[dynnode, label={[label_style]below:$e{-}2$}] (ne_2) at (9,0) {};
    \node[dynnode, label={[label_style]below:$e{-}1$}] (ne_1) at (10,0) {};

    
    \draw[thick] (n1) -- (n2) -- (n3) -- (dots1) -- (nk_1) -- (nk) -- (nk_plus1) -- (dots2) -- (ne_3) -- (ne_2) -- (ne_1);

    \draw[thick] (n0) -- (n1);
    \draw[thick] (n0) -- (ne_1);

    \node at (-1.5, 0.5) {$\Gamma$:};

\end{tikzpicture}
\end{center}
Fix an integer $k\in I$ and identify $e$ with $0$. The subdivision at the edge $k\to k+1$ replaces this single edge by the two-step path
$k\to \overline{k}\to k+1$, while leaving all other edges unchanged. We denote the resulting subdivision map by $\Phi_k$, and write $\overline{\Gamma}:=\Phi_k(\Gamma)$ for the resulting quiver:
\begin{center}
\begin{tikzpicture}[
    scale=0.9, 
    every node/.style={transform shape},
    dynnode/.style={circle, draw, thick, minimum size=3mm, inner sep=0pt, fill=white},
    label_style/.style={yshift=-2mm, font=\small}
]

    \node[dynnode, label={[yshift=1mm]above:$0$}] (n0) at (5.5, 1.5) {};


    \node[dynnode, label={[label_style]below:$1$}] (n1) at (0,0) {};
    \node[dynnode, label={[label_style]below:$2$}] (n2) at (1,0) {};
    \node[dynnode, label={[label_style]below:$3$}] (n3) at (2,0) {};

    \node (dots1) at (3,0) {$\cdots$};

    \node[dynnode, label={[label_style]below:$k{-}1$}] (nk_1) at (4,0) {};
    \node[dynnode, label={[label_style]below:$k$}] (nk) at (5,0) {};
    \node[dynnode, label={[label_style]below:$\overline{k}$}] (nk_bar) at (6,0) {};
    \node[dynnode, label={[label_style]below:$k{+}1$}] (nk_plus1) at (7,0) {};

    \node (dots2) at (8,0) {$\cdots$};

    \node[dynnode, label={[label_style]below:$e{-}3$}] (ne_3) at (9,0) {};
    \node[dynnode, label={[label_style]below:$e{-}2$}] (ne_2) at (10,0) {};
    \node[dynnode, label={[label_style]below:$e{-}1$}] (ne_1) at (11,0) {};

    
    \draw[thick] (n1) -- (n2) -- (n3) -- (dots1) -- (nk_1) -- (nk) -- (nk_bar) -- (nk_plus1) -- (dots2) -- (ne_3) -- (ne_2) -- (ne_1);

    \draw[thick] (n0) -- (n1);
    \draw[thick] (n0) -- (ne_1);

    \node at (-1.5, 0.5) {$\overline{\Gamma}$:};

\end{tikzpicture}
\end{center}
As we will apply combinatorial arguments later, it is convenient to label the new quiver
$\overline{\Gamma}$, which is a cyclic quiver with $e+1$ vertices, using the standard
convention for type $\Aone[e]$. We therefore relabel the vertices as follows:
\begin{equation}\label{eq:relabel-rules}
\begin{aligned}
    &\bullet \text{ keep the label } i \text{ unchanged for } 0 \le i \le k; \\
    &\bullet \text{ replace the label } i \text{ by } i+1 \text{ for } k+1 \le i \le e-1; \\
    &\bullet \text{ replace the label } \overline{k} \text{ by } k+1.
\end{aligned}
\end{equation}
With this convention, the quiver $\overline{\Gamma}$ becomes:

\begin{center}
\begin{tikzpicture}[
    scale=0.9,
    every node/.style={transform shape},
    dynnode/.style={circle, draw, thick, minimum size=3mm, inner sep=0pt, fill=white},
    label_style/.style={yshift=-2mm, font=\small}
]

    \node[dynnode, label={[yshift=1mm]above:$0$}] (n0) at (5.5, 1.5) {};


    \node[dynnode, label={[label_style]below:$1$}] (n1) at (0,0) {};
    \node[dynnode, label={[label_style]below:$2$}] (n2) at (1,0) {};
    \node[dynnode, label={[label_style]below:$3$}] (n3) at (2,0) {};

    \node (dots1) at (3,0) {$\cdots$};

    \node[dynnode, label={[label_style]below:$k{-}1$}] (nk_1) at (4,0) {};
    \node[dynnode, label={[label_style]below:$k$}] (nk) at (5,0) {};
    \node[dynnode, label={[label_style]below:$k{+}1$}] (nk_plus1) at (6,0) {};
    \node[dynnode, label={[label_style]below:$k{+}2$}] (nk_plus2) at (7,0) {};

    \node (dots2) at (8,0) {$\cdots$};

    \node[dynnode, label={[label_style]below:$e{-}2$}] (ne_2) at (9,0) {};
    \node[dynnode, label={[label_style]below:$e{-}1$}] (ne_1) at (10,0) {};
    \node[dynnode, label={[label_style]below:$e$}] (ne) at (11,0) {};

    
    \draw[thick] (n1) -- (n2) -- (n3) -- (dots1) -- (nk_1) -- (nk) -- (nk_plus1) -- (nk_plus2) -- (dots2) -- (ne_2) -- (ne_1) -- (ne);

    \draw[thick] (n0) -- (n1);
    \draw[thick] (n0) -- (ne);

    \node at (-1.5, 0.5) {$\overline{\Gamma}$:};

\end{tikzpicture}
\end{center}
By abuse of notation, we will continue to write this relabelled quiver as $\overline{\Gamma}$. The vertex set of $\oGamma$ is $\widebar{I}=\Z/(e+1)Z$ which is identified with $\{0,1,\cdots,e\}$.



\begin{Remark}
In \cite{maksimau-subdivision}, there is no relabelling step: the new vertex is kept as a distinguished symbol
$\overline{k}$.  Our relabelling identifies this new vertex with $k+1\in\overline I$ and shifts the old labels
$k+1,\dots,e-1$ up by one, so the two conventions differ only by this canonical relabelling.
\end{Remark}
\subsection{Subdivision on positive roots}\label{subsec:subdivision-root}
Let $Q^+=Q^+_I$ be the positive root lattice of type $\Aone[e-1]$, with simple roots
$\alpha_0,\dots,\alpha_{e-1}$. For $\beta\in Q^+$ write
\[
      \beta=\sum_{i=0}^{e-1} x_i\alpha_i \qquad\text{with } x_i\in\mathbb{Z}_{\ge 0}.
\]
For $k\in I=\{0,1,\cdots,e-1\}$, the subdivision map $\Phi_k$ on $Q^+$ at the edge $k\to k+1$ is defined by
\begin{equation}\label{eq:Phi-k-root}
      \Phi_k(\beta)
      \;=\;
      \sum_{i=0}^{k-1} x_i\alpha_i
      \;+\;
      x_k(\alpha_k+\alpha_{k+1})
      \;+\;
      \sum_{i=k+1}^{e-1} x_i\alpha_{i+1}.
\end{equation}
This is compatible with the subdivision map on quivers described in \autoref{subsec:subdivision-quiver}. Indeed, for $0\le i\le k-1$ we have $\Phi_k(\alpha_i)=\alpha_i$, for $k+1\le i\le e-1$ we have $\Phi_k(\alpha_i)=\alpha_{i+1}$, and $\Phi_k(\alpha_k)=\alpha_k+\alpha_{k+1}$.

The map $\Phi_k$ yields a bijection
\begin{equation}\label{eq:subdivision-bijection-roots}
    \Phi_k: Q^{+}_{I}\to \Bigl\{\beta=\sum_{i\in \widebar{I}}x_i\alpha_i\in Q^+_{\widebar{I}} \,\Bigm|\, x_k=x_{k+1}\Bigr\}, 
    \qquad \alpha\mapsto\Phi_k(\alpha).
\end{equation}
If there is no ambiguity in $k$, we write
$\overline{\beta}=\Phi(\beta)=\Phi_k(\beta)$.
\subsection{Subdivision on words}\label{subsec:subdivision-words}
Fix quiver type $\Aone[e-1]$ and let $I$ be its vertex set. Let $\bi\in I^\alpha$ with $n=\operatorname{ht}(\alpha)$, and fix $k\in I$. Let $\bar{I}$ be the vertex set of the quiver $\Aone[e]$, which is the image of $\Aone[e-1]$ under the subdivision map constructed in \autoref{subsec:subdivision-quiver}. Define $\Phi_k(\bi)$ to be the sequence with entries in $\bar{I}$ obtained from $\bi=(\bi_1,\ldots,\bi_n)$ by:
\begin{itemize}
    \item keeping $\bi_j$ unchanged if $0\leq \bi_j<k$;
    \item replacing $\bi_j$ by $\bi_j+1$ if $k<\bi_j\le e-1$;
    \item replacing each occurrence of $\bi_j=k$ by two consecutive entries $k,k+1$.
\end{itemize}
\begin{Example}\label{eg:subdivision-words}
    Fix type $\Aone[2]$ and $k=1$, let $\bi=(1200122012010)$ from \autoref{eg:multi-tableau-residue-sequence}, then $\Phi_k(\bi)=(12300123301230120)$.
\end{Example}

This definition is compatible with the subdivision map on positive roots in the following sense. Recall, for each word $\bi$, we can define the associated positive root $\alpha(\bi)$ by \autoref{eq:associated-positive-root}, then we have 
\[
    \Phi_k\big(\alpha(\bi)\big)=\alpha\big(\Phi_k(\bi)\big)
\]
More explicitly, write $\alpha(\bi)=\sum_{j=1}^n \alpha_{\bi_j}\in Q^+$ and let $m$ be the number of entries equal to $k$ in $\bi$. Then
\begin{equation}\label{eq:subdivision-residue-sequence-root}
    \sum_{j=1}^{n+m} \alpha_{(\Phi_k(\bi))_j} \;=\; \Phi_k\big(\alpha(\bi)\big).
\end{equation}

Similarly to \autoref{eq:subdivision-bijection-roots}, $\Phi_k$ gives the following bijection
\begin{equation}\label{eq:subdivision-bijection-words}
    \Phi_k: I^{\alpha}\to \{\bj\in I^{\Phi_k(\alpha)}\mid \text{$\bj_t=k$ if and only if $t<N$ and $\bj_{t+1}=k+1$}\},\quad \text{where }N=\height\big(\Phi_k(\alpha)\big)
\end{equation}

To trace the position of a letter in a word after applying subdivision, we introduce the following \emph{position-tracing function}.
\begin{Definition}\label{def:position-tracing-function}
    Fix $k\in I$ and let $\bi=(i_1,\dots,i_n)\in I_\alpha$. Write $m:=\#\{t\mid i_t=k\}$.
    The position-tracing function associated to $(k,\bi)$ is the strictly increasing map
    \[
        \phi_{\bi}:\{1,\dots,n\}\longrightarrow \{1,\dots,n+m\}
    \]
    defined by
    \[
        \phi_{\bi}(t):=t+\#\{\,1\le j<t\mid i_j=k\,\}.
    \]
    Equivalently, $\phi_{\bi}(1)=1$ and $\phi_{\bi}(t+1)=\phi_{\bi}(t)+1+\delta_{i_t,k}\qquad(1\le t<n)$.
\end{Definition}

The following result is immediate from the definition.

\begin{Lemma}
    With the same notations as in \autoref{def:position-tracing-function}, 
    \[
    \big(\Phi_k(\bi)\big)_{\phi_{\bi}(t)}=
    \begin{cases}
        \bi_t & \text{if }0\leq \bi_t< k,\\
        k & \text{if }\bi_t= k,\\
        \bi_t+1 & \text{if }k+1\leq \bi_t\leq e-1.
    \end{cases}
    \]
    Moreover, if $\big(\Phi_k(\bi)\big)_{\phi_{\bi}(t)}=k$, then $\big(\Phi_k(\bi)\big)_{\phi_{\bi}(t)+1}=k+1$.\hfill $\square$
\end{Lemma}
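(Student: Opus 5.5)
The plan is to argue directly from \autoref{def:position-tracing-function}, by induction on $t$, and to treat the three residue ranges separately. First we show that the block of $\Phi_k(\bi)$ contributed by the letter $\bi_t$ starts exactly at position $\phi_{\bi}(t)$. By the definition in \autoref{subsec:subdivision-words}, $\Phi_k(\bi)$ is the concatenation of blocks $b_1b_2\cdots b_n$. Here $b_j$ has length $2$ with entries $(k,k+1)$ if $\bi_j=k$. Otherwise $b_j$ has length $1$, with entry $\bi_j$ if $\bi_j<k$ and $\bi_j+1$ if $\bi_j>k$.

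The first position of $b_t$ is therefore
\[
1+\sum_{j<t}|b_j| \;=\; 1+(t-1)+\#\{\,1\le j<t\mid \bi_j=k\,\},
\]
which equals $\phi_{\bi}(t)$. Equivalently, this follows by induction from $\phi_{\bi}(1)=1$ and the recursion $\phi_{\bi}(t+1)=\phi_{\bi}(t)+|b_t|=\phi_{\bi}(t)+1+\delta_{\bi_t,k}$.

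Next we read off the first entry of $b_t$, which is $\big(\Phi_k(\bi)\big)_{\phi_{\bi}(t)}$. It equals $\bi_t$ when $0\le \bi_t<k$, equals $k$ when $\bi_t=k$, and equals $\bi_t+1$ when $k<\bi_t\le e-1$. This gives the displayed formula.

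For the ``moreover'' part, suppose $\big(\Phi_k(\bi)\big)_{\phi_{\bi}(t)}=k$. We must show that this forces $\bi_t=k$; then $b_t=(k,k+1)$, and the entry at $\phi_{\bi}(t)+1$ is $k+1$. Consider the three cases. If $\bi_t<k$, the entry is $\bi_t\neq k$. If $\bi_t>k$, the entry is $\bi_t+1\ge k+2$, which is not $k$ in $\widebar I=\{0,\dots,e\}$ because $k+2\le e$ whenever $\bi_t\le e-1$. So the value $k$ can only arise from $\bi_t=k$.

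The only point needing care is this last exclusion. It relies on the relabelling \autoref{eq:relabel-rules}, which makes the map $\{0,\dots,e-1\}\setminus\{k\}\to\widebar I\setminus\{k,k+1\}$ injective and keeps it away from $k$. Apart from that, the argument is routine bookkeeping.
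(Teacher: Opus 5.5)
Your proof is correct and is essentially the paper's argument: the paper records this lemma as immediate from \autoref{def:position-tracing-function} and gives no further proof, and your block decomposition $\Phi_k(\bi)=b_1\cdots b_n$, with $b_t$ starting at position $\phi_{\bi}(t)$, is exactly that unpacking. Your justification of the ``moreover'' clause is also right: the relabelling never sends a letter other than $k$ to $k$, so a $k$ at position $\phi_{\bi}(t)$ forces $b_t=(k,k+1)$.
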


\subsection{Subdivision on dominant weights}\label{subsec:subdivision-weight}
Let $P^+$ be the (integral) dominant weight lattice of type $\Aone[e-1]$, and let $I$ be the vertex set of the corresponding $\Aone[e-1]$ quiver. We define the subdivision map on fundamental weights by
\begin{equation}\label{eq:def-subdivision-weight}
    \Phi \Lambda_i =
        \begin{cases}
            \Lambda_i, & \text{if } 0 \leq i \leq k, \\[1mm]
            \Lambda_{i+1}, & \text{if } i > k.
        \end{cases}
\end{equation}
For an arbitrary (integral dominant) weight $\Lambda=\sum_{i=0}^{e-1} x_i \Lambda_i$ with $x_i\geq 0$ for all $i$, we extend this map linearly by
\[
\Phi(\Lambda):=\sum_{i=0}^{e-1} x_i \Phi(\Lambda_i).
\]
In type $\Aone[e-1]$ (or type $\Ainfty$), this map preserves the \emph{level} of the dominant weight. The compatibility of this definition with the ones in \autoref{subsec:subdivision-root} is shown in the following results:
\begin{Lemma}\label{lm:cyclotomic-multiplicity}
    Fix $\alpha \in Q^+$, $\Lambda \in P^+$, and $k\in I$. Let $\bi \in I^\alpha$, and let $\Phi = \Phi_k$ be the subdivision map on positive roots or dominant weights. Then we have:
    \[
        \big( \Phi(\Lambda)\mid \alpha_{\Phi(\bi)_1} \big) = \big( \Lambda\mid \alpha_{\bi_1} \big).
    \]
\end{Lemma}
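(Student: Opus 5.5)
The plan is a direct computation: reduce both sides to Kronecker deltas by expanding $\Lambda$ in fundamental weights, then compare the first letter of $\Phi(\bi)$ with $\bi_1$. Write $\Lambda=\sum_{j\in I}x_j\Lambda_j$ with $x_j\in\Z_{\ge0}$. Since $(\Lambda_j\mid\alpha_i)=\delta_{ij}$, the right-hand side is
\[
\big(\Lambda\mid\alpha_{\bi_1}\big)=x_{\bi_1}.
\]
By linearity of the form and the definition \autoref{eq:def-subdivision-weight}, we have $\Phi(\Lambda)=\sum_{j\le k}x_j\Lambda_j+\sum_{j>k}x_j\Lambda_{j+1}$. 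In $\overline{\Gamma}$ this means the coefficient of $\Lambda_{k+1}$ in $\Phi(\Lambda)$ is $0$, the coefficient of $\Lambda_j$ is $x_j$ for $0\le j\le k$, and the coefficient of $\Lambda_{j+1}$ is $x_j$ for $k<j\le e-1$. The left-hand side is therefore the coefficient of $\Lambda_{\Phi(\bi)_1}$ in $\Phi(\Lambda)$.

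Next we identify the first letter $\Phi(\bi)_1$, using the definition of $\Phi_k$ on words (equivalently, the lemma following \autoref{def:position-tracing-function} with $t=1$, since $\phi_{\bi}(1)=1$). There are three cases.
\begin{itemize}
\item If $0\le\bi_1<k$, then $\Phi(\bi)_1=\bi_1$, so the left-hand side is $x_{\bi_1}$.
\item If $\bi_1=k$, the letter $k$ is replaced by the pair $k,k+1$, so $\Phi(\bi)_1=k$ and the left-hand side is $x_k$.
\item If $k<\bi_1\le e-1$, then $\Phi(\bi)_1=\bi_1+1$, so the left-hand side is the coefficient of $\Lambda_{\bi_1+1}$, which is $x_{\bi_1}$.
\end{itemize}
In every case the left-hand side equals $x_{\bi_1}$, which proves the claim.

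The only point needing care is the middle case. It works because the new vertex $k+1\in\overline{I}$ never occurs as the first letter of $\Phi(\bi)$, and $\Phi$ never sends any fundamental weight to $\Lambda_{k+1}$. One should also check the edge case $k=e-1$, where the new vertex is $e$ and the identification $e\equiv 0$ in $I$ must not be confused with $0\in\overline{I}$. Here $\Phi(\Lambda_{e-1})=\Lambda_{e-1}$, and $\Lambda_0$ is fixed, so the same case analysis applies verbatim.
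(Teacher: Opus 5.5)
Your proposal is correct and follows essentially the same route as the paper: reduce by linearity to fundamental weights and check cases according to whether $\bi_1\le k$ or $\bi_1>k$, using that the first letter of $\Phi_k(\bi)$ is $\bi_1$ or $\bi_1+1$. The only difference is bookkeeping. The paper fixes a single $\Lambda_j$ and checks four $(j,\bi_1)$ cases, while you keep $\Lambda=\sum_j x_j\Lambda_j$ and split only on $\bi_1$; the computation is the same.
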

\begin{proof}
    It suffices to verify the equality for any fundamental weight $\Lambda_j$; the general case follows by linearity. Recall that $\left( \Lambda_j \mid \alpha_{\bi_1} \right) = \delta_{j, \bi_1}$. Set $\oLambda := \Phi(\Lambda)$ and $\overline{\bi} := \Phi(\bi)$. Then
    \[
        \left( \oLambda \mid \alpha_{\overline{\bi}_1} \right) =
        \begin{cases}
            \left( \Lambda_j \mid \alpha_{\bi_1} \right)= \delta_{j, \bi_1}& \text{if } j \leq k,\, \bi_1 \leq k,\\
            \left( \Lambda_j \mid \alpha_{\bi_1+1} \right) =0& \text{if } j \leq k,\, \bi_1 > k,\\
            \left( \Lambda_{j+1} \mid \alpha_{\bi_1+1} \right)=\delta_{j+1, \bi_1+1} & \text{if } j > k,\, \bi_1 > k,\\
            \left( \Lambda_{j+1} \mid \alpha_{\bi_1} \right)=0 & \text{if } j > k,\, \bi_1 \leq k.
        \end{cases}\quad
         = \delta_{j, \bi_1}
    \]
    Thus, the desired equality holds in all cases.
\end{proof}
\begin{Lemma}\label{lm:pairing-invariance}
    Fix $\alpha \in Q^+$, $\Lambda \in P^+$, and $k\in I$. Let $\Phi=\Phi_k$ be the subdivision map on positive roots or dominant weights. Then the pairing is preserved:
    \[
        (\Phi(\Lambda) \mid \Phi(\alpha)) = (\Lambda \mid \alpha).
    \]
\end{Lemma}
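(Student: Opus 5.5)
Since both sides of the claimed identity are additive in each argument, I would reduce to the case $\Lambda=\Lambda_j$ a fundamental weight and $\alpha=\alpha_i$ a simple root. Write $\alpha=\sum_i x_i\alpha_i$ and $\Lambda=\sum_j a_j\Lambda_j$. The map $\Phi$ on $Q^+$ from \autoref{eq:Phi-k-root} is additive, and so is $\Phi$ on $P^+$ from \autoref{eq:def-subdivision-weight}. The form $(\cdot\mid\cdot)$ is bilinear. Hence
\[
(\Phi(\Lambda)\mid\Phi(\alpha))=\sum_{i,j}a_jx_i\,(\Phi(\Lambda_j)\mid\Phi(\alpha_i)),
\]
and it suffices to show that $(\Phi(\Lambda_j)\mid\Phi(\alpha_i))=\delta_{ij}$ for all $i,j\in I$.

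I would then check this by the same case split as in the proof of \autoref{lm:cyclotomic-multiplicity}, according to whether $j\le k$ or $j>k$, and whether $i<k$, $i=k$, or $i>k$. Only the case $i=k$ is new compared with that lemma, because there $\Phi(\alpha_k)=\alpha_k+\alpha_{k+1}$ in $Q^+_{\widebar I}$.
\begin{itemize}
\item If $i\neq k$, then $\Phi(\alpha_i)$ is the single simple root $\alpha_i$ (for $i<k$) or $\alpha_{i+1}$ (for $i>k$). Since $\Phi(\Lambda_j)$ is $\Lambda_j$ or $\Lambda_{j+1}$, the relabelling \autoref{eq:relabel-rules} is injective on $I\setminus\{k\}$ and on the fundamental weights, so the pairing is $\delta_{ij}$. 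Concretely, the index of $\Phi(\Lambda_j)$ equals the index of $\Phi(\alpha_i)$ if and only if $i=j$. The one subtle point is $j=k$ against $i=k+1$: here $\Phi(\Lambda_k)=\Lambda_k$ and $\Phi(\alpha_{k+1})=\alpha_{k+2}$, giving $0$, as required.
\item If $i=k$, then $(\Phi(\Lambda_j)\mid\alpha_k+\alpha_{k+1})=(\Phi(\Lambda_j)\mid\alpha_k)+(\Phi(\Lambda_j)\mid\alpha_{k+1})$. For $j\le k$ this is $\delta_{jk}+\delta_{j,k+1}=\delta_{jk}$, because $j\le k$ rules out $j=k+1$. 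For $j>k$ this is $\delta_{j+1,k}+\delta_{j+1,k+1}=0=\delta_{jk}$, because $j+1\ge k+2$.
\end{itemize}
One should remember that indices are taken in $\Z/(e+1)\Z$. The only possible wrap-around is $k=e-1$, where $k+1=e$ is a genuine new vertex and $j+1\le e$ for $j\le e-1$, so no collisions occur.

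There is no real obstacle here. The step needing care is exactly the $i=k$ case, where the image root has two summands. One must confirm that the new vertex $k+1\in\widebar I$ is never the index of any $\Phi(\Lambda_j)$, which holds because the image of $\Phi$ on weights misses $\Lambda_{k+1}$. Once this is noted, the lemma follows by bilinearity.
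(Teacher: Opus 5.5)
Your proof is correct and follows the same approach as the paper. Both reduce by bilinearity to showing $(\Phi(\Lambda_j)\mid\Phi(\alpha_i))=\delta_{ij}$ and then split into cases, with $i=k$ the only case needing care because $\Phi(\alpha_k)=\alpha_k+\alpha_{k+1}$. The only cosmetic difference is that you merge the paper's cases $i<k$ and $i>k$ into one, by observing that the index relabelling is injective.
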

\begin{proof}
    By linearity, it suffices to verify the equality for fundamental weights $\Lambda_j$ and simple roots $\alpha_i$, i.e., to show $(\Phi(\Lambda_j) \mid \Phi(\alpha_i)) = \delta_{j,i}$. 
    We verify the cases based on the index $i$:
    \begin{enumerate}
        \item Case $i < k$: Then $\Phi(\alpha_i) = \alpha_i$. If $j \leq k$, then $\Phi(\Lambda_j) = \Lambda_j$ and the pairing is $\delta_{j,i}$. If $j > k$, then $\Phi(\Lambda_j) = \Lambda_{j{+}1}$, and $(\Lambda_{j{+}1} \mid \alpha_i) = 0$ (since $j{+}1 > i$), matching $\delta_{j,i}=0$.

        \item Case $i = k$: Then $\Phi(\alpha_k) = \alpha_k + \alpha_{k{+}1}$.
        \begin{itemize}
            \item If $j < k$: $(\Lambda_j \mid \alpha_k + \alpha_{k{+}1}) = 0$.
            \item If $j = k$: $(\Lambda_k \mid \alpha_k + \alpha_{k{+}1}) = 1 + 0 = 1$.
            \item If $j > k$: $(\Lambda_{j{+}1} \mid \alpha_k + \alpha_{k{+}1}) = 0$.
        \end{itemize}
        In all subcases, the result matches $\delta_{j,k}$.

        \item Case $i > k$: Then $\Phi(\alpha_i) = \alpha_{i{+}1}$. If $j \leq k$, $(\Lambda_j \mid \alpha_{i{+}1}) = 0$. If $j > k$, $(\Lambda_{j{+}1} \mid \alpha_{i{+}1}) = \delta_{j{+}1, i{+}1} = \delta_{j,i}$.
    \end{enumerate}
    Thus, the equality holds for all basis elements.
\end{proof}
If there is no ambiguity in $k$, we write
$\overline{\Lambda}=\Phi(\Lambda)=\Phi_k(\Lambda)$
without further explanation.
\subsection{Subdivision on KLR algebras}\label{subsec:subdivision-KLR}
Following \cite{maksimau-subdivision}, one may define the subdivision map on KLR algebras. In this section, we give a diagrammatic definition of the map (see \cite[Remark 2.13]{maksimau-subdivision} and \cite[Section 4]{mathastubbenhauer-klrw-ac}), which we think is more intuitive and clearer. For the algebraic description, see \cite[Section 2]{maksimau-subdivision}.

Fix a quiver $\Gamma$ of type $\Aone[e-1]$ with vertex set $I$, and let $\alpha\in Q^+$ have height $n$. Recall that the KLR algebra $R_\alpha$ is generated by elements subject to certain relations; see \autoref{def:klr-algebras}. Concretely, it is generated by
$e(\bi)$, $\psi_j e(\bi)$, and $y_i e(\bi)$, where $1\le i\le n$, $1\le j\le n-1$, and $\bi\in I^\alpha$.
These generators admit a diagrammatic interpretation as in \cite{khovanovlauda-klr-1}:
\begin{align*}
    e(\bi): &\quad 
    \begin{tikzpicture}[baseline={(0,1.25)}, xscale=0.8, yscale=0.8, thick]
        \def\h{2.5}
        \foreach \x/\lab in {1/1, 2/2, 4/j, 5/{j+1}, 7/{n-1}, 8/n} {
            \draw (\x,0) -- (\x,\h);
            \node[below] at (\x,0) {$\bi_{\lab}$};
        }
        \node at (3, \h/2) {$\cdots$};
        \node at (6, \h/2) {$\cdots$};
    \end{tikzpicture} 
    \\[1em] 
    y_je(\bi): &\quad 
    \begin{tikzpicture}[baseline={(0,1.25)}, xscale=0.8, yscale=0.8, thick]
        \def\h{2.5}
        \foreach \x/\lab in {1/1, 2/2, 4/j, 5/{j+1}, 7/{n-1}, 8/n} {
            \draw (\x,0) -- (\x,\h);
            \node[below] at (\x,0) {$\bi_{\lab}$};
        }
        \fill (4,\h/2) circle (3pt);
        \node at (3, \h/2) {$\cdots$};
        \node at (6, \h/2) {$\cdots$};
    \end{tikzpicture} 
    \\[1em]
    \psi_je(\bi): &\quad 
    \begin{tikzpicture}[baseline={(0,1.25)}, xscale=0.8, yscale=0.8, thick]
        \def\h{2.5}
        \foreach \x/\lab in {1/1, 2/2, 7/{n-1}, 8/n} {
            \draw (\x,0) -- (\x,\h);
            \node[below] at (\x,0) {$\bi_{\lab}$};
        }
        \draw (4,0) -- (5,\h);
        \node[below] at (4,0) {$\bi_j$};
        \draw (5,0) -- (4,\h);
        \node[below] at (5,0) {$\bi_{j+1}$};
        \node at (3, \h/2) {$\cdots$};
        \node at (6, \h/2) {$\cdots$};
    \end{tikzpicture}
\end{align*}

In this string-diagrammatic presentation, a string labeled by $i\in I$ is called an $i$-string. The multiplication is given by vertical concatenation of diagrams.

\begin{Example}\label{eg:stringdiagram}
The following string diagram represents the element $\psi_1\psi_4\psi_3\psi_4 y_2e(01212)$: 

\begin{center}
\begin{tikzpicture}[scale=1.2, thick]
    \draw (0,0) -- (1.5,2);
    \node[below] at (0,0) {0};
    \draw (1.5,0) -- (0,2) node[pos=0.25, circle, fill, inner sep=2.5pt] {};
    \node[below] at (1.5,0) {1};
    \begin{scope}[shift={(3,0)}]
        \draw (0,0) -- (2,2);
        \node[below] at (0,0) {2};
        \draw (1,0) .. controls (2.2, 0.8) and (2.2, 1.2) .. (1,2); 
        \node[below] at (1,0) {1};
        \draw (2,0) -- (0,2);
        \node[below] at (2,0) {2};
    \end{scope}
\end{tikzpicture}
\end{center}

\end{Example}

Let $\overline{\Gamma}=\Phi_k(\Gamma)$ and $\overline{\alpha}=\Phi_k(\alpha)$. 
We construct a subdivision map $\Phi=\Phi_k$ from the KLR algebra $R_\alpha=R_\alpha(\Gamma)$ over $\Gamma=\Aone[e-1]$ to the KLR algebra $R_{\overline{\alpha}}=R_{\overline{\alpha}}(\overline{\Gamma})$ over $\overline{\Gamma}=\Aone[e]$. This map will be compatible with the subdivision maps on the quiver, positive roots, and dominant weights defined above.

It follows from \autoref{thm:base-of-klr} that the elements $\psi_w y_1^{a_1}\cdots y_n^{a_n} e(\bi)$, where $a_i\ge 0$, $w\in\Sym_n$, and $\bi\in I^\alpha$, form a basis of $R_\alpha$. We first define a map $\Theta_k$ on these basis elements:

\begin{enumerate}
      \item The element $\Theta_k\bigl(e(\bi)\bigr)$ is obtained by adding a new string labeled $\overline{k}$ immediately to the right of each $k$-string.
      \item  The newly added $\overline{k}$-string carries no dots even if the $k$-string carries dots. 
      \item For a string diagram of the form $\psi_w\,e(\bi)$, first add the new strings as in (1), and then extend each new string so that it follows the shape of the corresponding $k$-string.
      \item Relabel the strings as follows: replace $i$ by $i+1$ for $k+1\le i\le e-1$, replace $\overline{k}$ by $k+1$, and leave all other labels unchanged.
\end{enumerate}

\begin{Example}\label{eg:subdivision-stringdiagram}
The image of the string diagram from \autoref{eg:stringdiagram} under subdivision $\Phi_1$ is the following string diagram:
\begin{center}
    \begin{tikzpicture}[scale=1.2, thick, line cap=round, line join=round]
        \draw (0,0) -- (1.5,2);
        \node[below, scale=0.8] at (0,0) {0};
        \draw (1.5,0) -- (0,2)
          node[pos=0.25, circle, fill, inner sep=2.5pt] {};
        \node[below, scale=0.8] at (1.5,0) {1};
        \draw (1.75,0) -- (0.25,2);
        \node[below, scale=0.8] at (1.75,0) {2};
        \begin{scope}[shift={(3.2,0)}]
            \draw (0,0) -- (2,2);
            \node[below, scale=0.8] at (0,0) {3};
    
            \draw (2,0) -- (0,2);
            \node[below, scale=0.8] at (2,0) {3};
            \def\bulge{1.55} 
            \def\yA{0.65}
            \def\yB{1.35}
            
            \foreach \x/\lab in {1.00/1, 1.25/2} {
              \draw (\x,0) .. controls (\x+\bulge,\yA) and (\x+\bulge,\yB) .. (\x,2);
              \node[below, scale=0.8] at (\x,0) {\lab};
            }
        \end{scope}
    \end{tikzpicture}
\end{center}






    

\end{Example}
We caution the reader that the map $\Theta$ is defined only on basis elements and does not extend to a $\bk$-algebra homomorphism. Indeed, $\Theta$ does not preserve the defining relations of KLR algebras. As described below, after quotienting by a suitable ideal and applying an idempotent truncation, one obtains a well-defined homomorphism, which is in fact an isomorphism.

To describe this isomorphism, we introduce some additional notation. The notation below is taken from \cite[Section 2]{maksimau-subdivision}. The reader is encouraged to consult that source for the more general definitions.

Recall that we subdivide at the edge $k\to k+1$. Let $d=\height(\oalpha)$. A sequence $\bi=(\bi_1,\ldots,\bi_d)\in\widebar{I}^{\oalpha}$ is called:
\begin{itemize}
    \item \emph{unordered} if there exists an index $r\in\{1,2,\ldots,d\}$ such that the number of occurrences of $k+1$ in $(\bi_1,\ldots,\bi_r)$ is strictly greater than the number of occurrences of $k$ in $(\bi_1,\ldots,\bi_r)$;
    \item \emph{ordered}\footnote{In \cite{maksimau-subdivision}, the terminology \emph{well-ordered} is used; we prefer \emph{ordered} here, since \emph{well-ordered} already has a standard meaning.} if, for every index $a$ with $\bi_a=k$, we have $a<d$ and $\bi_{a+1}=k+1$;
    \item \emph{almost-ordered} if there exist an ordered sequence $\bj\in\widebar{I}^{\oalpha}$ and an index $r\in\{1,2,\ldots,d-1\}$ such that $\bj_r=k$ and $\bi=\sigma_r(\bj)$.
\end{itemize}
We write $\wellorder$, $\unorder$, and $\almostorder$ for the subsets of ordered, unordered, and almost-ordered sequences in $\widebar{I}^{\oalpha}$. Every almost-ordered sequence is unordered, so $\almostorder\subseteq\unorder$. By the definition of subdivision on words, for any $\bi\in\widebar{I}^{\oalpha}$, the number of indices $j$ with $\bi_j=k$ is the same as the number with $\bi_j=k+1$. In particular, the definition of an ordered sequence can be restated as follows: for each admissible $a$, we have $\bi_a=k$ if and only if $a<d$ and $\bi_{a+1}=k+1$. In other words, all occurrences of $k$ and $k+1$ appear in adjacent pairs. In view of \autoref{eq:subdivision-bijection-words}, $\Phi_k$ gives a bijection between $I^\alpha$ and $\wellorder$.

We define the \emph{truncation idempotent} by
\begin{equation}\label{eq:truncation-idempotent}
\be=\sum_{\bi\in\wellorder} e(\bi)\in R_{\oalpha}(\oGamma).
\end{equation}

\begin{Definition}[{\cite[Definition 2.6]{maksimau-subdivision}}]\label{def:balanced-klr}
  The \textbf{balanced KLR algebra} is the algebra
  $$
    S_{\oalpha}(\oGamma)
      = \be R_{\oalpha}(\oGamma)\be \Big/ \sum_{\bj\in \unorder}\be R_{\oalpha}(\oGamma)e(\bj)R_{\oalpha}(\oGamma)\be.
  $$
\end{Definition}

For simplicity, we omit the quiver $\oGamma$ from the notation, and write $S_{\oalpha}$ for $S_{\oalpha}(\oGamma)$. Let
\[
  \badideal=\sum_{\bj\in \unorder}R_{\oalpha}e(\bj)R_{\oalpha}
\]
be the two-sided ideal of $R_{\oalpha}$, called the \emph{bad ideal}. Then
\begin{equation}\label{eq:balanced-klr-truncation}
  S_{\oalpha}
    = \be R_{\oalpha}\be/\be\badideal\be
    \cong \be(R_{\oalpha}/\badideal)\be.
\end{equation}

It is possible to reduce the number of idempotents appearing in the definition of $\be\badideal\be$.

\begin{Lemma}[{\cite[Lemma 3.7]{maksimau-subdivision}}]
  $\be\badideal\be=\sum\limits_{\bj\in \almostorder}\be R_{\oalpha}e(\bj)R_{\oalpha}\be$.
\end{Lemma}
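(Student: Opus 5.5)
The inclusion $\supseteq$ is immediate, since $\almostorder\subseteq\unorder$. For $\subseteq$, it suffices to show that for every unordered $\bj\in\unorder$ the element $e(\bj)$, viewed inside $R_{\oalpha}/\mathfrak{J}'$, is killed on both sides by $\be$. Here $\mathfrak{J}':=\sum_{\bj\in\almostorder}R_{\oalpha}e(\bj)R_{\oalpha}$. In other words, we want
\[
\be R_{\oalpha}e(\bj)R_{\oalpha}\be\subseteq \be\mathfrak{J}'\be .
\]
By \autoref{thm:base-of-klr}, any element of $\be R_{\oalpha}e(\bj)$ is a combination of terms $e(\bi)\psi_w y^{a}e(\bj)$ with $\bi$ ordered and $w\bj=\bi$. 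So it is enough to show that each $e(\bi)\psi_w e(\bj)$, with $\bi$ ordered and $\bj$ unordered, lies in $\mathfrak{J}'$; the right-hand side is handled symmetrically by applying the anti-involution.

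I would argue by induction on $\ell(w)$, with the fixed reduced expression $w=\sigma_{r_1}\cdots\sigma_{r_m}$. Reading from the top, the intermediate words are $\bi=\bi^{(0)},\bi^{(1)}=\sigma_{r_1}\bi,\dots,\bi^{(m)}=\bj$. The first is ordered and the last is not ordered, so there is a first index $t$ such that $\bi^{(t-1)}$ is ordered and $\bi^{(t)}=\sigma_{r_t}\bi^{(t-1)}$ is not.

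The key combinatorial observation is that a single transposition applied to an ordered word either keeps it ordered or breaks exactly one adjacent pair $(k,k+1)$. If the pair is split by swapping it with a neighbour, or by swapping $k+1$ with the letter after it, the result is either still unordered or differs from the almost-ordered form. Specifically, $\sigma_r$ applied at position $r$ with $\bj_r=k,\bj_{r+1}=k+1$ gives an almost-ordered word by definition. In the other cases (moving $k$ leftwards away from its $k+1$, or moving $k+1$ rightwards), the word is not ordered but may still be "balanced" rather than unordered.

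Hence I factor $e(\bi)\psi_w e(\bj)=\bigl(e(\bi)\psi_{r_1}\cdots\psi_{r_{t-1}}\bigr)\,e(\bi^{(t-1)})\psi_{r_t}\,(\cdots)$. There are two cases. When $\bi^{(t)}$ is almost-ordered, the factor $e(\bi^{(t)})$ lies in $\mathfrak{J}'$ and we are done. In the remaining cases, the letter separated from its partner can be commuted back using the relations \eqref{eq:psi_commute_far}, \eqref{eq:braid_relation} and \eqref{eq:psi_square}. The braid and square relations only produce error terms with shorter $\psi$-words or with polynomial factors, and the induction on length handles these. The rewriting should bring the crossing to the point where the strand $k$ crosses its own $k+1$, which is exactly the almost-ordered idempotent.

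The main obstacle is this last rewriting. One must check that when $k$ (respectively $k+1$) crosses a neighbouring strand $c$, the diagram can be isotoped so that the $k$–$(k+1)$ pair crosses $c$ together. The cases are $c\neq k,k+1$ (pair moves as a block, by braid moves with error terms only when labels are adjacent, i.e.\ $c=k-1$ or $k+2$), and $c\in\{k,k+1\}$ coming from another pair. In each case I would track the error term $Q_{\cdot,\cdot,\cdot}$ of \eqref{eq:braid_relation} and show that it factors through an idempotent that is again not ordered, with a shorter $\psi$-word, so that induction applies. If this bookkeeping becomes unwieldy, I would fall back on Maksimau's argument in \cite[Lemma 3.7]{maksimau-subdivision}, which proves the statement via exactly this reduction.
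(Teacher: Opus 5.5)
Your reduction is fine: $\supseteq$ is clear, and it suffices to show $e(\bi)\psi_w e(\bj)\in\mathfrak{J}'$ for $\bi\in\wellorder$, $\bj\in\unorder$ and $w\bj=\bi$. One side is enough, since then $\be R_{\oalpha}e(\bj)R_{\oalpha}\be\subseteq\mathfrak{J}'R_{\oalpha}\be$, so the anti-involution is not needed.

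\textbf{The gap.} It is the step you yourself flag as the main obstacle. After locating the first index $t$ where orderedness breaks, you only assert that the crossings ``should'' be rearranged until the separated strand meets its own partner. That mechanism is wrong in general: the strand separated at step $t$ need never cross its partner, and the almost-ordered idempotent may come from a different pair.

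For example, take $c\notin\{k,k+1\}$, $\bi=(k,k+1,c,k,k+1)$, $\bj=(k,c,k+1,k+1,k)$ (unordered, not almost-ordered) and $w=\sigma_2\sigma_4$. With the reduced expression $\sigma_2\sigma_4$, the words read from the top are $\bi$, $(k,c,k+1,k,k+1)$, $\bj$, and none of them is almost-ordered. The first break moves the first $(k+1)$-strand past $c$, and that strand never meets its $k$-partner. The element lies in $\mathfrak{J}'$ only because $\psi_2\psi_4=\psi_4\psi_2$ and the other order passes through the almost-ordered word $(k,k+1,c,k+1,k)$, which comes from the second pair. So a local ``commute the separated letter back'' procedure starting at $t$ has no reason to end at an almost-ordered word. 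Tracking the $Q_{\cdot,\cdot,\cdot}$ error terms does not address this.

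\textbf{The missing idea.} A global counting argument finds the right crossing directly.
\begin{itemize}
\item Since $\bi$ is ordered, its $k$'s and $(k+1)$'s come in adjacent pairs at positions $(p,p+1)$.
\item Suppose no such pair of strands crossed, i.e.\ $w^{-1}(p)<w^{-1}(p+1)$ for every pair. Then at the bottom each $k+1$ of $\bj$ would be matched injectively with a $k$ strictly to its left. So every prefix of $\bj$ would contain at least as many $k$'s as $(k+1)$'s, contradicting $\bj\in\unorder$.
\item Hence some $\sigma_p$ with $\bi_p=k$ is a left descent of $w$. For a reduced expression of $w$ beginning with $\sigma_p$ we get $e(\bi)\psi_p\psi_{\sigma_pw}e(\bj)=e(\bi)\psi_p\,e(\sigma_p\bi)\,\psi_{\sigma_pw}e(\bj)\in\mathfrak{J}'$, because $\sigma_p\bi$ is almost-ordered by definition.
\item Changing the reduced expression of $w$ changes $\psi_we(\bj)$ only by combinations of $\psi_uf(y)e(\bj)$ with $\ell(u)<\ell(w)$. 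After multiplying by $e(\bi)$, these are again elements running from the unordered $\bj$ to the ordered $\bi$.
\item Induction on $\ell(w)$, proving the claim for all reduced expressions at once, then completes the proof with no braid-relation bookkeeping.
\end{itemize}
The paper itself gives no argument for this lemma; it simply cites Maksimau.
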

\begin{Theorem}[{\cite[Theorem 2.12]{maksimau-subdivision}}]\label{thm:subdivision-iso}
    Fix a quiver $\Gamma$ of type $\Aone[e-1]$, and let $\alpha\in Q^+$ and $k\in\{0,1,\ldots,e-1\}$. Then there is a graded $\bk$-algebra isomorphism
    \[
      \Phi_k:R_\alpha\to S_{\oalpha},
    \]
    induced by the map $\Theta$ on the basis of $R_{\alpha}$, as described above.
\end{Theorem}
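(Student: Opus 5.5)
We prove \autoref{thm:subdivision-iso} in three steps: show that $\Theta_k$ induces an algebra homomorphism into $S_{\oalpha}$, that this homomorphism is surjective, and that it is injective.

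\textbf{Homomorphism.} Define $\Phi_k$ on generators by
\[
e(\bi)\mapsto e(\Phi_k(\bi)),\qquad y_r e(\bi)\mapsto y_{\phi_{\bi}(r)}e(\Phi_k(\bi)).
\]
For $\psi_r e(\bi)$, take the image $\Theta_k(\psi_r e(\bi))$, i.e.\ the crossing of the (possibly doubled) strands in positions $\phi_{\bi}(r),\dots,\phi_{\bi}(r+1)+\delta_{\bi_{r+1},k}$. We then check that each relation \eqref{eq:idempotents}--\eqref{eq:braid_relation} holds in $S_{\oalpha}=\be(R_{\oalpha}/\badideal)\be$.

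\begin{itemize}
\item Relations \eqref{eq:idempotents}--\eqref{eq:psi_commute_far} are immediate from the diagrammatics and the fact that $\Phi_k$ is a bijection $I^\alpha\to\wellorder$ by \eqref{eq:subdivision-bijection-words}.
\item For the dot--crossing relations \eqref{eq:psi_y_shift_left} and \eqref{eq:y_psi_shift_right}, the only new case is a $k$-string crossing a $k$-string. There we get a doubled crossing of pairs $(k,k+1)$ over $(k,k+1)$. Sliding the dot through it yields the desired term plus error terms. Each error term factors through an idempotent in which a $k+1$ strand precedes its $k$ partner, i.e.\ an unordered or almost-ordered sequence, so it lies in $\badideal$.
\item For the quadratic relation \eqref{eq:psi_square}, we compute $\psi^2$ on the doubled strands using the KLR relations in $R_{\oalpha}$. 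The labels adjacent in $\oGamma$ are now $k,k+1$ and $k+1,k+2$ (relabelled). We must check that the polynomial obtained agrees with $Q_{\bi_r,\bi_{r+1}}$ modulo $\badideal$. For the pair $k\to k+1$ in $\Gamma$, the original crossing becomes a crossing of $k+1$ (new) with $k+2$ (old $k+1$). This gives $y_{k+2\text{-strand}}-y_{k+1\text{-strand}}$. Moving the dot from the $\overline{k}$ strand to its $k$ partner costs a term through an almost-ordered idempotent, hence vanishes in $S_{\oalpha}$. This identity, that $y$ on the new strand equals $y$ on its partner modulo $\badideal$, should be established once as a lemma first.
\item For the braid relation \eqref{eq:braid_relation}, we expand both sides. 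The correction terms $Q_{\bi_r,\bi_{r+1},\bi_{r+2}}$ in $\oGamma$ either match the original ones or pass through unordered idempotents.
\end{itemize}

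\textbf{Surjectivity.} By \autoref{thm:base-of-klr}, $\be R_{\oalpha}\be$ is spanned by elements $e(\bj)\psi_w y^{a}e(\bi)$ with $\bi,\bj\in\wellorder$. Choose reduced expressions and track where each $k$-strand and its $k+1$ partner go. If $w$ separates some pair, then some horizontal slice of the diagram (reading bottom to top) carries an unordered idempotent, so the element lies in $\be\badideal\be$. Otherwise $w$ moves pairs as blocks, so it is the image $\Theta_k(\psi_{w'})$ of some $w'\in\Sym_n$, up to lower terms handled by induction on length. Finally, dots on $k+1$ strands can be traded for dots on their partners modulo $\badideal$, by the lemma above.

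\textbf{Injectivity} is the main obstacle. The plan is to show that the images $\Theta_k(\psi_w y^a e(\bi))$ remain linearly independent modulo $\be\badideal\be$. We would first try a faithful polynomial representation. $R_{\oalpha}$ acts on $\bigoplus_{\bj}\bk[y]e(\bj)$; restrict this action to a suitable quotient on which $e(\bj)$ acts by zero for every $\bj\in\unorder$, and in which the variables of the $k+1$ strands are identified with those of their partners. The action of $S_{\oalpha}$ on this quotient should then reproduce exactly the Khovanov--Lauda polynomial representation of $R_\alpha$, which is faithful. Composing with $\Phi_k$ therefore gives injectivity. The delicate point is verifying that this quotient is a genuine $R_{\oalpha}$-module killed by $\badideal$. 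If that fails, we fall back on a graded dimension count: compare $\dim_q\be R_{\oalpha}\be$ with $\dim_q\be\badideal\be$, using the reduction to almost-ordered idempotents from \cite[Lemma 3.7]{maksimau-subdivision}.

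Finally, grading is preserved because crossings of doubled strands have total degree equal to $-a_{ij}$ for the original pair, by the relabelling convention.
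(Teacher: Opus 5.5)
The paper does not reprove this theorem. It takes the isomorphism from \cite[Theorem~2.12]{maksimau-subdivision} and only checks that generators go to homogeneous elements of the right degree, which your last sentence does correctly. Your from-scratch reproof has genuine gaps.

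\textbf{The homomorphism step.} With the relations exactly as in \autoref{def:klr-algebras} and $\operatorname{char}\bk\neq 2$, the naive doubled crossing does \emph{not} respect the relations. Take $\bi_r=\bi_{r+1}=k$. Write the ordered word locally as $\bj=(k,k{+}1,k,k{+}1)$ in positions $t,\dots,t+3$, and put $\Psi=\psi_{t+1}\psi_t\psi_{t+2}\psi_{t+1}e(\bj)$. Sliding the dot with \eqref{eq:psi_y_shift_left} gives exactly
\[
\Psi y_{t+2}-y_t\Psi=\psi_{t+1}\psi_{t+2}\psi_{t+1}e(\bj).
\]
This error term does not factor through an unordered idempotent: its slices are $(k,k{+}1,k,k{+}1)$ and $(k,k,k{+}1,k{+}1)$. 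Apply \eqref{eq:braid_relation} to $(k{+}1,k,k{+}1)$; since $k\to k{+}1$ the coefficient is $-1$, so
\[
\psi_{t+1}\psi_{t+2}\psi_{t+1}e(\bj)=\psi_{t+2}\psi_{t+1}\psi_{t+2}e(\bj)-e(\bj).
\]
The first term factors through the unordered word $(k,k{+}1,k{+}1,k)$. So modulo $\badideal$ you get $-e(\bj)$, whereas \eqref{eq:psi_y_shift_left} demands $+e(\bj)$.

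The braid relation for $(k,k{+}1,k)$ fails in the same way: the correction term in the image comes out as $-e(\overline{\bi})$ instead of $+e(\overline{\bi})$. You must send $\psi_re(\bi)$ to $-\Theta_k(\psi_re(\bi))$ when $\bi_r=\bi_{r+1}=k$. Whether such a sign is needed depends on the sign normalisation of the KLR relations, which the paper sidesteps by citing Maksimau.

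\textbf{Surjectivity.} The same element refutes your surjectivity claim as stated. For the reduced expression $s_{t+1}s_{t+2}s_{t+1}$, the element $\psi_{t+1}\psi_{t+2}\psi_{t+1}e(\bj)$ separates the pairs, yet no slice is unordered, and it is $\equiv -e(\bj)$ modulo $\badideal$. To make this step work you must choose reduced expressions carefully and prove an actual combinatorial lemma.

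\textbf{Injectivity.} This is the more serious gap: it is the real content of Maksimau's theorem, and you do not prove it. Your ``suitable quotient'' of $\mathrm{Pol}=\bigoplus_{\bj}\bk[y]e(\bj)$ would have to satisfy three things.
\begin{enumerate}
\item It is an $R_{\oalpha}$-module on all words, including ones like $(\dots,k,k,k{+}1,k{+}1,\dots)$ that are neither ordered nor unordered.
\item It is killed by $\badideal$.
\item Its $\be$-part, pulled back along $\Phi_k$, is the faithful polynomial representation of $R_\alpha$.
\end{enumerate}
Establishing this is exactly the missing argument, and it is sensitive to normalisations. For example, take $\oalpha=\alpha_k+\alpha_{k+1}$ and let the crossing from $e(k{+}1,k)$ to $e(k,k{+}1)$ act by $f(y_1,y_2)\mapsto f(y_2,y_1)$, with the opposite crossing multiplying by $y_1-y_2$. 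This is a legitimate polynomial representation, yet $\be(\mathrm{Pol}/\badideal\,\mathrm{Pol})=0$.

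Your fallback of comparing $\dim_q\be R_{\oalpha}\be$ with $\dim_q\be\badideal\be$ is not an argument either, since computing $\dim_q\be\badideal\be$ is essentially as hard as the theorem.

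As it stands, the proposal establishes neither the homomorphism (without the sign) nor injectivity. Either carry these out in full or, as the paper does, appeal to \cite{maksimau-subdivision}.
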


\begin{proof}
    By \cite[Theorem 2.12]{maksimau-subdivision}, $\Phi_k$ is a $\bk$-algebra isomorphism. It remains to show that $\Phi_k$ preserves the grading. This follows from a straightforward diagrammatic check that $\Theta_i$ is homogeneous on the generators.
\end{proof}
\subsection{Subdivision on cyclotomic KLR algebras}\label{subsec:subdivision-cyclotomic-KLR}
By the results of \autoref{subsec:subdivision-KLR}, we have the following (commutative) diagram:
\begin{center}
\begin{tikzcd}[row sep=3em, column sep=4em]
    & \be R_{\overline{\alpha}} \be \arrow[d, "\pi"] \\
    R_\alpha \arrow[ru, dashed, "\Theta"] \arrow[r, "\Phi", "\cong"'] 
    &  S_{\oalpha}
\end{tikzcd}
\end{center}

Since $\Theta$ is defined only on basis elements of $R_\alpha$, we draw it as a dashed arrow to emphasize this point. Let $\pi$ be the canonical quotient map from $\be R_{\oalpha}\be$ to
$S_{\oalpha}=\be R_{\oalpha}\be\big/\be \badideal\be$, and let $\Phi=\Phi_k$ be the subdivision isomorphism from \autoref{thm:subdivision-iso}. As usual, we write $\oalpha=\Phi(\alpha)$, $\oLambda=\Phi(\Lambda)$, and $\overline{\bi}=\Phi(\bi)$ for $\alpha\in Q^+$, $\Lambda\in P^+$, and $\bi\in I^{\alpha}$.

The cyclotomic ideal $J^\Lambda_\alpha$ of $R_\alpha$ is the two-sided ideal generated by the elements
\[
  \bigl\{\,y_1^{(\Lambda\mid \alpha_{\bi_1})}e(\bi)\ \bigm|\ \bi \in I^\alpha\,\bigr\}.
\]
Equivalently, set
\[
  e^{\Lambda}(\alpha):=\sum_{\bi \in I^\alpha}y_1^{(\Lambda\mid \alpha_{\bi_1})}e(\bi),
\]
so that $J^\Lambda_\alpha=R_\alpha\, e^{\Lambda}(\alpha)\, R_\alpha$.

Similarly, the cyclotomic ideal $J^{\overline{\Lambda}}_{\overline{\alpha}}$ of $R_{\overline{\alpha}}$ is the two-sided ideal generated by the elements
\[
  \bigl\{\,y_1^{(\overline{\Lambda}\mid \alpha_{\bj_1})}e(\bj)\ \bigm|\ \bj \in I^{\overline{\alpha}}\,\bigr\}.
\]
Equivalently, set
\[
  e^{\oLambda}(\oalpha):=\sum_{\bj \in \widebar{I}^{\oalpha}}y_1^{(\oLambda\mid \alpha_{\bj_1})}e(\bj),
\]
so that $J^{\oLambda}_{\oalpha}=R_{\oalpha}\, e^{\oLambda}(\oalpha)\, R_{\oalpha}$.

The image of $J^{\Lambda}_{\alpha}$ under $\Phi$ is the two-sided ideal of $S_{\oalpha}$ generated by $\Phi\bigl(e^{\Lambda}(\alpha)\bigr)$. In $S_{\oalpha}$ we have
\[
  \Phi\bigl(e^{\Lambda}(\alpha)\bigr)
  =\sum_{\bi \in I^\alpha}\Phi(y_1)^{(\Lambda\mid \alpha_{\bi_1})}\,\Phi\bigl(e(\bi)\bigr)
  =\sum_{\bi \in I^\alpha}y_1^{(\Lambda\mid \alpha_{\bi_1})}e(\Phi(\bi))+\be\badideal\be
  =\sum_{\bi \in I^\alpha}y_1^{(\oLambda\mid \alpha_{\overline{\bi}_1})}e(\overline{\bi})+\be\badideal\be,
\]
where the second equality follows from the definition of subdivision on KLR algebras and on positive roots, and the last equality follows from \autoref{lm:cyclotomic-multiplicity}. On the other hand,
\[
  \be e^{\oLambda}(\oalpha)\be
  =\be \sum_{\bj \in \widebar{I}^{\oalpha}}y_1^{(\oLambda\mid \alpha_{\bj_1})}e(\bj)\be
  =\sum_{\bj \in \wellorder}y_1^{(\oLambda\mid \alpha_{\bj_1})}e(\bj).
\]
Since the subdivision map induces a bijection between $I^\alpha$ and $\wellorder$, it follows that
\[
  \Phi\bigl(e^{\Lambda}(\alpha)\bigr)=\be e^{\oLambda}(\oalpha)\be+\be\badideal\be.
\]

Since $\Phi:R_\alpha\xrightarrow{\sim} S_{\oalpha}$ is an algebra isomorphism, it follows that
\[
\Phi(J^\Lambda_\alpha)
= S_{\oalpha}\,\Phi\bigl(e^\Lambda(\alpha)\bigr)\,S_{\oalpha}
= S_{\oalpha}\bigl(\be e^{\oLambda}(\oalpha)\be+\be\badideal\be\bigr)S_{\oalpha}.
\]

Viewing ideals in $S_{\oalpha}=\be R_{\oalpha}\be/\be\badideal\be$, this is the same as
\[
\Phi(J^\Lambda_\alpha)
= \bigl(\be J^{\oLambda}_{\oalpha}\be+\be\badideal\be\bigr)\big/\be\badideal\be
\subseteq \be R_{\oalpha}\be\big/\be\badideal\be.
\]

Therefore $\Phi$ induces an isomorphism on cyclotomic quotients:
\[
R_\alpha^\Lambda
=R_\alpha/J^\Lambda_\alpha
\xrightarrow{\ \sim\ }
S_{\oalpha}/\Phi(J^\Lambda_\alpha)
\xrightarrow{\ \sim\ }
\be R_{\oalpha}\be\big/\be\bigl(\badideal+J^{\oLambda}_{\oalpha}\bigr)\be.
\]

Now the subdivision isomorphism induces the following isomorphism between the cyclotomic KLR algebra and a (cyclotomic) quotient of balanced KLR algebra:
\begin{equation}\label{iso-cyclotomic-subdivision}
\overline{\Phi}: R_\alpha^\Lambda \xrightarrow{\sim} \be R^{\overline{\Lambda}}_{\overline{\alpha}}\be/\be\big(\badideal+J^{\oLambda}_{\oalpha}\big)\be.
\end{equation}
This isomorphism will be called the \textbf{cyclotomic subdivision isomorphism}.

\subsection{Defect invariance}
It is well known that the blocks of cyclotomic KLR algebras of type $\Aone[e-1]$ are the algebras $R^\Lambda_\beta$ for $\beta\in Q^+$ and $\Lambda\in P^+$. In particular, they are indecomposable.
\begin{Definition}\label{def:defect}
    Fix $\beta\in Q^+$ and $\Lambda\in P^+$. The defect of $\beta$ is the integer:
    \begin{equation}
        \defe_\Lambda \beta=(\Lambda\mid\beta)-\frac{1}{2}(\beta\mid\beta)
    \end{equation}
\end{Definition}
The defect of a nonzero block (i.e.\ $R^{\Lambda}_{\beta}\neq 0$) is non-negative (see \cite[Lemma 11.13.2]{kac-infinite-dimensional-lie-algebras}) and measures the complexity of the corresponding block. We now show that the subdivision map preserves the defect of the block.

\begin{Proposition}\label{prop:equality-of-defects}
    Fix the quiver of type $\Aone[e-1]$ with vertex set $I$ and fix $k\in I$. Take $\alpha\in Q^+$ and $\Lambda\in P^+$. Set $\oalpha:=\Phi_k(\alpha)$ and $\oLambda:=\Phi_k(\Lambda)$, where $\Phi_k$ is the corresponding subdivision map. Then
    $\defe_\Lambda \beta = \defe_{\bar{\Lambda}}\bar{\beta}$.
\end{Proposition}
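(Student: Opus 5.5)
The defect is $(\Lambda\mid\beta)-\tfrac12(\beta\mid\beta)$, so it suffices to show that $\Phi_k$ preserves each of the two terms. Here $\beta$ in the statement is the root denoted $\alpha$ in the hypotheses, and $\bar\beta=\Phi_k(\beta)$. The linear term is already handled: \autoref{lm:pairing-invariance} gives $(\oLambda\mid\bar\beta)=(\Lambda\mid\beta)$ directly. So the whole proof reduces to showing that $\Phi_k$ is an isometry on root lattices, namely $(\Phi_k(\beta)\mid\Phi_k(\beta))=(\beta\mid\beta)$ for all $\beta\in Q^+$, where the left side uses the $\Aone[e]$ form and the right side the $\Aone[e-1]$ form.

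By bilinearity, I will verify $(\Phi_k(\alpha_i)\mid\Phi_k(\alpha_j))=a_{ij}$ for all $i,j\in I$. Write $\bar\imath$ for the image label of $i$: it is $i$ if $i<k$ and $i+1$ if $i>k$. The check splits into three cases.

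\textbf{Case $i,j\neq k$.} Here $\Phi_k(\alpha_i)=\alpha_{\bar\imath}$ and $\Phi_k(\alpha_j)=\alpha_{\bar\jmath}$. The relabelling is an order-preserving bijection of $I\setminus\{k\}$ onto $\bar I\setminus\{k,k+1\}$. It preserves adjacency everywhere except possibly along the subdivided edge, and that edge has endpoint $k$, which is excluded in this case. The wrap-around edge $e-1\to 0$ becomes $e\to 0$, which is still an edge. Hence $a_{\bar\imath\bar\jmath}=a_{ij}$.

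\textbf{Case $i=k$, $j\neq k$.} We need $(\alpha_k+\alpha_{k+1}\mid\alpha_{\bar\jmath})$. If $j=k-1$, the value is $-1+0=-1$. If $j=k+1$, then $\bar\jmath=k+2$ and the value is $0-1=-1$. Otherwise the value is $0$. Each of these matches $a_{kj}$.

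\textbf{Case $i=j=k$.} We get $(\alpha_k+\alpha_{k+1}\mid\alpha_k+\alpha_{k+1})=2+2-2=2=a_{kk}$.

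The only point needing care is the small-rank and wrap-around situation: the case $k=e-1$, where $k+1\equiv0$ and the new vertex $e$ sits between $e-1$ and $0$, and the case $e$ small, where $k-1$ and $k+1$ might coincide. Since $e>2$ is assumed, $k-1\neq k+1$ in $\Z/e\Z$. Within $\Aone[e]$ the vertices $k$, $k+1$, $k+2$ remain distinct, so no double edges arise. With these checks, combining the linear and quadratic parts gives the equality of defects.
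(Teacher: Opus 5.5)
Your proof is correct and follows the same route as the paper. You split the defect into $(\Lambda\mid\beta)$, handled by \autoref{lm:pairing-invariance}, and $(\beta\mid\beta)$, verified by direct computation. The difference is only organizational: you check $(\Phi_k(\alpha_i)\mid\Phi_k(\alpha_j))=a_{ij}$ on simple roots and use bilinearity, whereas the paper expands both quadratic forms in the coordinates $x_i$ with $k$ implicitly away from the boundary indices. As a result, your version handles $k\in\{0,e-1\}$ and $e=3$ more cleanly.
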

\begin{proof}
    Assume $\beta=\sum_{0\leq i\leq e{-}1}x_i\alpha_i$. Then, recall from \autoref{subsec:subdivision-root} that $\bar{\beta}$ is of the form:
    \[
        \bar{\beta}=\sum_{0\leq i\leq k}x_i\alpha_i + x_k\alpha_{k{+}1} + \sum_{k{+}1\leq i\leq e{-}1}x_i\alpha_{i{+}1}.
    \]
    Calculating the inner product $(\beta\mid \beta)$ gives:
    \begin{align*}
        (\beta\mid\beta) &= (2x_0^2-x_0x_1-x_0x_{e{-}1}) \quad + \sum_{1\leq i\leq k{-}1}(2x_i^2-x_ix_{i{-}1}-x_ix_{i{+}1}) \\
        &\quad + (2x_k^2-x_kx_{k{+}1}-x_kx_{k{-}1}) \quad + \sum_{k{+}1\leq i\leq e{-}2}(2x_{i}^2-x_{i}x_{i{-}1}-x_ix_{i{+}1}) \\
        &\quad + (2x_{e{-}1}^2-x_{e{-}1}x_{e{-}2}-x_{e{-}1}x_{0}).
    \end{align*}
    whereas, calculating the inner product $(\bar{\beta}\mid\bar{\beta})$ gives: 
    \begin{align*}
        (\bar{\beta}\mid\bar{\beta}) &= (2x_0^2-x_0x_1-x_0x_{e{-}1}) \quad + \sum_{1\leq i\leq k{-}1}(2x_i^2-x_ix_{i{-}1}-x_ix_{i{+}1}) \\
        &\quad + (2x_k^2-x_k^2-x_kx_{k{-}1}) + (2x_k^2-x_k^2-x_kx_{k{+}1}) \quad + \sum_{k{+}2\leq i\leq e{-}1}(2x_{i{-}1}^2-x_{i{-}2}x_{i{-}1}-x_ix_{i{-}1}) \\
        &\quad + (2x_{e{-}1}^2-x_{e{-}1}x_{e{-}2}-x_{e{-}1}x_{0}).
    \end{align*}
    Simplifying the middle terms, we observe $(\beta\mid\beta)=(\bar{\beta}\mid\bar{\beta})$. It remains to show $(\Lambda\mid\beta)=(\bar{\Lambda}\mid\bar{\beta})$, which follows by \autoref{lm:pairing-invariance}.
\end{proof}
\section{Combinatorial Subdivision}\label{sec:combinatorial-subdivision}
In this section, we first define the subdivision map on partitions, which we use in \autoref{subsec:image-of-idempotent} to describe the image of idempotents in a KLR algebra under the subdivision map $\Phi_k$. We give two equivalent definitions, using Young diagrams (\autoref{subsec:subdivision-young-diagram}) and abaci (\autoref{subsec:subdivision-abacus}), since each is convenient for different arguments.

We then extend the construction to row-standard tableaux (\autoref{subsec:subdivision-standard-tableaux}). Its restriction to standard tableaux sends standard tableaux to standard tableaux, and we show that it preserves degree (\autoref{thm:degree-invariance-standard-tableau}). This construction is used for describing the bases of permutation modules and Specht modules under the isomorphism induced by the subdivision on KLR algebras; see \autoref{subsec:subdivision-permutation-modules} and \autoref{subsec:subdivision-specht-modules}.

A preliminary partition-level sketch in a restricted specialization appeared in \cite{qin-subdivision-klrw} (namely $k=0$ in the subdivision datum; see \autoref{def:subdivision-datum}). The present section is self-contained and substantially extends this to a complete picture, and the tableaux-level construction is brand new.
\begin{Definition}\label{def:subdivision-datum}
    A \textbf{subdivision datum} is a tuple $\subdatum$, where $e\in\Z_{\ge 3}$, $I$ is the vertex set of the  quiver $\Aone[e-1]$ (identified with $\{0,1,\dots,e-1\}$), $\Lambda\in P^+$ is a dominant weight of level $\ell\in\Z_{\ge 1}$, $\alpha\in Q^+$ is a positive root, $\bkappa\in I^\ell$ is a charge of $\Lambda$, and $k\in I$.
\end{Definition}

Fix a subdivision datum $\subdatum$, let $\Phi=\Phi_k$ be the subdivision map (on quivers, positive roots, dominant weights) introduced in the earlier sections, and set $\oGamma=\Phi(\Gamma)$, $\oalpha=\Phi(\alpha)$, and $\oLambda=\Phi(\Lambda)$. Let $\overline{\bkappa}\in \widebar{I}^\ell$ be such that $\Phi_k(\Lambda_{\kappa_i})=\Lambda_{\okappa_i}$ for each $1\le i\le \ell$.\footnote{The reader should be cautious that this $\okappa$ does not coincide with the subdivision of the word $\kappa\in I^\ell$.}

Recall that $\Par[\bkappa]_{\alpha}$ is the set of $\ell$-partitions of residue content $\alpha$ with respect to the charge $\bkappa$. Similarly, $\Par[\overline{\bkappa}]_{\oalpha}$ is the set of $\ell$-partitions of residue content $\oalpha$ with respect to the charge $\overline{\bkappa}$. Our aim is to construct a subdivision map $\Phi_k:\Par[\bkappa]_{\alpha}\to \Par[\overline{\bkappa}]_{\oalpha}$ that is compatible with other subdivision maps defined in the earlier sections.

To define $\Phi_k(\blam)$, we first define $\Phi_k(\lambda)$ for an arbitrary partition $\lambda$, and then apply the construction componentwise. More precisely, if $\blam=(\blam^{(1)},\ldots,\blam^{(\ell)})\in\Par[\bkappa]_{\alpha}$, then
\[
  \Phi_k(\blam):=\bigl(\Phi_k(\blam^{(1)}),\ldots,\Phi_k(\blam^{(\ell)})\bigr)\in \Par[\overline{\bkappa}]_{\oalpha}
\]
For this reason, it suffices to consider the case $\Lambda=\Lambda_i$ for some $i\in I$.
\subsection{Subdivision on partitions via Young diagrams}\label{subsec:subdivision-young-diagram}
Fix a subdivision datum $\subdatum[\Lambda_i][i]$\footnote{Strictly speaking, it should be written as $\subdatum[\Lambda_i][(i)]$, but we abuse notation and identify $(i)$ with $i$.} and let $\Lambda:=\Lambda_i$. Recall from \autoref{subsec:multipartition-charge-residue} that $[\lambda]$ is the Young diagram of a partition $\lambda$, and that $[\lambda]_\Lambda$ is the Young diagram filled with residues from $I$. As usual, since the choice of $\Lambda=\Lambda_i$ is clear from the context, we abuse notation and write $[\lambda]$ for the Young diagram filled with residues.

\begin{Definition}\label{def:k-strip}
    Fix a subdivision datum $\subdatum[\Lambda=\Lambda_i][i]$ and $\lambda\in\Par[\Lambda]_{\alpha}$.
    A \textbf{\boldmath $(k,k+1)$-strip of length $m$} in $[\lambda]$ is a finite sequence of nodes in $[\lambda]$,
    $(A_1,A_2,\ldots,A_m)$, such that:
    \begin{enumerate}
      \item $\res(A_{2j+1})=k$ and $\res(A_{2j})=k+1$ for all admissible $j$;
      \item if $A_{2j+1}$ lies in row $r$ and column $c$, then $A_{2j+2}$ lies in row $r$ and column $c+1$;
      \item if $A_{2j}$ lies in row $r$ and column $c$, then $A_{2j+1}$ lies in row $r+1$ and column $c$.
    \end{enumerate}
    
    Similarly, a \textbf{\boldmath $(k+1,k)$-strip of length $m$} in $[\lambda]$ is a finite sequence of nodes in $[\lambda]$, $(A_1,A_2,\ldots,A_m)$, such that:
    \begin{enumerate}
      \item $\res(A_{2j+1})=k+1$ and $\res(A_{2j})=k$ for all admissible $j$;
      \item if $A_{2j+1}$ lies in row $r$ and column $c$, then $A_{2j+2}$ lies in row $r+1$ and column $c$;
      \item if $A_{2j}$ lies in row $r$ and column $c$, then $A_{2j+1}$ lies in row $r$ and column $c+1$.
    \end{enumerate}
    
    A $(k,k+1)$-strip or $(k+1,k)$-strip is called \textbf{maximal} if it cannot be extended to a strip of greater length. If the length $m=1$, we call the strip \textbf{trivial}. We call $A_1$ the \textbf{initial node} and $A_m$ the \textbf{terminal node}.
\end{Definition}

\begin{Example}\label{eg:k-strip}
    Take the subdivision datum to be $\subdatum=(5,I,\Lambda_1,\alpha,1,1)$ and $\lambda=(11^7)$, we form the Young diagram $[\lambda]$ and color the maximal $(1,2)$-strips in {\color{cyan}cyan}, color the maximal $(2,1)$-strip in {\color{OrangeRed}orange}:

    \begin{center}
        \tikzset{
            C/.style={fill=cyan,text=white},
            O/.style={fill=OrangeRed, text=white}
                }
        \Tableau[no border,box height=0.5,box width=0.5]{[C]1[C]2340[C]1[C]2340[C]1,0[C]1[C]2340[C]1[C]2340,40[C]1[C]2340[C]1[C]234,340[C]1[C]2340[C]1[C]23,[O]2340[C]1[C]2340[C]1[C]2,[O]1[O]2340[C]1[C]2340[C]1,0[O]1[O]2340[C]1[C]2340}
    \end{center}    
\end{Example}
Since we work in type $\Aone[e-1]$, the following observation is immediate.

\begin{Lemma}\label{lm:initial-nodes}
Every maximal $(k,k+1)$-strip in $[\lambda]$ has its initial node in the first row of $[\lambda]$, and every maximal $(k+1,k)$-strip in $[\lambda]$ has its initial node in the first column of $[\lambda]$. \hfill $\square$
\end{Lemma}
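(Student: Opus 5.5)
The plan is to argue by contradiction, backwards from the initial node, using only two facts: a partition diagram is closed under moving up and moving left, and residues change by $\pm1$ along rows and columns. The argument needs to read ``cannot be extended to a strip of greater length'' in \autoref{def:k-strip} as follows: a strip is maximal if its node sequence is not a proper consecutive piece of a longer strip of \emph{either} kind. So we may prepend nodes at the initial end, and prepending a single node turns a $(k,k+1)$-strip into a $(k+1,k)$-strip and vice versa. The lemma fails under the stricter reading that allows only same-type extensions: then the trivial strip $((2,1))$ for $\lambda=(1,1)$ and $\Lambda=\Lambda_{k+1}$ would be a maximal $(k,k+1)$-strip not starting in the first row. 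So this reading is essential. Since $e\ge 3$, the residues $k$ and $k+1$ are distinct modulo $e$, so the rules in \autoref{def:k-strip} determine each next node uniquely.

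First, the $(k,k+1)$ case. Let $(A_1,\dots,A_m)$ be a maximal $(k,k+1)$-strip with $A_1=(r,c)$, and suppose $r>1$. Because $[\lambda]$ is a Young diagram, the node $B=(r-1,c)$ lies in $[\lambda]$, and
\[
\res B\equiv c-(r-1)+i\equiv \res A_1+1\equiv k+1 \pmod e.
\]
Now $A_1$ is directly below $B$. Rule (2) for $(k+1,k)$-strips (odd step: a $(k+1)$-node followed by the node directly below it) therefore shows that $(B,A_1,A_2,\dots,A_m)$ is a $(k+1,k)$-strip. The remaining steps of the original strip keep their rules, since every index has shifted by one and the two step types alternate. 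This longer strip contradicts maximality, so $r=1$. If one also wants a same-type extension whenever $c>1$, we can instead prepend the pair $(r-1,c-1),(r-1,c)$. The node $(r-1,c-1)$ has residue $k$ and lies in $[\lambda]$, and this yields a $(k,k+1)$-strip of length $m+2$.

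Second, the $(k+1,k)$ case. Let $A_1=(r,c)$ have residue $k+1$, and suppose $c>1$. Then $B=(r,c-1)\in[\lambda]$ has residue $k$, and $B$ is directly to the left of $A_1$. The first step of a $(k,k+1)$-strip is exactly ``move right'', so $(B,A_1,\dots,A_m)$ is a $(k,k+1)$-strip, again contradicting maximality. Hence $c=1$.

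The main point needing care is the index bookkeeping after prepending one node. The old odd/even positions swap, so we must check that conditions (2) and (3) of one strip type become conditions (3) and (2) of the other type. This is a direct comparison of the definitions: both types alternate between ``right'' and ``down'' steps, and they differ only in which step comes first. Once this is checked, the lemma follows immediately.
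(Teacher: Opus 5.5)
Your argument is correct, and it is the routine verification behind the paper's claim that this lemma is immediate (the paper gives no written proof). The key step is that a strip can always be extended backwards through the node directly above its initial node (respectively, directly to its left) unless that node lies outside the first row (respectively, first column). Your reading of maximality, which allows extensions of either type, is also the one the paper actually uses. In \autoref{eg:k-strip}, the tail of the orange strip starting at $(6,1)$ is maximal among $(k,k+1)$-strips alone, yet the paper counts it as part of the $(k+1,k)$-strip starting at $(5,1)$.
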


The definition in \autoref{def:k-strip} extends in a natural way to triples $(k,k+1,k+2)$ and $(k+2,k+1,k)$. Indeed, we will define the image of a partition $\lambda$ under subdivision by replacing its maximal $(k,k+1)$-strips and maximal $(k+1,k)$-strips with $(k,k+1,k+2)$-strips and $(k+2,k+1,k)$-strips, respectively. We make this precise as follows:

\begin{Definition}\label{def:subdivision-young-diagram}
    Fix a subdivision datum $\subdatum[\Lambda_i][i]$ and take a partition $\lambda\in \Par[\Lambda_i]_{\alpha}$.
    For each maximal $(k,k+1)$-strip $S$ in  $[\lambda]$, insert a node labelled with $\overline{k}$ to the right of every $k$-node in $S$. Similarly, for each maximal $(k+1,k)$-strip $S$ in $[\lambda]$, insert a node labelled with $\overline{k}$ above every $k$-node in $S$. After performing these two procedures for all maximal strips, apply the relabelling from \autoref{eq:relabel-rules} to each node. The resulting Young diagram is denoted by $\Phi^{Y}_k([\lambda])$.
\end{Definition}

There are two points that are not immediate from the definition: (1) the resulting diagram is the Young diagram of a partition; (2) the new label on each node is exactly its residue with respect to the new charge (or dominant weight) for $\Aone[e]$. We now prove both statements.

\smallskip
By \autoref{def:subdivision-young-diagram}, all four kinds of maximal strips (except for the trivial $(k+1,k)$-strip, that is, a single $(k+1)$-node, which simply becomes a $(k+2)$-node) are treated as follows:

For any maximal $(k,k+1)$-strip, 
\begin{enumerate}[label=(\roman*)]
    \item if it is of the form 
    \begin{center}
        \RibbonTableau[no border, box font=\tiny,styles={D={draw=aTableauInner}},tikz after={
        \draw[dotted,aTableauMain]([shift={(0,-0.15)}]A-1-2.south)--([shift={(0,0.15)}]A-4-4.north);
        \draw[aTableauInner](A-1-1.south east)--++(0,-0.5)--++(0.5,0)--++(0,-0.5)--++(0.5,0);
        \draw[aTableauInner](A-1-2.south east)--++(0.5,0)--++(0,-0.5)--++(0.5,0)--++(0,-0.5);}]
        {[D]11_{k},[D]12_{k{+}1},
        [D]44_{k},[D]45_{k{+}1}}
    \end{center}
    then replace it by 
    \begin{center}
       \RibbonTableau[no border, box font=\tiny,styles={D={draw=aTableauInner}},tikz after={
        \draw[dotted,aTableauMain]([shift={(0,-0.15)}]A-1-2.south)--([shift={(0,0.15)}]A-4-5.north);
        \draw[aTableauInner](A-1-1.south east)--++(0,-0.5)--++(0.5,0)--++(0,-0.5)--++(0.5,0);
        \draw[aTableauInner](A-1-3.south east)--++(0.5,0)--++(0,-0.5)--++(0.5,0)--++(0,-0.5);}]
        {[D]11_{k},[D]12_{k{+}1},[D]13_{k{+}2},
        [D]44_{k},[D]45_{k{+}1},[D]46_{k{+}2}}
    \end{center}
    \item if it is of the form 
    \begin{center}
        \RibbonTableau[no border, box font=\tiny,styles={D={draw=aTableauInner}},tikz after={
        \draw[dotted,aTableauMain]([shift={(0,-0.15)}]A-1-2.south)--([shift={(0,0.15)}]A-4-4.north);
        \draw[aTableauInner](A-1-1.south east)--++(0,-0.5)--++(0.5,0)--++(0,-0.5)--++(0.5,0);
        \draw[aTableauInner](A-1-2.south east)--++(0.5,0)--++(0,-0.5)--++(0.5,0)--++(0,-0.5);}]
        {[D]11_{k},[D]12_{k{+}1},
        [D]44_{k}}
    \end{center}
    then replace it by 
    \begin{center}
       \RibbonTableau[no border, box font=\tiny,styles={D={draw=aTableauInner}},tikz after={
        \draw[dotted,aTableauMain]([shift={(0,-0.15)}]A-1-2.south)--([shift={(0,0.15)}]A-4-5.north);
        \draw[aTableauInner](A-1-1.south east)--++(0,-0.5)--++(0.5,0)--++(0,-0.5)--++(0.5,0);
        \draw[aTableauInner](A-1-3.south east)--++(0.5,0)--++(0,-0.5)--++(0.5,0)--++(0,-0.5);}]
        {[D]11_{k},[D]12_{k{+}1},[D]13_{k{+}2},
        [D]44_{k},[D]45_{k{+}1}}
    \end{center}
\end{enumerate}

For any non-trivial maximal $(k+1,k)$-strip: 
\begin{enumerate}
    \item if it is of the form 
    \begin{center}
        \RibbonTableau[no border, box font=\tiny,styles={D={draw=aTableauInner}},tikz after={
        \draw[dotted,aTableauMain]([shift={(0.15,0)}]A-2-1.east)--([shift={(-0.15,0)}]A-4-4.west);
        \draw[aTableauInner](A-2-1.south east)--++(0,-0.5)--++(0.5,0)--++(0,-0.5)--++(0.5,0);
        \draw[aTableauInner](A-1-1.south east)--++(0.5,0)--++(0,-0.5)--++(0.5,0)--++(0,-0.5);}]
        {[D]11_{k{+}1},[D]21_{k},
        [D]44_{k{+}1},[D]54_{k}}
    \end{center}
    then replace it by 
    \begin{center}
       \RibbonTableau[no border, box font=\tiny,styles={D={draw=aTableauInner}},tikz after={
        \draw[dotted,aTableauMain]([shift={(0.15,0)}]A-2-1.east)--([shift={(-0.15,0)}]A-5-4.west);
        \draw[aTableauInner](A-3-1.south east)--++(0,-0.5)--++(0.5,0)--++(0,-0.5)--++(0.5,0);
        \draw[aTableauInner](A-1-1.south east)--++(0.5,0)--++(0,-0.5)--++(0.5,0)--++(0,-0.5);}]
        {[D]11_{k{+}2},[D]21_{k{+}1},[D]31_{k},
        [D]44_{k{+}2},[D]54_{k{+}1},[D]64_{k}}
    \end{center}
    \item if it is of the form 
    \begin{center}
        \RibbonTableau[no border, box font=\tiny,styles={D={draw=aTableauInner}},tikz after={
        \draw[dotted,aTableauMain]([shift={(0.15,0)}]A-2-1.east)--([shift={(-0.15,0)}]A-4-4.west);
        \draw[aTableauInner](A-2-1.south east)--++(0,-0.5)--++(0.5,0)--++(0,-0.5)--++(0.5,0);
        \draw[aTableauInner](A-1-1.south east)--++(0.5,0)--++(0,-0.5)--++(0.5,0)--++(0,-0.5);}]
        {[D]11_{k{+}1},[D]21_{k},
        [D]44_{k{+}1}}
    \end{center}
    then replace it by 
    \begin{center}
       \RibbonTableau[no border, box font=\tiny,styles={D={draw=aTableauInner}},tikz after={
        \draw[dotted,aTableauMain]([shift={(0.15,0)}]A-2-1.east)--([shift={(-0.15,0)}]A-5-4.west);
        \draw[aTableauInner](A-3-1.south east)--++(0,-0.5)--++(0.5,0)--++(0,-0.5)--++(0.5,0)--++(0,0.5);
        \draw[aTableauInner](A-1-1.south east)--++(0.5,0)--++(0,-0.5)--++(0.5,0)--++(0,-0.5);}]
        {[D]11_{k{+}2},[D]21_{k{+}1},[D]31_{k},
        [D]44_{k{+}2},}
    \end{center}
\end{enumerate}

To show that $\Phi^{Y}_k[\lambda]$ is the Young diagram of a partition, it suffices to check that each of the four steps still produces the Young diagram of a partition. Consider a maximal $(k,k+1)$-strip of the form:
\begin{center}
        \RibbonTableau[no border, box font=\tiny,styles={D={draw=aTableauInner}},tikz after={
        \draw[dotted,aTableauMain]([shift={(0,-0.15)}]A-1-2.south)--([shift={(0,0.15)}]A-4-4.north);
        \draw[aTableauInner](A-1-1.south west)--++(0,-1.5);
        \draw[aTableauInner](A-4-4.south west)--++(-1.5,0);
        \draw[aTableauInner](A-1-2.north east)--++(1,0);
        \draw[aTableauInner](A-4-4.north east)--++(0,1.5);
        \draw[aTableauInner](A-1-1.south east)--++(0,-0.5)--++(0.5,0)--++(0,-0.5)--++(0.5,0);
        \draw[aTableauInner](A-1-2.south east)--++(0.5,0)--++(0,-0.5)--++(0.5,0)--++(0,-0.5);}]
        {[D]11_{k},[D]12_{k{+}1},
        [D]44_{k}}
\end{center}
As $\lambda$ is a partition, this strip must lie within a minimal rectangle (as shown in the figure above) in $[\lambda]$. Hence, by replacing the strip, we obtain the following:
\begin{center}
       \RibbonTableau[no border, box font=\tiny,styles={D={draw=aTableauInner}},tikz after={
        \draw[dotted,aTableauMain]([shift={(0,-0.15)}]A-1-2.south)--([shift={(0,0.15)}]A-4-5.north);
        \draw[aTableauInner](A-1-1.south west)--++(0,-1.5);
        \draw[aTableauInner](A-4-4.south west)--++(-1.5,0);
        \draw[aTableauInner](A-1-3.north east)--++(1,0);
        \draw[aTableauInner](A-4-5.north east)--++(0,1.5);
        \draw[aTableauInner](A-1-1.south east)--++(0,-0.5)--++(0.5,0)--++(0,-0.5)--++(0.5,0);
        \draw[aTableauInner](A-1-3.south east)--++(0.5,0)--++(0,-0.5)--++(0.5,0)--++(0,-0.5);}]
        {[D]11_{k},[D]12_{k{+}1},[D]13_{k{+}2},
        [D]44_{k},[D]45_{k{+}1}}
\end{center}
This transformation still results in a partition by modifying such a rectangle. The other three operations can be verified in a similar manner. Hence, after replacing every maximal $(k,k+1)$-strip and $(k+1,k)$-strip, we obtain a new partition.

Let $\Phi^{Y}_k(\lambda)$ be the partition such that $[\Phi^{Y}_k(\lambda)]=\Phi^{Y}_k([\lambda])$. In this way, $\Phi^{Y}_k$ is realized as a map on the set of partitions (rather than merely on Young diagrams) via Young diagrams.

\smallskip
gWe next consider residues. Assume $\lambda\in\Par[\Lambda_i]_{\alpha}$. For any node $A=(r,c)\in[\lambda]$, the residue of $A$ is $i+c-r\pmod{e}$. We call the node $A=(1,1)$ the \emph{first node} of $[\lambda]$. In particular, the residue of the first node is $i$, corresponding to the dominant weight $\Lambda_i$. If $0\leq i\leq k$, then by our construction in \autoref{def:subdivision-young-diagram}, the first node in $\Phi^{Y}_k([\lambda])$ has residue $i$; if $k+1\leq i\leq e-1$, the first node in $\Phi^{Y}_k([\lambda])$ has residue $i+1$. This agrees with the definition of the subdivision map on dominant weights in \autoref{eq:def-subdivision-weight}. Hence the new dominant weight is $\Phi_k(\Lambda_i)$, and the new partition $\Phi^Y_k(\lambda)$ lies in $\Par[\Phi_k(\Lambda_i)]$. 

\begin{Lemma}\label{lem:residue-characterization}
    Fix quiver type $\Aone[e-1]$, let $\lambda\in\Par[\Lambda_i]$ and let $[\lambda]$ be its Young diagram.  A function $f: [\lambda] \longrightarrow \Z/e\Z$
    coincides with the residue function on $[\lambda]$ with respect to $\Lambda_i$ if and only if it satisfies the following two conditions:
    \begin{enumerate}[label=(\roman*)]
        \item $f(1,1)=i$;
        \item for any two horizontally adjacent nodes $(r,c),(r,c+1)\in [\lambda]$ one has
              $f(r,c+1)=f(r,c)+1$, and for any two vertically adjacent nodes $(r,c),(r+1,c)\in [\lambda]$ one has
              $f(r+1,c)=f(r,c)-1$, with both equalities taken in $\Z/e\Z$.
    \end{enumerate}
\end{Lemma}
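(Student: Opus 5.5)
The plan is to prove the two implications separately; the substance lies in the fact that the Young diagram $[\lambda]$ of a partition is connected via row and column steps back to the first node.

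For the forward direction, take $f=\res_{\Lambda_i}$, so $f(r,c)\equiv i+c-r \pmod e$. Then $f(1,1)=i$. Moreover $f(r,c+1)-f(r,c)\equiv 1$ and $f(r+1,c)-f(r,c)\equiv -1$, directly from formula \autoref{eq:residue-of-node}. This is immediate.

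For the converse, suppose $f$ satisfies (i) and (ii). We show $f(r,c)=i+c-r$ for every $(r,c)\in[\lambda]$ by induction along an explicit path from $(1,1)$.

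First, since $\lambda$ is a partition, $(r,c)\in[\lambda]$ implies $(r',c')\in[\lambda]$ whenever $r'\le r$ and $c'\le c$. In particular, the nodes $(1,1),(2,1),\dots,(r,1)$ all lie in $[\lambda]$, and so do $(r,1),(r,2),\dots,(r,c)$.

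Going down the first column, repeated use of the vertical condition in (ii) gives $f(r',1)=i-(r'-1)$ for $1\le r'\le r$. Then going right along row $r$, the horizontal condition gives $f(r,c')=i-(r-1)+(c'-1)=i+c'-r$ for $1\le c'\le c$. Taking $c'=c$ shows $f(r,c)=\res_{\Lambda_i}(r,c)$. Equivalently, one can phrase this as induction on $r+c$, using the predecessor $(r,c-1)$ if $c>1$ and $(r-1,c)$ otherwise.

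The only point needing care, and the mild "main obstacle", is confirming that the path used stays inside $[\lambda]$. This is exactly the down-closedness of Young diagrams of partitions (it would fail for general compositions with empty rows, but not here). All equalities are in $\Z/e\Z$, so no further issues arise.
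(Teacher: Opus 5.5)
Your proof is correct and essentially the same as the paper's. The paper also treats the forward direction as immediate from the formula $i+c-r$. For the converse it uses induction on $r+c$, stepping back to $(r,c-1)$ or $(r-1,c)$ inside $[\lambda]$, which is your explicit path argument and exactly the alternative phrasing you mention.
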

\begin{proof}
    If $f$ is the residue function, then the two properties follow immediately from
    definition.
    Conversely, assume $f$ satisfies \emph{(i)} and \emph{(ii)}.  We prove by induction on $r+c$ that
    $f(r,c)\equiv i+c-r\pmod{e}$ for all $(r,c)\in [\lambda]$.  The base case $(1,1)$ is \emph{(i)}.
    For $(r,c)\neq(1,1)$, either $(r,c-1)\in [\lambda]$ (if $c>1$) or $(r-1,c)\in [\lambda]$ (if $r>1$), and then \emph{(ii)} gives
    \[
        f(r,c)\equiv f(r,c-1)+1\pmod{e} \quad\text{or}\quad f(r,c)\equiv f(r-1,c)-1\pmod{e}
    \]
    Applying the induction hypothesis to $(r,c-1)$ or $(r-1,c)$ yields
    $f(r,c)\equiv i+c-r\pmod{e}$, as required.  Hence $f$ agrees with the residue function.
\end{proof}

To show that the label associated to each node in $[\Phi^{Y}_k(\lambda)]$ coincides with the residue of that node with respect to $\Lambda_j:=\Phi_k(\Lambda_i)$, note that we already know the first node of $[\Phi^Y_k(\lambda)]$ has residue $j$. Thus it suffices to show that \emph{(ii)} in \autoref{lem:residue-characterization} holds for type $\Aone[e]$.

For each of the four steps above applied to a maximal strip, the diagram is modified only inside the minimal rectangle containing that strip.  Before the relabeling procedure, the labels on all original nodes are unchanged (hence still equal to their residues in $[\lambda]$), and the only new labels that appear are the labels $\overline{k}$ on the nodes inserted to subdivide the strip: these are inserted either immediately to the right of a $k$-node (for a maximal $(k,k{+}1)$-strip) or immediately above a $k$-node (for a maximal $(k{+}1,k)$-strip).

Let $\operatorname{L}$ be the labeling on the set of nodes of $[\Phi^Y_k(\lambda)]$ after performing the local modifications and the final relabeling.  Consider any pair of horizontally adjacent nodes $(r,c),(r,c+1)\in[\Phi_k(\lambda)]$ (respectively, vertically adjacent nodes $(r,c),(r+1,c)\in[\Phi_k(\lambda)]$). If both endpoints are original nodes of $[\lambda]$, then their labels were not altered during the local modification inside the supporting rectangles, and the global relabeling acts compatibly on both labels; hence the relations
\[
    \operatorname{L}(r,c+1)\equiv \operatorname{L}(r,c)+1 \pmod{e+1}
    \qquad\text{and}\qquad
    \operatorname{L}(r+1,c)\equiv \operatorname{L}(r,c)-1 \pmod{e+1}
\]
continue to hold.

Therefore it remains only to check the horizontal and vertical adjacency relations for pairs of nodes in which at least one node is newly inserted.  By construction, every inserted node is labeled $\overline{k}$ before relabeling and lies on a modified maximal strip.  Such a node is inserted only in the following local situation: a node labeled $k$ is adjacent (horizontally or vertically, according to the type of the strip) to another node on the strip, and the insertion places $\overline{k}$ immediately to the right of that $k$-node (in the horizontal case) or immediately above that $k$-node (in the vertical case).  When the adjacent node on the strip exists and is labeled $k+1$, the insertion replaces the original adjacency by a chain of two adjacencies
\begin{center}
    \resizebox{0.2\linewidth}{!}{
    \Tableau[no border,box height=1,box width=1]{k{\overline{k}}{k{+}1}}\qquad \Tableau[no border,box height=1,box width=1]{{k{+}1},{\overline{k}},k}
    }
\end{center}
respectively.  Near an endpoint of the strip, the corresponding neighboring $k+1$-node may be absent; in that case $\overline{k}$ has only one adjacent neighbor along the strip, and there is no second adjacency relation to verify there.

After applying the relabeling, $k$ and $k+1$ are sent to $k$ and $k+2$, respectively, while $\overline{k}$ is sent to $k+1$. Consequently, every horizontal (respectively, vertical) adjacency involving an inserted node satisfies the required increasing (respectively, decreasing) relation modulo $e+1$ as well. Hence \emph{(ii)} of \autoref{lem:residue-characterization} holds for $\operatorname{L}$ in type $\Aone[e]$. Since the first node has residue $j$, it follows from \autoref{lem:residue-characterization} that $\operatorname{L}$ coincides with the residue function on $[\Phi^Y_k(\lambda)]$ with respect to $\Lambda_j$.

\begin{Example}\label{eg:subdivision-tableau}
    Continue with \autoref{eg:k-strip}, the image of $\lambda=(11)^7$ under the subdivision map $\Phi^{Y}_1$ is the one corresponding to the following Young diagram:
    \begin{center}
        \tikzset{
            C/.style={fill=cyan,text=white},
            O/.style={fill=OrangeRed, text=white}
        }
        \Tableau[no border,box height=0.5,box width=0.5]{
            [C]1[C]2[C]3450[C]1[C]2[C]3450[C]1[C]2,
            0[C]1[C]2[C]3450[C]1[C]2[C]3450,
            50[C]1[C]2[C]3450[C]1[C]2[C]345,
            450[C]1[C]2[C]3450[C]1[C]2[C]34,
            [O]3450[C]1[C]2[C]3450[C]1[C]2[C]3,
            [O]2[O]3450[C]1[C]2[C]3450[C]1[C]2,
            [O]1[O]2[O]3450[C]1[C]2[C]3450,
            0[O]1
        }
    \end{center}    
    The $(1,2)$-strips become the $(1,2,3)$-strips and the $(2,1)$-strips become the $(3,2,1)$-strips. The partition $\Phi^{Y}_1(\lambda)=(14,13^5,12,2)$
\end{Example}
\subsection{Subdivision on partitions via abaci}\label{subsec:subdivision-abacus}
The definition of the subdivision map on partitions in \autoref{subsec:subdivision-young-diagram} is natural, but it is difficult to compute in practice for large partitions, since it requires working with the entire Young diagram. In this section, we give a second, more computable description of the map, and in \autoref{subsec:equivalence-of-two-definitions} we show that the two definitions agree.

We use the abacus combinatorics from \autoref{subsec:young-diagram-abaci} and \autoref{subsec:multipartition-charge-residue}. 

\begin{Definition}\label{def:abacus-subdivision-datum}
    Fix a subdivision datum $\subdatum[\Lambda_i][i]$. For each partition $\lambda\in\Par[\Lambda_i]_{\alpha}$, an \emph{abacus subdivision datum} for $\lambda$
    is a tuple $(a,c,d,a')\in\mathbb Z^{4}$ such that
    \[
    a\ge \max\{k,\ell(\lambda)\},\qquad a\equiv i\pmod e,\qquad a+d=ce+k,\qquad d\in I,\qquad a'=a+c.
    \]
\end{Definition}

The subdivision map on the set of partitions is easier to define in terms of abaci combinatorics:

\begin{Definition}\label{def:subdivision-abacus}
    Fix a subdivision datum $\subdatum[\Lambda_i][i]$ and let $\lambda\in\Par[\Lambda_i]_{\alpha}$, choose an abacus subdivision datum $\absubdatum$ for $\lambda$. Form the $e$-abacus of $\lambda$ with $a$ beads. Insert a new runner immediately to the left of the $k$-runner, and place $c$ beads on this runner in the top rows.
    Finally, relabel the runners from left to right by $0,1,\ldots,e-1,e$.
    The partition corresponding to the resulting $(e+1)$-abacus with $a'$ beads is defined to be $\Phi^A_k(\lambda)$.
\end{Definition}

It is useful to record the positions of the beads added in \autoref{def:subdivision-abacus}.
Define the set $\bT_{e,k,c}$ of non-negative integers by
\begin{equation}\label{eq:T-ekc}
    \bT_{e,k,c}:=\{ie+k\mid 0\leq i\leq c-1\}.
\end{equation}
Let $\subdatum[\Lambda_i][i]$ be a subdivision datum and let $(a,c,d,a')$ be an abacus subdivision datum
for $\lambda\in \Par[\Lambda_i]_{\alpha}$.
Then, in the resulting $(e+1)$-abacus for $\Phi^A_k(\lambda)$, the positions of the newly added beads are exactly the set $\bT_{e+1,k,c}$.

\begin{Example}\label{eg:subdivision-abacus}
    Returning to \autoref{eg:k-strip}, the subdivision datum is $\subdatum=(5,I,\Lambda_1,\alpha,1,1)$ and $\lambda=(11^7)$. Choose an abacus subdivision datum for $\lambda$ to be $\absubdatum=(11,2,0,13)$. Form the $e$-abacus of $\lambda$ with $11$ beads:
    \begin{center}
      \Abacus[runner labels={0,1,2,3,4}]{5}{11^7,0^4}
    \end{center}
    Apply $\Phi^{A}_k$ to $\lambda$: insert a new runner immediately to the left of the $1$-runner, and place two beads on this runner as high as possible. The resulting $(e+1)$-abacus with $13$ beads is:
    \begin{center}
      \Abacus[runner labels={0,1,2,3,4,5}]{6}{14,13^5,12,*2,0,0,0,*0,0}
    \end{center}
    This abacus corresponds to the partition $(14,13^5,12,2)$. Comparing with \autoref{eg:subdivision-tableau}, we see that the two definitions agree. The two newly added beads are located at $\{1,7\}$, which is precisely $\bT_{6,1,2}$.
\end{Example}

\begin{Lemma}\label{lm:e-regularity-preserving}
    Take $\lambda\in\Par[\Lambda]_{\alpha}$ $e$-regular, then $\Phi^A_k(\lambda)$ is $(e+1)$-regular.
\end{Lemma}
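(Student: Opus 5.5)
We translate $e$-regularity into an abacus condition and check that this condition survives the insertion of the new runner in \autoref{def:subdivision-abacus}. Recall that $\lambda$ is $e$-regular if no $e$ nonzero parts are equal. In terms of the beta-set $B=B(\lambda;a)$ this reads as follows: there is no $x\ge 0$ such that $x,x+1,\dots,x+e-1$ all lie in $B$, unless all $e$ corresponding parts are zero. A block of $e$ consecutive beads gives $e$ equal parts; the parts are zero exactly when the block lies in the initial segment $\{0,1,\dots,a-\ell(\lambda)-1\}$ of beads, which are the positions below the first gap.

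So the plan is to prove the contrapositive. Suppose the $(e+1)$-abacus of $\Phi^A_k(\lambda)$ with $a'$ beads contains $e+1$ consecutive bead positions $y,y+1,\dots,y+e$ whose parts are nonzero, i.e.\ some gap lies below $y$. Since $e+1$ consecutive positions on an $(e+1)$-abacus meet every runner exactly once, the block meets the new runner exactly once. Deleting that position and reading the remaining $e$ positions in the old labelling, we obtain $e$ positions of the old $e$-abacus that are consecutive in the reading order. This holds because deleting the new runner is an order-preserving bijection from the positions on the old runners to $\mathbb{Z}_{\ge0}$. By construction these $e$ positions are beads of $B(\lambda;a)$.

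It remains to check that the corresponding parts of $\lambda$ are nonzero, which we expect to be the only delicate point. We must show that a gap of $\Phi^A_k(\lambda)$ below $y$ yields a gap of $\lambda$ below the deleted block. The beads on the new runner are exactly $\bT_{e+1,k,c}$, which is flush with no gaps among its top $c$ rows. So any gap below $y$ lies either on an old runner, or on the new runner at row $\ge c$. In the first case it is an old gap below the block, and we are done. In the second case we use the datum: we have $a+d=ce+k$ with $a\ge\ell(\lambda)$, and the bead count $a$ means that position $a$ and higher carry only nonzero-part beads shifted. We would argue that a new-runner gap at row $\ge c$ lies at position at least $c(e+1)+k$. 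Every old position before it is at most about $ce+k+c$, so the old positions of the block sit beyond index $ce+k-d=a$, up to the shift by $c$. Hence the block positions in $B(\lambda;a)$ are $\ge a-\ell(\lambda)$, so their parts are nonzero.

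If this index bookkeeping proves awkward, we fall back on the Young-diagram description, whose equivalence with $\Phi^A_k$ is \autoref{thm:equivalence-of-two-definitions-subdivision}. Equal nonzero rows of $\Phi^Y_k(\lambda)$ would arise only from equal rows of $\lambda$, since strip insertions add at most one node per row in a way governed by the row residues. However, we expect the abacus argument above to suffice.
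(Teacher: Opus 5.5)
Your argument is correct and follows the paper's proof. You pass to the contrapositive, note that $e+1$ consecutive beads of the $(e+1)$-abacus meet the inserted runner exactly once, and delete that bead to obtain $e$ consecutive beads of $B(\lambda;a)$. Your explicit check that the corresponding parts are nonzero is a point the paper leaves implicit.

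That check is simpler than you make it, because your second case cannot occur. The block's bead on the new runner lies in $\bT_{e+1,k,c}$, so its position is at most $(c-1)(e+1)+k$. Every gap on the new runner has position at least $c(e+1)+k$, so it is larger than $y$. You can therefore drop both the index bookkeeping and the Young-diagram fallback.
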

\begin{proof}
    More generally, inserting a flush runner preserves $e$-regularity. Recall that an $e$-abacus display of $\lambda$ is $e$-regular if and only if it contains no string of $e$ consecutive beads with no gap between them. Suppose that $\Phi^A_k(\lambda)$ is not $(e+1)$-regular. Then the $(e+1)$-abacus display of $\Phi^A_k(\lambda)$ contains $(e+1)$ consecutive beads with no gap between them. Exactly one of these beads lies on the newly inserted runner, so the remaining $e$ beads come from the original $e$-abacus display and form $e$ consecutive beads with no gap between them. This shows that $\lambda$ is not $e$-regular, a contradiction. 
\end{proof}
In view of the discussion in \autoref{sec:categorification}, \autoref{lm:e-regularity-preserving} is the same as \cite[Lemma 4.18]{alice-full-runner-removal}.
\subsection{Equivalence of the two definitions}\label{subsec:equivalence-of-two-definitions}
In this section, we prove that for a partition $\lambda$, the map $\Phi_k^Y(\lambda)$ defined in \autoref{subsec:subdivision-young-diagram} using Young diagrams coincides with $\Phi_k^A(\lambda)$ defined in \autoref{subsec:subdivision-abacus} using abaci. Consequently, in later sections we may simply write $\Phi_k(\lambda)$ and refer to it as the subdivision map on partitions.

\begin{Definition}\label{def:k_lambda}
    Fix a subdivision datum $\subdatum[\Lambda=\Lambda_x][x]$ and take $\lambda\in \Par[\Lambda]_{\alpha}$.
    Let $k(\lambda)$ be the number of non-trivial maximal $(k+1, k)$-strips in the Young diagram $[\lambda]$. By \autoref{lm:initial-nodes}, such strips correspond to adjacent pairs in the first column $(r, 1)$ and $(r+1, 1)$ with residues $k+1$ and $k$ respectively. Thus:
    $$
    k(\lambda) := \# \{ 1 \le r < \ell(\lambda) \mid \operatorname{res}(r, 1) \equiv k+1 \pmod e \}.
    $$
\end{Definition}
In \autoref{def:k_lambda}, the maximal $(k{+}1,k)$-strip is required to be non-trivial. Indeed, by \autoref{def:subdivision-young-diagram}, a trivial $(k{+}1,k)$-strip (i.e.\ a single $(k{+}1)$-node) simply becomes a $(k{+}2)$-node under $\Phi^{Y}_k$ and hence does not change the Young diagram under subdivision.

The case $k(\lambda)=0$ is of key importance in \autoref{subsec:subdivision-standard-tableaux} and \autoref{sec:categorical-subdivisions}. For later use, we extend this terminology to multipartitions as follows.

\begin{Definition}\label{def:k-horizontal}
    Fix a subdivision datum $\subdatum$, an $\ell$-partition $\blam\in \Par[\bkappa]$ is \textbf{$\bm{k}$-horizontal} if each component $\blam^{(m)}$ satisfies $k(\blam^{(m)})=0$, equivalently, if in every component of $[\blam]$ there is no non-trivial maximal $(k{+}1,k)$-strip.
\end{Definition}

We now study some properties of $k(\lambda)$. Let $\lfloor\bullet\rfloor$ be the floor function, i.e.  for any real number $x$, $\lfloor x \rfloor$ is the maximal integer $N$ such that $x\geq N$.
\begin{Lemma}\label{lm:counting_formula}
    Fix a subdivision datum $\subdatum[\Lambda_x][x]$ and take $\lambda\in \Par[\Lambda_x]_{\alpha}$.
    Let $\rho\in I$ be the unique integer satisfying $\rho\equiv x-k\pmod e$.
    Then
    \begin{equation}\label{eq:k-lambda-counting-formula}
        k(\lambda)
        = \max\Bigl(0,\ \Bigl\lfloor \frac{\ell(\lambda)-1-\rho}{e}\Bigr\rfloor + 1\Bigr).
    \end{equation}
\end{Lemma}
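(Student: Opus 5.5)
By \autoref{def:k_lambda}, $k(\lambda)$ equals the number of integers $r$ with $1\le r<\ell(\lambda)$ and $\res(r,1)\equiv k+1\pmod e$. The plan is to turn this residue condition into a congruence on $r$ and then count an arithmetic progression.

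First, translate the condition. Since $\Lambda=\Lambda_x$, we have $\res(r,1)\equiv x+1-r\pmod e$. So the condition $\res(r,1)\equiv k+1$ becomes $x-r\equiv k$, that is, $r\equiv x-k\equiv\rho\pmod e$. Hence
\[
k(\lambda)=\#\{\,1\le r\le \ell(\lambda)-1 \mid r\equiv \rho \pmod e\,\}.
\]

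Next, count the progression. Here $\rho\in\{0,1,\dots,e-1\}$, and the case $\rho=0$ needs care. When $\rho=0$, the admissible values are $r=e,2e,\dots$, and $r=0$ is excluded. When $\rho\ge 1$, they are $r=\rho,\rho+e,\rho+2e,\dots$.

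For $\rho\ge1$, write $L=\ell(\lambda)-1$. If $L\ge\rho$, the count of $r=\rho+je\le L$ with $j\ge0$ is $\lfloor (L-\rho)/e\rfloor+1$. If $L<\rho$, the count is $0$. In that case $-e< L-\rho<0$, since $L\ge -1$ and $\rho\le e-1$ give $L-\rho\ge -e$. So the floor is $-1$ (or possibly $-1$ when $L-\rho=-e$), the expression $\lfloor (L-\rho)/e\rfloor+1$ is $\le 0$, and the $\max$ returns $0$.

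The main obstacle is the case $\rho=0$. There the formula $\lfloor L/e\rfloor+1$ would count $r=0$, which is not in the range $r\ge1$, so it overcounts by one. I therefore need to check how the convention $\rho\in I$ interacts with this case. Two options are available:

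1. Reinterpret $\rho$ as the representative in $\{1,\dots,e\}$. Then the count is $\lfloor (L-e)/e\rfloor+1=\lfloor L/e\rfloor$, matching the true count.
2. Use that $\ell(\lambda)$ is bounded by the residue constraint (the abacus datum has $a\ge k$ and $a\equiv x$).

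Concretely, I would first verify the formula on small examples with $x=k$, i.e.\ $\rho=0$. If the statement as written fails there, I would present the count as $\#\{1\le r\le L: r\equiv\rho\}$ and note that the displayed formula holds with $\rho$ taken in $\{1,\dots,e\}$, equivalently $\lfloor L/e\rfloor$ when $\rho=0$.

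Finally, for the nonempty case, rewriting $r=\rho+je$ and solving $0\le j\le (L-\rho)/e$ is routine and completes the proof.
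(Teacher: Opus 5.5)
Your counting argument is the same as the paper's: reduce $\res(r,1)\equiv k+1$ to $r\equiv x-k\pmod e$, then count the progression in $[1,\ell(\lambda)-1]$. Your suspicion about $\rho=0$ is also correct, so you can drop the conditional phrasing. With $\rho\in I=\{0,\dots,e-1\}$ as stated, the formula fails whenever $x=k$ and $\lambda\neq\varnothing$. For instance, take $e=3$, $x=k=1$ and $\lambda=(1^4)$. The first-column residues are $1,0,2,1$, so $k(\lambda)=1$, whereas the formula gives $\lfloor 3/3\rfloor+1=2$. Similarly $\lambda=(1)$ gives $1$ instead of $0$. The paper's proof makes exactly the slip you anticipated: it asserts that ``the smallest positive solution is $\rho$'', which is false for $\rho=0$, where the smallest positive solution is $e$.

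Your option~1 is the right repair. Take $\rho$ to be the representative of $x-k$ in $\{1,\dots,e\}$. Then every solution is $\rho+te$ with $t\ge 0$, all of them are $\ge 1$, and your count goes through verbatim. When $\ell(\lambda)-1<\rho$ the floor term is $\le 0$, so the $\max$ returns $0$.

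This is also the convention the paper actually uses afterwards. The proof of \autoref{cor:k-lambda-zero-bound-2} sets $\rho=e-d$ with $d\in I$, which equals $e$ exactly when $x=k$. Likewise, \autoref{lm:k-lambda-zero-bound} is only true under this convention.

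Discard option~2. Nothing bounds $\ell(\lambda)$: the abacus datum is chosen with $a\ge\ell(\lambda)$, not the reverse. So the literal statement cannot be rescued that way, and you should simply prove the corrected version.
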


\begin{proof}
    In the first column of $[\lambda]$, $\operatorname{res}(r,1)\equiv x+1-r\pmod e$.
    Hence $\operatorname{res}(r,1)\equiv k+1\pmod e$ is equivalent to
    \begin{equation}\label{eq:counting-congruence}
    x+1-r \equiv k+1 \pmod e
    \quad\iff\quad
    r \equiv x-k \pmod e.
    \end{equation}
    We count integers $r$ with $1\le r\le \ell(\lambda)-1$ satisfying \eqref{eq:counting-congruence}.
    The smallest positive solution is $\rho$, and every solution is of the form $\rho+te$ for $t\in\Z_{\ge 0}$.
    Therefore $k(\lambda)$ is the number of non-negative integers $t$ such that $\rho+te\le \ell(\lambda)-1$.
    
    If $\ell(\lambda)-1<\rho$, there are no solutions and $k(\lambda)=0$.
    Otherwise, the largest admissible $t$ is $\bigl\lfloor(\ell(\lambda)-1-\rho)/e\bigr\rfloor$, so the number of solutions is
    $\bigl\lfloor(\ell(\lambda)-1-\rho)/e\bigr\rfloor+1$.
    This gives \eqref{eq:k-lambda-counting-formula}.
\end{proof}

\begin{Lemma}\label{lm:length-subdivision}
    Fix a subdivision datum $\subdatum[\Lambda_x][x]$ and take $\lambda\in \Par[\Lambda_x]_{\alpha}$. Then 
    \[
        \ell\big(\Phi_k^Y(\lambda)\big) = \ell(\lambda) + k(\lambda).
    \]
\end{Lemma}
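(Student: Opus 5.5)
We prove the lemma by tracking how the first column of $[\lambda]$ changes under the local replacements of \autoref{def:subdivision-young-diagram}. The number of rows of $\Phi^Y_k([\lambda])$ equals the length of its first column. So it suffices to determine which insertions add a node to the first column, or equivalently which of them add a new row at the bottom. We will argue that rows are never deleted, and that the replacement for each maximal strip changes the set of occupied rows only in the way recorded below.

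We treat the four kinds of maximal strips separately.

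For a maximal $(k,k+1)$-strip, the new nodes $\overline{k}$ are inserted immediately to the right of a $k$-node in the same row. Such an insertion lengthens that row and shifts the later nodes of the strip one step to the right, but it creates no new row. The same holds for a trivial $(k+1,k)$-strip, which is merely relabelled.

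For a non-trivial maximal $(k+1,k)$-strip, \autoref{lm:initial-nodes} says the initial node lies in the first column, say at $(r,1)$ with $\res(r,1)=k+1$ and $(r+1,1)$ of residue $k$. The replacement inserts a $\overline{k}$ node above each $k$-node of the strip. Looking at the pictures for the non-trivial $(k+1,k)$ cases, the whole strip is stretched by one row in the first column: the column segment $k+1,k$ becomes $k+2,k+1,k$. In particular the first column below row $r$ is shifted down by exactly one. The later steps of the strip, at columns $c>1$, have their inserted nodes absorbed into existing rows. I would check this against the rectangle argument already given in the text: inside the minimal rectangle, only the first-column insertion increases the height.

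Summing over strips, each non-trivial maximal $(k+1,k)$-strip contributes exactly one new first-column node, and no other strip contributes any. Different strips start at different rows $r$, so their contributions add up. Hence the first column of $\Phi^Y_k([\lambda])$ has length $\ell(\lambda)+k(\lambda)$, using the description of $k(\lambda)$ in \autoref{def:k_lambda} as a count of adjacent first-column pairs.

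The main obstacle is making precise that the local modifications for distinct strips are compatible: the rectangles may interact, and the first column must grow by exactly one per non-trivial $(k+1,k)$-strip. To handle this cleanly, I would argue only on the first column. Its residues are $x+1-r$, so the column is itself a sequence of vertical adjacencies. The procedure inserts a $\overline{k}$ above every $k$-node in the first column that lies in a non-trivial $(k+1,k)$-strip, which happens exactly when the node above it has residue $k+1$. It inserts nothing else into the first column, because $(k,k+1)$-strip insertions go to the right and never into column $1$, except for the shift of the whole row. Since a $k$-node $(r+1,1)$ heads a non-trivial strip precisely when $1\le r<\ell(\lambda)$ and $\res(r,1)=k+1$, the count of insertions is exactly $k(\lambda)$.
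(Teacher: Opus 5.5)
Your proposal is correct and follows essentially the same argument as the paper. The length is the height of the first column. Each non-trivial maximal $(k+1,k)$-strip turns its first-column pair $k+1,k$ into the triple $k+2,k+1,k$ and so adds exactly one node there, while trivial $(k+1,k)$-strips and all $(k,k+1)$-strips leave that height unchanged. Your extra checks spell out what the paper leaves implicit: that $(k,k+1)$-insertions never land in column $1$, and that every $k$-node $(r+1,1)$ with $r\ge 1$ lies in a non-trivial $(k+1,k)$-strip.
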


\begin{proof}
    The length of a partition is the number of nodes in the first column of its Young diagram. Maximal $(k+1, k)$-strips start from the first column. By \autoref{def:subdivision-young-diagram}, the map $\Phi_k^Y$ replaces:
    \begin{enumerate}
        \item A non-trivial maximal $(k+1,k)$-strip \Tableau[no border,box font=\tiny]{{k{+}1},{k}} with a vertical triple \Tableau[no border, box font=\tiny]{{k{+}2},{k{+}1},{k}}. This increases the column height by $1$.
        \item A trivial maximal $(k+1,k)$-strip (single $k+1$) with a single $k+2$. This preserves height.
    \end{enumerate}
    Thus, the total length increases by exactly the number of non-trivial maximal  $(k+1,k)$-strips, $k(\lambda)$.
\end{proof}
\begin{Lemma}\label{lm:k-lambda-zero-bound}
    Fix a subdivision datum $\subdatum[\Lambda_x][x]$ and take $\lambda\in \Par[\Lambda_x]_{\alpha}$. 
    Let $\rho \in I$ be the unique integer satisfying $\rho \equiv x-k \pmod e$.
    Then $k(\lambda)=0$ if and only if $\ell(\lambda) \le \rho$.
\end{Lemma}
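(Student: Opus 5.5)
The plan is to read this off directly from \autoref{lm:counting_formula}, which gives
\[
k(\lambda)=\max\Bigl(0,\ \Bigl\lfloor \tfrac{\ell(\lambda)-1-\rho}{e}\Bigr\rfloor+1\Bigr).
\]
Thus $k(\lambda)=0$ if and only if $\lfloor(\ell(\lambda)-1-\rho)/e\rfloor+1\le 0$, that is, $\lfloor(\ell(\lambda)-1-\rho)/e\rfloor\le -1$. Since $e>0$, this floor is at most $-1$ exactly when $(\ell(\lambda)-1-\rho)/e<0$, i.e.\ when $\ell(\lambda)-1<\rho$. As all quantities are integers, this is equivalent to $\ell(\lambda)\le\rho$.

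As a sanity check, I will also give the direct argument from \autoref{def:k_lambda}, which avoids the floor manipulation. By \eqref{eq:counting-congruence}, $k(\lambda)$ counts the rows $1\le r\le\ell(\lambda)-1$ with $r\equiv\rho\pmod e$. One subtlety needs care: if $\rho=0$, the smallest positive solution is $e$, not $\rho$. In that case, however, the statement ``$\ell(\lambda)\le 0$'' would force $\lambda=\varnothing$. So I need to check how $\rho\in I=\{0,\dots,e-1\}$ is being used in \autoref{lm:counting_formula}.

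\begin{itemize}
\item If $\rho\ge1$, the smallest solution is $\rho$. There is no admissible $r$ if and only if $\rho>\ell(\lambda)-1$, i.e.\ $\ell(\lambda)\le\rho$, and the statement follows.
\item If $\rho=0$, i.e.\ $x=k$, then row $r$ with $r\equiv 0$ first occurs at $r=e$. The honest criterion becomes $\ell(\lambda)\le e$, so the lemma as stated would need $\rho$ read as the representative in $\{1,\dots,e\}$.
\end{itemize}

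The main (and only) obstacle is this edge case. I would handle it by following the convention already implicit in \autoref{lm:counting_formula} (whose proof asserts that the smallest positive solution is $\rho$), interpreting $\rho$ as the least positive residue. With that reading, both the floor computation and the direct count give $k(\lambda)=0\iff\ell(\lambda)\le\rho$ immediately. I would flag this convention in a remark rather than alter the statement.
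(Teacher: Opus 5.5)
Your main argument is exactly the paper's proof: read $k(\lambda)=0$ off the formula in \autoref{lm:counting_formula} and unwind the floor using $e>0$. Your edge-case flag is also correct and goes beyond the paper. With $\rho$ taken literally in $I$, the case $x=k$ gives $\rho=0$, and there both \autoref{lm:counting_formula} and this lemma fail. For instance, $\lambda=(1)$ has $k(\lambda)=0$ but $\ell(\lambda)>0$, and in the paper's own \autoref{eg:k-strip} the formula gives $2$ instead of $1$. Reading $\rho\in\{1,\dots,e\}$ is precisely the convention the paper uses later, via $\rho=e-d$ in \autoref{cor:k-lambda-zero-bound-2}.
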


\begin{proof}
By \autoref{lm:counting_formula}, the condition $k(\lambda)=0$ is equivalent to 
$$
\frac{\ell(\lambda)-1-\rho}{e} < 0.
$$
Since $e > 0$, this simplifies to $\ell(\lambda) - 1 - \rho < 0$, or $\ell(\lambda) \le \rho$.
\end{proof}
The following corollary will be useful later.
\begin{Corollary}\label{cor:k-lambda-zero-bound-2}
    Fix a subdivision datum $\subdatum[\Lambda_x][x]$ and take $\lambda\in\Par[\Lambda_x]_{\alpha}$. Take an abacus subdivision datum $\absubdatum$ for $\lambda$. Then $k(\lambda)=0$ if and only if $\ell(\lambda)\leq e-d$.
\end{Corollary}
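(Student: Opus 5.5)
By \autoref{lm:k-lambda-zero-bound}, $k(\lambda)=0$ if and only if $\ell(\lambda)\le\rho$, where $\rho\in I$ is the residue of $x-k$ modulo $e$. So it suffices to show that $\rho=e-d$. The one exception is the degenerate case $\rho=0$ (equivalently $d=0$); there $e-d=e$, and we will need to check that the two bounds still give the same condition.

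First we compute $d$ modulo $e$. The abacus subdivision datum $(a,c,d,a')$ satisfies $a\equiv x\pmod e$ and $a+d=ce+k$. Hence
\[
d\equiv k-a\equiv k-x\equiv -\rho\pmod e.
\]
Since both $d$ and $\rho$ lie in $I=\{0,1,\dots,e-1\}$, there are two cases.
\begin{itemize}
\item If $\rho\neq 0$, then $d=e-\rho$, so $\rho=e-d$. The condition $\ell(\lambda)\le\rho$ is then literally $\ell(\lambda)\le e-d$.
\item If $\rho=0$, then $d=0$.
\end{itemize}

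The main obstacle is the case $\rho=0$, where \autoref{lm:k-lambda-zero-bound} gives $k(\lambda)=0\iff\ell(\lambda)=0$, while the claimed bound is $\ell(\lambda)\le e$. These two conditions disagree: for example, if $\ell(\lambda)=1$ then $k(\lambda)=0$ trivially, since the defining set is empty. So we must recheck the definitions at this step.

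Go back to the counting in \autoref{lm:counting_formula}. There, $k(\lambda)$ counts rows $r$ with $1\le r<\ell(\lambda)$ and $r\equiv x-k\pmod e$. When $x\equiv k$, the smallest positive solution is $r=e$, not $\rho=0$. So the effective threshold is $\rho'=e$ when $\rho=0$, giving $k(\lambda)=0\iff\ell(\lambda)\le e$. This matches $e-d=e$.

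I will therefore state the argument with $\rho$ taken to be the least positive integer congruent to $x-k$ modulo $e$, i.e.\ $\rho\in\{1,\dots,e\}$. With that convention, $\rho=e-d$ holds in all cases, since $d\in\{0,\dots,e-1\}$ and $d\equiv-\rho\pmod e$. The corollary then follows directly from \autoref{lm:k-lambda-zero-bound}, applied with this positive representative.
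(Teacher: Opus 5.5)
Your proof is correct and follows the paper's route. Like the paper, you use $a\equiv x$ and $a+d=ce+k$ to get $d\equiv k-x\pmod e$, identify the threshold in \autoref{lm:k-lambda-zero-bound} with $e-d$, and conclude. Your treatment of $d=0$ is more careful than the paper's: the paper asserts $\rho=e-d$ with $\rho\in I$, which is impossible when $d=0$, and relies on \autoref{lm:counting_formula}, whose claim that the least positive solution of $r\equiv x-k\pmod e$ is $\rho$ fails for $\rho=0$. Taking $\rho$ to be the least positive representative in $\{1,\dots,e\}$, as you do, repairs both slips at once.
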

\begin{proof}
    Let $\rho \in I$ be the unique integer satisfying $\rho \equiv x-k \pmod e$.
    By definition, $a+d=ce+k$ and $a\equiv x\pmod e$. Hence $x-k\equiv ce-d\equiv -d\pmod e$, and it follows that $\rho=e-d$. The statement then follows from \autoref{lm:k-lambda-zero-bound}.
\end{proof}

To compare $\Phi^A_k$ and $\Phi^{Y}_k$ explicitly, we analyze the row lengths. We introduce three useful functions, which will be used to describe the subdivision map in terms of beta numbers. We fix a subdivision datum $\subdatum[\Lambda_x][x]$ for convenience.

\begin{Definition}\label{def:counting-functions-NkM}
    Take $M\in \Z_{\geq 0}$ and let $N_k(M)$ be the number of integers $y$ such that $0 \le y \le M$ and $y \equiv k \pmod e$. An explicit formula is
    \begin{equation}\label{eq:NkM-formula}
        N_k(M)=\left\lfloor \frac{M-k}{e} \right\rfloor + 1.
    \end{equation}
\end{Definition}
\begin{Definition}\label{def:eps-d}
    For $u\in I=\{0,1,\dots,e-1\}$, define the step function on $I$ by
    \begin{equation}\label{eq:eps-def}
      \varepsilon_u(r):=
      \begin{cases}
        0,& 0\le r<u,\\
        1,& u\le r\le e-1,
      \end{cases}
      \qquad(r\in I).
    \end{equation}
\end{Definition}

\begin{Definition}\label{def:iota}
    For $u\in I=\{0,1,\dots,e-1\}$, define a map $\iota_u:\Z_{\ge0}\to \Z_{\ge0}$ as follows. For $n\in\Z_{\ge0}$, write $n=qe+r$ with $q\in\Z_{\ge0}$ and $r\in I$, and set    \begin{equation}\label{eq:iota-def}
      \iota_u(n):=q(e+1)+r+\varepsilon_u(r).
    \end{equation}
\end{Definition}

The subdivision map $\Phi_k^A$ acts on the corresponding beta set essentially by applying $\iota_k$. More precisely, we have the following.
\begin{Lemma}\label{lm:subdivision-beta-numbers}
    Fix a subdivision datum $\subdatum[\Lambda_x][x]$. Take $\lambda\in\Par[\Lambda_x]_{\alpha}$ and an abacus subdivision datum $\absubdatum$ for $\lambda$.
    Let $B(\lambda;a)$ and $B(\Phi^A_k(\lambda);a')$ be the corresponding beta sets (see \autoref{eq:beta-set}). Set $\bT:=\bT_{e+1,k,c}$ as in \autoref{eq:T-ekc}. Then
    $$
          B(\Phi^A_k(\lambda);a')=\iota_k\bigl(B(\lambda;a)\bigr)\sqcup \bT.
    $$
\end{Lemma}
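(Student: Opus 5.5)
The statement is essentially a bookkeeping of positions on the abacus before and after inserting a runner, so the plan is to check directly that the insertion in \autoref{def:subdivision-abacus} acts on positions exactly as $\iota_k$.

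First I will fix what "insert a runner immediately to the left of the $k$-runner and relabel" does to the position of an old bead. Take a position $n=qe+r$ of the $e$-abacus with $q\ge 0$ and $r\in I$, so it sits in row $q$ on runner $r$. Inserting the new runner does not change the row, so the bead stays in row $q$. The old runners $0,\dots,k-1$ keep their labels. The new runner receives label $k$, and each old runner $r\ge k$ becomes runner $r+1$. Hence in the $(e+1)$-abacus the bead sits at position $q(e+1)+r$ if $r<k$, and at $q(e+1)+r+1$ if $r\ge k$. By \autoref{def:eps-d} and \autoref{def:iota}, this is exactly $\iota_k(n)$.

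Second, I will record two easy facts about $\iota_k$.
\begin{itemize}
\item $\iota_k$ is injective, indeed strictly increasing.
\item Its image is precisely the set of non-negative integers not congruent to $k$ modulo $e+1$.
\end{itemize}
The new runner is the $k$-runner of the $(e+1)$-abacus. Its top $c$ positions are $\{t(e+1)+k\mid 0\le t\le c-1\}=\bT_{e+1,k,c}$, which is the observation already recorded after \eqref{eq:T-ekc}. This set lies in the complement of $\iota_k(\Z_{\ge0})$, so the union $\iota_k(B(\lambda;a))\cup\bT$ is disjoint and has exactly $a+c=a'$ elements.

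Finally, by \autoref{def:subdivision-abacus}, $\Phi^A_k(\lambda)$ is by definition the partition whose $(e+1)$-abacus with $a'$ beads has bead set $\iota_k(B(\lambda;a))\sqcup\bT$. Since a beta set determines the partition (the remark after \eqref{eq:beta-set}), this set is $B(\Phi^A_k(\lambda);a')$.

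The only point needing care is the requirement that $\ell(\Phi^A_k(\lambda))\le a'$, so that $B(\,\cdot\,;a')$ is defined. This is automatic because the defining bead set has $a'$ elements, so I expect no real obstacle. The main care is the off-by-one convention of $\varepsilon_k$ at $r=k$: the old $k$-runner moves right, so $\varepsilon_k(k)=1$, matching \eqref{eq:eps-def}.
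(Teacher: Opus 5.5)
Your proposal is correct and takes the same approach as the paper, which simply calls the lemma a direct translation of \autoref{def:subdivision-abacus} into beta numbers. You spell that translation out: old beads move by $qe+r\mapsto q(e+1)+r+\varepsilon_k(r)=\iota_k(qe+r)$, the new beads fill $\bT$, disjointness follows by working modulo $e+1$, and the count is $a+c=a'$.
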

\begin{proof}
    This is a direct translation of \autoref{def:subdivision-abacus}, using the equivalence between the $e$-abacus with $a'$ beads and the $a'$-beta numbers of the corresponding partition.
\end{proof}
\begin{Lemma}\label{lm:Nk-epsilon}
    Write $M = qe + r$ with $0 \le r < e$, then $N_k(M) = q + \varepsilon_k(r)$.
\end{Lemma}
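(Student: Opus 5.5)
Write $M=qe+r$ with $q\in\Z_{\ge0}$ and $0\le r<e$, and compare the formula \autoref{eq:NkM-formula} directly with \autoref{eq:eps-def}; here $k\in I$, so $0\le k\le e-1$. The idea is to reduce the floor of $(M-k)/e$ to $q$ plus a correction that depends only on how $r$ compares with $k$.

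First I substitute to get
\[
  \frac{M-k}{e}=q+\frac{r-k}{e},\qquad\text{hence}\qquad \Bigl\lfloor\frac{M-k}{e}\Bigr\rfloor=q+\Bigl\lfloor\frac{r-k}{e}\Bigr\rfloor,
\]
since $q$ is an integer. Because $0\le r\le e-1$ and $0\le k\le e-1$, we have $-(e-1)\le r-k\le e-1$, so $(r-k)/e\in(-1,1)$. It follows that $\lfloor (r-k)/e\rfloor$ equals $0$ when $r\ge k$ and equals $-1$ when $r<k$.

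Adding $1$ then gives $N_k(M)=q+1$ if $k\le r\le e-1$ and $N_k(M)=q$ if $0\le r<k$. By \autoref{def:eps-d}, this is exactly $q+\varepsilon_k(r)$.

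Alternatively, one can count directly: the integers $y\equiv k\pmod e$ in $[0,M]$ are $k,k+e,\dots$. Among the first $qe$ integers $0,\dots,qe-1$ there are exactly $q$ of them. In the remaining block $qe,\dots,qe+r$ there is one more, namely $qe+k$, precisely when $k\le r$.

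There is no real obstacle here. The only point that needs care is the bound $|r-k|<e$, which guarantees that the floor takes only the values $0$ and $-1$.
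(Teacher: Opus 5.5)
Your proposal is correct and follows essentially the same route as the paper's proof: substitute $M=qe+r$ into the floor formula for $N_k(M)$, split off $q$, and observe that $\lfloor (r-k)/e\rfloor$ equals $0$ or $-1$ according as $r\ge k$ or $r<k$. Your explicit bound $|r-k|<e$ and the direct counting argument only add justification that the paper leaves implicit.
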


\begin{proof}
    By definition, $N_k(M) = \lfloor \frac{M-k}{e} \rfloor + 1$. Substituting $M=qe+r$:
    $$
    N_k(M) = q + \left\lfloor \frac{r-k}{e} \right\rfloor + 1.
    $$
    If $r \ge k$, the floor term is $0$, yielding $q+1$. If $r < k$, the floor term is $-1$, yielding $q$.
    This matches the definition of $q + \varepsilon_k(r)$.
\end{proof}

\begin{Corollary}\label{cor:relation-NkM-iota}
    Take $n\in\Z_{\geq 0}$ and $k\in\{0,1,\cdots,e-1\}$, we have $\iota_k(n)=n+N_k(n)$. \hfill$\square$
\end{Corollary}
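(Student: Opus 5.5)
The statement is \autoref{cor:relation-NkM-iota}: for $n\in\Z_{\ge 0}$ and $k\in I$ we have $\iota_k(n)=n+N_k(n)$. The plan is to compare both sides after writing $n$ in its base-$e$ division form. No case analysis is needed beyond what is already packaged in \autoref{lm:Nk-epsilon}.

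First, write $n=qe+r$ with $q\in\Z_{\ge 0}$ and $r\in I=\{0,1,\dots,e-1\}$. This representation is unique, and it is the same one used in \autoref{def:iota}.

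By \autoref{def:iota},
\[
\iota_k(n)=q(e+1)+r+\varepsilon_k(r)=(qe+r)+q+\varepsilon_k(r)=n+q+\varepsilon_k(r).
\]

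Next, apply \autoref{lm:Nk-epsilon} with $M=n$. This gives $N_k(n)=q+\varepsilon_k(r)$. Substituting this into the previous display yields $\iota_k(n)=n+N_k(n)$, as required.

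There is no real obstacle in this argument. The only point to check is that \autoref{lm:Nk-epsilon} is applied with the same quotient $q$ and remainder $r$ as in the definition of $\iota_k$, and this holds because the decomposition $n=qe+r$ is unique. It also helps to recall the meaning: $N_k(n)$ counts the positions $y\le n$ on the $k$-runner, and this is exactly the shift produced by inserting the new runner in \autoref{def:subdivision-abacus}.
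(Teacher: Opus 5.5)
Your argument is correct and is exactly the intended one. The paper leaves this corollary without proof (marked $\square$) because it follows immediately by substituting $N_k(n)=q+\varepsilon_k(r)$ from \autoref{lm:Nk-epsilon} into $\iota_k(n)=q(e+1)+r+\varepsilon_k(r)=n+q+\varepsilon_k(r)$ from \autoref{def:iota}, just as you do.
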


For $\lambda\in\Par[\Lambda_x]_{\alpha}$, define $m_r:=m_{r,k}(\lambda)$ to be the number of $k$-nodes in the $r$-th row of $[\lambda]$. Take an abacus subdivision datum $\absubdatum$ for $\lambda$. Then, since $a\equiv x\pmod{e}$, $m_r$ equals the number of integers in $\{a-r+1,a-r+2,\cdots,a-r+\lambda_r=\beta_r\}$ that are congruent to $k$ modulo $e$. Thus,
\begin{equation}\label{eq:m_r-NkM-expression}
    m_r=N_k(\beta_r)-N_k(a-r).
\end{equation}

We split the proof of the equivalence of the two definitions (\autoref{thm:equivalence-of-two-definitions-subdivision}) into two parts, depending on whether $k(\lambda)=0$.
\subsubsection{\texorpdfstring{The case \(k(\lambda)=0\)}{k(lambda)=0 case}}\label{subsubsec:klam-zero-case}
Throughout this section, we fix a subdivision datum $\subdatum[\Lambda_x][x]$.
\begin{Lemma}\label{lm:node-count}
    Take $\lambda\in \Par[\Lambda_x]_{\alpha}$ and assume $k(\lambda)=0$. Let $\absubdatum$ be an abacus subdivision datum for $\lambda$.
    For $1 \le r \le \ell(\lambda)$, let $\beta_r = \lambda_r - r + a = q_r e + t_r$ with $0 \le t_r < e$. Then $m_r = q_r - c + \varepsilon_k(t_r)$.
\end{Lemma}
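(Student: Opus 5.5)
Starting from \autoref{eq:m_r-NkM-expression}, we have $m_r=N_k(\beta_r)-N_k(a-r)$. By \autoref{lm:Nk-epsilon}, writing $\beta_r=q_re+t_r$, the first term is $N_k(\beta_r)=q_r+\varepsilon_k(t_r)$. The statement therefore reduces to the identity
\[
N_k(a-r)=c\qquad\text{for all }1\le r\le \ell(\lambda),
\]
and the plan is to establish this identity from the abacus subdivision datum together with the hypothesis $k(\lambda)=0$.

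To compute $N_k(a-r)$, write $a-r=ce+k-d-r$, using $a+d=ce+k$. Since $d\in I$ and $r\ge1$, we have $k-d-r\le k-1$. By \autoref{cor:k-lambda-zero-bound-2}, the hypothesis $k(\lambda)=0$ gives $\ell(\lambda)\le e-d$, so $r\le e-d$ and hence $k-d-r\ge k-e$. Thus $a-r$ lies in the window $[ce+k-e,\ ce+k-1]=[(c-1)e+k,\ ce+k-1]$. Note also that $a-r\ge0$, because $a\ge\ell(\lambda)\ge r$.

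On this window the formula \autoref{eq:NkM-formula} gives
\[
\Bigl\lfloor\frac{a-r-k}{e}\Bigr\rfloor+1=(c-1)+1=c,
\]
since $(a-r-k)/e\in[c-1,\,c-1/e]$. Equivalently, the integers in $[0,a-r]$ that are congruent to $k$ modulo $e$ are exactly $k,k+e,\dots,k+(c-1)e$. The inequality $a-r\ge(c-1)e+k$ confirms that $k+(c-1)e$ is included, and $a-r<ce+k$ confirms that $k+ce$ is excluded. Substituting back yields $m_r=q_r+\varepsilon_k(t_r)-c$, as claimed.

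The only delicate points are boundary cases. One is $c=0$: then the window would contain only negative numbers, which is impossible since $a-r\ge0$. One should also check that the definition $N_k(M)=\lfloor(M-k)/e\rfloor+1$ is being applied with $M\ge0$, which holds here. The other is the precise use of $k(\lambda)=0$ to obtain the lower bound $a-r\ge(c-1)e+k$. I expect that verification, namely translating $k(\lambda)=0$ into $r\le e-d$ via \autoref{cor:k-lambda-zero-bound-2}, to be the main, albeit mild, obstacle. The rest is a direct substitution.
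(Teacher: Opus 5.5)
Your argument is correct and is the same as the paper's: you reduce to $N_k(a-r)=c$ using $a-k=ce-d$ and the bound $r\le\ell(\lambda)\le e-d$ from \autoref{cor:k-lambda-zero-bound-2}, and this bound forces $\lfloor (a-r-k)/e\rfloor=c-1$. One correction to your boundary remark: $c=0$ can occur (it forces $a=k$, $d=0$ and $x=k$, and then $a-r\in[0,k-1]$ is nonnegative), but your uniform floor computation already gives $N_k(a-r)=0=c$ in that case, so no separate treatment is needed.
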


\begin{proof}
    The residues in $[\lambda]_r$ are $\{a-r+1,\cdots,a-r+\lambda_r=\beta_r\}$ mod $e$. Hence the number of $k$-nodes is $m_r = N_k(\beta_r) - N_k(a-r)$.
    By \autoref{lm:Nk-epsilon}, the first term is $N_k(\beta_r) = q_r + \varepsilon_k(t_r)$.
    
    For the second term $N_k(a-r)$, since $a-k = ce - d$, we have:
    $$
    \frac{a-r-k}{e} = \frac{ce - d - r}{e} = c + \frac{-(d+r)}{e}.
    $$
    Since $k(\lambda)=0$, by \autoref{cor:k-lambda-zero-bound-2}, this implies $\ell(\lambda) \le e-d$. Since $1 \le r \le \ell(\lambda)$, we have $d+1 \le d+r \le e$.
    Consequently,
    $$
    -1 \le \frac{-(d+r)}{e} < 0 \implies \left\lfloor \frac{-(d+r)}{e} \right\rfloor = -1.
    $$
    Thus by \autoref{eq:NkM-formula},
    $$
    N_k(a-r) = \left( c - 1 \right) + 1 = c.
    $$
    Substituting these back into the expression for $m_r$ proves the lemma. 
\end{proof}


\begin{Lemma}\label{lm:beta-mapping}
    Take $\lambda\in\Par[\Lambda_x]_{\alpha}$ and an abacus subdivision datum $\absubdatum$ for $\lambda$. Let $\{\beta_r\}_{r\ge 1}$ and $\{\beta'_r\}_{r\ge 1}$ be the sets of $a$-beta (respectively $a'$-beta) numbers for $\lambda$ and $\Phi^A_k(\lambda)$. If $k(\lambda)=0$, then
    $$
    \beta'_r = \iota_k(\beta_r), \qquad 1 \le r \le \ell(\lambda).
    $$
\end{Lemma}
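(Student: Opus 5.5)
The plan is to use \autoref{lm:subdivision-beta-numbers}, which gives $B(\Phi^A_k(\lambda);a')=\iota_k(B(\lambda;a))\sqcup\bT$ with $\bT=\bT_{e+1,k,c}=\{j(e+1)+k\mid 0\le j\le c-1\}$. Since $\iota_k$ is strictly increasing, the elements $\iota_k(\beta_1)>\iota_k(\beta_2)>\cdots$ keep their relative order. So the claim $\beta'_r=\iota_k(\beta_r)$ for $1\le r\le\ell(\lambda)$ is equivalent to the following statement: every element of $\bT$ is smaller than $\iota_k(\beta_{\ell(\lambda)})$. Then the largest $\ell(\lambda)$ elements of the new beta set are exactly $\iota_k(\beta_1),\dots,\iota_k(\beta_{\ell(\lambda)})$, in that order. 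The whole proof therefore reduces to the single inequality
\[
  \iota_k(\beta_{\ell(\lambda)})>\max\bT=(c-1)(e+1)+k .
\]
(If $\ell(\lambda)=0$ there is nothing to prove.)

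To prove this inequality, set $\ell:=\ell(\lambda)$. Since $\lambda_\ell\ge1$, we have $\beta_\ell=\lambda_\ell-\ell+a\ge a-\ell+1$. Because $\iota_k$ is increasing, it suffices to show $\iota_k(a-\ell+1)>(c-1)(e+1)+k$. By \autoref{cor:k-lambda-zero-bound-2}, the hypothesis $k(\lambda)=0$ gives $\ell\le e-d$. Using $a=ce+k-d$, we get
\[
  a-\ell+1\ \ge\ ce+k-d-(e-d)+1=(c-1)e+k+1 .
\]
So it is enough to check $\iota_k((c-1)e+k+1)>(c-1)(e+1)+k$.

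For this last step, write $n=(c-1)e+k+1$. If $k+1\le e-1$, then $q=c-1$ and $r=k+1\ge k$, so $\varepsilon_k(r)=1$ and $\iota_k(n)=(c-1)(e+1)+k+2$, which is larger than $(c-1)(e+1)+k$. If $k=e-1$, then $n=ce$, so $q=c$ and $r=0$, giving $\iota_k(n)=c(e+1)+\varepsilon_k(0)\ge c(e+1)>(c-1)(e+1)+e-1$. Alternatively, \autoref{cor:relation-NkM-iota} gives $\iota_k(n)=n+N_k(n)$, and one can estimate $N_k(n)\ge c$ directly, which avoids splitting into cases.

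The main point needing care is the bookkeeping around the constraint $c\ge1$ versus $c=0$. When $c\le0$ the set $\bT$ is empty and the statement follows immediately from the order-preservation of $\iota_k$. I also need to check that $a\ge\max\{k,\ell(\lambda)\}$ makes $a-\ell+1\ge1$, so that $\iota_k$ is evaluated on nonnegative integers. Everything else is the order argument from the first paragraph.
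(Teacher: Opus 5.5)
Your proposal is correct and follows essentially the same route as the paper. Both reduce, via \autoref{lm:subdivision-beta-numbers} and the monotonicity of $\iota_k$, to the inequality $\iota_k(\beta_r)>\max\bT=(c-1)(e+1)+k$, using $\ell(\lambda)\le e-d$ from \autoref{cor:k-lambda-zero-bound-2}. The one difference is the lower bound on $\beta_r$: you use $\lambda_{\ell(\lambda)}\ge 1$ to get $\beta_{\ell(\lambda)}\ge (c-1)e+k+1$, whereas the paper compares with the beta number $\beta_{r_0}=(c-1)e+k-1$ of the empty row $r_0=e-d+1$. That is why the paper sets aside the cases $c=0$ and $c=1,k=0$, but its variant also yields $\beta'_r=\iota_k(\beta_r)$ for all $r\le e-d$, which is reused in \autoref{cor:length_preserved}.
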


\begin{proof}
    By \autoref{lm:subdivision-beta-numbers}, the set of beta numbers $\{\beta'_j\}_{j\ge 1}$ for $\Phi^A_k(\lambda)$ is the union of the image set $S=\{\iota_k(\beta_r)\}_{r\ge 1}$ and the set $\bT:=\bT_{e+1,k,c}$ defined in \autoref{eq:T-ekc}, which corresponds to the beads on the newly inserted runner. We only need to prove that the order of the beta numbers is preserved.
    
    There are two trivial cases. If $c=0$, then $\bT=\emptyset$, so the result holds. If $c=1$ and $k=0$, then $\bT=\{0\}$ and the conclusion holds naturally.
    
    Assume now that we are not in these trivial cases. Let $X = (c-1)e+k-1\geq 0$ and set $r_0 = a - X$. Since $ce = a+d-k$, we have:
    $$
    \begin{aligned}
    r_0 &= a - X \\
    &= a - (a+d-k - e + k - 1) \\
    &= a - a - d + e + 1 \\
    &= e - d + 1.
    \end{aligned}
    $$
    By \autoref{cor:k-lambda-zero-bound-2}, $k(\lambda)=0$ implies $\ell(\lambda) \le e-d$. Thus $\ell(\lambda) < r_0$.
    Since $r_0 > \ell(\lambda)$, the corresponding part $\lambda_{r_0}$ is zero. Therefore, the beta number  $\beta_{r_0}=\lambda_{r_0}+a-r_0 = a - r_0 = X$.
    
    Since the sequence of beta numbers is strictly decreasing, for any $1 \le r \le \ell(\lambda)$, we have $r < r_0$, which implies:
    $$
    \beta_r > \beta_{r_0} = (c-1)e + k - 1.
    $$
    Therefore, $\beta_r \ge (c-1)e + k$. Applying the strictly increasing map $\iota_k$, we get:
    $$
    \iota_k(\beta_r) \ge \iota_k((c-1)e + k) = (c-1)(e+1) + k + 1 > \max(\bT).
    $$
    This shows that the images of the first $\ell(\lambda)$ beta numbers are strictly larger than any element in $\bT$. Consequently, they occupy the first $\ell(\lambda)$ positions in the sorted set $\{\beta'_j\}$, proving $\beta'_r = \iota_k(\beta_r)$ for these indices.
\end{proof}
\begin{Corollary}\label{cor:length_preserved}
    Take $\lambda\in\Par[\Lambda_x]_{\alpha}$ and assume $k(\lambda)=0$. Then $\ell(\Phi^A_k(\lambda)) = \ell(\lambda)$.
\end{Corollary}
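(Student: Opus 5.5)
The plan is to use \autoref{lm:beta-mapping} together with the bookkeeping of beads on the two abaci. The length of a partition is determined by its beta set: with $a'$ beads, $\ell(\mu)$ equals the number of indices $r\le a'$ with $\beta'_r>a'-r$. Equivalently, $\ell(\mu)$ is the largest $r$ such that $\beta'_r\neq a'-r$, since the tail of a beta set corresponding to zero parts is exactly $\{0,1,\dots,a'-\ell(\mu)-1\}$.

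First, by \autoref{lm:beta-mapping}, for $1\le r\le \ell(\lambda)$ we have $\beta'_r=\iota_k(\beta_r)$. Next, we determine the remaining beta numbers $\beta'_r$ for $r>\ell(\lambda)$. For these $r$, the numbers $\beta_r=a-r$ run through $\{0,1,\dots,a-\ell(\lambda)-1\}$, and by \autoref{lm:subdivision-beta-numbers} the remaining beads of $\Phi^A_k(\lambda)$ are
\[
\iota_k\bigl(\{0,\dots,a-\ell(\lambda)-1\}\bigr)\sqcup\bT_{e+1,k,c}.
\]
It suffices to show that this set is an initial segment $\{0,1,\dots,a'-\ell(\lambda)-1\}$; its cardinality is $a-\ell(\lambda)+c=a'-\ell(\lambda)$, as required. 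Write $L:=a-\ell(\lambda)-1$. The image $\iota_k(\{0,\dots,L\})$ is $\{0,\dots,\iota_k(L)\}$ with the positions $\equiv k\pmod{e+1}$ removed, at least among those below $\iota_k(L)$. The set $\bT_{e+1,k,c}$ consists exactly of the first $c$ such positions. So we need the number of positions in $[0,\iota_k(L)]$ congruent to $k$ modulo $e+1$ that are skipped by $\iota_k$ to be exactly $c$.

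By \autoref{cor:relation-NkM-iota}, $\iota_k(L)=L+N_k(L)$, and the number of skipped positions is $N_k(L)$. Using $a-k=ce-d$ and \autoref{cor:k-lambda-zero-bound-2}, the assumption $k(\lambda)=0$ gives $\ell(\lambda)\le e-d$. Hence $L-k=ce-d-\ell(\lambda)-1$, and $-e\le -d-\ell(\lambda)-1$ would give $N_k(L)=c$. However, the inequality $d+\ell(\lambda)+1\le e$ can fail by one when $\ell(\lambda)=e-d$. This boundary case is the main obstacle. There, $L=(c-1)e+k-1+\ldots$, so we need to check directly that $N_k(L)=c$ still holds, or alternatively verify that the top element is a gap of the new runner adjacent to an old bead. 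I would handle it by computing $\iota_k(L)$ explicitly in both subcases and checking that the union is contiguous.

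With contiguity established, $\beta'_r=a'-r$ for all $r>\ell(\lambda)$. Meanwhile, $\beta'_{\ell(\lambda)}=\iota_k(\beta_{\ell(\lambda)})>\iota_k(a-\ell(\lambda))\ge a'-\ell(\lambda)$ because $\lambda_{\ell(\lambda)}>0$, so the last nonzero part is preserved. Hence $\ell(\Phi^A_k(\lambda))=\ell(\lambda)$. A cleaner alternative for the key step is to note that the $\ell(\lambda)$ largest beads exceed $\max\bT$ (as shown in \autoref{lm:beta-mapping}) and that the beads below are flush, which again reduces to the same contiguity count.
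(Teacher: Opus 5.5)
Your argument is correct in outline and is a close variant of the paper's proof. Both proofs identify the top $\ell(\lambda)$ beads of $\Phi^A_k(\lambda)$ via \autoref{lm:beta-mapping} and show that the remaining beads are flush.

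The point you leave open does go through, but via your second suggestion, not your first. When $\ell(\lambda)=e-d$ you have $L=(c-1)e+k-1$. Then $N_k(L)=c+\lfloor-(e+1)/e\rfloor+1=c-1$, so "$N_k(L)=c$ still holds" is false there. Correspondingly, your criterion that exactly $c$ positions must be skipped is sufficient but not necessary.

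Your alternative is the right fix. One computes $\iota_k(L)=(c-1)(e+1)+k-1$, for $k=0$ as well as $k\ge 1$. So the $c-1$ new-runner gaps in $[0,\iota_k(L)]$ are filled by the first $c-1$ elements of $\bT$. The last element $(c-1)(e+1)+k$ of $\bT$ is exactly $\iota_k(L)+1=L+c$, so the union is again $\{0,\dots,a'-\ell(\lambda)-1\}$. If $L=-1$, i.e.\ $c=1$, $k=0$, $a=\ell(\lambda)$, the union is just $\bT=\{0\}$.

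You should also justify $\iota_k(a-\ell(\lambda))\ge a'-\ell(\lambda)$. It is in fact an equality: $N_k(a-\ell(\lambda))=c+\lfloor-(d+\ell(\lambda))/e\rfloor+1=c$, because $1\le d+\ell(\lambda)\le e$ for $\lambda\neq\varnothing$.

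The paper avoids your case split by cutting the beta set at the $\lambda$-independent value $X=(c-1)e+k-1=a-r_0$, with $r_0=e-d+1$. This is precisely your boundary-case $L$, so the tail identity $\iota_k(\{0,\dots,X\})\sqcup\bT=\{0,\dots,Y\}$ holds uniformly. The cost is a separate check that rows $\ell(\lambda)<r<r_0$ of $\Phi^A_k(\lambda)$ stay empty. The paper does this using $\iota_k(a-r)=a'-r$ for $1\le r\le e-d$ and the monotonicity of $\iota_k$. Your cut at $\ell(\lambda)$ removes that intermediate step, at the price of distinguishing $\ell(\lambda)<e-d$ (where $N_k(L)=c$) from $\ell(\lambda)=e-d$.
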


\begin{proof}
    Set up as in the proof of \autoref{lm:beta-mapping}. Define the values $X = (c-1)e + k - 1$ and $Y = (c-1)(e+1) + k$.
    Let $B = \{\beta_r\}_{1\le r \le a}$ be the set of $a$-beta numbers for $\lambda$. We partition $B$ into a "head" $B_{>X} = \{ \beta \in B \mid \beta > X \}$ and a "tail" $B_{\le X} = \{ \beta \in B \mid \beta \le X \}$.
    Similarly, let $B' = \{\beta'_r\}_{1\le r \le a'}$ be the set of $a'$-beta numbers for $\mu:=\Phi^A_k(\lambda)$, partitioned into $B'_{>Y}$ and $B'_{\le Y}$.
    
    Let $\bT=\bT_{e+1,k,c}$ as in \autoref{eq:T-ekc}. By definition, the map $\iota_k$ induces a bijection between $\{0, \dots, X\}$ and $\{0, \dots, Y\} \setminus \bT$, while $\bT$ fills the gaps. Thus, $\iota_k(\{0, \dots, X\}) \cup \bT = \{0, \dots, Y\}$.
    
    First, consider the tail. Let $r_{0} = e - d + 1$. In the proof of \autoref{lm:beta-mapping}, we established that $X = a - r_{0}$.
    Since $k(\lambda)=0 \implies \ell(\lambda) < r_{0}$, we have $\lambda_r = 0$ for all $r \ge r_{0}$ and thus $\beta_r = a - r \le X$.
    Therefore, $B_{\le X}$ is exactly the set of integers $\{0, \dots, X\}$ and
    $$
    B'_{\le Y} = \iota_k(B_{\le X}) \cup \bT = \iota_k(\{0, \dots, X\}) \cup \bT = \{0, \dots, Y\}.
    $$
    Since $Y = a' - r_{0}$, this set $\{0, \dots, a' - r_{0}\}$ corresponds exactly to the beta numbers $a-r$ for indices $r \ge r_{0}$.
    Thus, $\mu_r = 0$ for all $r \ge r_{0}$.
    
    Next, consider the head. For $r < r_{0}$, we have $\beta_r \in B_{>X}$. By \autoref{lm:beta-mapping}, $\beta'_r = \iota_k(\beta_r)$ for $1\leq r\leq \ell(\lambda)$. We remark that the proof shows that this is actually true for all $1\leq r\leq r_0-1$.
    
    
    Since $1 \le r \le e-d$, we have $d+1 \le d+r \le e$, which implies $\lfloor -(d+r)/e \rfloor = -1$.
    Thus $N_k(a-r) = c - 1 + 1 = c$.
    By \autoref{cor:relation-NkM-iota}, and using $a'=a+c$:
    $$
    \iota_k(a-r) = (a-r) + N_k(a-r) = a - r + c = a' - r.
    $$
    Using the fact that $\iota_k$ is strictly increasing:
    $$
    \lambda_r > 0 \iff \beta_r > a - r \iff \iota_k(\beta_r) > \iota_k(a - r) \iff \beta'_r > a' - r \iff \mu_r > 0.
    $$
    This equivalence holds for all $r < r_{0}$. Since $\ell(\lambda) < r_{0}$, this confirms that exactly the first $\ell(\lambda)$ parts of $\Phi^A_k(\lambda)$ are non-zero.
    Thus $\ell(\Phi^A_k(\lambda)) = \ell(\lambda)$.
\end{proof}


\begin{Proposition}\label{prop:equivalence-abacus-tableau-klam-zero}
    Take $\lambda\in\Par[\Lambda_x]_{\alpha}$ and assume $k(\lambda)=0$. Then $\Phi^A_k(\lambda) = \Phi^{Y}_k(\lambda)$.
\end{Proposition}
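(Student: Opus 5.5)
We compare the two partitions row by row. By \autoref{cor:length_preserved} and \autoref{lm:length-subdivision} (with $k(\lambda)=0$), both $\Phi^A_k(\lambda)$ and $\Phi^Y_k(\lambda)$ have exactly $\ell(\lambda)$ nonzero parts. It therefore suffices to show that, for each $1\le r\le\ell(\lambda)$, the $r$-th parts agree.

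\emph{Abacus side.} Fix an abacus subdivision datum $\absubdatum$ and write $\beta_r=q_re+t_r$. By \autoref{lm:beta-mapping}, $\beta'_r=\iota_k(\beta_r)$. Using \autoref{cor:relation-NkM-iota} and \autoref{lm:Nk-epsilon},
\[
\Phi^A_k(\lambda)_r=\beta'_r-a'+r=\beta_r+q_r+\varepsilon_k(t_r)-a-c+r=\lambda_r+q_r-c+\varepsilon_k(t_r).
\]
By \autoref{lm:node-count} this equals $\lambda_r+m_r$, where $m_r$ is the number of $k$-nodes in row $r$ of $[\lambda]$.

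\emph{Young diagram side.} We must show that $\Phi^Y_k$ lengthens row $r$ by exactly $m_r$. Since $k(\lambda)=0$, there are no non-trivial maximal $(k{+}1,k)$-strips. The only $(k{+}1,k)$-strips are therefore trivial ones: single $(k{+}1)$-nodes, which are merely relabelled and insert nothing. Every $k$-node lies in some maximal $(k,k{+}1)$-strip, since a node of residue $k$ starts or continues such a strip. This needs one check: a $k$-node cannot belong only to a $(k{+}1,k)$-strip of length $\ge2$, because such a node would lie directly below a $(k{+}1)$-node in column $1$ (by \autoref{lm:initial-nodes}), i.e.\ a non-trivial maximal $(k{+}1,k)$-strip, which is excluded. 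For each $k$-node in a maximal $(k,k{+}1)$-strip, \autoref{def:subdivision-young-diagram} inserts exactly one node immediately to its right, in the same row. Hence row $r$ gains exactly $m_r$ nodes, and $\Phi^Y_k(\lambda)_r=\lambda_r+m_r$.

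\emph{Main obstacle.} The delicate step is justifying that the local strip replacements really act row-by-row as ``insert one box right of each $k$-node.'' Distinct maximal strips are disjoint, and each $k$-node lies in a unique one. The insertions in a given row do not interact across rows: they only shift boxes within that row. Combined with the rectangle argument following \autoref{def:subdivision-young-diagram}, this shows the result is a partition whose $r$-th row has length $\lambda_r+m_r$. Once this is in place, the two row lengths agree for all $r$, and the proposition follows.
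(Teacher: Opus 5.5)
Your proposal is correct and follows the paper's proof essentially line for line. Both get equal lengths from \autoref{lm:length-subdivision} and \autoref{cor:length_preserved}, and then show $\Phi^A_k(\lambda)_r-\lambda_r=\iota_k(\beta_r)-\beta_r-c=m_r$ via \autoref{lm:beta-mapping} and \autoref{lm:node-count}. Your extra justification that $\Phi^Y_k(\lambda)_r=\lambda_r+m_r$, which the paper takes as immediate from \autoref{def:subdivision-young-diagram}, is sound with two small wording fixes: it is the \emph{maximal} $(k{+}1,k)$-strips that are all trivial, and a $k$-node in such a strip need not lie directly below the column-$1$ node; what matters is only that the strip containing it must be non-trivial.
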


\begin{proof}
    First, consider $\Phi^{Y}_k(\lambda)$. Since $k(\lambda)=0$, by \autoref{lm:length-subdivision}, $\ell(\Phi^{Y}_k(\lambda))=\ell(\lambda)$. Moreover, let $m_r$ be the number of $k$-nodes in the $r$-th row of $[\lambda]$, it follows directly from \autoref{def:subdivision-young-diagram} that $\Phi^{Y}_k(\lambda)_r=\lambda_r+m_r$ for $1\le r\le \ell(\lambda)$.

    By \autoref{cor:length_preserved}, we also have $\ell(\Phi^A_k(\lambda))=\ell(\lambda)$. It remains to show that the parts agree for $1\le r\le \ell(\lambda)$.

    Fix an abacus subdivision datum $\absubdatum$ for $\lambda$. Let $\mu=\Phi^A_k(\lambda)$. The $r$-th part is $\mu_r=\beta'_r+r-a'$. By \autoref{lm:beta-mapping}, $\beta'_r=\iota_k(\beta_r)$ for $1\le r\le \ell(\lambda)$. Since $a'=a+c$, we have
    $$
        \mu_r-\lambda_r=\big(\iota_k(\beta_r)+r-(a+c)\big)-(\beta_r+r-a)=\iota_k(\beta_r)-\beta_r-c.
    $$
    Write $\beta_r=q_re+t_r$. By \autoref{def:iota}, $\iota_k(\beta_r)-\beta_r=q_r+\varepsilon_k(t_r)$. Therefore,
    $$
    \mu_r-\lambda_r=q_r+\varepsilon_k(t_r)-c.
    $$
    By \autoref{lm:node-count}, this is exactly $m_r$. Hence $\mu_r=\lambda_r+m_r=\Phi^{Y}_k(\lambda)_r$ for $1\le r\le \ell(\lambda)$, and so $\Phi^A_k(\lambda)=\Phi^{Y}_k(\lambda)$.
\end{proof}

\subsubsection{\texorpdfstring{The case $k(\lambda)>0$}{k(lambda)>0 case}}\label{subsubsec:klam-nonzero-case}\mbox{}

Throughout this section, we fix a subdivision datum $\subdatum[\Lambda_x][x]$ and a partition $\lambda\in\Par[\Lambda_x]_{\alpha}$. We assume $s:=k(\lambda)>0$. Hence, the Young diagram $[\lambda]$ contains $s$ non-trivial maximal $(k+1,k)$-strips. By \autoref{lm:initial-nodes}, these strips start in the first column.

For a non-trivial maximal $(k+1,k)$-strip $S$, let $k_S$ and $r_S$ be the row indices of the initial node and the terminal node of $S$, respectively. Non-triviality of $S$ is equivalent to $r_S>k_S$.

Let $S_1,\dots,S_s$ be the non-trivial maximal $(k+1,k)$-strips in $[\lambda]$. Set $k_i:=k_{S_i}$ and $r_i:=r_{S_i}$, and reorder the strips so that
\[
    k_1<k_2<\cdots<k_s
    \qquad\text{and}\qquad
    r_1\le r_2\le\cdots\le r_s.
\]

\begin{Definition}\label{def:tracing-functions-k}
    We define the \textit{strip-tracing functions} $f, g: \Z_{\ge 1} \to \Z_{\ge 0}$ by:
    \begin{equation}\label{eq:g-def-k}
        g(j) = \#\{i \mid r_i < j\}
    \end{equation}
    \begin{equation}\label{eq:f-def-k}
        f(j) =
        \begin{cases}
            0 & \text{if } 1 \le j \le k_1, \\
            i & \text{if } k_i < j \le k_{i+1},
        \end{cases}
    \end{equation}
    where we set $k_{s+1} = \infty$.
\end{Definition}

By definition, $g(j)$ is the number of maximal $(k+1,k)$-strips in $[\lambda]$ that end strictly before row $j$. Recall also that $m_j$ denotes the number of $k$-nodes in the $j$-th row of $[\lambda]$ (as defined in \autoref{subsubsec:klam-zero-case}).

By \autoref{lm:length-subdivision}, the subdivision map $\Phi^{Y}_k$ increases the number of rows by $s=k(\lambda)$. The rows of the new diagram $[\Phi^{Y}_k(\lambda)]$ can be naturally classified by the residue of the last node in each row:
\begin{itemize}
    \item The rows that \textbf{do not} end with residue $k$ correspond to the original rows of $\lambda$.
    \item The rows that \textbf{do} end with residue $k$ correspond to the new rows created by enlarging the strips.
\end{itemize}

\begin{Lemma}\label{lm:tableau-row-expansion-k}
    Take $\lambda\in\Par[\Lambda_x]_{\alpha}$ and let $\lambda^+ = \Phi^{Y}_k(\lambda)$. For $1 \le j \le \ell(\lambda)$, the following hold:
    \begin{enumerate}
        \item\label{item:old-row} The row in $\lambda^+$ that does not end with residue $k$ is located at index $j+g(j)$ and has length
        \begin{equation}\label{eq:original_row_length_k}
            \lambda^+_{j+g(j)} = \lambda_j + m_j + g(j) - f(j).
        \end{equation}
        and we say this row is corresponding to the original $j$-th row of $\lambda$.
        \item\label{item:new-row} The $s$ new rows that end with residue $k$ are located at indices $r_i + i$ for $1 \le i \le s$.
    \end{enumerate}
\end{Lemma}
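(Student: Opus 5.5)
The plan is to prove \eqref{item:new-row} and the row index in \eqref{item:old-row} by counting rows, and then to get the length in \eqref{item:old-row} from a content computation rather than by tracking every node. For a partition, the row $R$ of $[\lambda^+]$ has length $\lambda^+_R=\operatorname{cont}(\text{last node})+R$, where $\operatorname{cont}(r,c)=c-r$. So it suffices to know the row index of the image of the last node $(j,\lambda_j)$ of row $j$ and its content in $[\lambda^+]$.

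\textbf{Step 1 (row bookkeeping).} I would first work out what the local replacements of \autoref{def:subdivision-young-diagram} do to rows.
\begin{itemize}
\item The $(k,k+1)$-strip replacements only insert nodes to the right of $k$-nodes inside a row. They create no rows and do not move nodes between rows.
\item Replacing a non-trivial maximal $(k+1,k)$-strip $S_i$, which runs from row $k_i$ to row $r_i$, thickens the staircase by one row inside its minimal rectangle. The effect is to insert exactly one new row, whose last node is the terminal $k$-node of $S_i$ after the shift. Each old row $j$ with $j>r_i$ is pushed down by one. Each old row $j\le r_i$ keeps its relative position.
\end{itemize}
Ordering the strips by $k_1<\dots<k_s$, $r_1\le\dots\le r_s$ and processing them from the bottom up, old row $j$ lands at $j+\#\{i\mid r_i<j\}=j+g(j)$. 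The new row coming from $S_i$ lands at $r_i+i$, since $i-1$ earlier new rows lie above it and the old rows up to $r_i$ lie above it. The residue of a last node is $k$ exactly for these new rows, by the shape of the $(k+2,k+1,k)$ pattern. This gives \eqref{item:new-row} and the index in \eqref{item:old-row}. The case $r_i=r_{i+1}$ needs separate care: there two strips end in the same row, and one must check that the new rows are stacked in the stated order. I would handle it directly from the pictures.

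\textbf{Step 2 (content of the last node).} I would compare contents along the hook path from $(1,1)$ down the first column to $(j,1)$ and then right along row $j$ to $(j,\lambda_j)$.
\begin{itemize}
\item Along the first column, subdivision inserts one extra vertical step (the new $\overline{k}$ above a $k$-node) for each strip $S_i$ with $k_i<j$. That is $f(j)$ extra vertical steps.
\item Along row $j$, each $k$-node gives one extra horizontal step. The inserted $\overline{k}$ sits either immediately to its right, for a $(k,k+1)$-strip, or, for a $(k+1,k)$-strip, its insertion creates the horizontal $k\to(k+1)\to(k+2)$ pattern after relabelling. That is $m_j$ extra horizontal steps.
\end{itemize}
Contents are determined by residues modulo $e+1$ together with adjacency, by \autoref{lem:residue-characterization}, and each step changes the content by $\pm1$. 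Hence the image of the last node has content $\lambda_j-j+m_j-f(j)$. Adding the row index $j+g(j)$ gives \eqref{eq:original_row_length_k}.

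\textbf{Main obstacle.} The delicate point is to justify that the image of the hook path is again a path of unit horizontal and vertical steps in $[\lambda^+]$. Strips crossing row $j$ with $k_i<j\le r_i$ shift part of row $j$ diagonally, so the path is only preserved up to such shifts. I would first try an explicit coordinate map: send a node $(r,c)$ of $[\lambda]$ to $(r+(\text{number of strips passing weakly above it}),\ c+(\text{number of inserted nodes to its left}))$. I would check this map on the four local pictures, and then sum the content changes along it. If that becomes messy, the fallback is induction on $s$: apply one strip replacement at a time and verify \eqref{eq:original_row_length_k} with $f,g$ updated by one.
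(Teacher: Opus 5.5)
Your Step~1 is the paper's row bookkeeping, and computing the length through the content of the last node is a legitimate alternative to the paper's direct count. However, Step~2 as written rests on a wrong picture of what $\Phi^Y_k$ does to row $j$, and that is exactly where the lemma has content.

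Consider a $k$-node of a strip $S_i$ crossing row $j$ (so $k_i<j\le r_i$). It does not give an extra \emph{horizontal} step. The $\overline{k}$ is inserted above it, so this $k$-node and everything to its left in row $j$ are pushed down one row relative to the $(k+1)$-node to its right. In \autoref{eg:subdivision-tableau}, for instance, the node $(6,1)$ of $[\lambda]$ goes to $(7,1)$, while $(6,2)$ stays at $(6,2)$. So your traced path makes an \emph{up}-step through the inserted node at each such crossing. Up- and right-steps both raise the content by $1$, so your value $\lambda_j-j+m_j-f(j)$ survives. But if the extra steps really were horizontal, the endpoint would lie in row $j+f(j)$, and you could not combine it with the row index $j+g(j)$ from Step~1. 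It is precisely the $f(j)-g(j)$ up-steps that bring the path back to row $j+g(j)$. The same phenomenon affects Step~1: an old row $j$ with $k_i<j\le r_i$ does not move as a block; only its part from the $(k+1)$-node of $S_i$ onward keeps its position.

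Second, the image of $(j,\lambda_j)$ is not in general the last node of row $j+g(j)$.
\begin{itemize}
\item If $(j,\lambda_j)$ is the terminal $k$-node of $S_i$, its image lies in the new row $r_i+i$. For $e=3$, $x=k=0$, $\lambda=(3^3,1^3)$ one has $\Phi^Y_0(\lambda)=(4^3,1^4)$. The node $(4,1)$ goes to $(5,1)$, of content $-4$, and the last node of row $4$ is the inserted node $(4,1)$, of content $-3=\lambda_4-4+m_4-f(4)$.
\item If $(j,\lambda_j)$ is a $k$-node of a $(k,k+1)$-strip, the last node of its row is the inserted node to its right.
\end{itemize}
Your $m_j$ silently includes this final $+1$. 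So what you are really computing is the content of the last node of row $j+g(j)$, and that should be the stated target.

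To make this rigorous, use the coordinate map $(r,c)\mapsto(r+V,\,c+H)$. Here $V$ is the number of $i$ with $-k_i\ge c-r$, i.e.\ the strips whose $k$-diagonal lies weakly above-right of $(r,c)$. $H$ is the number of $k$-nodes of $(k,k+1)$-strips strictly left of $(r,c)$ in row $r$. In particular, your proposed column shift ``number of inserted nodes to its left'' must count only nodes inserted for $(k,k+1)$-strips. Now check the three cases for $(j,\lambda_j)$: not a $k$-node; a $k$-node of a $(k,k+1)$-strip; the terminal $k$-node of some $S_i$. Each gives $\lambda^+_{j+g(j)}=\lambda_j+\bigl(m_j-(f(j)-g(j))\bigr)$, which is $\lambda_j$ plus the number of $k$-nodes of row $j$ lying on $(k,k+1)$-strips.

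This is exactly the paper's count. The paper starts from $\lambda_j+m_j$ and subtracts one for each of the $f(j)-g(j)$ strips through row $j$, since that strip's $k$-node in row $j$ is replaced in place by the inserted node rather than acquiring a right neighbour.
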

\begin{proof}
    We first record the only way that a row of $[\lambda^+]$ can end with residue $k$.
    Let $A$ be the last node of a row of $[\lambda]$. Tracking $A$ by the local rules of
    $\Phi^{Y}_k$ in \autoref{def:subdivision-young-diagram}, one checks:
    
    (i) if $\res(A)\notin\{k,k+1\}$ then its image has residue $\Phi_k\big(\res(A)\big)\neq k$;
    
    (ii) if $A$ is the terminal node of a maximal $(k,k+1)$-strip and $\res(A)=k$, then $A$ expands
    to a horizontal pair \Tableau[no border,box font=\tiny]{{k}{k{+}1}}, so the row ends with residue $k+1$;
    
    (iii) if $\res(A)=k+1$ and $A$ is the terminal node of a maximal $(k,k+1)$-strip or a maximal
    $(k+1,k)$-strip, then $A$ is replaced by a single $(k+2)$-node, so the row ends with
    residue $k+2$;
    
    (iv) if $A$ is the terminal node of a non-trivial maximal $(k+1,k)$-strip and $\res(A)=k$, then
    $A$ is replaced by a vertical pair \Tableau[no border,box font=\tiny]{{k{+}1},{k}}, which
    creates a new row whose last node has residue $k$.
    
    Hence the rows of $\lambda^+$ ending with residue $k$ are exactly the new rows created from
    the terminal nodes of non-trivial maximal $(k+1,k)$-strips.
        
    For \autoref{item:new-row}, consider the $i$th non-trivial maximal $(k+1,k)$-strip $S_i$.
    Its terminal node lies in row $r_i$ of $[\lambda]$, and by (iv) it produces exactly one new row.
    The order of the original rows is preserved, and the new row is inserted immediately after
    the image of row $r_i$. Since exactly $i-1$ new rows are inserted strictly above row $r_i$,
    the image of row $r_i$ sits at index $r_i+(i-1)$, so the new row sits at index $r_i+i$.
    
    For \autoref{item:old-row}, fix $1\le j\le \ell(\lambda)$. The image of the original $j$th row is shifted
    downward by the number of new rows inserted strictly above it, which is $g(j)$ by definition \autoref{eq:g-def-k},
    so it is located at index $j+g(j)$.
    
    To compute the length of the image of the $j$th row. We firstly ignore the maximal $(k+1,k)$-strips for a moment, every $k$-node in row $j$ would expand to a
    horizontal pair $(k,k+1)$ in the same row, so the length would increase by $1$ for each
    $k$-node. This gives the tentative value $\lambda_j+m_j$.
    
    However, this tentative value $\lambda_j+m_j$ is too large, because not every $k$-node in
    row $j$ expands horizontally. The problematic ones are those $k$-nodes in row $j$ that lie on a non-trivial
    maximal $(k{+}1,k)$-strip that passes through row $j$. Indeed, for each such non-trivial maximal
    $(k{+}1,k)$-strip $S$ one has $k_S<j\le r_S$, and $S\cap [\lambda]_j$
    contains exactly one $k$-node (it is the left node of a horizontal \Tableau[no border,box font=\tiny]{{k}{k{+}1}} if
    $k_S<j<r_S$, and it is the single terminal $k$-node if $j=r_S$). 
    
    In the construction of
    $\Phi_k^Y$ by \autoref{def:subdivision-young-diagram}, this particular $k$-node does not contribute an extra node to the image of row $j$: as \Tableau[no border,box font=\tiny]{{k{+}1},{k}} is replaced by \Tableau[no border,box font=\tiny]{{k{+}2},{k{+}1},{k}}, the $k$-node in this row is replaced by a single $(k+1)$-node
    
    Hence one must subtract $1$ for each such strip that passes through row $j$.
    By \autoref{def:tracing-functions-k}, the number of such strips is $f(j)-g(j)$.
    Therefore
    \[
    \lambda^+_{j+g(j)}
    =(\lambda_j+m_j)-(f(j)-g(j))
    =\lambda_j+m_j+g(j)-f(j),
    \]
    as required.
\end{proof}

The row indices $k_i$ can be computed explicitly as follows. Take an abacus subdivision datum for $\lambda$. Then, by \autoref{def:k_lambda} and \autoref{cor:k-lambda-zero-bound-2}, we have $k_1=e-d$ and hence $k_j=je-d$ for $1\le j\le s=k(\lambda)$.

Moreover, $f(t)$ can be written in terms of the function $N_k(M)$ as follows. Suppose that $f(t)=j$. Then, by \autoref{def:tracing-functions-k}, we have $je-d<t\le (j+1)e-d$, and hence $je<t+d\le (j+1)e$. We compute
$$
N_k(a-t)=\left\lfloor \frac{a-t-k}{e}\right\rfloor+1
=\left\lfloor \frac{ce-(d+t)}{e}\right\rfloor+1
=c+\left\lfloor \frac{-(t+d)}{e}\right\rfloor+1
=c-(j+1)+1=c-j.
$$
Hence,
\begin{equation}\label{eq:f-equal-NkM}
    f(t)=c-N_k(a-t),\quad (t\geq 1).
\end{equation}

\begin{Remark}\label{rmk:relation-of-bead-node}
     Consider the $e$-abacus of $\lambda$ with $a$ beads, and the Young diagram $[\lambda]$. For $1\le i\le \ell(\lambda)$, the bead corresponding to the beta number $\beta_i=\lambda_i+a-i$ lies on the $k$-runner if and only if $\lambda_i+a-i\equiv k\pmod{e}$. The last node of the $i$-th row in $[\lambda]$ has residue $a+\lambda_i-i\pmod{e}$. Hence the runner label agrees with the residue of the last node in the corresponding row.
\end{Remark}

\begin{Proposition}\label{prop:equivalence-abacus-tableau-klam-nonzero-general}
    Take $\lambda\in\Par[\Lambda_x]_{\alpha}$ and assume $k(\lambda)>0$. Then $\Phi^A_k(\lambda) = \Phi^{Y}_k(\lambda)$.
\end{Proposition}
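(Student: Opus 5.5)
The plan is to compare beta sets rather than Young diagrams. Fix an abacus subdivision datum $\absubdatum$ for $\lambda$, write $\beta_j=\lambda_j+a-j$, and set $\mu:=\Phi^A_k(\lambda)$ and $\lambda^+:=\Phi^Y_k(\lambda)$. By \autoref{lm:subdivision-beta-numbers}, the $a'$-beta set of $\mu$ is $\iota_k(B(\lambda;a))\sqcup\bT$ with $\bT=\bT_{e+1,k,c}$. It therefore suffices to show that the $a'$-beta set of $\lambda^+$ is the same set. By \autoref{lm:tableau-row-expansion-k}, the rows of $\lambda^+$ fall into two families: the images of old rows, at indices $j+g(j)$, and the $s$ new rows, at indices $r_i+i$. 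We will show that the beta numbers of the first family are exactly $\iota_k(\beta_j)$ for $1\le j\le \ell(\lambda)$, and that the beta numbers of the second family, together with the zero rows, account for $\bT$ and for the $\iota_k$-images of the remaining (zero-part) beta numbers of $\lambda$.

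\textbf{Step 1: old rows.} For an old row, \eqref{eq:original_row_length_k} gives the beta number
\[
\lambda^+_{j+g(j)}+a'-(j+g(j))=\lambda_j+m_j-f(j)+a+c-j .
\]
Substitute $m_j=N_k(\beta_j)-N_k(a-j)$ from \eqref{eq:m_r-NkM-expression} and $f(j)=c-N_k(a-j)$ from \eqref{eq:f-equal-NkM}. Everything collapses to $\beta_j+N_k(\beta_j)$, which equals $\iota_k(\beta_j)$ by \autoref{cor:relation-NkM-iota}. The same computation applies formally to rows $j>\ell(\lambda)$ with $\lambda_j=0$, provided we track their positions correctly.

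\textbf{Step 2: new rows.} The new row attached to $S_i$ ends with residue $k$, so by \autoref{rmk:relation-of-bead-node} its bead lies on the $k$-runner of the $(e+1)$-abacus, i.e.\ at a position $\equiv k \pmod{e+1}$. Since $r_i$ is weakly increasing, the positions of these $s$ beads are distinct. We want to show they are exactly the top $s$ elements $(c-i)(e+1)+k$, for $1\le i\le s$, of $\bT$. A cleaner route is to count. In the $(e+1)$-abacus the $k$-runner is not hit by $\iota_k$: indeed $\iota_k(n)\equiv k \pmod{e+1}$ never occurs, because $r<k$ gives residue $r$ and $r\ge k$ gives residue $r+1$. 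Hence, once Step 1 is extended to all $a$ original beta numbers (including zero rows), the old-row beads fill $\iota_k(B(\lambda;a))$, which avoids the $k$-runner. The remaining $a'-a=c$ beads of $\lambda^+$ then all lie on the $k$-runner.

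\textbf{Step 3: flushness.} It remains to show these $c$ beads form the flush set $\bT$. Below the first column, the zero rows of $\lambda^+$ give consecutive positions $0,\dots,a'-\ell(\lambda^+)-1$, which contain every $k$-runner position up to that bound. So we only need the new-row beads to be exactly the $k$-runner positions between $a'-\ell(\lambda^+)$ and $(c-1)(e+1)+k$, with no gaps. The new row $r_i+i$ has length $\lambda^+_{r_i+i}$, obtained by tracking the strip $S_i$ up to its terminal node. This yields the beta value
\[
\lambda^+_{r_i+i}+a'-r_i-i .
\]
We must show it equals $(c-i)(e+1)+k$, using $k_i=ie-d$ and the fact that the strip descends one row per column. \emph{This step is the main obstacle.} I would attempt it by noting that the terminal node of $S_i$ sits at content $\equiv k$ on the diagonal starting at $(k_i,1)$, so $\lambda_{r_i}$ relates to $r_i-k_i$. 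Then compute the new-row length as (old length of row $r_i$ up to the terminal node) plus the $k$-nodes before it minus the strips passing through, and simplify with $N_k$ as in Step 1.

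The row reindexing (old rows versus zero rows when $j+g(j)$ exceeds $\ell(\lambda^+)$) also needs care, and we will handle it by extending $g$ and $f$ to all $1\le j\le a$.
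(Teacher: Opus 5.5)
Your strategy is the paper's: compare $a'$-beta sets, and your Step 1 is exactly the paper's computation, valid for $1\le j\le\ell(\lambda)$. The gap lies in accounting for the remaining beads.

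In Step 2, the claim that the $c$ beads outside $\iota_k(B(\lambda;a))$ all lie on the $k$-runner is unsupported. Only $s$ of them come from new rows. The other $c-s$ are zero-row positions in $\{0,\dots,M\}$, where $M:=a'-\ell(\lambda^+)-1$, and nothing you say places them on the $k$-runner.

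Your proposed fix, extending Step 1 to zero rows with the same index $j+g(j)$, is false. For $j>\ell(\lambda)$ one has $f(j)=s$, while $c-N_k(a-j)=\lceil (d+j)/e\rceil-1$. So \eqref{eq:f-equal-NkM}, and with it $\beta^+_{j+s}=\iota_k(\beta_j)$, fails once $j>(s+1)e-d$. This already happens for $j=11$ in \autoref{eg:subdivision-abacus}: row $12$ of $\Phi_1(\lambda)$ has bead $1$, but $\iota_1(\beta_{11})=0$.

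What holds, and suffices, is a statement about sets. First, $\iota_k(\{0,\dots,a-\ell(\lambda)-1\})\subseteq\{0,\dots,M\}$. Second, the $k$-runner positions in $\{0,\dots,M\}$ are exactly $q(e+1)+k$ for $0\le q\le c-s-1$. Both follow from $se-d<\ell(\lambda)\le(s+1)e-d$, which is \autoref{lm:counting_formula} with $\rho=e-d$. The paper uses this bound, in the form $X\in(s,s+1]$, to show that exactly $s$ elements of $\bT$ exceed $M$. It then only has to check that each new-row bead lies in $\bT$. This counting step is what your Steps 2 and 3 are missing.

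The decisive step, computing the bead of each new row, is left open. The route you sketch presumes that $S_i$ ends at a $k$-node, which need not happen. When it does not, the last node of row $r_i$ need not have residue $k$, and $\lambda_{r_i}$ is not governed by $r_i-k_i$. In \autoref{eg:k-strip} the maximal strip is $(5,1),(6,1),(6,2),(7,2),(7,3)$ and $\lambda_7=11$. In $\Phi^Y_1(\lambda)=(14,13^5,12,2)$, the new row $8$ has length $2$ while row $7$ has length $12$. So the new row is not a copy of the row above either; the paper's shortcut $\beta^+_{r_i+i}=\iota_k(\beta_{r_i})-1$ likewise only covers strips ending at a $k$-node.

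Work instead with the last $k$-node of $S_i$, which is $(r_i,r_i-k_i)$ in both parity cases.
\begin{itemize}
\item Every node to its left has negative content. So the $k$-nodes among them lie on $(k+1,k)$-strips and receive no horizontal insertion.
\item Each of $S_1,\dots,S_i$ has a $k$-node in column $r_i-k_i$ at or above it, while $S_{i+1},\dots,S_s$ meet that column only below it. So it is pushed down exactly $i$ rows.
\end{itemize}
Hence the new row $r_i+i$ has length $r_i-k_i$. Using $k_i=ie-d$ and $a+d=ce+k$, its bead is $a'-k_i-i=(c-i)(e+1)+k$. Since $c\ge s$, these are the $s$ largest elements of $\bT$. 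The zero rows supply the remaining elements of $\bT$ together with the $\iota_k$-images of the zero-row beads of $\lambda$, and Step 1 supplies the rest of $\iota_k(B(\lambda;a))$. Disjointness and $|B(\lambda^+;a')|=a'$ then finish the proof.
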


\begin{proof}
    Fix an abacus subdivision datum $\absubdatum$ for $\lambda$.
    Let $\lambda^+=\Phi_k^Y(\lambda)$ and $\mu=\Phi_k^A(\lambda)$. 
    
    For $1\le r\le a$, set $\beta_r:=\lambda_r+a-r$, and let $B(\lambda;a)=\{\beta_r\mid 1\le r\le a\}$ be the $a$-beta set of $\lambda$.
    For $1\le r\le a'$, set $\beta_r^+:=\lambda_r^+ + a' - r$, and let $B(\lambda^+;a')=\{\beta_r^+\mid 1\le r\le a'\}$ be the $a'$-beta set of $\lambda^+$. Similarly, $B(\mu;a')$ is the $a'$-beta set of $\mu$.  
    
    To prove $\lambda^+=\mu$, it suffices to show their $a'$-beta sets $B(\mu;a')$ and $B(\lambda^+;a')$ are equal. By \autoref{lm:subdivision-beta-numbers}, we have
    \[
    B(\mu;a')=\iota_k\bigl(B(\lambda;a)\bigr)\cup \bT,
    \]
    where $\bT:=\bT_{e+1,k,c}=\{q(e+1)+k\mid 0\le q\le c-1\}$ as in \autoref{eq:T-ekc}. 
    
    \medskip
    \noindent \textit{Step 1.} We first show $\iota_k\bigl(B(\lambda;a)\bigr)\subseteq B(\lambda^+;a')$.
    
    Fix $1\le j\le a$. By \autoref{lm:tableau-row-expansion-k}, the image of the original $j$th row lies
    in row $j+g(j)$ of $[\lambda^+]$ and has length $\lambda_j+m_j+g(j)-f(j)$. Hence
    \[
    \beta_{j+g(j)}^+
    =\bigl(\lambda_j+m_j+g(j)-f(j)\bigr)+a'-(j+g(j))
    =\beta_j+m_j-f(j)+c.
    \]
    Using \eqref{eq:m_r-NkM-expression} (applied with $r=j$) gives
    $m_j=N_k(\beta_j)-N_k(a-j)$, so
    \[
    \beta_{j+g(j)}^+
    =\beta_j+N_k(\beta_j)-N_k(a-j)-f(j)+c.
    \]
    By \autoref{eq:f-equal-NkM}, $N_k(a-j)=c-f(j)$.
    Substituting this yields
    \[
    \beta_{j+g(j)}^+=\beta_j+N_k(\beta_j)=\iota_k(\beta_j).
    \]
    where the last equality follows from \autoref{cor:relation-NkM-iota}.
    Thus every element of $\iota_k(B(\lambda;a))$ occurs among the beta numbers of $\lambda^+$, so
    $\iota_k(B(\lambda;a))\subseteq B(\lambda^+;a')$.

    \medskip\noindent \textit{Step 2.} Secondly, we show that $\bT\subseteq B(\lambda^+;a')$.
    
    Let $M:=a'-\ell(\lambda^+)-1$.  Since $\lambda_r^+=0$ for all $r>\ell(\lambda^+)$, we have
    \[
    \{\beta_r^+\mid \ell(\lambda^+)<r\leq a'\}=\{a'-r\mid \ell(\lambda^+)<r\leq a'\}=\{0,1,\dots,M\}.
    \]
    Hence every element of $\bT$ that is at most $M$ lies in $B(\lambda^+;a')$.
    
    It remains to consider the elements of $\bT$ that are larger than $M$.
    Let $t=q(e+1)+k\in \bT$ and assume $t>M$.
    So $t$ cannot come from a zero row of $\lambda^+$ and must occur as
    $\beta_r^+$ for some $1\le r\le \ell(\lambda^+)$. Moreover, by \autoref{rmk:relation-of-bead-node}, $t$ can correspond only to a row whose last node has residue $k$.
    
    By \autoref{lm:tableau-row-expansion-k}, the rows of $[\lambda^+]$ ending with residue $k$ are precisely
    the newly inserted rows, and the $i$th inserted row occurs at index $r_i+i$. Moreover, this row is
    created from the terminal $k$-node of some non-trivial maximal $(k+1,k)$-strip: this $k$-node
    is replaced by a vertical pair with residues $k+1$ above $k$. Hence the inserted row $r_i+i$ has the
    same length as the row immediately above it (the image of row $r_i$), and therefore its $a'$-beta
    number is one less than the $a'$-beta number of that row. Since $g(r_i)=i-1$, the row above $r_i+i$
    is $r_i+i-1$, and Step~1 gives $\beta^+_{r_i+i-1}=\iota_k(\beta_{r_i})$. Thus
    \[
        \beta^+_{r_i+i}=\beta^+_{r_i+i-1}-1=\iota_k(\beta_{r_i})-1.
    \]
    Because the strip ends at a $k$-node, the last node of row $r_i$ has residue $k$ in $[\lambda]$, so
    $\beta_{r_i}\equiv k\pmod e$. Writing $\beta_{r_i}=qe+k$ gives
    $\iota_k(\beta_{r_i})=q(e+1)+k+1$, and therefore
    \[
        \beta^+_{r_i+i}=q(e+1)+k\in \bT.
    \]
    Hence every inserted row contributes an element of $\bT$ to $B(\lambda^+;a')$. In total, there are $s=k(\lambda)$ such elements. It remains to prove that there are exactly $s$ elements of the form $t=q(e+1)+k>M$ where $0\leq q\leq c-1$. We have:
    \[
        q(e+1)+k>M=a'-\ell(\lambda^+)-1=(a+c)-\big(\ell(\lambda)+s\big)-1=(ce+k-d)+c-\big(\ell(\lambda)+s\big)-1
    \]
    which is equivalent to
    \begin{equation}\label{eq:number-of-new-created-row}
        X:=\frac{d+\ell(\lambda)+s+1}{e+1} >c-q
    \end{equation}
    Let $\rho\in I$ be the unique integer satisfying $\rho\equiv x-k\pmod{e}$. Then by \autoref{lm:counting_formula}, we have:
    \[
        (s-1)e\leq \ell(\lambda)-1-\rho< se
    \]
    and, by the proof of \autoref{cor:k-lambda-zero-bound-2}, $\rho=e-d$. Hence
    \[
        s(e+1)\leq \ell(\lambda)-1+d+s< s(e+1)+e
    \]
    which implies
    \[
        s(e+1)+2\leq \ell(\lambda)+1+d+s\leq s(e+1)+e+1
    \]
    which implies
    \[
        s+\frac{2}{e+1}\leq \frac{d+\ell(\lambda)+s+1}{e+1}\leq  s+1.
    \]
    This means $X\in (s,s+1]$.
    In particular, since $1\leq c-q\leq c$ and $c\ge s$ (as the inserted rows correspond to a subset of $\bT$), there are exactly $s=k(\lambda)$ values satisfying \autoref{eq:number-of-new-created-row}, as desired. Hence $\bT\subset B(\lambda^+;a')$.
    
    \medskip\textit{Step 3} We have shown that $\iota_k(B(\lambda;a))\subseteq B(\lambda^+;a')$ and $\bT\subseteq B(\lambda^+;a')$ by the last two steps.
    By \autoref{lm:subdivision-beta-numbers},
    \[
        B(\mu;a')=\iota_k(B(\lambda;a))\sqcup \bT\subseteq B(\lambda^+;a').
    \]
    Comparing cardinalities gives equality of the two beta sets, and consequently $\lambda^+=\mu$.
\end{proof}

We can now state the main theorem of this section.
\begin{Theorem}\label{thm:equivalence-of-two-definitions-subdivision}
    \autoref{def:subdivision-young-diagram} and \autoref{def:subdivision-abacus} are equivalent.
\end{Theorem}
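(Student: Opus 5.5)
The theorem follows by splitting into the two cases already treated. What must be shown is that for every subdivision datum $\subdatum[\Lambda_x][x]$, every $\lambda\in\Par[\Lambda_x]_{\alpha}$ and every abacus subdivision datum $\absubdatum$ for $\lambda$, we have $\Phi^A_k(\lambda)=\Phi^Y_k(\lambda)$.

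Since $k(\lambda)$ is a non-negative integer, exactly one of $k(\lambda)=0$ or $k(\lambda)>0$ holds. In the first case I would invoke \autoref{prop:equivalence-abacus-tableau-klam-zero}. In the second I would invoke \autoref{prop:equivalence-abacus-tableau-klam-nonzero-general}. Together these give $\Phi^A_k(\lambda)=\Phi^Y_k(\lambda)$ for every $\lambda$.

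A side consequence needs recording. The left side $\Phi^A_k(\lambda)$ a priori depends on the choice of abacus subdivision datum, while $\Phi^Y_k(\lambda)$ involves no such choice. Both propositions hold for an arbitrary admissible $\absubdatum$, so $\Phi^A_k(\lambda)$ is independent of that choice. I would note this explicitly, since it is what justifies writing simply $\Phi_k(\lambda)$ afterwards.

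For an $\ell$-partition $\blam$, both maps are defined componentwise with respect to the charges $\kappa_m$. Applying the level-one equality to each component $\blam^{(m)}$ with datum $\subdatum[\Lambda_{\kappa_m}][\kappa_m]$ then gives the multipartition version immediately.

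There is no real obstacle, since all the combinatorial work lies in the two propositions. The only point to check is that their hypotheses, namely existence of some abacus subdivision datum, are always met. This holds because one can take $a\ge\max\{k,\ell(\lambda)\}$ with $a\equiv x\pmod e$. Then $d\in I$ and $c$ are determined by $a+d=ce+k$, and $c\ge0$ because $a\ge k$.
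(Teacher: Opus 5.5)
Your proposal is correct and is the paper's own argument: split on whether $k(\lambda)=0$ or $k(\lambda)>0$, then invoke the corresponding proposition in each case. Your additional remarks are accurate and only make explicit what the paper leaves implicit:
\begin{itemize}
  \item $\Phi^A_k(\lambda)$ does not depend on the choice of abacus subdivision datum;
  \item such a datum exists, with $c\ge 0$;
  \item the result extends componentwise to multipartitions.
\end{itemize}
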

\begin{proof}
    This follows from \autoref{prop:equivalence-abacus-tableau-klam-zero} and \autoref{prop:equivalence-abacus-tableau-klam-nonzero-general}.
\end{proof}
\subsection{Subdivision on standard tableaux}\label{subsec:subdivision-standard-tableaux}
In this section, we show that if a multipartition is $k$-horizontal (see \autoref{def:k-horizontal}), then the subdivision on partitions in \autoref{def:subdivision-young-diagram} extends to a map on the set of (row)-standard tableaux. Moreover, it preserves the degree of a standard tableau; see \autoref{thm:degree-invariance-standard-tableau}.

\smallskip
Fix a subdivision datum $\subdatum$ and a $k$-horizontal $\ell$-partition $\blam\in\Par[\bkappa]_{\alpha}$. Let $\bmu:=\Phi_k(\blam)$ be the image. By \autoref{def:subdivision-young-diagram}, in this case $\bmu$ is obtained by inserting an extra node immediately to the right of every $k$-node in $[\blam]$. That is, if a $k$-node is not the last node in its row, then
\begin{center}
    \tikzset{
            C/.style={fill=cyan,text=white},
            O/.style={fill=OrangeRed, text=white}
                }
    \Tableau[no border,box height=0.7,box width=0.7]{[C]k[O]{k{+}1}}$\longrightarrow$\Tableau[no border,box height=0.7,box width=0.7]{[C]k{k{+}1}[O]{k{+}2}}
\end{center}
and otherwise,
\begin{center}
    \tikzset{
            C/.style={fill=cyan,text=white},
            O/.style={fill=OrangeRed, text=white}
                }
    \Tableau[no border,box height=0.7,box width=0.7]{[C]k}$\longrightarrow$\Tableau[no border,box height=0.7,box width=0.7]{[C]k{k{+}1}}\\
\end{center}

We distinguish the nodes of $[\bmu]$ coming from $[\blam]$ from those inserted in the construction. Consider the initial tableaux $T^{\blam}$ and $T^{\bmu}$, and set $\bi^{\blam}:=\bi^{T^{\blam}}$.
Let $\phi:=\phi_{\bi^{\blam}}$ be the position-tracing function associated to $(k,\bi^{\blam})$;
see \autoref{def:position-tracing-function}.
For each node $A\in[\blam]$ with $T^{\blam}(A)=t$ (where $1\le t\le d=\height(\alpha)$), define $A^{+}\in[\bmu]$ by
$T^{\bmu}(A^{+})=\phi(t)$.

The nodes of the form $A^{+}$ for some $A\in [\blam]$ are called \emph{old nodes}, and the remaining nodes of $[\bmu]$ are \emph{new nodes}.
Every new node is a $(k+1)$-node, lying immediately to the right of an old $k$-node; if $B$ is such a new node and
$A^{+}$ is the old $k$-node immediately to its left, write $B=A^{\sharp}$.
In particular, if $A\in[\blam]$ is a $k$-node with $T^{\blam}(A)=t$, then $A^{\sharp}$ satisfies $T^{\bmu}(A^{\sharp})=\phi(t)+1$.

We can be more precise. Let $A=(m,r,c)\in[\blam]$. When we apply $\Phi_k$, since the only non-trivial maximal strips are $(k,k+1)$-strips, the row index of every node is unchanged, and the column index increases by the number of $k$-nodes in the same row that occur strictly before $A$. This number is $\phi(t)-\phi(t_{m,r})$ where
\[
    t_{m,r}
    :=T^{\blam}(m,r,1)=
    1+\sum\limits_{1\le i\le m-1}\bigl|\blam^{(i)}\bigr|
    +\sum\limits_{1\le i\le r-1}\blam^{(m)}_{i}.
\]
Hence
\[
    A^+=(m,r,c+\phi(t)-\phi(t_r))\in[\bmu].
\]

\medskip
Let $\oalpha:=\Phi_k(\alpha)=\alpha(\bmu)$ be the residue content of $\bmu$ and let $d':=\height(\oalpha)$. Take any $T\in\RST{\blam}$, let $\bi^T$ be the residue sequence of $T$ and $\phi^T:=\phi_{\bi^T}$ be the position-tracing function associated to $(k,\bi^{T})$. We
construct a map $T'$ from $[\bmu]$ to $\{1,\cdots,d'\}$ as follows:
\begin{enumerate}
    \item if $B\in [\bmu]$ is a old node of the form $A^+$ for $A\in [\blam]$. Assume $T(A)=t$, define $T'(B):=\phi^T(t)$.
    \item if $B\in [\bmu]$ is a new node of the form $A^{\sharp}$ for $A\in [\blam]$. Assume $T(A)=t$, define $T'(B):=\phi^T(t)+1$.
\end{enumerate}
\begin{Lemma}\label{lm:image-bmu-tableau}
    Fix a subdivision datum $\subdatum$, $k$-horizontal $\blam\in\Par[\bkappa]_{\alpha}$ such that $T\in\RST{\blam}$ and let $\bmu:=\Phi_k(\blam)$, then $T'$ is a $\bmu$-tableau.
\end{Lemma}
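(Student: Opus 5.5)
We need to show that $T'$ is a bijection $[\bmu]\to\{1,\dots,d'\}$, where $d'=\height(\oalpha)=d+m$ and $m$ is the number of $k$-nodes of $[\blam]$. Since $[\bmu]$ also has exactly $d+m$ nodes (each $k$-node of $[\blam]$ acquires exactly one new node $A^\sharp$ to its right and nothing else changes, because $\blam$ is $k$-horizontal), it suffices to show that $T'$ is well defined and injective, or equivalently surjective.

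\emph{Well-definedness.} The set of old nodes is $\{A^+\mid A\in[\blam]\}$ and the set of new nodes is $\{A^\sharp\mid A\in[\blam],\ \res(A)=k\}$. These two sets are disjoint and cover $[\bmu]$. The maps $A\mapsto A^+$ and $A\mapsto A^\sharp$ are injective, because $A\mapsto A^+$ is induced by the strictly increasing map $\phi=\phi_{\bi^{\blam}}$ on the entries of $T^{\blam}$, and $A^\sharp$ has $T^{\bmu}$-entry $\phi(t)+1$. So every node of $[\bmu]$ receives exactly one value.

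\emph{Surjectivity.} The number of $k$-entries of $\bi^T$ equals the number of $k$-nodes of $[\blam]$, namely $m$, since $\bi^T$ is a permutation of $\bi^{\blam}$. So $\phi^T$ maps $\{1,\dots,d\}$ into $\{1,\dots,d+m\}$. By \autoref{def:position-tracing-function}, $\phi^T$ is strictly increasing with $\phi^T(t+1)=\phi^T(t)+1+\delta_{i_t,k}$, where $\bi^T=(i_1,\dots,i_d)$. Hence the complement of its image is exactly $\{\phi^T(t)+1\mid i_t=k\}$: the gaps occur precisely right after positions $t$ with $i_t=k$. This uses $\phi^T(d)=d+\#\{t<d\mid i_t=k\}$, and if $i_d=k$ the last value $d+m$ is the gap $\phi^T(d)+1$.

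Now $i_t=\res(T^{-1}(t))$, so $i_t=k$ exactly when $A=T^{-1}(t)$ is a $k$-node. For such $A$ the node $A^\sharp$ exists and receives the value $\phi^T(t)+1$. Hence the old nodes receive the values $\phi^T(\{1,\dots,d\})$, the new nodes receive exactly the complementary values, and $T'$ is onto $\{1,\dots,d'\}$. Injectivity follows because $\phi^T$ is injective and the two value sets are disjoint. Combined with the cardinality count, $T'$ is a bijection, so $T'$ is a $\bmu$-tableau.

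The only point requiring care is the gap description of $\operatorname{im}\phi^T$, in particular the boundary case $i_d=k$. This is immediate from the recursion for $\phi^T$.
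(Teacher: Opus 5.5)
Your proof is correct and follows the paper's argument. The key point in both is that $\phi^T$ is a strictly increasing embedding of $\{1,\dots,d\}$ into $\{1,\dots,d'\}$ whose image misses exactly the values $\phi^T(t)+1$ with $\bi^T_t=k$, and these are precisely the values placed on the new nodes $A^{\sharp}$; you additionally spell out what the paper leaves implicit, namely the node count $|[\bmu]|=d+m$, the disjointness of old and new nodes, and the boundary case $\bi^T_d=k$.
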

\begin{proof}
    It suffices to show that $T'$ is a bijection from $[\bmu]$ to $\{1,\dots,d'\}$. This follows from \autoref{def:position-tracing-function} and the definition of $T'$. Indeed, $\phi^T$ is an embedding from $\{1,\cdots,d\}$ to $\{1,\cdots,d'\}$, and the elements of $\{1,\cdots,d'\}$ not in its image are precisely those of the form $\phi^T(t)+1$ with $\bi^T_t=k$.
\end{proof}

The following is an easy consequence of \autoref{def:position-tracing-function}.
\begin{Lemma}\label{lm:row-standardness-preserving}
    Fix a subdivision datum $\subdatum$, $k$-horizontal $\blam\in\Par[\bkappa]_{\alpha}$ such that $T\in\RST{\blam}$, if $\bi^T_t=k$, then $\phi^T(t+1)=\phi^T(t)+2$. In particular, for any $t'>t$, we have $\phi^T(t')>\phi^T(t)+1$.\hfill $\square$
\end{Lemma}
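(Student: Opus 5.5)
This follows directly from the recursive description of $\phi^T=\phi_{\bi^T}$ in \autoref{def:position-tracing-function}: $\phi^T(1)=1$ and
\[
\phi^T(s+1)=\phi^T(s)+1+\delta_{\bi^T_s,k}\qquad(1\le s<d).
\]
So if $\bi^T_t=k$ and $t<d$, taking $s=t$ gives $\phi^T(t+1)=\phi^T(t)+2$ at once. The hypotheses that $\blam$ is $k$-horizontal and that $T$ is row-standard are not needed for this identity; they only guarantee that $\phi^T$ and the tableau $T'$ make sense in the surrounding construction.

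For the second assertion, fix $t'>t$. Since $\phi^T$ is strictly increasing (each step adds at least $1$), we have $\phi^T(t')\ge\phi^T(t+1)$. Combining with the first part,
\[
\phi^T(t')\ge \phi^T(t+1)=\phi^T(t)+2>\phi^T(t)+1 .
\]
Equivalently, by telescoping the recursion,
\[
\phi^T(t')-\phi^T(t)=(t'-t)+\#\{t\le j<t' \mid \bi^T_j=k\}\ge 1+1=2,
\]
because the index $j=t$ itself contributes to the count.

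There is no real obstacle here. The only points to check are that $t'>t$ forces $t<d$, so that $t+1$ lies in the domain of $\phi^T$, and that the indicator uses $\bi^T_t$ (the entry of $T$'s residue sequence) rather than $\bi^{\blam}_t$, matching the definition $\phi^T=\phi_{\bi^T}$.
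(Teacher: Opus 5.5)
Your proof is correct and is exactly the paper's argument. The paper states the lemma without proof, as an immediate consequence of the recursion $\phi^T(s+1)=\phi^T(s)+1+\delta_{\bi^T_s,k}$ in the definition of the position-tracing function, which is precisely what you unpack, together with strict monotonicity for the second claim.
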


\begin{Proposition}\label{prop:subdivision-of-standard-tableau-is-standard}
    Fix a subdivision datum $\subdatum$. Let $\blam\in\Par[\bkappa]_{\alpha}$ be $k$-horizontal, and let $T\in\RST{\blam}$. Set $\bmu:=\Phi_k(\blam)$. Then $T'$ is row-standard, that is, $T'\in\RST{\bmu}$. Moreover, if $T\in\Std(\blam)$, then $T'$ is standard, that is, $T'\in\Std(\bmu)$.
\end{Proposition}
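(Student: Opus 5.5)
Since $\blam$ is $k$-horizontal, $\Phi_k$ only inserts, immediately to the right of each $k$-node $A$, the new node $A^{\sharp}$. It leaves every row index unchanged and preserves the left-to-right order of old nodes within a row. So the plan is to compare entries of $T'$ along each row and each column of $[\bmu]$ with the corresponding entries of $T$. Throughout I use that $\phi^T$ is strictly increasing, together with \autoref{lm:row-standardness-preserving}. Well-definedness of $T'$ as a $\bmu$-tableau is \autoref{lm:image-bmu-tableau}.

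\emph{Row-standardness.} Take two horizontally adjacent nodes $B,B'$ of $[\bmu]$ with $B'$ directly right of $B$. There are three cases.

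Case (i): both are old, say $B=A^+$, $B'=C^+$. Then $C$ is the node right of $A$ in $[\blam]$, and $A$ is not a $k$-node (otherwise $A^{\sharp}$ would sit between them). Hence $T(A)<T(C)$, and so $T'(B)=\phi^T(T(A))<\phi^T(T(C))=T'(B')$.

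Case (ii): $B=A^+$ with $A$ a $k$-node and $B'=A^{\sharp}$. Then $T'(B')=\phi^T(t)+1>\phi^T(t)=T'(B)$.

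Case (iii): $B=A^{\sharp}$ and $B'=C^+$, with $C$ right of $A$ in $[\blam]$. Then $T(C)>T(A)$, and \autoref{lm:row-standardness-preserving} gives $\phi^T(T(C))>\phi^T(T(A))+1$.

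\emph{Column-standardness for standard $T$.} Now take vertically adjacent nodes $B=(m,r,c')$ and $B'=(m,r+1,c')$ in $[\bmu]$. The key combinatorial input is a residue computation. The node $(m,r+1,c)$ has residue one less than $(m,r,c)$. So in each of the two rows, the $k$-nodes of $[\blam]$ sit in columns congruent to fixed residues mod $e$, and these congruence classes for row $r+1$ are shifted by $+1$ relative to row $r$. Hence the number of $k$-nodes strictly left of column $c$ in row $r+1$ is at least the number in row $r$, and it exceeds it by at most one; it exceeds it exactly when $(m,r+1,c-1)$ is a $k$-node.

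From this I want to show the following: if $B'=C^+$ or $B'=C^{\sharp}$ with $C=(m,r+1,c)$, then $B$ is either $A^+$ or $A^{\sharp}$ for some $A=(m,r,c_0)$ with $c_0\le c$. In other words, the preimage node in row $r$ lies weakly to the left. Once that is established, $T(A)\le T(m,r,c)<T(C)$ by row- and column-standardness of $T$. Then \autoref{lm:row-standardness-preserving} handles the $\sharp$ cases as before, and gives $T'(B)\le\phi^T(T(A))+1<\phi^T(T(C))\le T'(B')$. Here I will need strictness whenever $T(A)<T(C)$ and $B$ is a $\sharp$-node; this again follows from \autoref{lm:row-standardness-preserving}.

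The main obstacle is making the shift bookkeeping airtight. Concretely, the column of $A^+$ equals $c$ plus the number of earlier $k$-nodes in its row, and I must verify from the residue shift that the column of $B$ lies weakly left of $(m,r,c)^+$. This includes the edge case where $(m,r+1,c-1)$ is a $k$-node, so that row $r+1$ has gained one more insertion. In that edge case $B$ may be $(m,r,c-1)^{\sharp}$ or similar, which is still covered by the case analysis above. I would carry this out by writing the column of $A^+$ explicitly as $c+\#\{c''<c : (m,r,c'')\text{ is a }k\text{-node}\}$ and comparing the two counts directly.
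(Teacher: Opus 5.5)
\textbf{Genuine gap: your key counting claim is stated backwards.} Your row-standardness argument is fine and matches the paper's. Your plan for columns would also work: show that the node $B$ of $[\bmu]$ directly above $C^+$ or $C^{\sharp}$ is $A^+$ or $A^{\sharp}$ for some $A=(m,r,c_0)$ with $c_0\le c$, then use $T(A)\le T(m,r,c)<T(C)$, monotonicity of $\phi^T$ and \autoref{lm:row-standardness-preserving}.

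However, the ``key combinatorial input'' you state is false. The $k$-columns of row $r+1$ are those of row $r$ shifted one step to the \emph{right}. So row $r+1$ has \emph{at most} as many $k$-nodes strictly left of column $c$ as row $r$, not at least. Write $p_r(c)$ for the number of $k$-nodes $(m,r,c'')$ with $c''<c$. Residue arithmetic gives, for $c\ge 2$,
$p_{r+1}(c)-p_r(c)=[(m,r+1,1)\text{ is a }k\text{-node}]-[(m,r,c-1)\text{ is a }k\text{-node}]$.
The first term vanishes only because $\blam$ is $k$-horizontal. Otherwise $(m,r,1)$ would be a $(k+1)$-node above the $k$-node $(m,r+1,1)$, which is a non-trivial maximal $(k+1,k)$-strip. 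Hence $p_{r+1}(c)=p_r(c)-[(m,r,c-1)\text{ is a }k\text{-node}]$.

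A concrete counterexample to your claim: take $e=3$, $\Lambda=\Lambda_0$, $k=1$, $\lambda=(3,3)$, which is $k$-horizontal. Its $k$-nodes are $(1,2)$ and $(2,3)$, so $p_1(3)=1>0=p_2(3)$. Your version of the count would place $C^+$ weakly to the \emph{right} of $(m,r,c)^+$. That is the opposite of the ``weakly left'' claim you then need, and it would break your chain of inequalities.

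Your edge case is misidentified for the same reason. If $(m,r+1,c-1)$ is a $k$-node, then $(m,r,c-1)$ is a $(k+1)$-node, so $(m,r,c-1)^{\sharp}$ does not exist; in fact $B=(m,r,c)^+$.

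\textbf{The repair.} With the corrected count the alignment is exact. The node above $C^+$ is $(m,r,c-1)^{\sharp}$ if $(m,r,c-1)$ is a $k$-node, and $(m,r,c)^+$ otherwise. The node above $C^{\sharp}$ is always $(m,r,c)^+$; note $c\ge 2$ there, again by $k$-horizontality. This gives your ``weakly left'' statement, and your inequality argument then finishes the proof. One small point in that chain: for $B=A^+$ with $A$ not a $k$-node, use $\phi^T(T(A))<\phi^T(T(C))$ directly rather than passing through $\phi^T(T(A))+1$.

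\textbf{Comparison with the paper.} The paper encodes these same alignments in its local pictures, case by case on the residue of a node ($k$, $k+1$, or neither). Your repaired route handles all vertical pairs uniformly through one alignment statement, which is cleaner. What you must not miss is that $k$-horizontality enters the column argument a second time, through this count, and not only through the description of the shape of $\bmu$.
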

\begin{proof}
    By \autoref{lm:image-bmu-tableau}, $T'$ is a $\bmu$-tableau. It remains to show that $T'$ is row-standard, and, if $T$ is column-standard, that $T'$ is column-standard as well. Both properties can be checked locally.
    
    Let $A\in[\blam]$ be a node such that $T(A)=t$, and suppose that the surrounding nodes (some of which may not exist) are as follows (we draw the nodes in $[\blam]$ on the left and the corresponding entries of $T$ on the right):
    \begin{center}
        \resizebox{0.4\linewidth}{!}{
        \RibbonTableau[no border, math entries, box height=0.9, box width=0.9]{12_{A^u},21_{A^l},22_{A},23_{A^r},32_{A^d}}\qquad
        \RibbonTableau[no border, math entries, box height=0.9, box width=0.9]{12_{t^u},21_{t^l},22_{t},23_{t^r},32_{t^d}}
        }
    \end{center}
    \smallskip
    where $T(A^{*})=t^{*}$ for $*\in\{l,r,u,d\}$. Apply the construction of $[\bmu]$ and $T'$, there are several cases:
    \begin{enumerate}
        \item If $\res(A)=k$, then the corresponding nodes in $[\bmu]$ and the entries of $T'$ are as follows:
        \begin{center}
            \resizebox{0.6\linewidth}{!}{
            \RibbonTableau[no border, math entries, box height=1.5, box width=1.45]{
              13_{{(A^u)}^+},21_{{(A^l)}^+},22_{A^+},23_{A^{\sharp}},24_{{(A^r)}^+},32_{{(A^d)}^+}
            }
            \qquad
            \RibbonTableau[no border, math entries, box height=1.5, box width=1.45]{
              13_{\phi^T(t^u)},
              21_{\phi^T(t^l)},
              22_{\phi^T(t)},
              23_{\phi^T(t){+}1},
              24_{\phi^T(t^r)},
              32_{\phi^T(t^d)}
            }
            }
        \end{center}
        \smallskip
        Since $T$ is row-standard, we have $t^l<t<t^r$. The inequality $\phi^T(t^l)<\phi^T(t)$ follows from the fact that $\phi^T$ is strictly increasing, and $\phi^T(t)+1<\phi^T(t^r)$ follows from \autoref{lm:row-standardness-preserving} together with the assumption $\res(A)=\bi^T_t=k$. If, moreover, $T$ is standard, then $t^u<t<t^d$. Both $\phi^T(t)<\phi^T(t^d)$ and $\phi^T(t^u)<\phi^T(t)+1$ follow from the fact that $\phi^T$ is strictly increasing.
    
        \item If $\res(A)=k+1$, then $\res(A^l)=\res(A^d)=k$, and the corresponding nodes in $[\bmu]$ and the entries of $T'$ are as follows:
        \begin{center}
            \resizebox{0.6\linewidth}{!}{
            \RibbonTableau[no border, math entries, box height=1.5, box width=1.5]{
              13_{{(A^u)}^+},21_{{(A^l)}^+},23_{A^+},22_{{(A^{l})}^{\sharp}},24_{{(A^r)}^+},32_{{(A^d)}^+},33_{{(A^{d})}^{\sharp}}}
            \qquad
            \RibbonTableau[no border, math entries, box height=1.5, box width=1.5]{
              13_{\phi^T(t^u)},
              21_{\phi^T(t^l)},
              23_{\phi^T(t)},
              22_{\phi^T(t^l){+}1},
              24_{\phi^T(t^r)},
              32_{\phi^T(t^d)},
              33_{\phi^T(t^d){+}1}
            }
            }
        \end{center}
        \smallskip
        Since $T$ is row-standard, we have $t^l<t<t^r$. The inequality $\phi^T(t)<\phi^T(t^r)$ follows from the fact that $\phi^T$ is strictly increasing, and $\phi^T(t^l)+1<\phi^T(t)$ follows from \autoref{lm:row-standardness-preserving} together with the condition $\res(A^l)=\bi^T_{t^l}=k$. 
        If, moreover, $T$ is standard, then $t^u<t<t^d$. Both $\phi^T(t^u)<\phi^T(t)$ and $\phi^T(t)<\phi^T(t^d)+1$ follow from the fact that $\phi^T$ is strictly increasing, while $\phi^T(t^l)+1<\phi^T(t^d)$ again follows from \autoref{lm:row-standardness-preserving}.
            
        \item If $\res(A)\neq k,k+1$, then the corresponding nodes in $[\bmu]$ and the entries of $T'$ are as follows:
        \begin{center}
            \resizebox{0.5\linewidth}{!}{
            \RibbonTableau[no border, math entries, box height=1.5, box width=1.5]{
              12_{{(A^u)}^+},21_{{(A^l)}^+},22_{A^+},23_{{(A^r)}^+},32_{{(A^d)}^+}}
            \qquad
            \RibbonTableau[no border, math entries, box height=1.5, box width=1.5]{
              12_{\phi^T(t^u)},
              21_{\phi^T(t^l)},
              22_{\phi^T(t)},
              23_{\phi^T(t^r)},
              32_{\phi^T(t^d)}
            }
            }
        \end{center}
        \smallskip
        In this case, all inequalities follow immediately from the fact that $\phi^T$ is strictly increasing.
    \end{enumerate}
    Since every node in $[\bmu]$ is either of the form $A^+$ or of the form $A^{\sharp}$, the cases above exhaust all possibilities. Hence $T'$ is a row-standard $\bmu$-tableau, and is standard if $T$ is.
\end{proof}

By \autoref{prop:subdivision-of-standard-tableau-is-standard}, we define the subdivision map on row-standard tableaux by
\[
    \Phi_k:\RST{\blam}\to\RST{\bmu},\qquad T\mapsto \Phi_k(T):=T',
\]
which restricts to the set of standard tableaux:
\[
    \Phi_k:\Std(\blam)\to\Std(\bmu),\qquad T\mapsto \Phi_k(T):=T'.
\]
By construction, $\Phi_k(T^{\blam})=T^{\bmu}$ always holds and hence $\Phi_k(\bi^{\blam})=\bi^{\bmu}$.
\begin{Example}\label{eg:subdivision-on-standard-tableau}
    Let the subdivision datum be $\subdatum=(3,I,\Lambda_0+\Lambda_1,\alpha,(0,1),1)$, and consider the multipartition $\blam=(8,5\mid 4,3,2)\in\Par[\bkappa]$. Then $\blam$ is $k$-horizontal, and $\bmu:=\Phi_k(\blam)=(11,6\mid 6,4,2)$. The Young diagrams filled with residues are as follows. We draw the $k$-nodes and the $(k+1)$-nodes in the maximal $(k,k+1)$-strips of $[\blam]$ in {\color{cyan}cyan} and {\color{OrangeRed}orange}, respectively. Similarly, we draw the corresponding nodes in $[\bmu]$ in the same colors:
    \begin{center}
        \tikzset{
            C/.style={fill=cyan,text=white},
            O/.style={fill=OrangeRed, text=white}
                }
        $[\blam]:$\qquad\Multitableau[no border]{0[C]1[O]20[C]1[O]20[C]1,20[C]1[O]20|[C]1[O]20[C]1,0[C]1[O]2,20}\\
        $[\bmu]:$\qquad\Multitableau[no border]{0[C]12[O]30[C]12[O]30[C]12,30[C]12[O]30|[C]12[O]30[C]12,0[C]12[O]3,30}
    \end{center}
    Take $T\in\Std(\blam)$ to be the following standard tableau:
    \begin{center}
        \tikzset{
            C/.style={fill=cyan,text=white},
            O/.style={fill=OrangeRed, text=white}
                }
        \Multitableau[no border]{1[C]2[O]68[C]{13}[O]{17}{21}[C]{22},35[C]{12}[O]{16}{18}|[C]4[O]9{10}[C]{20},7[C]{14}[O]{19},{11}{15}}
    \end{center}
    Applying $\Phi_k$, we obtain the following tableau $\Phi_k(T)$:
    \begin{center}
        \tikzset{
            C/.style={fill=cyan,text=white},
            O/.style={fill=OrangeRed, text=white}
                }
        \Multitableau[no border]{1[C]23[O]8{10}[C]{16}{17}[O]{22}{27}[C]{28}{29},47[C]{14}{15}[O]{21}{23}|[C]56[O]{11}{12}[C]{25}{26},9[C]{18}{19}[O]{24},{13}{20}}
    \end{center}
    It is easy to check (for example, using SageMath) that $\deg T=12=\deg \Phi_k(T)$.
\end{Example}

We want to prove that the subdivision map we have just constructed preserves the degree of a standard $\blam$-tableau. Before stating the theorem, we first show that removable nodes and addable nodes behave well under subdivision (on partitions).

\begin{Lemma}\label{lm:removable-correspondence}
    Fix a subdivision datum $\subdatum$. Let $\blam\in \Par[\bkappa]$ be $k$-horizontal and $\bmu:=\Phi_k(\blam)$. Let $A\in[\blam]$ be a node.    \begin{enumerate}
        \item If $\res(A)=i\neq k$, let $j:=\Phi_k(i)$, where $\Phi_k(i)$ is the subdivision on words defined in \autoref{subsec:subdivision-words}. Then $A$ is a removable $i$-node of $[\blam]$ if and only if $A^+$ is a removable $j$-node of $[\bmu]$.
        \item If $\res(A)=k$, then $A$ is a removable $k$-node of $[\blam]$ if and only if $A^{\sharp}$ is a removable $(k+1)$-node of $[\bmu]$ and $A^+$ is a removable $k$-node of $[\bmu]\setminus \{A^{\sharp}\}$.
    \end{enumerate}
\end{Lemma}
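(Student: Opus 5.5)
The plan is to work row by row, using the explicit description of $[\bmu]$ for $k$-horizontal $\blam$. In that case $\Phi_k$ keeps every node in its row and component. A node $A=(m,r,c)$ is sent to $A^+=(m,r,c+n_A)$, where $n_A$ is the number of $k$-nodes strictly to the left of $A$ in row $r$. Each $k$-node $A$ acquires a new node $A^\sharp=(m,r,c+n_A+1)$ immediately to its right. Consequently $\bmu^{(m)}_r=\blam^{(m)}_r+m_r$, where $m_r$ counts the $k$-nodes in row $r$ of $\blam^{(m)}$. Recall that a node $(m,r,c)$ is removable exactly when it is the last node of its row and row $r+1$ is strictly shorter. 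So I need to show two things: the last node of row $r$ in $[\bmu]$ corresponds to the last node of row $r$ in $[\blam]$, and the strict inequality $\blam^{(m)}_{r+1}<\blam^{(m)}_r$ transfers to $\bmu$.

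First I would handle ``last in row''. Let $A$ be the last node of row $r$ of $[\blam]$. If $\res(A)\ne k$, the last node of row $r$ of $[\bmu]$ is $A^+$, with residue $\Phi_k(\res A)$. If $\res(A)=k$, the last node is $A^\sharp$, with residue $k+1$, and $A^+$ sits just before it. Conversely, any node $B$ of $[\bmu]$ has the form $A^+$ or $A^\sharp$. If $B=A^+$ is last in its row and $\res(A)\neq k$, then $A$ is last in its row, because otherwise $(A')^+$ for the next node $A'$ would lie further right. The same argument applies to $A^\sharp$.

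Next I compare row lengths. The key observation is that consecutive rows $r$ and $r+1$ in a component have first-node residues differing by $1$, and the residues along a row form a consecutive segment. I claim that $\blam_{r+1}<\blam_r$ if and only if $\bmu_{r+1}<\bmu_r$, provided the last node of row $r$ is the given $A$. Write $\blam_r=\blam_{r+1}+t$ with $t\ge0$. Row $r$ starts one residue higher than row $r+1$. If $t=0$, the two rows are shifted segments of equal length, so their $k$-counts differ by at most one, depending on the first node of row $r+1$ and the last node of row $r$. I will check directly that $\bmu_r=\bmu_{r+1}$ still holds when $t=0$, in the situations relevant to the given residue of $A$.

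For part (a), $\res(A)\ne k$. If the first node of row $r+1$ has residue $k$, then the first node of row $r$ has residue $k+1$. That node together with the $k$-node below it would form a non-trivial $(k+1,k)$-strip, which is excluded by $k$-horizontality. In all other cases, the $k$-residues in the two equal-length segments match up, so $m_r=m_{r+1}$ and $\bmu_r=\bmu_{r+1}$. When $t\ge1$, I get $m_r\ge m_{r+1}$, since row $r$'s segment contains row $r+1$'s segment shifted by one, except possibly at the first position, which is again excluded by $k$-horizontality. Hence $\bmu_r>\bmu_{r+1}$. For part (b), $\res(A)=k$. Removing $A^\sharp$ and then $A^+$ from $[\bmu]$ removes exactly the image of $A$, so the same length comparison, applied to $\bmu_r-1$ and $\bmu_r-2$, gives the stated equivalence.

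I expect the main obstacle to be the equal-length boundary case in the comparison of $m_r$ and $m_{r+1}$. This is precisely where $k$-horizontality must be invoked, through the first-column residues, and I would handle it by a careful case split on the residue of $(m,r+1,1)$.
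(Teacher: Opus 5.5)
Your proposal is correct and is essentially the paper's argument: both reduce removability to the correspondence of last nodes of rows together with the identity $\bmu^{(m)}_t=\blam^{(m)}_t+m_t$, and then compare $m_r$ with $m_{r+1}$, using $k$-horizontality to rule out a $k$-node at the start of row $r+1$. In part (b), state explicitly that the $k$-node $A$ itself forces $m_r\ge m_{r+1}+1$, with equality when $\blam_r=\blam_{r+1}$; this is exactly what gives $\bmu_r-\bmu_{r+1}\ge 2$ precisely when $\blam_r>\blam_{r+1}$.
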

\begin{proof}
    Assume first that $\res(A)=i\neq k$, and set $j:=\Phi_k(i)$. Let $A=(m,r,c)$, and let $A^+=(m,r,c^{+})\in[\bmu]$ be the (unique) $j$-node corresponding to $A$.

    Recall that $A$ is removable in $[\blam]$ if and only if $c=\blam^{(m)}_r$ and either $r=\ell(\blam^{(m)})$ or $\blam^{(m)}_r>\blam^{(m)}_{r+1}$. Similarly, $A^+$ is removable in $[\bmu]$ if and only if $c^+=\bmu^{(m)}_r$ and either $r=\ell(\bmu^{(m)})$ or $\bmu^{(m)}_r>\bmu^{(m)}_{r+1}$.
    
    Let $p^m_t$ denote the number of $k$-nodes in the $t$-th row of $[\blam^{(m)}]$, for $1\le t\le \ell(\blam^{(m)})$. By construction of $\Phi_k$ and since $\blam$ is $k$-horizontal, we have
    \[
        \bmu^{(m)}_t=\blam^{(m)}_t+p^m_t\qquad\text{for all }t.
    \]
    
    Since $A$ is not a $k$-node and $\blam$ is $k$-horizontal, $A$ is the last node of $[\blam^{(m)}_r]$ if and only if $A^+$ is the last node of $[\bmu^{(m)}_r]$. In other words, $c^+=\bmu^{(m)}_r$ if and only if $c=\blam^{(m)}_r$.
    Thus removability of $A$ and $A^+$ reduces to comparing adjacent row lengths. In particular, if $r=\ell(\blam^{(m)})$, there is nothing to prove. So we assume $r<\ell(\blam^{(m)})$.

    \noindent\emph{($\Leftarrow$)} Suppose that $A^+$ is removable. Then $\bmu^{(m)}_r>\bmu^{(m)}_{r+1}$ and hence
    \[
        \blam^{(m)}_r-\blam^{(m)}_{r+1}
        =\big(\bmu^{(m)}_r-\bmu^{(m)}_{r+1}\big)-\big(p^m_r-p^m_{r+1}\big).
    \]
    Since $p^m_r-p^m_{r+1}\ge 0$, it follows that $\blam^{(m)}_r-\blam^{(m)}_{r+1}\ge 1-(p^m_r-p^m_{r+1})$. If $p^m_r-p^m_{r+1}=0$, then $\blam^{(m)}_r-\blam^{(m)}_{r+1}\ge 1$, so $\blam^{(m)}_r>\blam^{(m)}_{r+1}$ and $A$ is removable. If $p^m_r-p^m_{r+1}>0$, assume that $\blam^{(m)}_r=\blam^{(m)}_{r+1}$. Since the last node $A$ of row $r$ of $[\blam^{(m)}]$ is not a $k$-node, this implies that the number of $k$-nodes in rows $r$ and $r+1$ must be the same, which contradicts $p^m_r>p^m_{r+1}$. Hence $\blam^{(m)}_r>\blam^{(m)}_{r+1}$ and $A$ is removable in all cases.
    
    \noindent\emph{($\Rightarrow$)} If $A$ is removable then $\blam^{(m)}_r>\blam^{(m)}_{r+1}$, using $p^m_r\ge p^m_{r+1}$, we get
    \[
    \bmu^{(m)}_r-\bmu^{(m)}_{r+1}
    =\big(\blam^{(m)}_r-\blam^{(m)}_{r+1}\big)+\big(p^m_r-p^m_{r+1}\big)\ge 1,
    \]
    hence $A^+$ is removable.
    
    \medskip
    If $\res(A)=k$, then under subdivision the node $A=(m,r,c)$ corresponds to a horizontally adjacent pair in $[\bmu^{(m)}_r]$:
    an old $k$-node $A^+$ and a new $(k+1)$-node $A^{\sharp}$ immediately to the right of $A^+$.
    We want to prove that $A$ is removable in $[\blam]$ if and only if $A^{\sharp}$ is removable in $[\bmu]$ and $A^+$ is removable in $[\bmu]\setminus\{A^{\sharp}\}$.

    Since $A^{\sharp}$ is immediately to the right of $A^+$ in the same row, it is the last node in $[\bmu^{(m)}_r]$ if and only if $A$ is the last node of $[\blam^{(m)}_r]$. By the same argument as in the last case, it suffices to compare the adjacent row lengths in $[\blam^{(m)}]$ and in $[\bmu^{(m)}]$. We assume $r<\ell(\blam^{(m)})$.
    
    \noindent\emph{($\Rightarrow$)} If $A$ is removable, so $c=\blam^{(m)}_r$. Since $A$ is a $k$-node, the node immediately below $A$ has residue $k-1$, in other words, it is of the form:
    \begin{center}
        \resizebox{0.5\linewidth}{!}{
                    \tikzset{
                C/.style={fill=cyan,text=white},
                O/.style={fill=OrangeRed, text=white}
                    }
                    \Tableau[no border,box height=1,box width=1,dotted cols={1,4,5,8,9,10,11}]{0[C]{k}{\empty}00[C]{k}{\empty}0000[C]{k},0{\empty}[C]{k}00{\empty}[C]{k}0000{k{-}1}}
        }
    \end{center}
    as a result, whether or not this $(k-1)$-node lies in $[\blam]$, we always have $p^m_{r}>p^m_{r+1}$. Hence
    \[
        \bmu^{(m)}_{r}-\bmu^{(m)}_{r+1}
        =\blam^{(m)}_{r}-\blam^{(m)}_{r+1}+p^m_r-p^m_{r+1}
        >1+0=1.
    \]
    Hence $B$ is removable of $[\bmu]$ and $A^+$ is removable of $[\bmu]\setminus\{A^{\sharp}\}$. 
    
    \noindent\emph{($\Leftarrow$)} Conversely, if $[\bmu]\setminus\{A^{\sharp},A^+\}$ is still (the Young diagram of) an $\ell$-partition, then $\bmu^{(m)}_r\ge \bmu^{(m)}_{r+1}+2$. If $p^m_r>p^m_{r+1}+1$, then at least the node immediately below $A$ is not contained in $[\blam]$, and hence $A$ is removable. Otherwise,
    \[
        \blam^{(m)}_r-\blam^{(m)}_{r+1}
        =\big(\bmu^{(m)}_r-\bmu^{(m)}_{r+1}\big)-\big(p^m_r-p^m_{r+1}\big)
        \ge 2-1=1,
    \]
    and hence $A$ is also removable.
\end{proof}

We also need to prove the analogous statements for addable nodes of $[\blam]$ and $[\bmu]$. To do this, we extend the definition of subdivision on partitions.

Fix a subdivision datum $\subdatum$. For each node $A=(m,r,c)$, we define the residue $\res(A):=\res_{\bkappa}(A)$ as in \autoref{subsec:multipartition-charge-residue}; if $\res(A)=i$, we refer to $A$ as an $i$-node. Equivalently, we may choose an $\ell$-partition $\blam\in\Par[\bkappa]$ such that $A\in [\blam]$ and then define the residue of $A$ in the usual way. By definition of the residue function, it is independent of the choice of $\blam$ containing $A$.

Let $p_k(A)$ be the number of $k$-nodes of the form $(m,r,x)$ such that $1\leq x<c$. We define the node $A^{+}=\big(m,r,c+p_k(A)\big)$. If there is a $k$-horizontal $\ell$-partition $\blam$ containing $A$ and $A$ is an $i$-node, then under the subdivision map, $A^+\in [\Phi_k(\blam)]$ is the corresponding $\Phi_k(i)$-node, as before.




\begin{Lemma}\label{lm:addable-correspondence}
    Fix a subdivision datum $\subdatum$. Let $\blam\in \Par[\bkappa]$ be $k$-horizontal and set $\bmu:=\Phi_k(\blam)$. Let $A$ be a node, not necessarily in $[\blam]$.
    \begin{enumerate}
        \item If $\res(A)=i\neq k$, let $j:=\Phi_k(i)$. Then $A$ is an addable $i$-node of $[\blam]$ if and only if $A^+$ is an addable $j$-node of $[\bmu]$.
        \item If $\res(A)=k$, then $A$ is an addable $k$-node of $[\blam]$ if and only if $A^{+}$ is an addable $k$-node of $[\bmu]$ and $A^{\sharp}$ is an addable $(k+1)$-node of $[\bmu]\cup \{A^{+}\}$.
    \end{enumerate}
\end{Lemma}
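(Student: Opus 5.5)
The plan is to mirror the proof of \autoref{lm:removable-correspondence}, with the row inequalities reversed. Write $A=(m,r,c)$. Put $p^m_t$ for the number of $k$-nodes in row $t$ of $[\blam^{(m)}]$, and use that $\bmu^{(m)}_t=\blam^{(m)}_t+p^m_t$ because $\blam$ is $k$-horizontal. Recall that $A$ is addable in $[\blam]$ exactly when two conditions hold: $c=\blam^{(m)}_r+1$, and either $r=1$ or $\blam^{(m)}_{r-1}\ge c$. The analogous description holds for $A^+=(m,r,c+p_k(A))$ in $[\bmu]$.

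\emph{Step 1: the column condition.} The $k$-nodes of row $r$ lying strictly left of column $c$ are exactly the $k$-nodes of $[\blam^{(m)}_r]$ when $c=\blam^{(m)}_r+1$, so $p_k(A)=p^m_r$. Hence $c=\blam^{(m)}_r+1$ if and only if $c+p_k(A)=\bmu^{(m)}_r+1$, i.e.\ $A^+$ sits just past the end of row $r$ of $[\bmu]$. (For the converse direction, one notes that $c+p_k(A)$ is strictly increasing in $c$, so it determines $c$.) If $r=1$ there is nothing more to check, so assume $r>1$. It remains to compare the row-$(r-1)$ condition $\blam^{(m)}_{r-1}\ge\blam^{(m)}_r+1$ with $\bmu^{(m)}_{r-1}\ge\bmu^{(m)}_r+1$ in case (a), and with $\bmu^{(m)}_{r-1}\ge\bmu^{(m)}_r+2$ in case (b). Case (b) asks for two nodes because $A^{\sharp}=(m,r,c+p_k(A)+1)$ must be addable to $[\bmu]\cup\{A^+\}$.

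\emph{Step 2: case (a), $\res(A)\neq k$.} The direction ($\Rightarrow$) is immediate from $p^m_{r-1}\ge p^m_r$. The key observation is that row $r-1$ is row $r$ shifted one column right with residues increased by one, so the residues of row $r$ match those of row $r-1$ starting one column later. Hence, if $\blam^{(m)}_{r-1}\ge\blam^{(m)}_r$, then $p^m_{r-1}$ counts the $k$-nodes of row $r$ plus any $k$-nodes in columns of row $r-1$ beyond column $\blam^{(m)}_r$. For ($\Leftarrow$), suppose $\bmu^{(m)}_{r-1}>\bmu^{(m)}_r$ but $\blam^{(m)}_{r-1}=\blam^{(m)}_r$. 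Then the rows have equal length and the final node of row $r-1$, which is the node above $A$, has residue $\res(A)+1$. One checks that $p^m_{r-1}=p^m_r$ unless this final node has residue $k$, i.e.\ unless $\res(A)=k-1$. That residue is allowed in case (a), so I must handle it by hand. Doing so, I expect $p^m_{r-1}-p^m_r$ to equal exactly the contribution of column $\blam^{(m)}_r$, which is at most one. I would then argue that the subdivided row $r-1$ is longer by exactly one, so $A^+$ is addable (the equality case). This boundary analysis, with residues $k-1$, $k$ and $k+1$ at the end of rows, is the main obstacle. My first attempt would be to draw the two rows explicitly, as done for the removable case, and read off $p^m_{r-1}-p^m_r\in\{0,1\}$ according to the residue at the end of row $r-1$.

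\emph{Step 3: case (b), $\res(A)=k$.} Here the node above $A$ has residue $k+1$, and the node above-left of $A$ is a $k$-node in row $r-1$ at column $c$. When $\blam^{(m)}_{r-1}\ge c$, this $k$-node lies in $[\blam]$ and is not matched in row $r$, so $p^m_{r-1}\ge p^m_r+1$. Then $\bmu^{(m)}_{r-1}-\bmu^{(m)}_r\ge 1+1=2$, which gives the forward direction. Conversely, if $\bmu^{(m)}_{r-1}\ge\bmu^{(m)}_r+2$, then either $p^m_{r-1}-p^m_r\le1$, which forces $\blam^{(m)}_{r-1}-\blam^{(m)}_r\ge1$, or else row $r-1$ visibly extends past column $c$. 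Either way $A$ is addable. Finally, the residues of $A^+$ and $A^{\sharp}$ are $\Phi_k(i)$, respectively $k$ and $k+1$. This follows from the residue compatibility established after \autoref{lem:residue-characterization}, applied to $\Phi_k(\blam\cup\{A\})$, which I expect to be again $k$-horizontal.
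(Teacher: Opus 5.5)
Your strategy is the paper's: Step 1's column condition, then comparing $\bmu^{(m)}_{r-1}-\bmu^{(m)}_r$ with $\blam^{(m)}_{r-1}-\blam^{(m)}_r$ via $p^m_{r-1}-p^m_r$. However, Step~2 rests on a miscomputed residue, and the ``main obstacle'' built on it does not exist.

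If $\blam^{(m)}_{r-1}=\blam^{(m)}_r=c-1$, the last node of row $r-1$ is $(m,r-1,c-1)$, which lies above-left of $A$. The node $(m,r-1,c)$ directly above $A$ is not in $[\blam]$. Since $\res(m,r-1,x)=\res(m,r,x+1)$, that last node has residue $\res(A)$, not $\res(A)+1$.

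Your plan for the phantom case $\res(A)=k-1$ also points the wrong way. In ($\Leftarrow$) you must prove $\blam^{(m)}_{r-1}\ge c$. A configuration with $\blam^{(m)}_{r-1}=\blam^{(m)}_r$ but $\bmu^{(m)}_{r-1}=\bmu^{(m)}_r+1$, which is what you predict, would be a counterexample to (a), not a proof of it.

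What is missing is the exact count. For $r\ge2$ and $c=\blam^{(m)}_r+1\ge2$, using that $(m,r,1)$ is not a $k$-node by $k$-horizontality,
\[
  p^m_{r-1}-p^m_r=\#\{\,x \mid c-1\le x\le \blam^{(m)}_{r-1},\ \res(m,r-1,x)=k\,\},
\]
and the $x=c-1$ term is present exactly when $\res(A)=k$. (Your ``beyond column $\blam^{(m)}_r$'' is off by one.)
\begin{itemize}
\item In case (a) that term is absent, so $\blam^{(m)}_{r-1}=\blam^{(m)}_r$ forces $\bmu^{(m)}_{r-1}=\bmu^{(m)}_r$. This is precisely the paper's remark that $p^m_{r-1}>p^m_r$ forces $\blam^{(m)}_{r-1}>\blam^{(m)}_r$ because $\res(A)\neq k$. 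For $c=1$ in (a), simply note $\bmu^{(m)}_{r-1}\ge1$ iff $\blam^{(m)}_{r-1}\ge1$.
\item In case (b) the term contributes exactly $1$, so $\bmu^{(m)}_{r-1}-\bmu^{(m)}_r\ge2$ if and only if $\blam^{(m)}_{r-1}>\blam^{(m)}_r$.
\end{itemize}
Step~3 has the same shift: the $k$-node above-left of $A$ sits in column $c-1$, not $c$.

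Separately, your closing expectation that $\blam\cup\{A\}$ is again $k$-horizontal fails for $A=(m,r,1)$ with $r\ge2$ and $\res(A)=k$. In that case $\{(m,r-1,1),A\}$ is a non-trivial $(k+1,k)$-strip. There the forward direction of (b) is actually false as stated: $A^+=(m,r,1)$ lies below the $(k+2)$-node $(m,r-1,1)$ of $[\bmu]$, so it has residue $k+1$. For example, take $\Lambda=\Lambda_{k+1}$ (indices mod $e$), $\blam=(1)$ and $A=(2,1)$; here $A^{\sharp}=(2,2)$ is not even addable.

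The paper's proof of (b) overlooks this case too: its claim $p^m_{r-1}>p^m_r$ fails when $c=1$ and $\blam^{(m)}_{r-1}=1$. \autoref{lm:induction-k-node} handles these first-column nodes by a separate correspondence. You should therefore exclude $c=1$, $r\ge2$ from (b) explicitly.
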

\begin{proof}
    Let $A=(m,r,c)$ be a node with $\res(A)=i\neq k$. We want to prove that $A$ is an addable node of $[\blam]$ if and only if $A^+$ is an addable node of $[\bmu]$. 
    
    By definition, $A$ is addable of $[\blam]$ if and only if $\blam^{(m)}_r+1=c$ and $\blam^{(m)}_{r-1}\ge c$ whenever $r\neq 1$. Similarly, $A^+=(m,r,c^+)$ is addable of $[\bmu]$ if and only if $\bmu^{(m)}_r+1=c^+$ and $\bmu^{(m)}_{r-1}\ge c^+$ whenever $r\neq 1$. Let $p^m_t$ denote the number of $k$-nodes in the $t$-th row of $[\blam^{(m)}]$, for $1\le t\le \ell(\blam^{(m)})$.
    
    Suppose that $A$ is addable. Then $c^+=c+p^m_r=\blam^{(m)}_r+1+p^m_r=1+\bmu^{(m)}_r$, as required. Moreover, if $r\neq 1$, then
    $\bmu^{(m)}_{r-1}-c^+=\big(\blam^{(m)}_{r-1}+p^{m}_{r-1}\big)-\big(c+p^m_{r}\big)
    =\big(\blam^{(m)}_{r-1}-c\big)+\big(p^{m}_{r-1}-p^{m}_{r}\big)\ge 0$,
    so $\bmu^{(m)}_{r-1}\ge c^{+}$. Hence $A^+$ is addable in $[\bmu]$. 
    
    Conversely, suppose that $A^+$ is addable in $[\bmu]$. Then
    $c^{+}=\bmu^{(m)}_r+1=\blam^{(m)}_r+p^m_r+1$.
    Since the number of $k$-nodes before $c^{+}$ is the same as the number of $k$-nodes in $[\bmu^{(m)}_r]$, which is the same as the number of $k$-nodes in $[\blam^{(m)}_r]$, this number is $p^m_r$ by definition. Hence $c^{+}=c+p^m_r$ and $c=\blam^{(m)}_r+1$, as required. Now assume $r\neq 1$. If $p^m_{r-1}>p^m_r$, then $\blam^{(m)}_{r-1}>\blam^{(m)}_r=c-1$ since $\res(A)\neq k$, and hence $\blam^{(m)}_{r-1}\ge c$. Otherwise, $p^m_{r-1}=p^m_r$ and
    $\blam^{(m)}_{r-1}-c=\big(\bmu^{(m)}_{r-1}-p^m_{r-1}\big)-c
    =\big(\bmu^{(m)}_{r-1}-c^{+}\big)-\big(p^m_{r-1}-p^m_r\big)\ge 0$,
    as desired. Therefore $A$ is addable in $[\blam]$, and the claim follows.
    
    \smallskip
    Let $A=(m,r,c)$ be a $k$-node. Under the subdivision map, there is a unique $k$-node $A^+=(m,r,c^{+})$ and a unique $(k+1)$-node $A^{\sharp}=(m,r,c^{+}+1)$ corresponding to $A$. Namely, it is of the following form (we draw the node labels on the left and their residues on the right; the same convention applies to all figures below in the proof of \autoref{thm:degree-invariance-standard-tableau}):
    \begin{center}
        $[\blam]:$\qquad\resizebox{0.7\linewidth}{!}{
        \Tableau[no border,box height=1,box width=1,snob style={draw =none},snobs={24_{A}},dotted cols={2,5,6}]{{\empty}0{\empty}{\empty}{\empty}0{\empty},{\empty}0{\empty}}
        \qquad
        \Tableau[no border,box height=1,box width=1,snob style={draw =none},snobs={24_{k}},dotted cols={2,5,6}]{{\empty}0{k}{k+1}00{\empty},{\empty}0{\empty}}
        }
    \end{center}
    \begin{center}
        $[\bmu]:$\qquad\resizebox{0.8\linewidth}{!}{
        \Tableau[no border,box height=1,box width=1,snob style={draw =none},snobs={25_{A^+},26_{A^{\sharp}}},dotted cols={2,3,7,8}]{{\empty}00{\empty}{\empty}{\empty}00{\empty},{\empty}00{\empty}}
        \qquad
        \Tableau[no border,box height=1,box width=1,snob style={draw =none},snobs={25_{k},26_{k{+}1}},dotted cols={2,3,7,8}]{{\empty}00{k}{k+1}{k+2}00{\empty},{\empty}00{\empty}}
        }
    \end{center}

    We want to prove that $A$ is an addable node of $[\blam]$ if and only $A^+$ is an addable node of $[\bmu]$ and $A^{\sharp}$ is an addable node of $[\bmu]\cup\{A^+\}$.
    
    Suppose that $A$ is an addable $k$-node of $[\blam]$. Then $c=\blam^{(m)}_r+1$ and $\blam^{(m)}_{r-1}\ge c$ whenever $r\neq 1$. By construction, $c^{+}=p^m_r+c=p^m_r+\blam^{(m)}_r+1=\bmu^{(m)}_r+1$. It remains to show that $\bmu^{(m)}_{r-1}\ge c^{+}+1$. Since $\res(A)=k$ and $A\notin [\blam]$, the maximal $(k,k+1)$-strip containing $A$ has no intersection with $[\blam^{(m)}_{r}]$ but does intersect $[\blam^{(m)}_{r-1}]$. In particular, $p^m_{r-1}>p^m_{r}$. Hence
    \[
        \bmu^{(m)}_{r-1}-c^{+}=\blam^{(m)}_{r-1}+p^m_{r-1}-c-p^m_r\ge 1.
    \]
    
    Conversely, suppose that $[\bmu]\cup \{A^+,A^{\sharp}\}$ is still the Young diagram of an $\ell$-partition. Then $c^{+}=\bmu^{(m)}_{r}+1=\blam^{(m)}_r+p^m_r+1$ and $\bmu^{(m)}_{r-1}\ge c^{+}+1$. Since the number of $k$-nodes before $A^+$ in $[\bmu^{(m)}_r]$ is the same as the number of $k$-nodes before $A$ in $[\blam^{(m)}_r]$, which is $p^m_r$, we have $c^{+}=p^m_r+c$ and hence $c=\blam^{(m)}_r+1$. It remains to show that $\blam^{(m)}_{r-1}\ge c$. If $p^m_{r-1}>p^m_r+1$, then $\blam^{(m)}_{r-1}>\blam^{(m)}_{r}=c-1$. Otherwise $p^m_{r-1}=p^m_r+1$, and
    $\blam^{(m)}_{r-1}-c=\bmu^{(m)}_{r-1}-p^m_{r-1}-c^{+}+p^m_r=\bmu^{(m)}_{r-1}-c^{+}-1\ge 0$,
    as required.
\end{proof}

\smallskip
For an $\ell$-partition $\blam=(\blam^{(1)},\cdots,\blam^{(\ell)})$, we say that the component $\blam^{(i)}$ is earlier than $\blam^{(j)}$ if $i<j$, and we call $\blam^{(i)}$ an \emph{earlier component} of $\blam$. Equivalently, we say that $\blam^{(j)}$ is later than $\blam^{(i)}$, and we call $\blam^{(j)}$ a \emph{later component} of $\blam$. 





For $\blam\in\Par[\bkappa]_n$ and $T\in \Std(\blam)$, the \emph{last node} of $T$ is the node $A\in[\blam]$ such that $T(A)=n$. In particular, since $T$ is standard, $A$ is a removable node of $[\blam]$. Hence we can define $d_{A}(\blam)$ as in \autoref{eq:degree-of-last-node}.

\begin{Lemma}\label{lm:induction-k-node}
    Fix a subdivision datum $\subdatum$. Take $\blam\in\Par[\bkappa]_{\alpha}$ to be $k$-horizontal and set $\bmu:=\Phi_k(\blam)$. Let $A$ be the last node of $T$. Suppose that $\res(A)=k$. Then
    \[
        d_{A}(\blam)=d_{A^{\sharp}}(\bmu)+d_{A^{+}}([\bmu]\setminus\{A^{\sharp}\}).
    \]
\end{Lemma}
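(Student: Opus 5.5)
We compare the signed counts in \autoref{eq:degree-of-last-node} directly, using the bijections of \autoref{lm:removable-correspondence} and \autoref{lm:addable-correspondence}. Since $A$ is a removable $k$-node of $[\blam]$, \autoref{lm:removable-correspondence}(b) shows that $A^{\sharp}$ is a removable $(k+1)$-node of $[\bmu]$ and $A^+$ is a removable $k$-node of $[\bmu]\setminus\{A^{\sharp}\}$. Both terms on the right are therefore defined. We then split $d_{A^{\sharp}}(\bmu)$ and $d_{A^+}([\bmu]\setminus\{A^{\sharp}\})$ into contributions from nodes strictly below $A$ (later rows of the same component, or later components). We also account separately for any node in row $r$ itself, which cannot be "below" by definition, so only other rows matter.

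The first step is the $k$-contribution. Every $k$-node of $[\bmu]$ is of the form $B^+$ with $B$ a $k$-node (old node or addable position), and every $(k+1)$-node is $B^{\sharp}$. The $k$-nodes among removable and addable nodes of $[\bmu]\setminus\{A^{\sharp}\}$ below $A^+$ are candidates $B^+$. Removing $A^{\sharp}$ only alters row $r$, so nodes in rows strictly below are unaffected, except for addability of a node in row $r+1$, which depends on row $r$'s length. We claim the following. A removable $k$-node $B^+$ below $A^+$ in $[\bmu]\setminus\{A^\sharp\}$ corresponds either to a genuine removable $k$-node $B$ of $[\blam]$ (when $B^\sharp$ is also removable), or to a "half" configuration where $B^\sharp\notin$ but $B^+$ is the row end. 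The latter cannot occur since old $k$-nodes always carry $B^\sharp$. Similarly, an addable $k$-node $B^+$ pairs with $B^\sharp$ being addable to $[\bmu]\cup\{B^+\}$ or not. The plan is to show that the $k$-count in $[\bmu]\setminus\{A^\sharp\}$ equals the $k$-count $d_A(\blam)$ plus a correction, and that this correction is exactly cancelled by $d_{A^\sharp}(\bmu)$.

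The second step is the $(k+1)$-contribution, namely $d_{A^{\sharp}}(\bmu)$. We count removable and addable $(k+1)$-nodes of $[\bmu]$ below $A^\sharp$. Each such node is a $B^\sharp$ sitting right of a $k$-node $B^+$. Removable ones are exactly the $B^\sharp$ with $B$ a removable $k$-node. Addable ones are either $B^\sharp$ with $B^+\in[\bmu]$, which is impossible since old $k$-nodes already carry $\sharp$, or $B^\sharp$ adjacent to an addable $k$-node $B^+$ only after adding $B^+$, which is not an addable node of $[\bmu]$ itself. The intended outcome is that $(k+1)$-nodes of $[\bmu]$ that are removable or addable pair off with $k$-nodes of $[\bmu]\setminus\{A^\sharp\}$ that are addable or removable but fail to lift to a pair. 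For example, a removable $B^\sharp$ corresponds to an addable $k$-position being absent, and so on. This produces $d_{A^\sharp}(\bmu) = -(\text{correction})$, and summing gives $d_A(\blam)$.

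The main obstacle is this bookkeeping of "half" configurations: $k$-positions $B^+$ addable in $[\bmu]$ where $B^\sharp$ is not addable afterwards, and $(k+1)$-nodes $B^\sharp$ that are addable or removable in $[\bmu]$ without $B$ being addable or removable in $[\blam]$. I would attack it row by row, using $\bmu^{(m)}_t=\blam^{(m)}_t+p^m_t$ together with the fact that consecutive rows satisfy $p^m_{t}-p^m_{t+1}\in\{0,1\}$ when their lengths are close. This reduces to a local check on pairs of adjacent rows, similar to the case analysis in the proofs of \autoref{lm:removable-correspondence} and \autoref{lm:addable-correspondence}. In each local configuration one verifies that the contributions of the $k$-nodes and $(k+1)$-nodes in that row sum to the contribution of the $k$-node of $[\blam]$ there. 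The row $r+1$ directly below $A$ needs extra care, since removing $A^\sharp$ can change addability there.
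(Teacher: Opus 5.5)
\textbf{The gap: first-column addable nodes.} Your second step claims that every $(k+1)$-node counted in $d_{A^{\sharp}}(\bmu)$ is some $B^{\sharp}$ sitting to the right of a $k$-node. From this you conclude that $[\bmu]$ has no addable $(k+1)$-nodes below $A^{\sharp}$. That conclusion is false.

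It misses the addable $k$-nodes of $[\blam]$ that lie in the first column directly below a $(k+1)$-node. Being $k$-horizontal only forbids \emph{non-trivial} maximal $(k+1,k)$-strips. So the last row of a component may begin with a $(k+1)$-node, and the empty position below it is then an addable $k$-node. After subdivision that first-column node has residue $k+2$. The position below it is therefore an addable $(k+1)$-node of $[\bmu]$. It is not of the form $B^{\sharp}$, and it has no partner among the $k$-nodes of $[\bmu]\setminus\{A^{\sharp}\}$.

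Concretely, take $e=3$, $k=1$, $\bkappa=(1,2)$, $\blam=((1),(1))$, and let $A=(1,1,1)$ hold the entry $2$. Then $d_A(\blam)=1$, coming from the addable $1$-node $(2,2,1)$. In $\bmu=((2),(1))$ with $\overline{\bkappa}=(1,3)$, the same position is an addable $2$-node. Hence $d_{A^{\sharp}}(\bmu)=1$ and $d_{A^+}([\bmu]\setminus\{A^{\sharp}\})=0$, whereas your mechanism gives $0+0$.

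These nodes are exactly case (ii) in the paper's treatment of addable $(k+1)$-nodes; there they are matched with the first-column addable $k$-nodes of $[\blam]$. You also cannot invoke part (b) of \autoref{lm:addable-correspondence} for them: the shifted position has residue $k+1$ in $[\bmu]$, not $k$.

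\textbf{The half configurations.} These are real, and a signed cancellation like the one you propose is genuinely needed. The paper's proof asserts that the addable counts and the removable counts match separately, via $B\leftrightarrow B^{\sharp}$ and $M_0\leftrightarrow A_0$. This fails for $e=3$, $k=1$, $\bkappa=(1,0)$, $\blam=((1),(2,2))$ with $A=(1,1,1)$ the last node. There $\bmu=((2),(3,2))$, and:
\begin{itemize}
\item the $2$-node $(2,1,3)=B^{\sharp}$ is removable although $B=(2,1,2)$ is not;
\item $(2,2,3)$ is an addable $1$-node of $[\bmu]\setminus\{A^{\sharp}\}$ that is not addable in $[\blam]$;
\item the addable totals are $1$ versus $1+1$ and the removable totals are $0$ versus $1+0$, while the signed identity $1=0+1$ holds.
\end{itemize}

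Your description of these configurations still needs correcting:
\begin{itemize}
\item There are no removable $k$-nodes of $[\bmu]\setminus\{A^{\sharp}\}$ below $A^+$ at all, since each such old $k$-node still has its $B^{\sharp}$ to its right. So the first alternative in your first step is vacuous.
\item Your sentence that the removable $(k+1)$-nodes are exactly the $B^{\sharp}$ with $B$ removable fails precisely when the row $t$ of $B$ satisfies $\blam^{(m)}_t=\blam^{(m)}_{t+1}$. In that case $k$-horizontality forces $\bmu^{(m)}_t=\bmu^{(m)}_{t+1}+1$.
\item The compensating addable $k$-node sits at the end of row $t+1$. So the local check must be made on consecutive rows, not row by row.
\end{itemize}

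Let $h$ be the number of such rows $t$ below $A$. The correct bookkeeping is then:
\begin{itemize}
\item the addable $(k+1)$-nodes of $[\bmu]$ below $A^{\sharp}$ correspond to the first-column addable $k$-nodes of $[\blam]$ described above;
\item the addable $k$-nodes of $[\bmu]\setminus\{A^{\sharp}\}$ below $A^+$ number the remaining addable $k$-nodes of $[\blam]$ below $A$, plus $h$;
\item the removable $(k+1)$-nodes of $[\bmu]$ below $A^{\sharp}$ number the removable $k$-nodes of $[\blam]$ below $A$, plus $h$;
\item the two contributions of $h$ cancel in the sum.
\end{itemize}
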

\begin{proof}
    By the definition of $d_{\bullet}(\bullet)$ in \autoref{subsec:degree-of-tableaux}, it is enough to prove the following two statements:
        (1) the number of addable $k$-nodes below $A$ of $[\blam]$ is equal to the sum of the number of addable $(k+1)$-nodes below $A^{\sharp}$ of $[\bmu]$ and the number of addable $k$-nodes below $A^+$ of $[\bmu]\setminus \{A^{\sharp}\}$;
        (2) the number of removable $k$-nodes below $A$ of $[\blam]$ is equal to the sum of the number of removable $(k+1)$-nodes below $A^{\sharp}$ of $[\bmu]$ and the number of removable $k$-nodes below $A^+$ of $[\bmu]\setminus \{A^{\sharp}\}$.

        The subdivision map on $\ell$-partitions is defined componentwise and hence preserves the component index of every node. Since $\blam$ is $k$-horizontal, the row index of each node is also preserved. As a result, the order (above/below) of nodes defined in \autoref{subsec:degree-of-tableaux} is preserved. Let $\bar{I}$ be the vertex set of the new quiver after subdivision. 

        \medskip
        We firstly treat the addable node case. Consider any addable $(k+1)$-node $M$ of $[\bmu]$ below $A^{\sharp}$, there are three possibilities: 
        \begin{enumerate}[label=(\roman*)]
            \item In some row below $A^{\sharp}$, the last node $M'$ of that row in $[\bmu]$ is a $k$-node, $M$ is immediately to the right of $M'$, and $[\bmu]\cup\{M\}$ is (the Young diagram of) an $\ell$-partition. In other words, it is of the following form:      
            \begin{center}
                \resizebox{0.6\linewidth}{!}{
                \Tableau[no border,box height=1,box width=1,snob style={draw =none},snobs={15_{M}},dotted cols={2,3}]{{\empty}{\cdots}{\cdots}{M'}}
                \qquad
                \Tableau[no border,box height=1,box width=1,snob style={draw =none},snobs={15_{k{+}1}},dotted cols={2,3}]{{\empty}{\cdots}{\cdots}{k}}
                }
            \end{center}
            
            \item In some component $[\bmu^{(i)}]$ that is no earlier than the component containing $A^{\sharp}$, the first node $M'$ of the last row has residue $k+2$, and the node $M$ lies immediately below $M'$. In other words, it is of the following form:
            \begin{center}
                \resizebox{0.5\linewidth}{!}{
                \Tableau[no border,box height=1,box width=1,snob style={draw =none},snobs={21_{M}},dotted cols={2,3}]{{M'}{\cdots}{\cdots}{\empty}}
                \qquad
                \Tableau[no border,box height=1,box width=1,snob style={draw =none},snobs={21_{k{+}1}},dotted cols={2,3}]{{k{+}2}{\cdots}{\cdots}{\empty}}
                }
            \end{center}
            
            \item $M$ is the first node of a component later than the component containing $A^{\sharp}$, and the charge of this component is $k+1\in \bar{I}$.
        \end{enumerate}
        
        The first case is impossible, since after subdivision any $k$-node appears with a $(k+1)$-node immediately to its right, and vice versa. The third case is also impossible because, under subdivision, no component can have charge $k+1$; see \autoref{subsec:subdivision-weight}. For each such node $M$ in the second case, it corresponds uniquely to an addable $k$-node $A_M$ in $[\blam]$ below $A$, of the following form:
        \begin{center}
                \resizebox{0.5\linewidth}{!}{
                \Tableau[no border,box height=1,box width=1,snob style={draw =none},snobs={21_{A_M}},dotted cols={2,3}]{{A_M'}{\cdots}{\cdots}{\empty}}
                \qquad
                \Tableau[no border,box height=1,box width=1,snob style={draw =none},snobs={21_{k}},dotted cols={2,3}]{{k{+}1}{\cdots}{\cdots}{\empty}}
                }
            \end{center}
            
        Any other addable $k$-node $A_0$ of $[\blam]$ below $A$ is of one of the following forms: 
        \begin{enumerate}[label=(\roman*)]
            \item $A_0$ is the first node of a later component than the component containing $A$, and the charge of this component is $k$.

            \item In some row below $A$, the last node $A_0'$ of that row in $[\blam]$ is a $(k-1)$-node, $A_0$ is immediately to the right of $A_0'$, and $[\blam]\cup\{A_0\}$ is an $\ell$-partition. In other words, it is of the following form:
            \begin{center}
                \resizebox{0.6\linewidth}{!}{
                \Tableau[no border,box height=1,box width=1,snob style={draw =none},snobs={15_{A_0}},dotted cols={2,3}]{{\empty}{\cdots}{\cdots}{A_0'}}
                \qquad
                \Tableau[no border,box height=1,box width=1,snob style={draw =none},snobs={15_{k}},dotted cols={2,3}]{{\empty}{\cdots}{\cdots}{k{-}1}}
                }
            \end{center}            
        \end{enumerate}
        
        On the other hand, any addable $k$-node $M_0$ below $A^+$ of $[\bmu]\setminus \{A^{\sharp}\}$ is of one of the following forms:
        \begin{enumerate}[label=(\roman*)]
            \item $M_0$ is the first node of a later component than the component containing $A^+$ and the charge of this component is $k$.
            
            \item In some row below $A^+$, the last node $M_0'$ of that row in $[\bmu]\setminus \{A^{\sharp}\}$ is a $(k-1)$-node, $M_0$ is immediately to the right of $M_0'$, and $[\bmu]\setminus\{A^{\sharp}\}\cup\{M_0\}$ is an $\ell$-partition. In other words, it is of the following form:
            \begin{center}
                \resizebox{0.6\linewidth}{!}{
                \Tableau[no border,box height=1,box width=1,snob style={draw =none},snobs={15_{M_0}},dotted cols={2,3}]{{\empty}{\cdots}{\cdots}{M_0'}}
                \qquad
                \Tableau[no border,box height=1,box width=1,snob style={draw =none},snobs={15_{k}},dotted cols={2,3}]{{\empty}{\cdots}{\cdots}{k{-}1}}
                }
            \end{center}    
        \end{enumerate}
        
        Under the subdivision map $\Phi_k$, and by the assumption that $\blam$ is $k$-horizontal, the correspondence $M_0\leftrightarrow A_0$ is one-to-one in each case.

        By \autoref{lm:addable-correspondence}, the statement for the number of addable nodes follows.
        
        \medskip
        We next consider the removable-node case.
        Every removable $(k+1)$-node $N$ of $[\bmu]$ below $A^{\sharp}$ is a new node $B^{\sharp}$ immediately to the right of a $k$-node $B^{+}$ for some $k$-node $B$ in $[\blam]$, and $B^{\sharp}$ is the last node of some row below $A^{\sharp}$. In other words, it is of the following form:
        \begin{center}
                \resizebox{0.6\linewidth}{!}{
                \Tableau[no border,box height=1,box width=1,dotted cols={2,3}]{{\empty}{\cdots}{\cdots}{B^+}{B^{\sharp}}}
                \qquad
                \Tableau[no border,box height=1,box width=1,dotted cols={2,3}]{{\empty}{\cdots}{\cdots}{k}{k{+}1}}
                }
            \end{center} 
        Every removable $k$-node $B$ of $[\blam]$ below $A$ is the last node of some row below $A$, of the form:
        \begin{center}
            \resizebox{0.5\linewidth}{!}{
                \Tableau[no border,box height=1,box width=1,dotted cols={2,3}]{{\empty}{\cdots}{\cdots}{B}}
                \qquad
                \Tableau[no border,box height=1,box width=1,dotted cols={2,3}]{{\empty}{\cdots}{\cdots}{k}}
                }
        \end{center}
        Under the subdivision map, as discussed at the beginning of this section, $B\leftrightarrow B^{\sharp}$ is a one-to-one correspondence. On the other hand, there is no removable $k$-node in $[\bmu]\setminus \{A^{\sharp}\}$ below $A^+$, since every $k$-node in $[\bmu]$ appears to the left of a $(k+1)$-node and hence cannot be removable.
        Therefore, by \autoref{lm:removable-correspondence}, the desired equality for the number of removable nodes also holds.

        \medskip
        Combining the addable-node case and the removable-node case then yields the desired equality
        $d_A(\blam)=d_{A^{\sharp}}(\bmu)+d_{A^+}([\bmu]\setminus \{A^{\sharp}\})$.
\end{proof}

The proof of the next result is analogous to that of \autoref{lm:induction-k-node}.

\begin{Lemma}\label{lm:induction-non-k-node}
    Fix a subdivision datum $\subdatum$. Take $\blam\in\Par[\bkappa]_{\alpha}$ to be $k$-horizontal and set $\bmu:=\Phi_k(\blam)$. Let $A$ be the last node of $T$. Suppose that $\res(A)\neq k$. Then
    \[
        d_{A}(\blam)=d_{A^{+}}(\bmu).
    \]
\end{Lemma}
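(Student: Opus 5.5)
We mirror the proof of \autoref{lm:induction-k-node}. Let $A$ be the last node of $T$, with $\res(A)=i\neq k$, and set $j:=\Phi_k(i)\in\widebar{I}$. Then $j\neq k,k+1$: it equals $i$ if $i<k$ and $i+1\ge k+2$ if $i>k$. Because $\blam$ is $k$-horizontal, $\Phi_k$ preserves the component and row index of every old node. So the relation ``below'' between nodes is preserved under $A\mapsto A^+$. By \autoref{lm:removable-correspondence}, $A^+$ is a removable $j$-node of $[\bmu]$. Recalling \autoref{eq:degree-of-last-node}, it suffices to prove two things.
\begin{enumerate}[label=(\roman*)]
\item The addable $i$-nodes of $[\blam]$ below $A$ are in bijection with the addable $j$-nodes of $[\bmu]$ below $A^+$.
\item The same holds for removable nodes.
\end{enumerate}

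For (ii), the map $B\mapsto B^+$ is our candidate bijection. Since $j\notin\{k,k+1\}$, every $j$-node of $[\bmu]$ is old, that is, of the form $B^+$ with $\res(B)=i$. By \autoref{lm:removable-correspondence}(a), $B$ is removable in $[\blam]$ if and only if $B^+$ is removable in $[\bmu]$. Preservation of rows and components then shows that $B$ is below $A$ exactly when $B^+$ is below $A^+$.

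For (i), we use the extended map $B\mapsto B^+=(m,r,c+p_k(B))$ on arbitrary nodes together with \autoref{lm:addable-correspondence}(a). This shows that an $i$-node $B$ is addable for $[\blam]$ if and only if $B^+$ is an addable $j$-node for $[\bmu]$. The map preserves rows and components, so it preserves being below $A$. It remains to check surjectivity: every addable $j$-node $M$ of $[\bmu]$ must be of the form $B^+$. An addable node sits at the end of a row, or begins a new row, or begins an empty component. At the end of a row, the last node of that row in $[\bmu]$ is old, or is a new $(k+1)$-node preceded by a $k$-node. In either case $M$ is $B^+$ for the node $B$ ending the corresponding row of $[\blam]$, because $p_k$ counts exactly the inserted nodes to its left. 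An addable node in the first column, whether in a new row or in an empty component, has $p_k=0$, so it equals $B^+$ with $B=M$. Here we also need that its residue is $j=\Phi_k(i)$. The first-column residues of $[\bmu]$ are the images of those of $[\blam]$, since no vertical strips are subdivided, and the charges satisfy $\Lambda_{\okappa_m}=\Phi_k(\Lambda_{\kappa_m})$.

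The main obstacle is this surjectivity check, in particular ruling out addable $j$-nodes of $[\bmu]$ that have no preimage. We handle it, as in \autoref{lm:induction-k-node}, by enumerating the possible positions of an addable node (end of row, below the first node of the last row, first node of a later component) and using $j\ne k,k+1$ in each case. Combining (i) and (ii) gives $d_A(\blam)=d_{A^+}(\bmu)$.
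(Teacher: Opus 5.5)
Your proposal is correct and is essentially the paper's argument. Both match the addable and removable $i$-nodes of $[\blam]$ below $A$ with the addable and removable $j$-nodes of $[\bmu]$ below $A^+$ via $B\mapsto B^+$, using part (a) of \autoref{lm:addable-correspondence} and of \autoref{lm:removable-correspondence}, the preservation of components and rows, and a case check over the possible positions of an addable node. Your version is a little more explicit than the paper's: you note that every $j$-node of $[\bmu]$ is old because $j\neq k+1$, and that the first-column residue match needs $j\neq k+1$. One wording fix: in the end-of-row case the preimage $B$ is the node immediately to the right of the last node of the corresponding row of $[\blam]$, not the node ending that row.
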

\begin{proof}
    Let $j=\Phi_k(i)$, where $\Phi_k$ is the subdivision map on words introduced in \autoref{subsec:subdivision-words}. Then $A^+$ is a $j$-node in $[\bmu]$. It is enough to prove that the number of addable $i$-nodes of $[\blam]$ below $A$ is equal to the number of addable $j$-nodes of $[\bmu]$ below $A^+$, and that the number of removable $i$-nodes of $[\blam]$ below $A$ is equal to the number of removable $j$-nodes of $[\bmu]$ below $A^+$. Let $\bar{I}$ be the vertex set of the new quiver after subdivision.  

    \smallskip
    Any addable $i$-nodes $M$ of $[\blam]$ below $A$ is of the following form:
    \begin{enumerate}[label=(\roman*)]
        \item In some row below $A$,
        the last node $M'$ of that row in $[\blam]$ is a $k$-node, $M$ is immediately to the right of $M'$, and $[\blam]\cup\{M\}$ is an $\ell$-partition.
                    
        \item In some component $\blam^{(m)}$ that is no earlier than the component containing $A$, the first node $M'$ of the last row has residue $i+1$, and the node $M$ lies immediately below $M'$.
        
        \item $M$ is the first node of later component than the component containing $A$ and the charge of this component is $i\in I$.
    \end{enumerate}
    
    While any addable $j$-node $A_M$ is of the following form:
    \begin{enumerate}[label=(\roman*)]
        \item In some row below $A^+$, there exists a $k$-node $A_M'\in [\bmu]$ such that $A_M'$ is the last node in that row, $A_M$ is immediately to the right of $A_M'$, and $[\bmu]\cup\{A_M\}$ is an $\ell$-partition.
                    
        \item In some component $\bmu^{(m)}$ that is no earlier than the component containing $A^+$, the first node $A_M'$ of the last row has residue $j+1$, and the node $A_M$ lies immediately below $A_M'$.
        
        \item $A_M$ is the first node of later component than the component where $A^+$ lives and the charge of this component is $j\in I$.
    \end{enumerate}
    
    Under the subdivision $\Phi_k$, $M\leftrightarrow A_M$ is thus a natural one-to-one correspondence and $A_M=M^+$ in our generalized definition of subdivision above \autoref{lm:removable-correspondence}. Hence by \autoref{lm:addable-correspondence}, the desired equality of addable nodes holds.

    \smallskip
    Similarly, applying \autoref{lm:removable-correspondence}, one can show that the removable $i$-nodes of $[\blam]$ below $A$ are in one-to-one correspondence with the removable $j$-nodes of $[\bmu]$ below $A^+$. 
    
    Combining these two statements gives the desired equality $d_A(\blam)=d_{A^+}(\bmu)$.
\end{proof}

\begin{Theorem}\label{thm:degree-invariance-standard-tableau}
    Fix a subdivision datum $\subdatum$,  $\blam\in\Par[\bkappa]_{\alpha}$ and $T\in\Std(\blam)$. Suppose $\blam$ is $k$-horizontal, then $\deg T=\deg \Phi_k(T)$.
\end{Theorem}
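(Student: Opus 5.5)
We argue by induction on $n=\height(\alpha)$, peeling off the last entry of $T$ and using the recursive definition $\deg(T)=d_A(\blam)+\deg(T\downarrow(n-1))$. The case $n=0$ is trivial. For $n>0$, let $A$ be the node with $T(A)=n$ and put $\blam':=\Shape(T\downarrow(n-1))$. Then $\blam'$ is again $k$-horizontal: removing a removable node only shortens a row, so it cannot create a non-trivial maximal $(k+1,k)$-strip beginning in the first column. By \autoref{lm:k-lambda-zero-bound} this amounts to $\ell(\blam'^{(m)})\le\ell(\blam^{(m)})\le\rho_m$ for each component.

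The key compatibility to check is between the last entries of $\Phi_k(T)$ and the subdivision of $T\downarrow(n-1)$. If $\res(A)\ne k$, then $\phi^T(n)$ is the largest entry of $\Phi_k(T)$ and sits at $A^+$. The tableau $\Phi_k(T)\downarrow(\phi^T(n)-1)$ should equal $\Phi_k(T\downarrow(n-1))$, with shape $[\bmu]\setminus\{A^+\}=[\Phi_k(\blam')]$. This holds because $\phi^T$ restricted to $\{1,\dots,n-1\}$ is the position-tracing function of $\bi^{T\downarrow(n-1)}$. Moreover, the positions $A^+$ of the remaining nodes are unchanged: they depend only on the $k$-nodes to the left in the same row, and row-standardness of $T$ places these at entries less than $n$.

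If $\res(A)=k$, the two largest entries of $\Phi_k(T)$ are $\phi^T(n)+1$ at $A^\sharp$ and $\phi^T(n)$ at $A^+$. Stripping both leaves $\Phi_k(T\downarrow(n-1))$ of shape $\Phi_k(\blam')=[\bmu]\setminus\{A^+,A^\sharp\}$. Since $\Phi_k(T)$ is standard by \autoref{prop:subdivision-of-standard-tableau-is-standard}, the recursive definition of the degree applies twice, giving
\[
\deg\Phi_k(T)=d_{A^\sharp}(\bmu)+d_{A^+}([\bmu]\setminus\{A^\sharp\})+\deg\Phi_k(T\downarrow(n-1)).
\]
Applying \autoref{lm:induction-k-node} in this case, or \autoref{lm:induction-non-k-node} when $\res(A)\ne k$, and then the induction hypothesis gives $\deg\Phi_k(T)=d_A(\blam)+\deg(T\downarrow(n-1))=\deg T$.

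The main obstacle is the identification $\Phi_k(T)\downarrow(\phi^T(n)-1)=\Phi_k(T\downarrow(n-1))$, including equality of the underlying shapes. This requires checking that the positions $A^+$ and $A^\sharp$ computed relative to $\blam$ agree with those computed relative to $\blam'$. I would verify this directly from the column formula $A^+=(m,r,c+p_k(A))$, noting that $p_k(A)$ depends only on the node and not on the ambient partition. Combined with the shape description $\bmu^{(m)}_t=\blam^{(m)}_t+p^m_t$, this shows the shapes match.
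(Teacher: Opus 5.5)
Your proposal is correct and follows the paper's own proof. Both argue by induction on the height, split according to whether the last node $A$ is a $k$-node, strip one or two top entries from $\Phi_k(T)$ to recover $\Phi_k(T\downarrow(n-1))$, and conclude with \autoref{lm:induction-k-node} and \autoref{lm:induction-non-k-node}. The only difference is that you explicitly check two points the paper uses without comment: that $\Shape(T\downarrow(n-1))$ is again $k$-horizontal, and that restriction commutes with $\Phi_k$ on both entries and shapes.
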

\begin{proof}
    Let $\bmu:=\Phi_k(\blam)$. Set $\height(\alpha)=d$ and $\height\big(\Phi_k(\alpha)\big)=d'$. Let $A=T^{-1}(d)$ be the last node of $T$. We proceed by induction on $d$. If $d=0$, then $\blam=\emptyset=\bmu$ and the claim holds by definition.

    For $d>0$, by definition of the degree of standard tableaux, we have 
    \[
        \deg T=d_{A}(\blam)+\deg \big( T\downarrow (d-1)\big)
    \]
    By inductive hypothesis, we have:
    \[
        \deg \big( T\downarrow (d-1)\big)=\deg \Big( \Phi_k \big(T\downarrow (d-1)\big)\Big)
    \]
    There are two cases, depending on whether $\res(A)=k$. We compute the degree in those two cases. 

    \begin{enumerate}
        \item If $\res (A)=k$, then applying the subdivision map to $[\blam]$ produces, in $[\bmu]$, an old node $A^+$ together with a new $(k+1)$-node $A^{\sharp}$ immediately to the right of $A^+$. In this case, by our definition of $\Phi_k(T)$, we have $\big(\Phi_k(T)\big)(A^+)=d'-1$ and $\big(\Phi_k(T)\big)(A^{\sharp})=d'$. Removing these two nodes from $[\bmu]$, it follows that
        $[\bmu]\setminus \{A^{\sharp},A^+\}=\Shape \Big(\Phi_k \big(T\downarrow (d-1) \big)\Big)$
        and $\Phi_k(T)\downarrow (d'-2)=\Phi_k \big(T\downarrow (d-1) \big)$. Hence:
        \begin{align*}
            \deg \big(\Phi_k( T)\big) &= d_{A^{\sharp}}(\bmu)+d_{A^+}([\bmu]\setminus \{A^{\sharp}\})+\deg \Big( \Phi_k(T)\downarrow (d'-2) \Big)\\
            &= d_{A^{\sharp}}(\bmu)+d_{A^+}([\bmu]\setminus \{A^{\sharp}\})+\deg \Big( \Phi_k \big(T\downarrow (d-1)\big)\Big)
        \end{align*}        
        It suffices to show that $d_A(\blam)=d_{A^{\sharp}}(\bmu)+d_{A^+}([\bmu]\setminus \{A^{\sharp}\})$, which follows from \autoref{lm:induction-k-node}.
        
        \smallskip
        \item If $\res (A)=i\neq k$, then applying the subdivision map gives a unique old node $A^+\in [\bmu]$ such that $\big(\Phi_k(T)\big)(A^+)=d'$, and we have
        $[\bmu]\setminus \{A^+\}=\Shape \Big(\Phi_k \big(T\downarrow (d-1) \big)\Big)$
        and $\Phi_k(T)\downarrow (d'-1)=\Phi_k \big(T\downarrow (d-1) \big)$. Hence:
        \begin{align*}
            \deg \big(\Phi_k(T)\big)&=d_{A^+}(\bmu)+\deg \Big( \Phi_k(T)\downarrow (d'-1) \Big)\\
            &=d_{A^+}(\bmu)
            +\deg \Big(\Phi_k\big( T\downarrow (d-1)\big)\Big)
        \end{align*}
        It suffices to show that $d_A(\blam)=d_{A^+}(\bmu)$, which follows from \autoref{lm:induction-non-k-node}. 
    \end{enumerate}
    
    \medskip
    Hence, $\deg T=\deg \Phi_k(T)$ follows by induction.   
\end{proof}


    
\section{Categorical Subdivision}\label{sec:categorical-subdivisions}
In this section, we use the combinatorial framework of the subdivision maps on partitions and standard tableaux to prove some categorical results concerning the subdivision map on KLR algebras. The main results are \autoref{thm:image-of-idempotent}, \autoref{thm:image-of-permutation-module}, and \autoref{thm:image-of-specht-module}.
\subsection{Image of Idempotents}\label{subsec:image-of-idempotent}
Fix a subdivision datum $\subdatum$. Let $R_{\alpha}$ be the corresponding KLR algebra. For any $\blam\in\Par[\bkappa]_{\alpha}$, the idempotent element $e_{\blam}\in R_{\alpha}$ is defined in \autoref{eq:idempotent-of-partition}. The aim of this section is to describe $\Phi_k(e_{\blam})$. For simplicity, in this section we write $\widebar{I}$, $\overline{\alpha}$, $\overline{\Lambda}$, and $\overline{\blam}$ for the images of $I$, $\alpha$, $\Lambda$, and $\blam$ under the corresponding subdivision map $\Phi_k$.
Let $\be\in R_{\oalpha}$ be the truncation idempotent defined in \autoref{eq:truncation-idempotent}.
\begin{Lemma}\label{lm:vanish-of-idempotent}
    Fix a subdivision datum $\subdatum[\Lambda_x][x]$ and take $\lambda\in\Par[\Lambda_x]_{\alpha}$. Then $e_{\olambda} \be=0$ if and only $\lambda$ is not $k$-horizontal.
\end{Lemma}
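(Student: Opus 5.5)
Since $e_{\olambda}=e(\bi^{\olambda})$ and $\be=\sum_{\bj\in\wellorder}e(\bj)$ is a sum of orthogonal idempotents, $e_{\olambda}\be\neq 0$ holds exactly when $\bi^{\olambda}$ is ordered. So the plan is to prove that $\bi^{\olambda}$, the row-reading residue sequence of $T^{\olambda}$ with $\olambda=\Phi_k(\lambda)$, is ordered if and only if $k(\lambda)=0$. Ordered means every occurrence of $k$ is immediately followed by $k+1$; this reduces everything to reading rows of $[\olambda]$ from left to right, top to bottom.

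\emph{The $k$-horizontal direction.} Assume $k(\lambda)=0$. By \autoref{def:subdivision-young-diagram}, every maximal strip is then a $(k,k+1)$-strip or a trivial $(k+1,k)$-strip. Hence $[\olambda]$ is obtained by inserting a $(k+1)$-node immediately to the right of each $k$-node, in the same row (as recorded at the start of \autoref{subsec:subdivision-standard-tableaux}), and no other node of $[\olambda]$ has residue $k$. It follows that in each row every $k$ is immediately followed by $k+1$. No $k$ is the last entry of a row, so the row-reading word is ordered; in fact $\bi^{\olambda}=\Phi_k(\bi^{\lambda})$. Therefore $e_{\olambda}\be=e_{\olambda}\neq0$.

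\emph{The converse.} Assume $k(\lambda)=s>0$. By \autoref{lm:tableau-row-expansion-k}\autoref{item:new-row}, $[\olambda]$ has a new row, at index $r_1+1$, whose last node has residue $k$. Let $j$ be the position of that last node in the row reading of $T^{\olambda}$. There are two cases.
\begin{enumerate}
\item If row $r_1+1$ is not the last row, the next letter $\bi^{\olambda}_{j+1}$ is the residue of the first node of the following row. That node lies in the first column, so its residue is one less than the residue of the node above it in the first column. For it to equal $k+1$, the first node of row $r_1+1$ would need residue $k+2$.
\item If row $r_1+1$ is the last row, then $j=d$, and $k$ occurs at the final position, which violates orderedness outright.
\end{enumerate}

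The main obstacle is case (a): we must show that the first node of the new row cannot have residue $k+2$. The new row was created from the vertical pair $(k+1,k)$ replacing the terminal $k$-node of the strip $S_1$. Its first node lies in the first column, directly below the image of the first column of row $r_1$. I would argue through the residue shift in the first column. Alternatively, and more cleanly, I would count occurrences: a word of content $\oalpha$ has equally many $k$'s and $(k+1)$'s, so it is ordered only if every prefix has at least as many $k$'s as $(k+1)$'s, with pairing enforced position by position. I would look instead at the strip $S_1$ itself. Its initial node $(k_1,1)$ becomes, after subdivision, the vertical triple $(k+2,k+1,k)$ in the first column. That $k+1$ in column one is preceded in its row by nothing, so the $k+1$ at the start of that row is not immediately preceded by $k$. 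Hence $\bi^{\olambda}$ is not ordered. This first-column observation is the argument I would actually use, since it avoids the end-of-row casework. What remains is to justify it from the explicit replacement picture for non-trivial maximal $(k+1,k)$-strips.
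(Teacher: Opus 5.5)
Your reduction to orderedness of $\bi^{\olambda}$ and your $k$-horizontal direction match the paper. In the converse you also single out the same node as the paper: the first-column $(k+1)$-node that opens row $k_1+1$ of $[\olambda]$.

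The gap is the step ``it is preceded in its row by nothing, so it is not immediately preceded by $k$.'' The word $\bi^{\olambda}$ concatenates the rows. So the letter just before this node is the residue of the last node of row $k_1$ of $[\olambda]$, and being first in a row says nothing about that letter. Your argument also never uses that $S_1$ is the \emph{top} strip. This is a real gap, not a technicality. Take $e=3$, $x=1$, $k=0$, $\lambda=(3^4,1)$. Then $\olambda=(4,3^4,1^2)$ and $\bi^{\olambda}=(2,3,0,1,\,1,2,3,\,0,1,2,\,3,0,1,\,2,3,0,\,1,\,0)$. The first-column $(k+1)$-node opening row $6$, which is the analogue of your node for $S_2$, is immediately preceded by the $k$ that ends row $5=r_1+1$. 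The same example shows that your case (a) actually occurs, so your first route genuinely fails.

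To close the gap you must show that row $k_1$ of $[\olambda]$ does not end in a $k$-node. This holds for the following reasons.
\begin{itemize}
\item The $k$-nodes of each $S_i$ lie in rows strictly below $k_i\ge k_1$. So every $k$-node in rows $1,\dots,k_1$ of $[\lambda]$ lies on a $(k,k+1)$-strip.
\item Passing to $[\olambda]$, these rows change only by horizontal insertions: each such $k$-node receives a $(k+1)$-node immediately to its right.
\item These inserted nodes are the only $(k+1)$-nodes in those rows.
\end{itemize}
The paper phrases this as a count. Rows $1,\dots,k_1$ of $[\olambda]$ contain equally many $k$'s and $(k+1)$'s. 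Hence the prefix of $\bi^{\olambda}$ ending at the first-column $(k+1)$-node of row $k_1+1$ has one more $k+1$ than $k$. That is the definition of $\bi^{\olambda}$ being unordered, so (using equal content) it is not ordered and $e_{\olambda}\be=0$.
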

\begin{proof}
    By the definition of $e_{\lambda}$, the residue sequence $\bi^\lambda$ corresponds to the standard tableau $T^\lambda$. Suppose $\lambda$ is $k$-horizontal, i.e. there is no non-trivial maximal $(k+1,k)$-strip. Then every $k$-node in $[\lambda]$ lies in a maximal $(k,k+1)$-strip. By \autoref{def:subdivision-young-diagram}, each $k$-node is replaced by a horizontal pair \Tableau[no border,box font=\tiny]{{k}{k{+}1}} in $[\olambda]$. The corresponding residue sequence is locally of the form $\cdots,k,k+1,\cdots$, and thus $\bi^{\olambda}\in\wellorder$. In particular, $e(\bi^{\olambda})\be=e(\bi^{\olambda})\neq 0$.

    Conversely, suppose that $\lambda$ is not $k$-horizontal and that there exists a non-trivial maximal $(k+1,k)$-strip. Let $S$ be the top such strip, and let $A_1$ be its initial node of residue $k+1$ (so $A_1$ lies above the initial nodes of all other non-trivial maximal $(k+1,k)$-strips). Let $A_2$ be the $k$-node immediately below $A_1$. By \autoref{def:subdivision-abacus}, this vertical pair \Tableau[no border,box font=\tiny]{{A_1},{A_2}} is replaced by the triple \Tableau[no border,box font=\tiny]{{k{+}2},{k{+}1},{k}}. In $[\olambda]$, the $k+1$-node inside this triple is precisely the initial node of the top non-trivial maximal $(k+1,k)$-strip. Therefore, every $k$-node above this $(k+1)$-node lies in a maximal $(k,k+1)$-strip and, in particular, occurs in a horizontal adjacent pair \Tableau[no border,box font=\tiny]{{k}{k{+}1}}. Now assume that this $(k+1)$-node is the $m$-th node of $[\olambda]$ in row-reading order, and write $\bi^{\olambda}=(\bi_1',\bi_2',\dots)$. Then the initial segment $(\bi_1',\bi_2',\dots,\bi_m')$ contains one more $k+1$ than $k$, so $\bi^{\olambda}\in\unorder$ by definition and hence $e_{\olambda}\be=0$.
\end{proof}
\begin{Corollary}
   Fix a subdivision datum $\subdatum[\Lambda_x][x]$. If $x\neq k$, then all partitions $\lambda\in\Par[\Lambda_x]_{\alpha}$ such that $\be e_{\olambda}\neq 0$ are $e$-regular.
\end{Corollary}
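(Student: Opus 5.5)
By \autoref{lm:vanish-of-idempotent}, the condition $\be e_{\olambda}\neq 0$ (equivalently $e_{\olambda}\be\neq0$, since both are the same idempotent $e(\bi^{\olambda})$ lying or not lying among the summands of $\be$) forces $\lambda$ to be $k$-horizontal, i.e.\ $k(\lambda)=0$. By \autoref{lm:k-lambda-zero-bound} this means $\ell(\lambda)\le\rho$, where $\rho\in I$ is the unique representative of $x-k$ modulo $e$. The plan is simply to turn this length bound into $e$-regularity.

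Since $x\neq k$, we have $\rho\neq 0$, so $\rho\in\{1,\dots,e-1\}$ and hence $\ell(\lambda)\le e-1$. A partition is $e$-regular if no $e$ of its nonzero parts are equal. A partition with at most $e-1$ nonzero parts cannot have $e$ equal nonzero parts, so $\lambda$ is $e$-regular. Equivalently, in the abacus language used in \autoref{lm:e-regularity-preserving}, there is no string of $e$ consecutive beads above a gap, because only $\ell(\lambda)<e$ beads sit to the right of the first gap.

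The only point needing care is the first reduction: that $\be e_{\olambda}\ne0$ holds exactly when $\bi^{\olambda}$ is ordered. This follows because $\be$ is a sum of orthogonal idempotents $e(\bi)$ with $\bi\in\wellorder$. The step I expect to need the most attention is checking that $\rho\neq0$ genuinely comes from $x\neq k$ and the range $I=\{0,\dots,e-1\}$. It does: $\rho\equiv x-k\pmod e$ with $x,k\in I$ distinct. This also explains why the hypothesis $x\neq k$ is needed. When $x=k$ we get $\rho=0$, so $\lambda=\varnothing$ and the statement holds trivially anyway.
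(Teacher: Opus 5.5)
Your argument is correct for $x\neq k$ and is essentially the paper's proof. The paper states the bound as $\ell(\lambda)\le e-d$ via \autoref{cor:k-lambda-zero-bound-2} and uses $x\neq k\Rightarrow d\neq 0$, which is your $\rho=e-d\le e-1$.

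Your closing aside about $x=k$ is wrong, however. When $x=k$, the smallest positive $r\equiv 0\pmod{e}$ is $e$, not $0$. So \autoref{lm:k-lambda-zero-bound} is misstated at $\rho=0$, while \autoref{cor:k-lambda-zero-bound-2} correctly gives $\ell(\lambda)\le e$. For example, $\lambda=(1^e)$ is then $k$-horizontal with $\be e_{\olambda}\neq 0$ but is not $e$-regular. The hypothesis $x\neq k$ is therefore genuinely needed; the case $x=k$ is not vacuous.
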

\begin{proof}
    Let $\absubdatum$ be an abacus subdivision datum for $\lambda$.
    Since $a+d=ce+k$ and $a\equiv x\pmod{e}$, we have $x+d\equiv k\pmod e$. Since $x,k,d\in I$, if $x\neq k$, then $d\not\equiv 0\pmod e$, and hence $e-d<e$. By \autoref{cor:k-lambda-zero-bound-2}, $\lambda$ is $k$-horizontal (i.e.\ $k(\lambda)=0$) if and only if $\ell(\lambda)\le e-d$, and hence $\ell(\lambda)<e$. By \autoref{lm:vanish-of-idempotent}, the conclusion follows.
\end{proof}

Recall from \autoref{thm:subdivision-iso} that $\Phi_k$ is an isomorphism between $R_\alpha$ and the balanced KLR algebra $S_{\oalpha}$. Since $e_{\blam}\in R_\alpha$ is a non-zero idempotent, it must be mapped to a non-zero idempotent in $S_{\oalpha}$. By \autoref{lm:vanish-of-idempotent}, in the level one case, if $\lambda$ is not $k$-horizontal, then
$0=\be e_{\olambda}\be+\be\badideal \be\in S_{\oalpha}$ and thus $\Phi_k(e_{\lambda})\neq e_{\olambda}$. By contrast, in the $k$-horizontal case, we have the following result.
\begin{Theorem}\label{thm:image-of-idempotent}
    Fix a subdivision datum $\subdatum$, and let $\blam\in\Par[\bkappa]_{\alpha}$ be a $k$-horizontal multipartition. Then
    \[
        \Phi_k(e_{\blam})=e_{\Phi_k(\blam)}+\be\badideal \be.
    \]
    
\end{Theorem}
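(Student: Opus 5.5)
The plan is to reduce the statement to an identity of residue sequences, $\Phi_k(\bi^{\blam})=\bi^{\Phi_k(\blam)}$, and then apply the definition of $\Phi_k$ on idempotents.

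By \autoref{eq:idempotent-of-partition}, $e_{\blam}=e(\bi^{\blam})$. By the diagrammatic construction of \autoref{subsec:subdivision-KLR}, $\Theta_k(e(\bi))$ adds a new string immediately to the right of each $k$-string and then relabels. This is exactly $e(\Phi_k(\bi))$, where $\Phi_k(\bi)$ is the subdivision on words from \autoref{subsec:subdivision-words}. By \autoref{thm:subdivision-iso} and the commutative triangle in \autoref{subsec:subdivision-cyclotomic-KLR}, we therefore get $\Phi_k(e_{\blam})=e(\Phi_k(\bi^{\blam}))+\be\badideal\be$. Since $\Phi_k(\bi^{\blam})$ is ordered by \autoref{eq:subdivision-bijection-words}, this idempotent does lie in $\be R_{\oalpha}\be$. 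It therefore suffices to prove $\Phi_k(\bi^{\blam})=\bi^{\Phi_k(\blam)}$ whenever $\blam$ is $k$-horizontal.

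To prove this identity, I would use the explicit description of $\bmu=\Phi_k(\blam)$ from \autoref{subsec:subdivision-standard-tableaux}. Because $\blam$ is $k$-horizontal, every component has no non-trivial maximal $(k+1,k)$-strip, so $\Phi^Y_k$ only inserts a node immediately to the right of each $k$-node. Row and component indices are preserved, and the nodes within a row keep their relative order. Reading $[\bmu]$ in the row-reading order $\prec$ then visits, component by component and row by row, the old nodes $A^+$ in the same order as the nodes $A$ of $[\blam]$, with each new node $A^\sharp$ read directly after $A^+$.

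It remains to check residues. An old node $A^+$ has residue $\Phi_k(\res A)$ under the relabelling \autoref{eq:relabel-rules}, with $k\mapsto k$. This was proved after \autoref{lem:residue-characterization}, componentwise with the charge $\overline{\bkappa}$. A new node $A^\sharp$ has residue $k+1$. So the residue word of $T^{\bmu}$ is obtained from $\bi^{\blam}$ by keeping letters $<k$, shifting letters $>k$ up by one, and replacing each $k$ by $k,k+1$. This is precisely $\Phi_k(\bi^{\blam})$. Equivalently, one can cite the remark following \autoref{prop:subdivision-of-standard-tableau-is-standard} that $\Phi_k(T^{\blam})=T^{\bmu}$, which gives $\Phi_k(\bi^{\blam})=\bi^{\bmu}$ directly.

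The main point needing care is this residue and order bookkeeping, in particular that no new node is created in a different row. That is guaranteed by $k$-horizontality, using \autoref{lm:length-subdivision} and the case analysis in \autoref{def:subdivision-young-diagram}. Two side issues also need checking: that the relabelled charge $\overline{\bkappa}$ matches the residues in each component, and that $\Theta_k$ on the basis element $e(\bi)$ (with $w=1$ and no dots) agrees with the algebra isomorphism of \autoref{thm:subdivision-iso}. The latter is part of that theorem.
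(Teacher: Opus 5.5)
Your proposal is correct and follows essentially the same route as the paper. Both arguments reduce the claim to the word identity $\Phi_k(\bi^{\blam})=\bi^{\Phi_k(\blam)}$, using two facts: $\Theta_k$ adds a $(k+1)$-string immediately after each $k$-string, and for $k$-horizontal $\blam$ the map $\Phi_k$ inserts a $(k+1)$-node immediately to the right of each $k$-node in the same row. The difference is only in presentation: you make the row-reading-order and residue bookkeeping explicit (or cite $\Phi_k(T^{\blam})=T^{\bmu}$), whereas the paper reduces to level one and argues briefly via the local string-diagram pattern.
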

\begin{proof}
    The definition of $e_{\blam}$ and the subdivision maps $\Phi_k$ on partitions and KLR algebras are all defined componentwise. Therefore, it suffices to prove the level-one case.
    By \autoref{def:subdivision-young-diagram}, we only need to consider the local pattern of $k$-nodes as all other nodes are just relabeled in a compatible way. 
    As $\lambda$ is $k$-horizontal, $k$-nodes only appears in maximal $(k,k+1)$-strips in $[\lambda]$. Consider the idempotent string diagram $e_\lambda$, where locally we have the pattern:
\begin{center}
    $\cdots$
    \begin{tikzpicture}[scale=1.2, thick]
        \draw (-3,0) -- (-3,2);
        \node[below] at (-3,0) {$k{-}1$};
        
        \draw (-1,0) -- (-1,2);
        \node[below] at (-1,0) {$k$};
        
        \draw (1,0) -- (1,2);
        \node[below] at (1,0) {$k{+}1$};
    \end{tikzpicture}
    $\cdots$
\end{center}

Under the subdivision, it is mapped to:

\begin{center}
    $\cdots$
    \begin{tikzpicture}[scale=1.2, thick]
        \draw (-3,0) -- (-3,2);
        \node[below] at (-3,0) {$k{-}1$};
        
        \draw (-1,0) -- (-1,2);
        \node[below] at (-1,0) {$k$};
        
        \draw (0,0) -- (0,2);
        \node[below] at (0,0) {$k{+}1$};
        
        \draw (1,0) -- (1,2);
        \node[below] at (1,0) {$k{+}2$};
    \end{tikzpicture}
    $\cdots$
\end{center}
    Translating this back to the partition, by \autoref{def:subdivision-young-diagram}, this corresponds to: the $(k,k+1)$-strip becoming the $(k,k+1,k+2)$-strip. Since the other strings only change their labels accordingly, the idempotent string diagram corresponds to $\Phi_k(\lambda)$. Hence, we obtain $\Phi_k(e_\lambda)=e_{\Phi_k(\lambda)}+\be\badideal \be$.
\end{proof}
\subsection{Subdivision of Permutation Modules}\label{subsec:subdivision-permutation-modules}
In this section, we present our first categorical result on subdivision. We first introduce a new map on the set of partitions (and extend it naturally to multipartitions), inspired by the level-up phenomenon in \cite{qin-specht-filtration-permutation}: the Specht modules appearing in the generalized Specht filtration of a permutation module $M^{\lambda}$, where $\lambda$ is a partition, are typically indexed by multipartitions.

Throughout this subsection, we fix a subdivision datum $\subdatum[\Lambda_x][x]$, unless stated otherwise.

Take a partition $\lambda=(\lambda_1,\dots,\lambda_{\ell(\lambda)})\in\Par[\Lambda_x]_{\alpha}$. As in \autoref{subsubsec:klam-nonzero-case}, let $s=k(\lambda)$ be the number of non-trivial maximal $(k+1,k)$-strips in $[\lambda]$, and list them as $S_1,\dots,S_s$ so that the row containing the initial node of $S_i$, written $k_i:=k_{S_i}$, satisfies
\[
k_1<k_2<\cdots<k_s.
\]
For convenience set $k_0:=0$ and $k_{s+1}:=\ell(\lambda)$. Note that $(k_i,1)\in[\lambda]$ is a $(k+1)$-node for each $1\le i\le s$, and $(k_i+1,1)\in[\lambda]$ is a $k$-node (in the first column) for each $1\le i\le s$.

Set $\bkappa=(x,\underbrace{k,\cdots,k}_{s \text{ times}})\in I^{s+1}$. Define the \emph{splitting map}
\[
\Psi_k:\Par[\Lambda_x]_\alpha\longrightarrow \Par[\bkappa]_\alpha
\]
by
\[
\Psi_k(\lambda)=(\lambda^{0}\mid \cdots\mid\lambda^{s}),
\qquad
\lambda^{i}:=(\lambda_{k_i+1},\lambda_{k_i+2},\dots,\lambda_{k_{i+1}})
\quad(0\le i\le s).
\]
Equivalently, $\Psi_k(\lambda)$ is obtained by cutting the Young diagram $[\lambda]$ into consecutive blocks of rows, with cuts immediately below rows $k_1,\dots,k_s$, and regarding each block as a separate component.

The map $\Psi_k$ is well-defined because, with the fixed choice $\bkappa$, the natural identification of nodes between $[\lambda]$ and $[\Psi_k(\lambda)]$ preserves residues. 
Indeed, $\Psi_k$ only cuts $[\lambda]$ into consecutive blocks of rows and reinterprets each block as a separate component, without changing the column positions of any nodes. 
The $0$th component inherits charge $x$, which matches the residue of the first node of $[\lambda]$. 
For each $1\le i\le s$, the first node of the $i$th component corresponds to the first node of row $k_i+1$ of $[\lambda]$, and by construction this is a $k$-node; assigning charge $k$ to that component therefore ensures that residues in this component agree with the residues of the corresponding nodes in $[\lambda]$. 
Consequently every node keeps the same residue under the identification, so the residue content is preserved and $\Psi_k(\lambda)\in\Par[\bkappa]_\alpha$.

Take a charge $\varkappa=(\varkappa_1,\cdots,\varkappa_{\ell})\in I^{\ell}$ and consider $\blam\in\Par[\varkappa]_{\alpha}$. We define $\Psi_k(\blam)$ componentwise: if $\blam=(\blam^{(1)},\cdots,\blam^{(\ell)})$, then we set
\[
    \Psi_k(\blam)=\Big(\Psi_k\big(\blam^{(1)}\big),\cdots,\Psi_k\big(\blam^{(\ell)}\big)\Big)
\]
If the number of non-trivial maximal $(k+1,k)$-strips in $\blam^{(i)}$ is $s_i$ for $1\le i\le \ell$, then it is straightforward to check that $\Psi_k(\blam)\in\Par[\pmb{\varkappa}]_{\alpha}$, where
\[
    \pmb{\varkappa}=(\varkappa_1,\underbrace{k,\cdots,k}_{s_1 \text{ times}},\varkappa_2,\underbrace{k,\cdots,k}_{s_2 \text{ times}},\cdots,\varkappa_{\ell},\underbrace{k,\cdots,k}_{s_{\ell} \text{ times}})\in I^{s+\ell},\quad s=\sum\limits_{1\leq i\leq \ell}s_i.
\]
\begin{Example}\label{eg:split-image}
    Again consider \autoref{eg:k-strip}, the image $\Psi_k(\lambda)$ has the following Young diagram:
    \begin{center}
        \tikzset{
            C/.style={fill=cyan,text=white},
            O/.style={fill=OrangeRed, text=white}
                }
        \Multitableau[no border,box height=0.5,box width=0.5]{[C]1[C]2340[C]1[C]2340[C]1,0[C]1[C]2340[C]1[C]2340,40[C]1[C]2340[C]1[C]234,340[C]1[C]2340[C]1[C]23,[O]2340[C]1[C]2340[C]1[C]2  |[C]1[C]2340[C]1[C]2340[C]1,0[C]1[C]2340[C]1[C]2340}
    \end{center}
\end{Example}
There is a special case in which the splitting map does nothing.
\begin{Lemma}\label{lm:split-no-change}
    Take a $k$-horizontal partition $\lambda\in\Par[\Lambda_x]_{\alpha}$. Then $\Psi_k(\lambda)=\lambda$.
\end{Lemma}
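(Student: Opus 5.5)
This is essentially an unwinding of definitions. Since $\lambda$ is $k$-horizontal, \autoref{def:k-horizontal} (with $\ell=1$) gives $k(\lambda)=0$, i.e.\ there is no non-trivial maximal $(k+1,k)$-strip in $[\lambda]$. In the construction of $\Psi_k$ this means $s=k(\lambda)=0$. Hence the charge $\bkappa=(x,k,\dots,k)$ is just $(x)$, and the set of cut rows $\{k_1,\dots,k_s\}$ is empty.

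With $s=0$ the convention $k_0=0$, $k_{s+1}=k_1=\ell(\lambda)$ yields exactly one component,
\[
\lambda^{0}=(\lambda_{k_0+1},\dots,\lambda_{k_1})=(\lambda_1,\dots,\lambda_{\ell(\lambda)})=\lambda .
\]
So $\Psi_k(\lambda)=(\lambda^0)=\lambda$ as a $1$-partition with charge $(x)$, which we identify with $\lambda\in\Par[\Lambda_x]_\alpha$ via the remark in \autoref{subsec:multipartition-charge-residue} identifying $\Par[\Lambda_x]_\alpha$ with $\Par[x]_\alpha$. The node-by-node identification is the identity and therefore preserves residues trivially.

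The only point needing care is the equivalence ``$k$-horizontal $\iff s=0$''. Here $s$ in the definition of $\Psi_k$ counts non-trivial maximal $(k+1,k)$-strips, which is exactly $k(\lambda)$ from \autoref{def:k_lambda}; by \autoref{lm:initial-nodes} these are the first-column pairs counted there. No further obstacle is expected, and the multipartition version follows componentwise.
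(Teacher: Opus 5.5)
Your proposal is correct and is exactly the paper's argument (the paper simply says the lemma follows directly from the definition): $k$-horizontality gives $s=k(\lambda)=0$, so there are no cuts, the charge is just $(x)$, and the single block $\lambda^{0}=(\lambda_1,\dots,\lambda_{\ell(\lambda)})$ is $\lambda$ itself. Your appeal to the initial-nodes lemma is harmless but unnecessary, since the construction of $\Psi_k$ defines $s$ to be $k(\lambda)$.
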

\begin{proof}
    This follows directly from the definition.
\end{proof}
As we can observe in \autoref{eg:split-image}, the advantage of the splitting map is the following:
\begin{Lemma}\label{lm:split-image-klam-zero}
   Take any partition $\lambda\in\Par[\Lambda_x]_{\alpha}$. Then $\Psi_k(\lambda)$ is $k$-horizontal.
\end{Lemma}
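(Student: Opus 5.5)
Since $k$-horizontality is checked component by component, and $\Psi_k(\lambda)=(\lambda^0\mid\cdots\mid\lambda^s)$ with charge $\bkappa=(x,k,\dots,k)$, the plan is to show that each component $\lambda^i$ satisfies $k(\lambda^i)=0$. By \autoref{lm:k-lambda-zero-bound}, this reduces to a length bound. Let $\rho_i\in I$ be the residue with $\rho_i\equiv \kappa_i-k\pmod e$; then we need $\ell(\lambda^i)\le\rho_i$.

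Equivalently, I will use the first-column description from \autoref{def:k_lambda}. A component has a non-trivial maximal $(k+1,k)$-strip exactly when, in its first column, some row $r$ with a row $r+1$ below it has $\res(r,1)\equiv k+1$. The cutting map $\Psi_k$ preserves residues, as noted after its definition. So the first column of $\lambda^i$ is simply rows $k_i+1,\dots,k_{i+1}$ of the first column of $[\lambda]$, with the same residues.

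The key step is to locate the $(k+1)$-nodes in the first column of $[\lambda]$ that have a node directly below them. These are exactly the rows $r<\ell(\lambda)$ with $\res(r,1)\equiv k+1$. By \autoref{def:k_lambda} and \autoref{lm:initial-nodes}, these rows are precisely $k_1<\cdots<k_s$, the initial rows of the non-trivial strips. Within $\lambda^i$, any row $r$ with a row below it in the same component satisfies $k_i+1\le r<k_{i+1}$. Such an $r$ is not among the $k_j$, so $\res(r,1)\not\equiv k+1$. The last row of $\lambda^i$ may itself be $k_{i+1}$, a $(k+1)$-node, but no row lies below it in $\lambda^i$, so it gives only a trivial strip. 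Hence $k(\lambda^i)=0$.

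One boundary case needs care: the last component, where $k_{s+1}=\ell(\lambda)$. There the final row $\ell(\lambda)$ has nothing below it, so the same argument applies. As a cross-check, the same conclusion follows numerically: $\lambda^i$ for $i\ge1$ has charge $k$, so $\rho_i=e$, and $\ell(\lambda^i)=k_{i+1}-k_i\le e$ because $k_j=je-d$. For $i=0$, $\ell(\lambda^0)=k_1=e-d=\rho_0$, or $\ell(\lambda^0)=\ell(\lambda)\le e-d$ when $s=0$, so \autoref{cor:k-lambda-zero-bound-2} applies.

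I expect the main (mild) obstacle to be the indexing at the cut rows and the end components. This needs the explicit description $k_j=je-d$ from the discussion before \autoref{eq:f-equal-NkM}, and the residue preservation of $\Psi_k$. The rest is immediate.
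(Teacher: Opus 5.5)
Your argument is correct and is exactly the ``by construction'' reasoning the paper intends (its proof just says the lemma is obvious by construction): the cuts of $\Psi_k$ sit immediately below rows $k_1,\dots,k_s$. These are the only first-column $(k+1)$-nodes of $[\lambda]$ with a node beneath them, so no component retains such a pair. One small caveat concerns your length-bound framing and cross-check: the least positive solution of $r\equiv \kappa_i-k\pmod{e}$ is $e$ when $\kappa_i=k$, so $\rho_i$ must be taken in $\{1,\dots,e\}$, not in $I$ (with $\rho_i=0$ the length criterion would be false), but your main first-column argument does not depend on this.
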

\begin{proof}
    This is obvious by construction.
\end{proof}
\begin{Corollary}\label{cor:initial-sequence-ordered}
    Take a partition $\lambda\in\Par[\Lambda_x]_{\alpha}$ and set $\bmu=\Phi_k\big(\Psi_k(\lambda)\big)$. Then for each $k$-node $(m,r,c)\in[\bmu]$, we have $(m,r,c+1)\in[\bmu]$, and this node has residue $k+1$. In particular, $\bi^{\bmu}\in\wellorder$.
\end{Corollary}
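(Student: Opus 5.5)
The plan is to combine \autoref{lm:split-image-klam-zero} with the explicit description of $\Phi_k$ on $k$-horizontal multipartitions given at the start of \autoref{subsec:subdivision-standard-tableaux}, and then to read off the residue sequence of the initial tableau.

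First, set $\bnu:=\Psi_k(\lambda)\in\Par[\bkappa]_\alpha$. By \autoref{lm:split-image-klam-zero}, $\bnu$ is $k$-horizontal. This means that no component of $[\bnu]$ contains a non-trivial maximal $(k+1,k)$-strip. Consequently, by \autoref{def:subdivision-young-diagram}, applied componentwise, $\bmu=\Phi_k(\bnu)$ is obtained by inserting a new node immediately to the right of every $k$-node of $[\bnu]$, and all other nodes are relabelled by \autoref{eq:relabel-rules}.

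Second, we show that every $k$-node of $[\bmu]$ is old. Every node of $[\bmu]$ is either an old node $A^+$ or a new node $A^\sharp$. By construction, new nodes have residue $k+1$. An old node $A^+$ has residue $\Phi_k(\res A)$, read letter-wise as in \autoref{subsec:subdivision-words}, and this equals $k$ exactly when $\res A=k$. So any $k$-node of $[\bmu]$ is of the form $A^+$ with $A$ a $k$-node of $[\bnu]$. The node $A^\sharp=(m,r,c+1)$ then lies in $[\bmu]$ and has residue $k+1$. This gives the first claim.

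The residue check should be justified once, either through the residue verification after \autoref{lem:residue-characterization} or directly: in the row, $A^\sharp$ sits immediately to the right of a $k$-node. This also depends on the new charges. Any component of $\bnu$ with charge $k$ keeps charge $\Phi_k(\Lambda_k)=\Lambda_k$, so its first node is an old $k$-node, and the same argument applies to it.

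Finally, we derive $\bi^{\bmu}\in\wellorder$. The initial tableau $T^{\bmu}$ lists the nodes in row-reading order. So if position $a$ of $\bi^{\bmu}$ has entry $k$, that node is $(m,r,c)$, and position $a+1$ is the node $(m,r,c+1)$, which lies in $[\bmu]$ and has residue $k+1$. In particular $a<\height(\oalpha)$. This is exactly the definition of an ordered sequence. Alternatively, one can note that $\Phi_k(T^{\bnu})=T^{\bmu}$ gives $\bi^{\bmu}=\Phi_k(\bi^{\bnu})$, which is ordered by \autoref{eq:subdivision-bijection-words}.

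The main obstacle is largely bookkeeping. We must make sure that new nodes can never have residue $k$ and that no component of $\bmu$ acquires charge $k+1$; the latter holds because $\Phi_k$ on weights never produces $\Lambda_{k+1}$. Once these are settled, the argument is immediate.
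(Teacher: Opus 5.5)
Your proposal is correct and follows the same route as the paper. You use \autoref{lm:split-image-klam-zero} to see that $\Psi_k(\lambda)$ is $k$-horizontal, so $\Phi_k$ replaces each $k$-node by the horizontal pair $(A^+,A^{\sharp})$ with residues $k$ and $k+1$. Your extra bookkeeping (new nodes never have residue $k$, no component acquires charge $k+1$, and the row-reading argument giving $\bi^{\bmu}\in\wellorder$) makes explicit what the paper's one-line proof leaves implicit.
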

\begin{proof}
    By \autoref{lm:split-image-klam-zero}, $\Psi_k(\lambda)$ is $k$-horizontal. Hence, when apply $\Phi_k$, a $k$-node $A$ is replaced by a horizontal pair $(A^+,A^{\sharp})$ where $A^+$ is a $k$-node and $A^{\sharp}$ is a $(k+1)$-node.
\end{proof}

Another direct consequence of \autoref{lm:split-image-klam-zero} is the following.
\begin{Corollary}\label{cor:image-of-splitting-idempotent}
    Take a partition $\lambda\in\Par[\Lambda_x]_{\alpha}$. Then
    $$
        \Phi_k\big(e_{\Psi_k(\blam)}\big)=e_{\Phi_k\big(\Psi_k(\blam)\big)}+\be\badideal\be. \eqno{\square}
    $$
\end{Corollary}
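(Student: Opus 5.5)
The statement is an immediate application of \autoref{thm:image-of-idempotent}, with \autoref{lm:split-image-klam-zero} supplying its only hypothesis. Throughout I read $\blam$ in the displayed formula as the partition $\lambda$, or more generally as a multipartition; both are handled by the componentwise definition of $\Psi_k$.

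First, I verify that $\Psi_k(\lambda)$ is a legitimate input for \autoref{thm:image-of-idempotent}. With the charge $\bkappa=(x,k,\dots,k)\in I^{s+1}$ fixed in the construction of the splitting map, the discussion after that definition shows $\Psi_k(\lambda)\in\Par[\bkappa]_{\alpha}$. The point is that cutting $[\lambda]$ into row blocks just below rows $k_1,\dots,k_s$ preserves residues, hence also the residue content $\alpha$. The tuple $(e,I,\Lambda',\alpha,\bkappa,k)$, where $\Lambda'=\Lambda_x+s\Lambda_k$, is therefore a subdivision datum in the sense of \autoref{def:subdivision-datum}.

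Second, \autoref{lm:split-image-klam-zero} says $\Psi_k(\lambda)$ is $k$-horizontal. The check is that no component contains a non-trivial maximal $(k+1,k)$-strip. By \autoref{lm:initial-nodes}, such a strip would have to start at a $(k+1)$-node in the first column that has a $k$-node directly beneath it within the same component. The cuts were placed exactly below each such $(k+1)$-node $(k_i,1)$. Each later component starts at a $k$-node, and the first column then reads $k,k-1,\dots$ downward, so it cannot reach another $(k+1)$-node followed by $k$ without passing some later $k_j$, where we cut again.

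Third, I apply \autoref{thm:image-of-idempotent} to the $k$-horizontal multipartition $\Psi_k(\lambda)$ with charge $\bkappa$. This gives $\Phi_k(e_{\Psi_k(\lambda)})=e_{\Phi_k(\Psi_k(\lambda))}+\be\badideal\be$ in $S_{\oalpha}$. Here $\Phi_k(\Psi_k(\lambda))$ is computed componentwise with the subdivided charge $\overline{\bkappa}=(\okappa_1,k,\dots,k)$, and $k$ is fixed by $\Phi_k$ on fundamental weights. For a general multipartition the argument is identical, using the concatenated charge $\pmb{\varkappa}$.

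There is no genuine obstacle. The only point needing care is the bookkeeping: one must confirm that the idempotent $e_{\Psi_k(\lambda)}\in R_\alpha$ is formed with respect to the new charge. Since the initial tableau's row-reading order is unchanged by splitting, $\bi^{\Psi_k(\lambda)}=\bi^{\lambda}$, so $e_{\Psi_k(\lambda)}=e_\lambda$ as elements of $R_\alpha$. This identity is consistent with, though not required for, the statement, and \autoref{cor:initial-sequence-ordered} confirms that the right-hand idempotent is indexed by an ordered word.
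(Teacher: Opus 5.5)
Your proposal is correct and is the paper's argument: the paper states this corollary as a direct consequence of \autoref{lm:split-image-klam-zero}, which makes $\Psi_k(\lambda)$ a $k$-horizontal multipartition for the charge $(x,k,\dots,k)$, so \autoref{thm:image-of-idempotent} applies verbatim. Your extra bookkeeping is accurate but is supporting detail rather than a needed step; this covers the subdivision datum with $\Lambda_x+s\Lambda_k$, the fact that $\Phi_k$ fixes $\Lambda_k$, and the observation $e_{\Psi_k(\lambda)}=e_\lambda$.
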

Our main theorem in this section is the following.

\begin{Theorem}\label{thm:image-of-permutation-module}
    Fix a subdivision datum $\subdatum$ and take $\blam\in\Par[\Lambda]_{\alpha}$. Then there is an isomorphism of graded $R_\alpha$-modules
    \[
        M^{\Psi_k(\blam)}\ \cong\ 
        \be M^{\Phi_k\big(\Psi_k(\blam)\big)}\big/\be\badideal\be\,M^{\Phi_k\big(\Psi_k(\blam)\big)}.
    \]
\end{Theorem}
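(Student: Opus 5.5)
Write $\bnu:=\Psi_k(\blam)$ and $\bmu:=\Phi_k(\bnu)$. By \autoref{lm:split-image-klam-zero}, $\bnu$ is $k$-horizontal, so every result of \autoref{subsec:subdivision-standard-tableaux} and \autoref{thm:image-of-idempotent} applies to it. The right-hand side $N:=\be M^{\bmu}/\be\badideal\be M^{\bmu}$ is a module over $\be R_{\oalpha}\be/\be\badideal\be=S_{\oalpha}$, and we regard it as an $R_\alpha$-module through the isomorphism $\Phi_k$ of \autoref{thm:subdivision-iso}.

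The plan is to construct a homomorphism $F\colon M^{\bnu}\to N$ from the highest-weight presentation of $M^{\bnu}$ and then compare bases. Let $n:=\be m^{\bmu}+\be\badideal\be M^{\bmu}$. By \autoref{cor:initial-sequence-ordered}, $\bi^{\bmu}\in\wellorder$, so $\be m^{\bmu}=m^{\bmu}$ and $n$ has the right degree. By \autoref{rmk:highest-weight-presentaiton-of-permutation-module}, $M^{\bnu}$ is cyclic on $m^{\bnu}$ with relations (a)--(c). These are carried by $\Phi_k$ into relations that $n$ satisfies:
\begin{enumerate}
\item[(a)] By \autoref{cor:image-of-splitting-idempotent}, $\Phi_k(e(\bj))$ is $e(\Phi_k(\bj))$ modulo $\be\badideal\be$, and this acts on $m^{\bmu}$ as $\delta_{\Phi_k(\bj),\bi^{\bmu}}=\delta_{\bj,\bi^{\bnu}}$.
\item[(b)] $\Phi_k(y_r)$ is a dot on the strand $\phi(r)$, which kills $m^{\bmu}$.
\item[(c)] If $r\rightarrow_{T^{\bnu}}r+1$, then $\Phi_k(\psi_r)$ is either a single crossing $\psi_{\phi(r)}$ of strands adjacent in a row of $T^{\bmu}$, or, when a $k$-strand is involved, a product of two or three crossings of the doubled strand. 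In each case the rightmost crossing swaps two entries adjacent in a row of $T^{\bmu}$ (using \autoref{lm:row-standardness-preserving}), so it kills $m^{\bmu}$.
\end{enumerate}
Hence $F(r\,m^{\bnu}):=\Phi_k(r)\,n$ is a well-defined graded homomorphism. The degree shift $\deg T^{\bnu}=\deg T^{\bmu}$ comes from \autoref{thm:degree-invariance-standard-tableau} applied to $T^{\bnu}$, since $\Phi_k(T^{\bnu})=T^{\bmu}$.

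Next, compare bases. By \autoref{thm:basis-permutation}, $M^{\bnu}$ has basis $\{\psi^T m^{\bnu}\mid T\in\RST{\bnu}\}$. The image of $\psi^T$ is, up to the bad ideal, $\psi^{\Phi_k(T)}$ for a suitable reduced expression, because the doubled strands follow the $k$-strands and $w^{\Phi_k(T)}$ is obtained from $w^T$ by doubling. Different reduced expressions differ by braid error terms, and these either have strictly smaller Bruhat length, handled by triangularity via \autoref{lem:dominance-vs-bruhat-rowstandard}, or lie in $\be\badideal\be$. It follows that $F(\psi^Tm^{\bnu})=\psi^{\Phi_k(T)}m^{\bmu}$ plus lower terms. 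By \autoref{prop:subdivision-of-standard-tableau-is-standard} and the injectivity of $T\mapsto\Phi_k(T)$, these images are linearly independent, provided we show they stay independent modulo $\be\badideal\be M^{\bmu}$. That independence gives injectivity of $F$.

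For surjectivity, note that $\be M^{\bmu}$ is spanned by the $\psi^{S}m^{\bmu}$ with $S\in\RST{\bmu}$ and $\bi^S\in\wellorder$. The claim is that any such $S$ in which some $k$-entry and the $(k{+}1)$-entry immediately following it are not paired as $A^+,A^\sharp$ contributes only an element of $\be\badideal\be M^{\bmu}$. The reason is that pulling the $(k{+}1)$-strand past the adjacent $k$-strand factors the diagram through an almost-ordered idempotent. The remaining $S$ are exactly the tableaux $\Phi_k(T)$.

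The main obstacle is precisely this description of $\be\badideal\be M^{\bmu}$: showing it is spanned by the unpaired $\psi^Sm^{\bmu}$ and meets the span of the $\psi^{\Phi_k(T)}m^{\bmu}$ trivially. I would first try a graded dimension count. If the graded dimension of $N$ is shown to be at most that of $M^{\bnu}$, then surjectivity of $F$ forces $F$ to be an isomorphism, and the independence question never has to be settled directly. To get the bound, I would use the induced-module description $M^{\bmu}=M(\bmr(\bmu))$ together with the compatibility of subdivision with induction from segment modules; here $\Phi_k(s(i,N))$ is again a segment because $\bnu$ is $k$-horizontal. This would reduce everything to the one-row case, where both sides can be compared by hand.
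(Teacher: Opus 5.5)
\textbf{Gap: injectivity of your map is never proved, and the proposed dimension count points the wrong way.}

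Your map $F$ is fine. Relations (a)--(c) are carried over correctly. Your treatment of (c) is more precise than a single-crossing description: when a $k$-strand is involved, $\Phi_k(\psi_r e(\bi^{\bnu}))$ is $\psi_{\phi(r)}\psi_{\phi(r)+1}e(\bi^{\bmu})$ or $\psi_{\phi(r)+1}\psi_{\phi(r)}e(\bi^{\bmu})$, and the factor acting first swaps two row-adjacent entries of $T^{\bmu}$.

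Surjectivity needs none of your spanning arguments. Since $\be m^{\bmu}=m^{\bmu}$, you have $\be M^{\bmu}=\be R_{\oalpha}\be\, m^{\bmu}$, hence $N=S_{\oalpha}n=\Phi_k(R_\alpha)n=F(M^{\bnu})$.

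The real problem is injectivity. A degree-$0$ surjection $M^{\bnu}\twoheadrightarrow N$ already forces the graded dimension of $N$ to be at most that of $M^{\bnu}$, so proving that bound adds nothing. What you need is the reverse inequality. That is exactly the independence of the $F(\psi^T m^{\bnu})$ modulo $\be\badideal\be M^{\bmu}$, which you hoped to bypass. Likewise, your claim that the unpaired $\psi^S m^{\bmu}$ lie in $\be\badideal\be M^{\bmu}$ would only re-prove that automatic upper bound.

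The reduction to one row has a further problem. It needs $M\mapsto \be M/\be\badideal\be M$ to commute with induction products, and nothing in the paper supplies this. The paper explicitly leaves functorial properties of subdivision, such as exactness, open.

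\textbf{The missing idea: build the inverse map instead of counting dimensions.}

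Present $N$ as a cyclic $S_{\oalpha}$-module on $n$ by truncating the highest-weight relations of $M^{\bmu}$. Then check that $G(y\,n):=\Phi_k^{-1}(y)\,m^{\bnu}$ kills these relations. Since $F$ and $G$ are inverse on the cyclic generators, $F$ is an isomorphism, with no basis comparison, triangularity, or description of $\be\badideal\be M^{\bmu}$ required. This is how the paper argues. The content lies at the new positions:
\begin{enumerate}
\item If $j=T^{\bmu}(A^\sharp)$, then $\sigma_{j-1}\bi^{\bmu}$ is almost-ordered. Hence $\psi_{j-1}^2e(\bi^{\bmu})=(y_j-y_{j-1})e(\bi^{\bmu})$ lies in $\badideal$. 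So on $n$ the dot $y_j$ acts as $y_{j-1}$, i.e.\ as $\Phi_k(y_{j'})$ with $\phi(j')=j-1$, which $G$ sends to $y_{j'}m^{\bnu}=0$.
\item If $t=T^{\bmu}(A^+)$, then $\psi_t e(\bi^{\bmu})\in\badideal$ for the same reason.
\end{enumerate}

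When you write this out, take care with the two remaining row relations at a new position: a new node followed by an old $(k{+}2)$-node, or an old $(k{-}1)$-node followed by $A^+$. There $\be\psi_t e(\bi^{\bmu})=0$ outright. The relation that survives truncation is the doubled crossing. You obtain it by routing through an ordered idempotent, using $\psi^2e=e$ for the non-adjacent label pairs $(k,k{+}2)$ and $(k{-}1,k{+}1)$. That doubled crossing equals $\Phi_k(\psi_{t'}e(\bi^{\bnu}))$ for a row-adjacent pair $t'\rightarrow_{T^{\bnu}}t'+1$, so $G$ kills it.
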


\begin{proof}
    Let $\bmu=\Psi_k(\blam)$ and $\bnu=\Phi_k\big(\Psi_k(\blam)\big)$, and set
    \[
        N^{\bnu}:=\be M^{\bnu}\big/\be \badideal \be\, M^{\bnu}.
    \]
    
    Let $\height(\alpha)=d$. Let $T^{\bmu}$ and $T^{\bnu}$ be the initial tableaux of shapes $\bmu$ and $\bnu$, respectively, and let $\bi^{\bmu}$ and $\bi^{\bnu}$ be the corresponding residue sequences. As usual, let
    $\bar{I}$ and $\oalpha$ be the images of $I$ and $\alpha$ under the corresponding subdivision maps
    $\Phi_k$. Let $d'=\height(\oalpha)$.

    For $\bmu$ and $\alpha$, set $K^{\bmu}_{\alpha}$ to be the ideal of $R_{\alpha}$ generated by the following set of elements:
    \begin{equation}\label{eq:generators-of-K-bmu}
        \{e(\bi)-\delta_{\bi,\bi^{\bmu}}, y_j, \psi_t \mid \bi\in I^{\alpha},1\leq j\leq d,t\rightarrow_{T^{\bmu}}t+1 \}
    \end{equation}
    then $M^{\bmu}\cong \big( R_{\alpha}/K^{\bmu}_{\alpha}\big)\langle \deg T^{\bmu}\rangle$ induced by the canonical projection $R_{\alpha}\twoheadrightarrow M^{\bmu}, r\mapsto r\cdot m^{\bmu}$. 
    
    Similarly, for $\bnu$ and $\oalpha$, set $K^{\bnu}_{\oalpha}$ to be the ideal of $R_{\oalpha}$ generated by the following set of elements:
    \[
        \{e(\bj)-\delta_{\bj,\bi^{\bnu}}, y_j, \psi_t \mid \bj\in I^{\oalpha},1\leq j\leq d',t\rightarrow_{T^{\bnu}}t+1 \}
    \]
    then $M^{\bnu}\cong \big( R_{\oalpha}/K^{\bnu}_{\oalpha}\big)\langle \deg T^{\bnu}\rangle$ induced by the canonical projection $R_{\oalpha}\twoheadrightarrow M^{\bnu}, r\mapsto r\cdot m^{\bnu}$. 

    Let $\be\in R_{\oalpha}$ be the truncation idempotent, defined in \autoref{eq:truncation-idempotent}. Then it follows that 
    \[
        \big(\be R_{\oalpha}/\be K^{\bnu}_{\oalpha}\big)\langle \deg T^{\bnu}\rangle \cong \be M^{\bnu}.
    \]
    which is induced from the canonical projection $\be R_{\oalpha}\twoheadrightarrow \be M^{\bnu}, \be r\mapsto \be r\cdot m^{\bnu}$.
    
    Since $\bi^{\bnu}\in\wellorder$ by \autoref{cor:initial-sequence-ordered}, we know $\be m^{\bnu}=m^{\bnu}$. Hence the map $\be R_{\oalpha}\be\to \be M^{\bnu}$ given by $\be r\be\mapsto \be r\be\cdot \be m^{\bnu}$ is surjective as $\be r\be\cdot \be m^{\bnu}=\be r \be m^{\bnu}=\be r m^{\bnu}$. Therefore, it follows that:
    \[
        \big(\be R_{\oalpha}\be /\be K^{\bnu}_{\oalpha}\be \big)\langle \deg T^{\bnu}\rangle \cong \be M^{\bnu}.
    \]
    Equivalently, $\be M^{\bnu}$ is a cyclic $\be R_{\oalpha}\be$-module with cyclic generator $\be m^{\bnu}$ annihilated by
    \[
        \{\be \big( e(\bj)-\delta_{\bj,\bi^{\bnu}}\big)\be, \be y_j\be , \be \psi_t\be  \mid \bj\in I^{\oalpha},1\leq j\leq d',t\rightarrow_{T^{\bnu}}t+1 \}
    \]    
    By definition of $\be$, it can be simplified as the following:
    \[
        \{e(\bj)-\delta_{\bj,\bi^{\bnu}}\be, \be y_j\be , \be \psi_t\be  \mid \bj\in \wellorder,1\leq j\leq d',t\rightarrow_{T^{\bnu}}t+1 \}
    \]
    
    Let $S_{\oalpha}$ be the balanced KLR algebra $\be R_{\oalpha}\be/ \be \badideal\be$ which is isomorphic to $R_{\alpha}$ by the subdivision map, see \autoref{thm:subdivision-iso}.
    The module $N^{\bnu}$ is naturally a cyclic $S_{\oalpha}$-module with cyclic generator $w^{\bnu}:=\be m^{\bnu}+\be\badideal\be$, which has the same degree as $m^{\nu}$. Moreover, we have:
    \begin{align*}
        \be M^{\bnu}/\be\badideal\be M^{\bnu} & \cong \Big(\big(\be R_{\oalpha}\be /\be K^{\bnu}_{\oalpha}\be \big)/\big( \be\badideal\be (\be R_{\oalpha}\be /\be K^{\bnu}_{\oalpha}\be )\big)\Big)\langle \deg T^{\bnu}\rangle\\
        & \cong \Big(\big(\be R_{\oalpha}\be /\be K^{\bnu}_{\oalpha}\be \big)/\big( (\be\badideal\be+\be K^{\bnu}_{\oalpha}\be)  /\be K^{\bnu}_{\oalpha}\be\big)\Big)\langle \deg T^{\bnu}\rangle\\
        & \cong \Big(\be R_{\oalpha}\be /(\be\badideal\be+\be K^{\bnu}_{\oalpha}\be)\Big)\langle \deg T^{\bnu}\rangle\\
        & \cong \Big(\big(\be R_{\oalpha}\be /\be \badideal\be \big)/\big( (\be\badideal\be+\be K^{\bnu}_{\oalpha}\be)  /\be \badideal\be\big)\Big)\langle \deg T^{\bnu}\rangle\\
        & = \Big( S_{\oalpha}/ \big( (\be\badideal\be+\be K^{\bnu}_{\oalpha}\be)  /\be \badideal\be\big)\Big)\langle \deg T^{\bnu}\rangle
    \end{align*}
    Set $\overline{K}^{\bnu}_{\oalpha}:=(\be\badideal\be+\be K^{\bnu}_{\oalpha}\be)  /\be \badideal\be$. Then it is generated by the following set of elements:
    \begin{equation}\label{eq:generators-of-K-bnu}
        \{e(\bj)-\delta_{\bj,\bi^{\bnu}}\be+\be\badideal\be, \be y_j\be +\be\badideal\be, \be \psi_t\be +\be\badideal\be \mid \bj\in \wellorder,1\leq j\leq d',t\rightarrow_{T^{\bnu}}t+1 \}
    \end{equation}
    and $N^{\bnu}$ is the $S_{\oalpha}$-module with cyclic generator $w^{\bnu}$ annihilated by this set of elements.

    We construct two maps:
    \[
        f: R_{\alpha}\to N^{\bnu},\qquad x\mapsto \Phi_k(x) w^{\bnu}
    \]
    and 
    \[
        g: S_{\oalpha}\to M^{\bmu},\qquad y\mapsto \Phi^{-1}_k(y) m^{\bmu}
    \]
    We want to show that 
    \begin{equation}\label{eq:mapping-kernel-to-zero}
        f(K^{\bmu}_{\alpha})=0,\qquad g(\overline{K}^{\bnu}_{\oalpha})=0
    \end{equation}
    If this is true, then $f$ and $g$ induce surjective maps:
    \[
        \overline{f}: M^{\bmu}\twoheadrightarrow N^{\bnu},\quad x m^{\bmu}\mapsto \Phi_k(x)w^{\bnu}
    \]
    and
    \[
        \overline{g}: N^{\bnu}\twoheadrightarrow M^{\bmu},\quad y w^{\bnu}\mapsto \Phi^{-1}_k(y)m^{\bmu}.
    \]
    By construction, we have $\overline{f}\circ \overline{g}=\operatorname{id}$ and $\overline{g}\circ \overline{f}=\operatorname{id}$. Moreover, by \autoref{thm:degree-invariance-standard-tableau}, $\deg T^{\bmu}=\deg T^{\bnu}$. Hence $\overline{f}$ is a degree-$0$ homogeneous isomorphism of $R_\alpha$-modules between $M^{\bmu}$ and $N^{\bnu}$.
    
    \medskip
    To prove the \autoref{eq:mapping-kernel-to-zero}, we proceed case by case, according to the type of generators in \autoref{eq:generators-of-K-bmu} and \autoref{eq:generators-of-K-bnu}.
    
    \smallskip
    \noindent\textit{Verification of $e(\bi)$-relations.}
    For $\bi\in I^\alpha$, we have
    \begin{align*}
        f\bigl(e(\bi)-\delta_{\bi,\bi^{\bmu}}\bigr)
        &=\Phi_k\big(e(\bi)-\delta_{\bi,\bi^{\bmu}}\big)\,w^{\bnu}\\
        &=\Big( e\big(\Phi_k(\bi)\big)-\delta_{\bi,\bi^{\bmu}}\Phi_k(1) +\be\badideal\be\Big)w^{\bnu}\\
        &=\Big( e(\overline{\bi})-\delta_{\bi,\bi^{\bmu}}\be+\be\badideal\be\Big)w^{\bnu}\\
        &=\Big( e(\overline{\bi})-\delta_{\overline{\bi},\bi^{\bnu}}\be+\be\badideal\be\Big)w^{\bnu}=0
    \end{align*}
     where the second equality follows from \autoref{cor:image-of-splitting-idempotent}, the third equality follows from the isomorphism $\Phi_k:R_{\alpha}\to S_{\oalpha}$ sending $1$ to $\be$ and the notation $\overline{\bi}:=\Phi_k(\bi)$, and the fourth equality follows from \autoref{eq:subdivision-bijection-words}: the map $\bi\mapsto \overline{\bi}$ gives a bijection $I^\alpha\to \wellorder$, and, by construction in \autoref{subsec:subdivision-standard-tableaux}, $\Phi_k(T^{\bmu})=T^{\bnu}$; hence $\overline{\bi}=\bi^{\bnu}$ if and only if $\bi=\bi^{\bmu}$.
    
    
    Conversely, for $\bj\in\wellorder$, again by \autoref{eq:subdivision-bijection-words}, there exists unique $\bi\in I^{\alpha}$ such that $\Phi_k(\bi)=\bj$. Hence, reversing the argument above, we have:    
    \begin{align*}
        g\Big(e(\bj)-\delta_{\bj,\bi^{\bnu}}\be+\be\badideal\be\Big)
        &=\Big(\Phi^{-1}_k\big(e(\bj)+\be\badideal\be\big)-\delta_{\bj,\bi^{\bnu}}\Phi^{-1}_k\big(\be+\be\badideal\be\big)\Big)m^{\bmu}\\
        &=\Big(e\big(\Phi_k^{-1}(\bj)\big)-\delta_{\bj,\bi^{\bnu}}\big)\Big)m^{\bmu}\\
        &=\Big(e(\bi)-\delta_{\bj,\bi^{\bnu}}\Big)m^{\bmu}\\
        &=\Big(e(\bi)-\delta_{\bi,\bi^{\bmu}}\Big)m^{\bmu}=0
    \end{align*}    
        
    \smallskip
    \noindent\textit{Verification of $\psi$-relations.}
    Let $\phi:=\phi_{\bi^{\bmu}}$ be the position-tracing function defined in \autoref{def:position-tracing-function}.
    For $1\leq t\leq d-1$ such that $t\rightarrow_{T^{\bmu}} t+1$, we have
    \begin{align*}
        f(\psi_t)
        &=\Phi_k(\psi_t)w^{\bnu}=\big(\be\psi_{\phi(t)}\be+\be\badideal\be\big)w^{\bnu}
    \end{align*}
    Let $A,B\in [\bmu]$ be the two nodes such that $T^{\bmu}(A)=t$ and $T^{\bmu}(B)=t+1$. By our construction of subdivision on standard tableaux, applied to $T^{\bmu}$, we have the following two cases. 
    \begin{enumerate}[label=$\bullet$]
        \item If $\res(A)\neq k$, then $A^+$ and $B^{+}$ are horizontally adjacent nodes in $[\bnu]$ such that $T^{\bnu}(A^+)=\phi(t)$ and $T^{\bnu}(B^+)=\phi(t+1)=\phi(t)+1$.

        \item If $\res(A)=k$, then $A^+$ and $A^{\sharp}$ are the corresponding horizontally adjacent nodes in $[\bnu]$ such that $T^{\bnu}(A^+)=\phi(t)$ and $T^{\bnu}(A^{\sharp})=\phi(t)+1$. 
    \end{enumerate}
    Hence, in any case, $\be\psi_{\phi(t)}\be+\be\badideal\be\in \overline{K}^{\bnu}_{\oalpha}$ and $f(\psi_t)=0$.

    \smallskip
    Conversely, for $t\rightarrow_{T^{\bnu}} t+1$. There are three cases: 
    \begin{enumerate}[label=$\bullet$]
        \item If $\res_{T^{\bnu}}(t)=k$, then there exists $A\in [\bmu]$ such that $T^{\bnu}(A^+)=t$ and $T^{\bnu}(A^{\sharp})=t+1$. However, in this case, since $\res(A^{\sharp})=k+1$, we have $\psi_t e(\bi^{\bnu})=e\big(\sigma_t(\bi^{\bnu})\big)\psi_t$ and $\sigma_t(\bi^{\bnu})\in\unorder$. Hence $\be\psi_t\be\in \be\badideal\be$.

        \item If $\res_{T^{\bnu}}(t)= k+1$, then there exist two horizontally adjacent nodes $A,B\in [\bmu]$ such that $T^{\bnu}(A^{\sharp})=t$ and $T^{\bnu}(B^+)=t+1$. Set $t':=\phi^{-1}(t+1)$; then $T^{\bmu}(A)=t'-1$ and $T^{\bmu}(B)=t'$, i.e. $(t'-1)\rightarrow_{T^{\bmu}} t'$. Hence we have:        
        \[
            g(\be\psi_t\be+ \be\badideal\be)=\Phi^{-1}_k\big(\be\psi_t\be+ \be\badideal\be\big)m^{\bmu}=\psi_{t'-1}m^{\bmu}=0.
        \]
        
        \item If $\res_{T^{\bnu}}(t)\neq k,k+1$, then there exist two horizontally adjacent nodes $A, B\in [\bmu]$ such that $T^{\bnu}(A^+)=t$ and $T^{\bnu}(B^+)=t+1$. Let $t'=\phi^{-1}(t)$; then $T^{\bmu}(A)=t'$ and $T^{\bmu}(B)=\phi^{-1}(t+1)=t'+1$, i.e.\ $t'\rightarrow_{T^{\bmu}} (t'+1)$. Hence we have:        
        \[
            g(\be\psi_t\be+ \be\badideal\be)=\Phi^{-1}_k\big(\be\psi_t\be+ \be\badideal\be\big)m^{\bmu}=\psi_{t'}m^{\bmu}=0.
        \]
    \end{enumerate}
    
    So, in any case, we have $g(\be\psi_t\be+\be\badideal\be)=0$. 
    
    \smallskip
    \noindent\textit{Verification of $y$-relations.}
     Let $\phi:=\phi_{\bi^{\bmu}}$ be the position-tracing function defined in \autoref{def:position-tracing-function}. Take $1\leq j\leq d$, then
     \[
        f(y_j)=\Phi_k(y_j)w^{\bnu}=\big(\be y_{\phi(j)}\be +\be\badideal\be\big)w^{\bnu}=0 
     \]

    \smallskip
    Conversely, take $1\le j\le d'$. If there exists $1\leq j'\leq d$ such that $\phi(j')=j$, then 
    \[
        g(\be y_j\be +\be\badideal\be)=\Phi_k^{-1}\big(\be y_j\be +\be\badideal\be \big)m^{\bmu}=y_{j'}m^{\bmu}=0
    \]
    Else, there exists some $k$-node $A\in [\bmu]$ such that $T^{\bnu}(A^{\sharp})=j$ and $T^{\bnu}(A^+)=j-1$. Then 
    \[
        0+\be\badideal\be=\be \psi^2_{j-1}\be+\be\badideal\be=\be \psi^2_{j-1}e(\bi^{\bnu})+\be\badideal\be=\be Q_{k,k+1}(y_{j-1},y_{j})e(\bi^{\bnu}) +\be\badideal\be=\be(y_{j}-y_{j-1})\be+\be\badideal\be
    \]
    where the first equality follows from the $\psi$-relations check in the last step, the third equality follows from \autoref{eq:psi_square} in \autoref{def:klr-algebras}, and we use $e(\bi^{\bnu})+\be\badideal\be=\be+\be\badideal\be$ in $S_{\oalpha}$, which follows from the presentation \autoref{eq:generators-of-K-bnu}. Hence    
    \[
        g\big(\be y_j\be+\be\badideal\be \big)=g\big(\be y_{j-1}\be+\be\badideal\be \big)=0.
    \]
    
    \medskip
    After verifying the three types of generators, we have proved that \autoref{eq:generators-of-K-bmu} and \autoref{eq:generators-of-K-bnu} correspond to each other under $f$ and $g$, which completes the proof.
\end{proof}

\begin{Corollary}\label{cor:image-of-permutation-module}
   Fix a subdivision datum $\subdatum$ and take $\blam\in\Par[\Lambda]_{\alpha}$. Then
    \[
        M^{\blam}\cong \be M^{\Phi_k\big(\Psi_k(\blam)\big)}/\be\badideal\be M^{\Phi_k\big(\Psi_k(\blam)\big)}
    \]
\end{Corollary}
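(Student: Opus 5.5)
The corollary follows by combining \autoref{thm:image-of-permutation-module} with \autoref{cor:iso-of-permutation-module}. By \autoref{thm:image-of-permutation-module} we already have a graded isomorphism
\[
    M^{\Psi_k(\blam)}\ \cong\ \be M^{\Phi_k(\Psi_k(\blam))}\big/\be\badideal\be\, M^{\Phi_k(\Psi_k(\blam))},
\]
so it remains to produce a canonical graded isomorphism $M^{\blam}\cong M^{\Psi_k(\blam)}$ of $R_\alpha$-modules.

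First, recall that $\Psi_k$ is defined componentwise. On each component $\blam^{(m)}$ it cuts the Young diagram into consecutive blocks of rows, with cuts immediately below rows $k_1,\dots,k_s$, and keeps the row order. I will check that the ordered list of row segments is unchanged, i.e.\ $\bmr(\Psi_k(\blam))=\bmr(\blam)$. Two facts give this.
\begin{enumerate}
\item The rows of $\Psi_k(\blam)$ are exactly the rows of $\blam$, listed in the same order (first component to last, top to bottom within each component), because cutting preserves row order.
\item Each row keeps its length and the residue of its leftmost node. This is the residue-preservation remark made right after the definition of $\Psi_k$: with the charge $\pmb{\varkappa}$ built from $\varkappa$ by inserting $s_m$ copies of $k$ after $\varkappa_m$, the identification of nodes between $[\blam]$ and $[\Psi_k(\blam)]$ preserves residues.
\end{enumerate}
Hence every segment $r(a)=s(i_a,N_a)$ is the same for both multipartitions, so the two row tuples coincide.

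Now apply \autoref{cor:iso-of-permutation-module} with $\ell$ and $\ell'=\ell+s$, charges $\bkappa$ and $\pmb{\varkappa}$, and the multipartitions $\blam$ and $\Psi_k(\blam)$. Since these have the same residue content $\alpha$ and the same row tuple, the corollary gives a canonical graded isomorphism $M^{\blam}\cong M^{\Psi_k(\blam)}$ sending $m^{\blam}\mapsto m^{\Psi_k(\blam)}$. The last sentence of that corollary, that splitting components by cutting off rows does not change the permutation module, is exactly the situation here.

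Composing this isomorphism with the one from \autoref{thm:image-of-permutation-module} yields the claim. The only point requiring care is the grading. \autoref{cor:iso-of-permutation-module} is asserted to be graded, so the shifts $\deg T^{\blam}$ and $\deg T^{\Psi_k(\blam)}$ in \autoref{def:row-permutation-module} must be compatible. If the corollary's grading claim needs justification, I would check it by computing both shifts through \autoref{lm:degree-match-psi-tableau} for the identical induced module $M(\bmr)$. This is the one step I expect could need an explicit argument; everything else is bookkeeping.
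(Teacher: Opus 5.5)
Your route is exactly the paper's: its one-line proof composes $M^{\blam}\cong M^{\Psi_k(\blam)}$ from \autoref{cor:iso-of-permutation-module} with \autoref{thm:image-of-permutation-module}. Your check that $\bmr(\Psi_k(\blam))=\bmr(\blam)$ is the right justification for the first step, and read as an isomorphism of ungraded modules your argument is complete.

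The grading point you flagged, however, is a genuine gap, and it cannot be closed, because $\deg T^{\blam}$ and $\deg T^{\Psi_k(\blam)}$ differ in general. The map $r\,m^{\blam}\mapsto r\,m^{\Psi_k(\blam)}$ is therefore homogeneous of degree $\deg T^{\Psi_k(\blam)}-\deg T^{\blam}$, not $0$.

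Here is a counterexample. Take $e=3$, $\Lambda=\Lambda_0$, $k=0$ and $\lambda=(1^4)$, whose first-column residues are $0,2,1,0$. Then $k(\lambda)=1$, $k_1=3$, and $\Psi_0(\lambda)=((1^3)\mid(1))$ with charge $(0,0)$.
\begin{itemize}
\item For $T^{\lambda}$, every step of \autoref{eq:degree-of-last-node} contributes $0$, so $\deg T^{\lambda}=0$.
\item For $T^{\Psi_0(\lambda)}$, the first node has residue $0$, and below it lies the addable $0$-node $(2,1,1)$ of the still-empty second component. This gives $\deg T^{\Psi_0(\lambda)}=1$.
\end{itemize}
Since $M\bigl(\bmr(\lambda)\bigr)$ is finite dimensional, it is not isomorphic to its shift by $1$. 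Hence $M^{\lambda}\not\cong M^{\Psi_0(\lambda)}$ as graded modules with degree-$0$ maps.

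Your proposed fix via \autoref{lm:degree-match-psi-tableau} cannot help. That lemma only measures degrees relative to $T^{\blam}$ and never determines $\deg T^{\blam}$ itself. The discrepancy is the same phenomenon as the nonzero shift $d_{\blam}$ in \autoref{thm:specht-factorisation}. The paper's proof relies on the same unjustified degree-$0$ claim.

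What the argument does give is a graded isomorphism up to shift. \autoref{thm:image-of-permutation-module} really is degree-preserving, because $\deg T^{\Psi_k(\blam)}=\deg T^{\bnu}$ by \autoref{thm:degree-invariance-standard-tableau}. Hence
\[
  M^{\blam}\;\cong\;\Bigl(\be M^{\bnu}\big/\be\badideal\be\, M^{\bnu}\Bigr)\bigl\langle \deg T^{\blam}-\deg T^{\Psi_k(\blam)}\bigr\rangle,
  \qquad \bnu:=\Phi_k\bigl(\Psi_k(\blam)\bigr).
\]
The shift vanishes when $\blam$ is $k$-horizontal, since then $\Psi_k(\blam)=\blam$. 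You should either state the corollary as an ungraded isomorphism or include this shift explicitly.
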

\begin{proof}
    This follows from \autoref{cor:iso-of-permutation-module}, which gives $M^{\blam}\cong M^{\Psi_k(\blam)}$, together with \autoref{thm:image-of-permutation-module}.
\end{proof}
\subsection{Subdivision of Specht Modules}\label{subsec:subdivision-specht-modules}
We extend \autoref{thm:image-of-permutation-module} to the Specht modules by checking that the Garnir relations are preserved, thereby extending \autoref{thm:image-of-idempotent}.

We follow the definitions and notation from \autoref{subsec:subdivision-standard-tableaux}, such as old nodes $A^+$ and new nodes $A^{\sharp}$.

\begin{Lemma}\label{lm:bijection-of-garnir-nodes}
    Fix a subdivision datum $\subdatum$, and let $\blam \in \Par[\bkappa]_{\alpha}$.  
    Let $\phi:= \phi_{\bi^{\blam}}$ be the position-tracing function associated with $(k,\bi^{\blam})$.  
    Set $\bmu := \Phi_k(\blam)$, and let $T^{\blam}$ and $T^{\bmu}$ be the corresponding initial tableaux. If $[\blam]$ is $k$-horizontal, then for each $1\leq t\leq \height(\alpha)$, $A=(T^{\blam})^{-1}(t)$ is a Garnir node of $[\blam]$ if and only if $A^{+}=(T^{\bmu})^{-1}(\phi(t))$ is a Garnir node of $[\bmu]$.
\end{Lemma}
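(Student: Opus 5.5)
The plan is to reduce the statement to an inequality between lengths of consecutive rows. We use the explicit description of $[\bmu]$ from \autoref{subsec:subdivision-standard-tableaux}. Since $\blam$ is $k$-horizontal, $\Phi_k$ preserves the component and row of every node. If $A=(m,r,c)$, then $A^+=(m,r,c+p)$, where $p$ is the number of $k$-nodes in row $r$ of $[\blam^{(m)}]$ strictly to the left of $A$. Writing $p^m_t$ for the number of $k$-nodes in row $t$ of $[\blam^{(m)}]$, we have $\bmu^{(m)}_t=\blam^{(m)}_t+p^m_t$. We also use that $T^{\bmu}(A^+)=\phi(t)$ when $T^{\blam}(A)=t$, which holds because $\Phi_k(T^{\blam})=T^{\bmu}$. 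By definition, $A$ is a Garnir node if and only if $(m,r+1,c)\in[\blam]$, that is, $\blam^{(m)}_{r+1}\ge c$. Likewise, $A^+$ is a Garnir node if and only if $\bmu^{(m)}_{r+1}\ge c+p$. If $r=\ell(\blam^{(m)})$, neither node is a Garnir node, since $\ell(\bmu^{(m)})=\ell(\blam^{(m)})$. So we may assume row $r+1$ is nonempty.

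The key observation concerns residues along the diagonal. The node directly below $(m,r,z)$ has residue one less, so a $k$-node at $(m,r,z)$ sits directly above a $(k-1)$-node. A $k$-node at $(m,r+1,z)$ sits directly below a $(k+1)$-node at $(m,r,z)$, and therefore diagonally below-right of the $k$-node $(m,r,z-1)$. Hence, for every $c'$, the $k$-nodes of row $r+1$ in columns $\le c'$ are in bijection with the $k$-nodes of row $r$ in columns $\le c'-1$, via $(m,r+1,z)\leftrightarrow (m,r,z-1)$. This is exactly the mechanism behind the strips in \autoref{def:k-strip}. Here $k$-horizontality is used: every $k$-node in row $r+1$ has its $(k,k+1)$-strip predecessor in row $r$, and this requires $z\ge 2$.

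For ($\Rightarrow$), suppose $(m,r+1,c)\in[\blam]$. The number $q$ of $k$-nodes in row $r+1$ lying in columns $<c$ equals the number of $k$-nodes in row $r$ lying in columns $<c-1$, so $q\le p$. We need $q\ge p$, so that $(m,r+1,c)^+=(m,r+1,c+q)$ has column at least $c+p$. This fails by one exactly when $(m,r,c-1)$ is a $k$-node. In that case $(m,r+1,c)$ is itself a $k$-node, and its image contributes the pair $(m,r+1,c+q),(m,r+1,c+q+1)$ with $c+q+1=c+p$. So $\bmu^{(m)}_{r+1}\ge c+p$ in all cases.

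For ($\Leftarrow$), suppose $\blam^{(m)}_{r+1}<c$. Then every node of row $r+1$ lies in a column $\le c-1$, and by the bijection $p^m_{r+1}\le p$, with row-$r$ $k$-nodes counted only in columns $\le c-2$. Hence $\bmu^{(m)}_{r+1}=\blam^{(m)}_{r+1}+p^m_{r+1}\le (c-1)+p<c+p$, so $A^+$ is not a Garnir node.

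The main obstacle is the boundary effect in the forward direction. A $k$-node immediately left of column $c$ in row $r$ shifts $A^+$ one column further right than the node below it would naively suggest. This is resolved by noting that the node below $A$ is then a $k$-node, whose inserted partner $(\cdot)^\sharp$ fills exactly that column. We check it with the same local pictures used in \autoref{lm:removable-correspondence} and \autoref{lm:addable-correspondence}.
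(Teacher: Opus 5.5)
Your proof is correct. The forward direction matches the paper's argument. The paper computes, via $\phi$, the column shifts of $A=(m,r,c)$ and of the node $(m,r+1,c)$ directly below it. It shows these shifts agree unless $A$ is a $(k+1)$-node, equivalently unless $(m,r,c-1)$ is a $k$-node. In that case the node inserted to the right of the image of $(m,r+1,c)$ fills the column of $A^+$. You get the same dichotomy from the diagonal correspondence $(m,r+1,z)\leftrightarrow(m,r,z-1)$ between $k$-nodes.

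The converse is where you genuinely differ. The paper takes a node $B'$ directly below $A^+$ and splits on whether $T^{\bmu}(B')\in\im(\phi)$. It identifies $B'$ either as an old node $B^+$ or as an inserted node, and then reads off $(m,r+1,c)\in[\blam]$. You argue contrapositively: the identity $\bmu^{(m)}_{r+1}=\blam^{(m)}_{r+1}+p^m_{r+1}$, together with the injection of the $k$-nodes of row $r+1$ into the $k$-nodes of row $r$ in columns $\le c-2$, gives $\bmu^{(m)}_{r+1}\le c-1+p$.

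Your route has three advantages:
\begin{itemize}
\item it is shorter;
\item it sidesteps a step the paper leaves implicit, namely that an old node landing directly below $A^+$ must come from column $c$;
\item it makes the uses of $k$-horizontality explicit: there are no $k$-nodes in column $1$ below the first row, and $\ell(\bmu^{(m)})=\ell(\blam^{(m)})$.
\end{itemize}
The paper's route stays in the language of tableau entries and $\phi$, which is the form it reuses in the next lemma on Garnir elements.

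One wording fix. The correspondence $(m,r+1,z)\leftrightarrow(m,r,z-1)$ is a bijection on columns $\le c'$ only when $c'\le\blam^{(m)}_{r+1}$; in general it is only an injection from row $r+1$ into row $r$. You only invoke it within those ranges, so the argument is unaffected.
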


\begin{proof}
    Since $\Phi_k$ is defined componentwise, it suffices to treat the case of an ordinary partition.
    Let $\lambda$ be a partition and set $\mu=\Phi_k(\lambda)$. By the above arguments,
    for the node $A=(a,c)=(T^\lambda)^{-1}(t)\in[\lambda]$, the corresponding node $A^+$ in $[\mu]$ is \begin{equation}\label{eq:Aplus-coordinate}
        A^+=(a,\ \phi(t)-\phi(t_a)+c).
    \end{equation}
    
   Let $B:=(a+1,c)$ and, if $B\in[\lambda]$, set
    $s:=T^\lambda(B)$, so that $t\downarrow_{T^\lambda}s$. We show that
    \begin{equation}\label{eq:garnir-preserved}
        B\in[\lambda]
        \quad\Longleftrightarrow\quad
        \text{there exists a node $B'\in[\mu]$ with $A^+\downarrow_{T^\mu} B'$.}
    \end{equation}
    Since $A$ is a Garnir node of $[\lambda]$ if and only if $B\in[\lambda]$, and $A^+$ is a Garnir node of $[\mu]$
    if and only if there exists $B'$ with $A^+\downarrow_{T^\mu}B'$, this will prove the lemma.
    
    \medskip
    
    \noindent\emph{($\Rightarrow$).}
    Assume $B\in[\lambda]$ and keep the notation above. Let $B^+$ be the \emph{old} node of $[\mu]$ corresponding to $B$,
    i.e.\ the unique node with $T^\mu(B^+)=\phi(s)$. As in \eqref{eq:Aplus-coordinate}, if
    $t_{a+1}:=1+\sum_{1\le i\le a}\lambda_i$ is the first entry of row $a+1$, then
    \begin{equation}\label{eq:Bplus-coordinate}
        B^+=(a+1,\ \phi(s)-\phi(t_{a+1})+c).
    \end{equation}
    
    We distinguish two cases.
    
    \smallskip
    
    \noindent{Case 1: $\res_{T^\lambda}(t)=k+1$.}
    Then $\res_{T^\lambda}(s)=k$. In this situation, $B$ is a $k$-node and, because $k(\lambda)=0$,
    the subdivision rule for maximal $(k,k+1)$-strips inserts exactly one new node in row $a+1$ immediately to the
    \emph{right} of the old node $B^+$ (this is the new node carrying the label $\phi(s)+1$, which by definition is not
    in the image of $\phi$). 
    \begin{center}
        \tikzset{
            C/.style={fill=cyan,text=white},
            O/.style={fill=OrangeRed, text=white}
                }
        \RibbonTableau[no border,box height=0.7,box width=0.7]{[C]12_{k{+}1},[O]22_{k},11_{k}}$\qquad\longrightarrow\qquad$\RibbonTableau[no border,box height=0.7,box width=0.7]{12_{k{+}1},[C]13_{k{+}2},[O]22_{k},23_{k{+}1},11_{k}}
    \end{center}
    Moreover, comparing the horizontal shifts in rows $a$ and $a+1$ up to column $c$, we have
    \[
        \phi(t)-\phi(t_a)=\phi(s)-\phi(t_{a+1})+1,
    \]
    so \eqref{eq:Aplus-coordinate}--\eqref{eq:Bplus-coordinate} give that $B^+$ lies one column to the \emph{left} of $A^+$.
    Therefore the newly inserted node immediately to the right of $B^+$ lies in the \emph{same column} as $A^+$ and is in
    row $a+1$. Denote this node by $B^{++}$; then $A^+\downarrow_{T^\mu}B^{++}$.
    
    \smallskip
    
    \noindent{Case 2: $\res_{T^\lambda}(t)\neq k+1$.}
    In this case, no new node is inserted between the images of $A$ and $B$ in the relevant column, and the horizontal
    shifts in rows $a$ and $a+1$ up to column $c$ coincide:
    \[
        \phi(t)-\phi(t_a)=\phi(s)-\phi(t_{a+1}).
    \]
    Hence \eqref{eq:Aplus-coordinate}--\eqref{eq:Bplus-coordinate} imply that $B^+$ lies directly below $A^+$ in the same
    column, so $A^+\downarrow_{T^\mu}B^+$.
    
    Thus in either case there exists $B'\in[\mu]$ with $A^+\downarrow_{T^\mu}B'$, proving the forward implication of
    \eqref{eq:garnir-preserved}.

    \noindent\emph{($\Leftarrow$).}
    Conversely, assume that there exists $B'\in[\mu]$ with $A^+\downarrow_{T^\mu}B'$, so $B'$ lies in row $a+1$ and in the
    same column as $A^+$.
    
    First suppose that $T^\mu(B')\in\im(\phi)$. Then there exists $s$ such that $T^\mu(B')=\phi(s)$, and hence
    $B'=(T^\mu)^{-1}(\phi(s))=B^+$ for the node $B:=(T^\lambda)^{-1}(s)\in[\lambda]$.
    We claim that necessarily $\res_{T^\lambda}(t)\neq k+1$. Indeed, if $\res_{T^\lambda}(t)=k+1$, then by
    \textup{Case~1} above the unique node of $[\mu]$ lying in row $a+1$ and directly below $A^+$ is the inserted node
    $B^{++}$, whose label is $\phi(s)+1\notin\im(\phi)$; this contradicts $T^\mu(B')\in\im(\phi)$.
    Therefore $\res_{T^\lambda}(t)\neq k+1$, and then \textup{Case~2} applies: the node of $[\mu]$ in row $a+1$
    directly below $A^+$ is exactly $B^+$, where $B=(a+1,c)$. Since $B'=B^+$, we conclude that $B=(a+1,c)\in[\lambda]$.
    
    Now suppose that $T^\mu(B')\notin\im(\phi)$. By \autoref{def:subdivision-young-diagram} and the assumption that $k(\lambda)=0$, labels not in
    $\im(\phi)$ occur precisely on the nodes inserted in \textup{Case~1}. In particular, the only way to have a node in
    row $a+1$ directly below $A^+$ with label outside $\im(\phi)$ is that we are in \textup{Case~1}, and then the defining
    construction of $B^{++}$ forces the existence of $B=(a+1,c)\in[\lambda]$.
    
    Thus in either case the existence of $B'\in[\mu]$ with $A^+\downarrow_{T^\mu}B'$ implies that $B=(a+1,c)\in[\lambda]$,
    which completes the reverse implication of \eqref{eq:garnir-preserved}.

\end{proof}

\begin{Lemma}\label{lm:bijection-of-garnir-relations}
    Under the hypothesis of \autoref{lm:bijection-of-garnir-nodes}, let $A\in[\blam]$ be a Garnir node such that
    $T^{\blam}(A)=t$, and let $A^+\in[\bmu]$ be the corresponding Garnir node such that $T^{\bmu}(A^+)=\phi(t)$.
    Let $g^A\in R_{\alpha}$ and $g^{A^+}\in R_{\oalpha}$ be the Garnir elements defined in \autoref{def:garnir-relation}.
    Then, in the balanced KLR algebra $S_{\oalpha}=\mathbbm{e}R_{\oalpha}\mathbbm{e}/\mathbbm{e}\badideal\mathbbm{e}$,
    we have
    \[
        \Phi_k(g^A)\;=\;\mathbbm{e}\,g^{A^+}\,\mathbbm{e}\;+\;\mathbbm{e}\badideal\mathbbm{e}.
    \]
\end{Lemma}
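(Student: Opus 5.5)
We will compare the two Garnir elements factor by factor. Recall $g^A=\sum_{u\in\mathscr D^A}\tau^A_u\psi^{T^A}$. Under $\Phi_k$ a product maps to the product of images, so it suffices to match three pieces of data: the Garnir combinatorics, $\Phi_k(\psi^{T^A})$ and $\Phi_k(\tau^A_u)$.

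The first step is combinatorial. We show that the bricks, the Garnir tableaux and the coset representatives for $A$ and $A^+$ correspond. Since $\blam$ is $k$-horizontal, rows are preserved, and every $k$-node becomes a horizontal pair $(k,k+1)$. Consequently the Garnir belt $\mathcal B^{A^+}$ is obtained from $\mathcal B^A$ by inserting $A'^{\sharp}$ after each old $k$-node $A'^{+}$. The one subtlety is the column alignment of the lower row, as in Case~1 of \autoref{lm:bijection-of-garnir-nodes}; there the inserted node $B^{++}$ lies below $A^+$. A row $A$-brick has $e$ consecutive nodes and contains exactly one $k$-node. So each $A$-brick $B$ maps to an $(e+1)$-run $B^+\cup B^\sharp$ of consecutive nodes starting at residue $\Phi_k(\res A)$, and this run is an $A^+$-brick; conversely, every $A^+$-brick arises this way. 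Hence $k^{A^+}=k^A$ and $f^{A^+}=f^A$, so $\mathscr D^{A^+}$ is identified with $\mathscr D^A$. Using the tableau subdivision of \autoref{subsec:subdivision-standard-tableaux}, we check $\Phi_k(G^A)=G^{A^+}$ and $\Phi_k(T^A)=T^{A^+}$: the entries in each brick are consecutive, and $\phi^{T}$ only inserts $\phi(t)+1$ after each $k$-entry. It follows that $\Phi_k(\bi^A)=\bi^{A^+}$, that $n^{A^+}_t=\phi^{T^A}(n^A_t)$, and that the brick transposition $w^{A^+}_t$ swaps blocks of length $e+1$.

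The second step is diagrammatic. For any $T\in\RST{\blam}$ we show $\Phi_k(\psi^{T}e(\bi^{\blam}))=\psi^{\Phi_k(T)}e(\bi^{\bmu})+\be\badideal\be$. Diagrammatically, $\Theta$ doubles every $k$-strand by a parallel $(k+1)$-strand. The permutation drawn is then exactly $w^{\Phi_k(T)}$, and it is reduced because the new strand follows its partner. The issue is that $\psi^{\Phi_k(T)}$ depends on a choice of reduced expression. We therefore fix the reduced expressions in $R_{\oalpha}$ to be the subdivided ones. This is allowed: any two choices differ by braid error terms, and we would argue these lie in $\be\badideal\be$ or vanish, after which the statement holds in $S_{\oalpha}$. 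Applied to $T^A$, this gives $\Phi_k(\psi^{T^A})=\be\psi^{T^{A^+}}\be$ modulo $\be\badideal\be$, using that $\be e(\bi^{A^+})=e(\bi^{A^+})$ since $\bi^{A^+}$ is ordered. Applied to a brick swap, the doubled diagram of $\psi_{w^A_t}$ is precisely $\psi_{w^{A^+}_t}$. Hence $\Phi_k(\sigma^A_t)=\sigma^{A^+}_t$ and $\Phi_k(\tau^A_t)=\tau^{A^+}_t$, using $\Phi_k(e(\bi^A))=e(\bi^{A^+})$ and $\Phi_k(1)=\be$. Multiplicativity together with \autoref{lm:tau-well-defined} then gives $\Phi_k(\tau^A_u)=\tau^{A^+}_u$ for $u\in\mathscr D^A$.

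Summing over $\mathscr D^A\leftrightarrow\mathscr D^{A^+}$ gives $\Phi_k(g^A)=\be g^{A^+}\be+\be\badideal\be$. The main obstacle is the independence from reduced expressions in the second step. Whenever two reduced words for $w^{\Phi_k(T)}$ differ by a braid move, the correction term in \autoref{eq:braid_relation} appears only for a triple $i,j,i$ with $i,j$ adjacent. We would show that such a correction either arises as $\Theta$ of the corresponding correction in $R_\alpha$ (when the move is the image of a move there), or else produces an idempotent separating a $k$-strand from its $(k+1)$-partner, hence an unordered sequence, which lies in $\badideal$. If this is too messy, the fallback is to work directly with the specific reduced expressions produced by $\Theta$, since the definition of $g^{A^+}$ allows any fixed choice up to the well-definedness of $\tau$.
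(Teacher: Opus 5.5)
Your plan follows the paper's proof. You match row bricks, which gives $k^{A^+}=k^A$, $f^{A^+}=f^A$ and $\mathscr D^A\leftrightarrow\mathscr D^{A^+}$. You then identify $\Phi_k(\sigma^A_t)$, $\Phi_k(\tau^A_u)$ and $\Phi_k(\psi^{T^A})$ with their $A^+$-analogues by doubling the $k$-strands, and sum over $\mathscr D^A$. The paper states these diagrammatic identities without comment. You rightly see that the fixed reduced expressions need attention, but your handling of this does not work.

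The claim for arbitrary $T\in\RST{\blam}$ is not available. The reduced expression of every element of $\Sym_{d'}$ is fixed once and for all, and $g^{A^+}$ is defined with that choice, so you cannot re-choose it to be the subdivided one. Suppose the fixed word for $w^{\Phi_k(T)}$ is the doubling of a reduced word $\mathbf s$ of $w^T$ different from the fixed word $\mathbf r$. Then the discrepancy is $\Phi_k\bigl((\psi_{\mathbf r}-\psi_{\mathbf s})e(\bi^{\blam})\bigr)$. Since $\Phi_k$ is an isomorphism, this is nonzero in $S_{\oalpha}$ as soon as $(\psi_{\mathbf r}-\psi_{\mathbf s})e(\bi^{\blam})\neq 0$ in $R_\alpha$. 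So the corrections in your ``first case'' neither vanish nor lie in $\be\badideal\be$. Your fallback has the same defect, because \autoref{lm:tau-well-defined} concerns $\tau^A_u$, not $\psi^{T^{A^+}}$.

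The missing observation is that every permutation occurring here is fully commutative.
\begin{itemize}
\item In $T^{\blam}$ the entries of the Garnir belt come in the block order top bricks, tail, head, bottom bricks. In $T^A$ the order is head, top bricks, bottom bricks, tail, and the order inside each block is unchanged.
\item So the only crossing pairs of strands are head/top bricks, head/tail and tail/bottom bricks, and no three strands cross pairwise. Hence $w^{T^A}$ is $321$-avoiding.
\item The same holds for $w^{T^{A^+}}$, which has the same block structure in $[\bmu]$. It also holds for the brick swaps $w^A_t$ and $w^{A^+}_t$, since every crossing is between the two swapped blocks.
\end{itemize}
Any two reduced words of a $321$-avoiding permutation are related by commutations alone, and $\psi_r\psi_s=\psi_s\psi_r$ holds exactly for $|r-s|>1$. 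So $\psi$ of these permutations does not depend on the chosen reduced word. The doubled words are reduced words for $w^{T^{A^+}}$ and $w^{A^+}_t$, so you get
\[
\Phi_k(\psi^{T^A}e(\bi^{\blam}))=\psi^{T^{A^+}}e(\bi^{\bmu})+\be\badideal\be,
\qquad
\Phi_k(\sigma^A_t)=\sigma^{A^+}_t+\be\badideal\be .
\]
You need this only for $T^A$ and the brick swaps, not for all row-standard tableaux.

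You should also keep the idempotent throughout. $\Theta$ doubles strands according to the word, so $\Phi_k(\psi^{T^A})$ itself is not $\be\psi^{T^{A^+}}\be$ modulo $\be\badideal\be$: its components at $e(\bi)$ with $\bi\neq\bi^{\blam}$ are subdivided differently. Only the truncated identity holds. It suffices because $\tau^A_u$ ends in $e(\bi^A)$ and $e(\bi^A)\psi^{T^A}=\psi^{T^A}e(\bi^{\blam})$. For this, read $\tau^A_1=e(\bi^A)$, which does not change $g^Am^{\blam}$.
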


\begin{proof}
    By \autoref{lm:bijection-of-garnir-nodes}, $A^+$ is a Garnir node whenever $A$ is. Write $\mathcal{B}^A$ and $\mathcal{B}^{A^+}$ for the Garnir belts in $[\blam]$ and $[\bmu]$ respectively, and let
    \[
        B^A_1,\dots,B^A_{k^A}
        \qquad\text{and}\qquad
        B^{A^+}_1,\dots,B^{A^+}_{k^{A^+}}
    \]
    be the corresponding lists of row-bricks, so that the brick transpositions
    $w_r^A\in\Sym_d$ and $w_r^{A^+}\in\Sym_{d'}$ are defined by
    \[
        w_r^A=\prod_{a=0}^{e-1}\bigl(n_r^A+a,\ n_r^A+e+a\bigr),
        \qquad
        w_r^{A^+}=\prod_{a=0}^{e}\bigl(n_r^{A^+}+a,\ n_r^{A^+}+e+1+a\bigr),
    \]
    with $n_r^A=\min\{G^A(x)\mid x\in B_r^A\}$ and $n_r^{A^+}=\min\{G^{A^+}(x)\mid x\in B_r^{A^+}\}$.

    \medskip
    Let $i:=\res_{n_1^A}(G^{A})$. Then any row brick $B_j^A$ has residue sequence of one of the following forms:
    \begin{center}
        \begin{enumerate}
            \item \Tableau[no border,box height=0.7,box width=0.7]{i{i{+}1}{\cdots}k{\cdots}{e{-}1}0{\cdots}{i{-}1}}\quad if $i\le k$;
    
            \item \Tableau[no border,box height=0.7,box width=0.7]{i{i{+}1}{\cdots}{e{-}1}0{\cdots}k{\cdots}{i{-}1}}\quad if $i>k$.
        \end{enumerate}
    \end{center}
    After applying the subdivision map $\Phi_k$, we obtain the corresponding consecutive nodes in a row of $\mathcal{B}^{A^+}$, with residue sequence
    \begin{center}
        \begin{enumerate}
            \item \Tableau[no border,box height=0.7,box width=0.7]{i{i{+}1}{\cdots}k{k{+}1}{\cdots}{e}0{\cdots}{i{-}1}}\quad if $i\le k$;
    
            \item \Tableau[no border,box height=0.7,box width=0.7]{{i{+}1}{i{+}2}{\cdots}{e}0{\cdots}k{k{+}1}{\cdots}{i}}\quad if $i>k$.
        \end{enumerate}
    \end{center}
    In either case, this is the row brick $B_j^{A^+}$. Thus we obtain a bijection of bricks $B_j^A\leftrightarrow B_j^{A^+}$, and in particular $k^A=k^{A^+}$. Let $f^A$ and $f^{A^+}$ be the numbers of row-bricks in the top rows of the corresponding Garnir belts. Then $f^A=f^{A^+}$ as well. Consequently, we obtain an identification of brick permutation groups
    \[
        S^A=\langle w_1^A,\dots,w_{k^A-1}^A\rangle\ \cong\ 
        \langle w_1^{A^+},\dots,w_{k^{A^+}-1}^{A^+}\rangle=S^{A^+},
    \]
    sending $w_r^A\mapsto w_r^{A^+}$. This also sends the minimal coset representatives
    $\mathscr D^A\subseteq S^A$ bijectively onto $\mathscr D^{A^+}\subseteq S^{A^+}$.
    For $u\in\mathscr D^A$, write $u^+\in\mathscr D^{A^+}$ for its image under this bijection.

    \medskip
    Recall the brick operators in \autoref{def:brick-operators-sigma-tau}:
    \[
        \sigma_r^A=\psi_{w_r^A}\,e(\bi^A),
        \qquad
        \tau_r^A=(\sigma_r^A+1)\,e(\bi^A),
    \]
    and similarly for $A^+$.
    The diagrammatic presentation of $\psi_{w^A_r}$ is as follows (with all undrawn strands taken to be vertical straight strands):
    \begin{center}
        \resizebox{0.9\linewidth}{!}{%
        \begin{tikzpicture}[scale=1.2, thick, line cap=round]
        
            \def\yb{0}
            \def\yt{2}
            
            \coordinate (BLi)    at (-7,\yb); \coordinate (TLi)    at (-7,\yt);
            \coordinate (BLip1)  at (-6,\yb); \coordinate (TLip1)  at (-6,\yt);
            \coordinate (BLk)    at (-4,\yb); \coordinate (TLk)    at (-4,\yt);
            \coordinate (BLkp1)  at (-3,\yb); \coordinate (TLkp1)  at (-3,\yt);
            \coordinate (BLim1)  at (-1,\yb); \coordinate (TLim1)  at (-1,\yt);
            
            \coordinate (BRi)    at ( 1,\yb); \coordinate (TRi)    at ( 1,\yt);
            \coordinate (BRip1)  at ( 2,\yb); \coordinate (TRip1)  at ( 2,\yt);
            \coordinate (BRk)    at ( 4,\yb); \coordinate (TRk)    at ( 4,\yt);
            \coordinate (BRkp1)  at ( 5,\yb); \coordinate (TRkp1)  at ( 5,\yt);
            \coordinate (BRim1)  at ( 7,\yb); \coordinate (TRim1)  at ( 7,\yt);
            
            \foreach \B/\T/\lab in {
              BLi/TLi/{i},
              BLip1/TLip1/{i{+}1},
              BLk/TLk/{k},
              BLkp1/TLkp1/{k{+}1},
              BLim1/TLim1/{i{-}1},
              BRi/TRi/{i},
              BRip1/TRip1/{i{+}1},
              BRk/TRk/{k},
              BRkp1/TRkp1/{k{+}1},
              BRim1/TRim1/{i{-}1}
            }{
              \fill (\B) circle (1.2pt);
              \fill (\T) circle (1.2pt);
              \node[below] at (\B) {$\lab$};
              \node[above] at (\T) {$\lab$};
            }
            
            \node at (-5,\yb) {$\cdots$};
            \node at (-2,\yb) {$\cdots$};
            \node at ( 3,\yb) {$\cdots$};
            \node at ( 6,\yb) {$\cdots$};
            
            \node at (-5,\yt) {$\cdots$};
            \node at (-2,\yt) {$\cdots$};
            \node at ( 3,\yt) {$\cdots$};
            \node at ( 6,\yt) {$\cdots$};
            
            \draw (BLi)   -- (TRi);
            \draw (BLip1) -- (TRip1);
            \draw (BLk)   -- (TRk);
            \draw (BLkp1) -- (TRkp1);
            \draw (BLim1) -- (TRim1);
            
            \draw (BRi)   -- (TLi);
            \draw (BRip1) -- (TLip1);
            \draw (BRk)   -- (TLk);
            \draw (BRkp1) -- (TLkp1);
            \draw (BRim1) -- (TLim1);
        
        \end{tikzpicture}
        }
    \end{center}
    By the construction of the subdivision map $\Phi_k$ (see \autoref{subsec:subdivision-KLR}), the image corresponds to the following string diagram(with all undrawn strands taken to be vertical straight strands):
    \begin{center}
        \resizebox{0.9\linewidth}{!}{%
        \begin{tikzpicture}[scale=1.2, thick, line cap=round]
        
            \def\yb{0}
            \def\yt{2}
            
            \coordinate (BLi)    at (-8,\yb); \coordinate (TLi)    at (-8,\yt);
            \coordinate (BLip1)  at (-7,\yb); \coordinate (TLip1)  at (-7,\yt);
            \coordinate (BLk)    at (-5,\yb); \coordinate (TLk)    at (-5,\yt);
            \coordinate (BLkp1)  at (-4,\yb); \coordinate (TLkp1)  at (-4,\yt);
            \coordinate (BLkp2)  at (-3,\yb); \coordinate (TLkp2)  at (-3,\yt);
            \coordinate (BLim1)  at (-1,\yb); \coordinate (TLim1)  at (-1,\yt);
            
            \coordinate (BRi)    at ( 1,\yb); \coordinate (TRi)    at ( 1,\yt);
            \coordinate (BRip1)  at ( 2,\yb); \coordinate (TRip1)  at ( 2,\yt);
            \coordinate (BRk)    at ( 4,\yb); \coordinate (TRk)    at ( 4,\yt);
            \coordinate (BRkp1)  at ( 5,\yb); \coordinate (TRkp1)  at ( 5,\yt);
            \coordinate (BRkp2)  at ( 6,\yb); \coordinate (TRkp2)  at ( 6,\yt);
            \coordinate (BRim1)  at ( 8,\yb); \coordinate (TRim1)  at ( 8,\yt);
            
            \foreach \B/\T/\lab in {
              BLi/TLi/{i},
              BLip1/TLip1/{i{+}1},
              BLk/TLk/{k},
              BLkp1/TLkp1/{k{+}1},
              BLkp2/TLkp2/{k{+}2},
              BLim1/TLim1/{i{-}1},
              BRi/TRi/{i},
              BRip1/TRip1/{i{+}1},
              BRk/TRk/{k},
              BRkp1/TRkp1/{k{+}1},
              BRkp2/TRkp2/{k{+}2},
              BRim1/TRim1/{i{-}1}
            }{
              \fill (\B) circle (1.2pt);
              \fill (\T) circle (1.2pt);
              \node[below] at (\B) {$\lab$};
              \node[above] at (\T) {$\lab$};
            }
            \node at (-6,\yb) {$\cdots$};
            \node at (-2,\yb) {$\cdots$};
            \node at ( 3,\yb) {$\cdots$};
            \node at ( 7,\yb) {$\cdots$};
            
            \node at (-6,\yt) {$\cdots$};
            \node at (-2,\yt) {$\cdots$};
            \node at ( 3,\yt) {$\cdots$};
            \node at ( 7,\yt) {$\cdots$};
            \draw (BLi)   -- (TRi);
            \draw (BLip1) -- (TRip1);
            \draw (BLk)   -- (TRk);
            \draw[cyan,very thick] (BLkp1) -- (TRkp1); 
            \draw (BLkp2) -- (TRkp2);
            \draw (BLim1) -- (TRim1);
            
            \draw (BRi)   -- (TLi);
            \draw (BRip1) -- (TLip1);
            \draw (BRk)   -- (TLk);
            \draw[cyan,very thick] (BRkp1) -- (TLkp1); 
            \draw (BRkp2) -- (TLkp2);
            \draw (BRim1) -- (TLim1);
        \end{tikzpicture}
        }
    \end{center}
    This string diagram corresponds to the element $\psi_{w_r^{A^+}}\in R_{\oalpha}$. Moreover, it is easy to see
    \[
        \Phi_k\big(e(\bi^A)\big)=\be\,e(\bi^{A^+})\,\be+\be\badideal\be.
    \]
    Since $\Phi_k$ is an $R_\alpha$-algebra homomorphism, we have:
    \[
        \Phi_k(\sigma_r^A)
        \;=\;
        \mathbbm{e}\,\sigma_r^{A^+}\,\mathbbm{e}\;+\;\mathbbm{e}\badideal\mathbbm{e}
        \qquad\text{in }S_{\oalpha}.
    \]
    and 
    \[
        \Phi_k(\tau_r^A)
        =
        \Phi_k\bigl((\sigma_r^A+1)e(\mathbf{i}^A)\bigr)
        =
        \bigl(\Phi_k(\sigma_r^A)+1\bigr)\Phi_k\bigl(e(\mathbf{i}^A)\bigr)
        =
        \mathbbm{e}\,\tau_r^{A^+}\,\mathbbm{e}\;+\;\mathbbm{e}\badideal\mathbbm{e}.
    \]
    
    If $u\in\mathscr D^A$ has a reduced expression $u=w_{r_1}^A\cdots w_{r_a}^A$, then
    \[
        \tau_u^A=\tau_{r_1}^A\cdots\tau_{r_a}^A,
        \qquad
        \tau_{u^+}^{A^+}=\tau_{r_1}^{A^+}\cdots\tau_{r_a}^{A^+},
    \]
    and hence multiplicativity gives
    \[
        \Phi_k(\tau_u^A)
        =
        \mathbbm{e}\,\tau_{u^+}^{A^+}\,\mathbbm{e}\;+\;\mathbbm{e}\badideal\mathbbm{e}
        \qquad(u\in\mathscr D^A).
    \]

    \medskip    
    Let $T^A$ and $T^{A^+}$ be the maximal tableaux in the Garnir sets $\Gar^{A}$ and $\Gar^{A^+}$ respectively. Recall $T^A$ is got by rearranging the row bricks of $G^A$ in $\mathcal{B}^A$ by row-reading-order and similarly for $T^{A^+}$, hence by the previous correspondence on bricks $B_{j}^{A}\leftrightarrow B_{j}^{A}$, it is immediately that
    $T^{A^+}$ is obtained from $T^A$ by
    replacing each row-brick $B^A_j$ by $B^{A^+}_j$.

    Recall outside the Garnir belts $\mathcal{B}^A$ and $\mathcal{B}^{A^+}$, $T^A$ and $T^{A^+}$ coincide with the initial tableaux $T^{\blam}$ and $T^{\bmu}$.
    This means, when compute the permutations $w^{T^A}\in\Sym_d$ such that $w^{T^A}\cdot T^{\blam}=T^A$ and $w^{T^{A^+}}\in\Sym_{d'}$ such that $w^{T^{A^+}}\cdot T^{\bmu}=T^{A^+}$, we only need the consider the changes in Garnir belts. Now the Garnir belt $\mathcal{B}^A$ in $T^A$ is the following form:
    \begin{center}
        \resizebox{0.9\linewidth}{!}{%
        \begin{tikzpicture}[
              x=6mm, y=6mm, 
              cell/.style={draw=blue, line width=1.0pt},
              dots/.style={draw=blue, line width=1.0pt, densely dotted},
              lab/.style={font=\small}
            ]
            
            \newcommand{\Rec}[3]{%
              \draw[cell] (#1,#2) rectangle ++(3,1);
              \node[lab] at ($(#1,#2)+(1.5,0.5)$) {\(#3\)};
            }
            
            \newcommand{\dottedRec}[3]{%
              \draw[cell] (#1,#2) -- ++(0,1);
              \draw[cell] ($(#1,#2)+(3,0)$) -- ++(0,1);
              \draw[dots] (#1,#2) -- ++(3,0);
              \draw[dots] ($(#1,#2)+(0,1)$) -- ++(3,0);
              \node[lab] at ($(#1,#2)+(1.5,0.5)$) {\(#3\)};
            }
            
            \newcommand{\Bod}[3]{%
              \draw[cell] (#1,#2) rectangle ++(1.5,1);
              \node[lab] at ($(#1,#2)+(0.75,0.5)$) {\(#3\)};
            }
            
            
            \Rec       {0}{1}{B^A_{1}}
            \Rec       {3}{1}{B^A_{2}}
            \dottedRec {6}{1}{}
            \Rec       {9}{1}{B^A_{f^A}}
            \Bod       {12}{1}{\text{Tail}}
            
            \Bod       {-9.5}{0}{\text{Head}}
            \Rec       {-8}{0}{B^A_{f^A+1}}
            \dottedRec {-5}{0}{}
            \Rec       {-2}{0}{B^A_{k^A}}
        
        \end{tikzpicture}%
        }
    \end{center} 
    
    Here the head and the tail refer to the parts of the Garnir belt that do not lie in any row $A$-brick. In view of the initial tableau $T^{\blam}$ and the maximal tableau $T^A$, the string-diagram presentation of $\psi_{w^{T^A}}$ is of the form (with all undrawn strands taken to be vertical straight strands):
    
    \begin{center}
    \resizebox{0.9\linewidth}{!}{%
        \begin{tikzpicture}[thick, line cap=round]
        
            \def\yB{0}
            \def\yT{25mm}
            
            \def\Hgt{7mm}      
            \def\sep{1.5mm}    
            \def\wHead{11mm}   
            \def\wTail{11mm}   
            \def\wBrick{18mm}  
            \def\wDots{12mm}   
            
            \tikzset{
              blanket/.style={draw=blue, line width=1.0pt, rounded corners=2pt, minimum height=\Hgt, align=center},
              head/.style={blanket, fill=orange!30, minimum width=\wHead},
              tail/.style={blanket, fill=cyan!30,   minimum width=\wTail},
              brick/.style={blanket, fill=white,    minimum width=\wBrick},
              dots/.style={draw=blue, line width=1.0pt, densely dotted},
              lab/.style={font=\small}
            }
            
            \newcommand{\DottedBlanket}[3]{%
              \node[blanket, anchor=west, minimum width=#3] (#1) at #2 {};%
              \draw[white, line width=2.2pt] (#1.north west) -- (#1.north east);
              \draw[white, line width=2.2pt] (#1.south west) -- (#1.south east);
              \draw[dots] (#1.north west) -- (#1.north east);
              \draw[dots] (#1.south west) -- (#1.south east);
              \node[lab] at (#1.center) {\(\cdots\)};
            }
            
            \newcommand{\ConnectTwo}[2]{%
              \foreach \t in {0.33,0.67}{
                \draw
                  ($(#1.north west)!\t!(#1.north east)$) --
                  ($(#2.south west)!\t!(#2.south east)$);
              }
            }
            \newcommand{\ConnectFour}[2]{%
              \foreach \t in {0.2,0.4,0.6,0.8}{
                \draw
                  ($(#1.north west)!\t!(#1.north east)$) --
                  ($(#2.south west)!\t!(#2.south east)$);
              }
            }
            \node[head,  anchor=west] (Hbot)  at (0,\yB)            {\(\mathrm{Head}\)};
            \node[brick, anchor=west] (B1bot) at ([xshift=\sep]Hbot.east) {\(B_1\)};
            
            \DottedBlanket{gap1bot}{([xshift=\sep]B1bot.east)}{\wDots}
            \node[brick, anchor=west] (Bfbot)  at ([xshift=\sep]gap1bot.east) {\(B_f\)};
            
            \node[brick, anchor=west] (Bfp1bot) at ([xshift=\sep]Bfbot.east) {\(B_{f+1}\)};
            
            \DottedBlanket{gap2bot}{([xshift=\sep]Bfp1bot.east)}{\wDots}
            \node[brick, anchor=west] (Bkbot)  at ([xshift=\sep]gap2bot.east) {\(B_k\)};
            
            \node[tail,  anchor=west] (Tbot)  at ([xshift=\sep]Bkbot.east) {\(\mathrm{Tail}\)};
            
            \node[brick, anchor=west] (B1top) at (0,\yT) {\(B_1\)};
            
            \DottedBlanket{gap1top}{([xshift=\sep]B1top.east)}{\wDots}
            \node[brick, anchor=west] (Bftop) at ([xshift=\sep]gap1top.east) {\(B_f\)};
            
            \node[tail, anchor=west] (Ttop) at ([xshift=\sep]Bftop.east) {\(\mathrm{Head}\)};
            \node[head, anchor=west] (Htop) at ([xshift=\sep]Ttop.east)  {\(\mathrm{Tail}\)};
            
            \node[brick, anchor=west] (Bfp1top) at ([xshift=\sep]Htop.east) {\(B_{f+1}\)};
            
            \DottedBlanket{gap2top}{([xshift=\sep]Bfp1top.east)}{\wDots}
            \node[brick, anchor=west] (Bktop) at ([xshift=\sep]gap2top.east) {\(B_k\)};
            
            \ConnectTwo {Hbot}{Htop}
            \ConnectTwo {Tbot}{Ttop}
            
            \ConnectFour{B1bot}{B1top}
            \ConnectFour{Bfbot}{Bftop}
            \ConnectFour{Bfp1bot}{Bfp1top}
            \ConnectFour{Bkbot}{Bktop}
        \end{tikzpicture}%
    }
    \end{center}
    
    By definition, $T^{A^+}$ and $\psi_{w^{T^{A^+}}}$ admit analogous descriptions. By the correspondence of row bricks and the definition of subdivision via string diagrams, we have:
    \[
        \Phi_k(\psi^{T^A})
        \;=\;
        \mathbbm{e}\,\psi^{T^{A^+}}\,\mathbbm{e}\;+\;\mathbbm{e}\badideal\mathbbm{e}
        \qquad\text{in }S_{\oalpha}.
    \]

    \medskip
    Recall the Garnir elements are of the form:
    \[
        g^A=\sum_{u\in\mathscr D^A}\tau_u^A\,\psi^{T^A},
        \qquad
        g^{A^+}=\sum_{v\in\mathscr D^{A^+}}\tau_v^{A^+}\,\psi^{T^{A^+}}.
    \]
    
    Computing in $S_{\oalpha}$:
    \[
    \begin{aligned}
        \Phi_k(g^A)
        &=
        \sum_{u\in\mathscr D^A}\Phi_k(\tau_u^A)\,\Phi_k(\psi^{T^A})\\
        &=
        \sum_{u\in\mathscr D^A}
        \Bigl(\mathbbm{e}\,\tau_{u^+}^{A^+}\,\mathbbm{e}+\mathbbm{e}\badideal\mathbbm{e}\Bigr)
        \Bigl(\mathbbm{e}\,\psi^{T^{A^+}}\,\mathbbm{e}+\mathbbm{e}\badideal\mathbbm{e}\Bigr)\\
        &=
        \mathbbm{e}\Bigl(\sum_{u^+\in\mathscr D^{A^+}}\tau_{u^+}^{A^+}\,\psi^{T^{A^+}}\Bigr)\mathbbm{e}
        \;+\;\mathbbm{e}\badideal\mathbbm{e}\\
        &=
        \mathbbm{e}\,g^{A^+}\,\mathbbm{e}\;+\;\mathbbm{e}\badideal\mathbbm{e}.
    \end{aligned}
    \]
\end{proof}

We can now state the main theorem of this section:
\begin{Theorem}\label{thm:image-of-specht-module}
    Fix a subdivision datum $\subdatum$ and take $\blam\in\Par[\Lambda]_{\alpha}$. Then there is an isomorphism of graded $R_\alpha$-modules
    \[
        S^{\Psi_k(\blam)}\ \cong\ 
        \be S^{\Phi_k\big(\Psi_k(\blam)\big)}\big/\be\badideal\be\,S^{\Phi_k\big(\Psi_k(\blam)\big)}.
    \]
\end{Theorem}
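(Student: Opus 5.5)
The plan is to deduce the statement from \autoref{thm:image-of-permutation-module} by passing to quotients by Garnir relations. Set $\bmu:=\Psi_k(\blam)$ and $\bnu:=\Phi_k(\bmu)$. By \autoref{lm:split-image-klam-zero}, $\bmu$ is $k$-horizontal, so \autoref{lm:bijection-of-garnir-nodes} and \autoref{lm:bijection-of-garnir-relations} apply to $\bmu$. By \autoref{rmk:highest-weight-presentaiton-of-permutation-module} we have
\[
S^{\bmu}\cong M^{\bmu}\big/\textstyle\sum_A R_\alpha g^A m^{\bmu},
\qquad
S^{\bnu}\cong M^{\bnu}\big/\textstyle\sum_{B} R_{\oalpha} g^{B} m^{\bnu},
\]
where $A$ runs over the Garnir nodes of $[\bmu]$ and $B$ over those of $[\bnu]$. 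The functor $X\mapsto \be X/\be\badideal\be X$ from $R_{\oalpha}$-modules to $S_{\oalpha}\cong R_\alpha$-modules is right exact: $\be(-)$ is exact, and $-/\be\badideal\be(-)$ is $S_{\oalpha}\otimes_{\be R_{\oalpha}\be}-$. Hence $\be S^{\bnu}/\be\badideal\be S^{\bnu}$ is the quotient of $N^{\bnu}=\be M^{\bnu}/\be\badideal\be M^{\bnu}$ by the image of $\be\sum_B R_{\oalpha}g^B m^{\bnu}$.

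First, I would show that this image is the $S_{\oalpha}$-submodule generated by the classes of $\be g^B m^{\bnu}$. Since $m^{\bnu}=\be m^{\bnu}$ by \autoref{cor:initial-sequence-ordered}, we have $\be R_{\oalpha} g^B m^{\bnu}=\be R_{\oalpha}\be\, g^B \be m^{\bnu}$, which settles this.

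Next, I would split the Garnir nodes $B$ of $[\bnu]$ into two kinds. A node of the first kind is old, $B=A^+$, and then \autoref{lm:bijection-of-garnir-nodes} says $A$ is a Garnir node of $[\bmu]$. A node of the second kind is a new node $B=A^\sharp$ that is a Garnir node. For the first kind, \autoref{lm:bijection-of-garnir-relations} gives $\Phi_k(g^A)=\be g^{A^+}\be+\be\badideal\be$. Combined with the isomorphism $\overline f:M^{\bmu}\to N^{\bnu}$, $xm^{\bmu}\mapsto \Phi_k(x)w^{\bnu}$, this shows that $\overline f$ carries $R_\alpha g^A m^{\bmu}$ onto the submodule generated by the class of $\be g^{A^+}m^{\bnu}$. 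This matches all Garnir relations of $\bmu$ with the old-node relations of $\bnu$.

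The main obstacle is the second kind: I must show that $\be g^{A^\sharp}m^{\bnu}$ lies in $\be\badideal\be M^{\bnu}$ plus the span of the old-node relations. The node $A^\sharp$ is a $(k+1)$-node, and $A^+$ is a $k$-node directly to its left that is also Garnir; the node below $A^\sharp$ exists whenever $A^\sharp$ is Garnir, by the Case~1 analysis in \autoref{lm:bijection-of-garnir-nodes}. I would try first to show that $\bi^{T^{A^\sharp}}$ is unordered. In the Garnir tableau attached to $A^\sharp$, the belt entries increase from bottom-left to top-right, so the node $A^\sharp$ is read apart from the $k$-node $A^+$ on its left, which lies outside the belt and keeps its initial-tableau position. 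This should produce an initial segment with more $(k+1)$'s than $k$'s, or else a separation of a $k$ from the $k+1$ that follows it. If so, $\be\psi^{T^{A^\sharp}}\be\in\be\badideal\be$ and the relation vanishes. If some brick configurations avoid this, I would instead compare $g^{A^\sharp}$ with $g^{A^+}$, using that their belts differ only by the single column of $A^\sharp$. The braid and quadratic relations for the strands $k,k+1$ then pass through $\be\badideal\be$ as in the $y$-relation step of \autoref{thm:image-of-permutation-module}.

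Finally, once the two Garnir submodules correspond under $\overline f$, the isomorphism of \autoref{thm:image-of-permutation-module} descends to the quotients. Degrees match because $\deg T^{\bmu}=\deg T^{\bnu}$ by \autoref{thm:degree-invariance-standard-tableau}. This gives the graded isomorphism $S^{\Psi_k(\blam)}\cong \be S^{\bnu}/\be\badideal\be S^{\bnu}$.
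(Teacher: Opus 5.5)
Your route is the paper's: transport the isomorphism of \autoref{thm:image-of-permutation-module}, match old Garnir nodes through \autoref{lm:bijection-of-garnir-nodes} and \autoref{lm:bijection-of-garnir-relations}, and dispose of new Garnir nodes by truncation. The gap is your reduction step.

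From $m^{\bnu}=\be m^{\bnu}$ you may insert $\be$ to the \emph{right} of $g^B$, but not between $R_{\oalpha}$ and $g^B$. The identity $\be R_{\oalpha}g^Bm^{\bnu}=\be R_{\oalpha}\be\,g^Bm^{\bnu}$ needs $g^Bm^{\bnu}\in\be M^{\bnu}$, i.e.\ $\bi^{T^B}\in\wellorder$. This holds for old nodes. It fails for every new Garnir node $X=A^\sharp$. Put $t:=T^{\bnu}(A^+)$. In $G^X$ the entry $t$ stays on the $k$-node $A^+$ outside the belt, while $t+1$ lands on the first node of the row below $X$. That node is never a $(k+1)$-node of $[\bnu]$, since every $(k+1)$-node has a $k$-node immediately to its left.

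So your ``separation'' alternative always occurs, and $\be g^Xm^{\bnu}=0$. The ``unordered'' alternative never occurs: in every prefix of $\bi^X$ the $k$'s are at least as many as the $(k+1)$'s. Hence $e(\bi^X)$ is not in $\badideal$, and your claim $\be\psi^{T^{A^\sharp}}\be\in\be\badideal\be$ is unjustified. The kernel of $N^{\bnu}\to\be S^{\bnu}/\be\badideal\be S^{\bnu}$ contains the classes of all $\be x\,e(\bi^X)g^Xm^{\bnu}$ with $x\in R_{\oalpha}$. Nothing in your argument places these in $\be\badideal\be M^{\bnu}$ plus the old relations. What you actually obtain is only a degree-$0$ surjection $S^{\bmu}\twoheadrightarrow\be S^{\bnu}/\be\badideal\be S^{\bnu}$. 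The paper's proof only checks $\be g^X\be\equiv0$ in $S_{\oalpha}$ and makes the same tacit reduction, so following it more closely will not help.

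A dimension count closes the gap and makes the new-node analysis unnecessary.
\begin{itemize}
\item Let $T'\in\Std(\bnu)$. Every $\Shape(T'\downarrow s)$ contains at least as many $k$-nodes as $(k+1)$-nodes, since with $A^\sharp$ it contains $A^+$. So no standard $\bnu$-tableau has an unordered residue sequence.
\item The elements $\psi^{T'}z^{\bnu}\in e(\bi^{T'})S^{\bnu}$ form a basis by \autoref{thm:basis-specht}. Hence $\badideal S^{\bnu}=\sum_{\bj\in\unorder}R_{\oalpha}e(\bj)S^{\bnu}=0$.
\item Consequently $\be S^{\bnu}$ has basis $\{\psi^{T'}z^{\bnu}\mid T'\in\Std(\bnu),\ \bi^{T'}\in\wellorder\}$.
\item This set contains the distinct elements $\psi^{\Phi_k(T)}z^{\bnu}$ for $T\in\Std(\bmu)$. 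Indeed, $\Phi_k(T)$ is standard by \autoref{prop:subdivision-of-standard-tableau-is-standard}, $\Phi_k$ is injective on tableaux, and $\bi^{\Phi_k(T)}=\Phi_k(\bi^T)$ is ordered.
\end{itemize}
Therefore $\dim\be S^{\bnu}/\be\badideal\be S^{\bnu}\ge|\Std(\bmu)|=\dim S^{\bmu}$, and the degree-$0$ surjection is an isomorphism.
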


\begin{proof}
    Let $\bmu=\Psi_k(\blam)$ and $\bnu=\Phi_k(\bmu)$, and set
    \[
        \widetilde S^{\bnu}:=\be S^{\bnu}\big/\be\badideal\be\,S^{\bnu}.
    \]
    Write $\widetilde z^{\bnu}:=\be z^{\bnu}+\be\badideal\be\,S^{\bnu}\in \widetilde S^{\bnu}$ for the image of the
    standard cyclic generator $z^{\bnu}$. Then $\widetilde S^{\bnu}$ is a cyclic $S_{\oalpha}$-module generated by
    $\widetilde z^{\bnu}$.
    
    \smallskip
    By \autoref{thm:image-of-permutation-module} we already have an isomorphism of graded $R_\alpha$-modules
    \[
        M^{\bmu}\ \cong\ \be M^{\bnu}\big/\be\badideal\be\,M^{\bnu}.
    \]
    induced by sending the cyclic generator $m^{\bmu}$ of $M^{\bmu}$ to the cyclic generator $w^{\bnu}:=\be m^{\bnu}+\be\badideal\be$ of $\be M^{\bnu}\big/\be\badideal\be\,M^{\bnu}$.
    Since $S^{\bmu}$ (resp.\ $S^{\bnu}$) is obtained from $M^{\bmu}$ (resp.\ $M^{\bnu}$) by imposing the Garnir relations, to check the map $S^{\bmu}\to \widetilde S^{\bnu}$ given by $z^{\bmu}\mapsto \widetilde z^{\bnu}$ is an isomorphism, it remains to check that the Garnir relations correspond under $\Phi_k$.

    By \autoref{lm:bijection-of-garnir-nodes} and \autoref{lm:bijection-of-garnir-relations}, let $\phi:=\phi_{\bi^{\bmu}}$ be the position-tracing function associated to $(k,\bi^{\bmu})$. Then the subdivision map $\Phi_k$ induces a bijection between Garnir nodes $A\in[\bmu]$ and the corresponding nodes $A^+\in[\bnu]$, and it matches the Garnir elements $g^A$ with $\be g^{A^+}\be+\be\badideal\be$. Recall that the nodes $B\in[\bmu]$ of the form $B=A^+$ are precisely those satisfying $T(B)=\phi(t)$ for some $1\le t\le d$. It therefore remains to consider those nodes $X$ that are not of the form $A^+$ for any $A\in[\bmu]$. These new nodes all have residue $k+1$ and occur immediately to the right of a $k$-node. In particular, the Garnir belt $\mathcal{B}^X$, filled with residues, has the following form:

    \begin{center}
    \resizebox{0.9\linewidth}{!}{%
        \begin{tikzpicture}[
          x=8mm, y=8mm,
          outer/.style={draw=blue!70!black, line width=1.4pt},
          inner/.style={draw=blue!70!black, line width=0.9pt},
          dots/.style ={draw=blue!70!black, line width=1.0pt, densely dotted},
          lab/.style  ={font=\small}
        ]
        
            \def\wSide{1}        
            \def\wMid{1.5}       
            \def\wMidHalf{0.75}  
            \def\wRec{3.5}       
            \def\wBod{1.5}       
            
            \newcommand{\Rec}[6]{%
              \path[fill=#4] (#1,#2) rectangle ++(\wSide,1);
              \path[fill=#5] ($(#1,#2)+(\wSide,0)$) rectangle ++(\wMid,1);
              \path[fill=#6] ($(#1,#2)+(\wSide+\wMid,0)$) rectangle ++(\wSide,1);
            
              \draw[outer] (#1,#2) rectangle ++(\wRec,1);
            
              \draw[inner] ($(#1,#2)+(\wSide,0)$) -- ++(0,1);
              \draw[inner] ($(#1,#2)+(\wSide+\wMid,0)$) -- ++(0,1);
            
              \draw[draw=#5, line width=2.8pt] ($(#1,#2)+(\wSide,0)$) -- ++(\wMid,0);
              \draw[draw=#5, line width=2.8pt] ($(#1,#2)+(\wSide,1)$) -- ++(\wMid,0);
              \draw[dots] ($(#1,#2)+(\wSide,0)$) -- ++(\wMid,0);
              \draw[dots] ($(#1,#2)+(\wSide,1)$) -- ++(\wMid,0);
            
              \node[lab] at ($(#1,#2)+(0.5,0.5)$) {\(#3{+}1\)};
              \node[lab] at ($(#1,#2)+(\wSide+\wMidHalf,0.5)$) {\(\cdots\)};
              \node[lab] at ($(#1,#2)+(\wSide+\wMid+0.5,0.5)$) {\(#3\)};
            }
            
            \newcommand{\dottedRec}[3]{%
              \path[fill=#3] (#1,#2) rectangle ++(\wRec,1);
              \draw[outer] (#1,#2) rectangle ++(\wRec,1);
            
              \draw[draw=#3, line width=2.8pt] (#1,#2) -- ++(\wRec,0);
              \draw[draw=#3, line width=2.8pt] ($(#1,#2)+(0,1)$) -- ++(\wRec,0);
              \draw[dots] (#1,#2) -- ++(\wRec,0);
              \draw[dots] ($(#1,#2)+(0,1)$) -- ++(\wRec,0);
            
              \node[lab] at ($(#1,#2)+(0.5*\wRec,0.5)$) {\(\cdots\)};
            }
            
            \newcommand{\Bod}[4]{%
              \path[fill=#3] (#1,#2) rectangle ++(\wBod,1);
              \draw[outer] (#1,#2) rectangle ++(\wBod,1);
              \node[lab] at ($(#1,#2)+(0.5*\wBod,0.5)$) {\(#4\)};
            }
            
            
            \Rec       {0}{1}{k}{green!18}{green!10}{green!18}
            \Rec       {\wRec}{1}{k}{red!18}{red!10}{red!18}
            \dottedRec {2*\wRec}{1}{gray!15}
            \Rec       {3*\wRec}{1}{k}{yellow!25}{yellow!12}{yellow!25}
            \Bod       {4*\wRec}{1}{cyan!20}{\text{Tail}}
            
            \Bod       {-11}{0}{orange!25}{\text{Head}}
            \Rec       {-9.5}{0}{k}{violet!18}{violet!10}{violet!18}
            \dottedRec {-6}{0}{gray!15}
            \Rec       {-2.5}{0}{k}{blue!16}{blue!9}{blue!16}
        
        \end{tikzpicture}%
    }
    \end{center}

    In particular, outside the Garnir belt $\mathcal{B}^X$, the last entry in $T^{\bnu}$ (and hence in $T^X$ and $G^X$) before any entry of $\mathcal{B}^X$ has residue $k$. Then the residue sequence $\bi^{X}$ of the Garnir tableau $G^{X}$ (and hence the residue sequence of every tableau in the Garnir set $\Gar{X}$) is:
    \begin{center}
        \[
            \cdots,k, \res(\text{Head}),\underbrace{k{+}1,\cdots,e,0,\cdots,k}_{k^X\text{ times }},\res(\text{Tail}),\cdots
        \]
    \end{center}
    Here $\res(\text{Head})$ and $\res(\text{Tail})$ are the residue sequences corresponding to the head and tail parts of the Garnir belt. Note that the first node in $\text{Head}$ cannot have residue $k+1$: by assumption there are only non-trivial $(k,k+1)$-strips, so every $(k+1)$-node must occur immediately to the right of a $k$-node. In particular, $\bi^X\notin\wellorder$, since there is an occurrence of $k$ that is not immediately followed by $k+1$. Therefore $e(\bi^X)\be=\be e(\bi^X)=0$, and hence:
    \begin{align*}
        \be g^X\be +\be\badideal\be
        &=
        \be\sum_{u\in\mathscr D^X}\tau_u^X\,e(\bi^X)\,\psi^{T^X}\be+\be\badideal\be\\
        &=
        \be\sum_{u\in\mathscr D^X}\tau^X_{r^u_1}\cdots\tau^X_{r^u_{\ell(u)}}\,e(\bi^X)\,\psi^{T^X}\be+\be\badideal\be\\
        &=\be\sum_{u\in\mathscr D^X}e(\bi^X)\,\tau^X_{r^u_1}\cdots\tau^X_{r^u_{\ell(u)}}\,e(\bi^X)\,\psi^{T^X}\be+\be\badideal\be\\
        &=0\in S_{\oalpha}.
    \end{align*}
    where the second-to-last equality holds because $\tau^X_{i}=(\sigma^X_i+1)e(\bi^X)$ and $\sigma^X_i$ only permutes the row bricks inside the Garnir belt $\mathcal{B}^X$, and hence does not change the residue sequence $\bi^X$. Therefore, we have shown that the Garnir element $g^X$ vanishes in $S_{\oalpha}$ whenever $X$ is not of the form $A^+$ for any $A\in[\bmu]$. Now the theorem follows from \autoref{thm:image-of-permutation-module}.
\end{proof}

\begin{Corollary}\label{cor:image-of-specht-module}
    Fix a subdivision datum $\subdatum$, and take $\blam\in \Par[\bkappa]_{\alpha}$ to be $k$-horizontal. Then there is an isomorphism of graded $R_\alpha$-modules
    \[
        S^{\blam}\ \cong\ 
        \be S^{\Phi_k(\blam)}\big/\be\badideal\be\,S^{\Phi_k(\blam)}.
    \]
\end{Corollary}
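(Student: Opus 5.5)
This corollary should follow immediately from \autoref{thm:image-of-specht-module} once we observe that the splitting map acts trivially on $k$-horizontal multipartitions. The plan is as follows.

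First I would check that $\Psi_k(\blam)=\blam$, including the charge. Since $\Psi_k$ is defined componentwise, \autoref{lm:split-no-change} applies to each component $\blam^{(m)}$. Because $\blam$ is $k$-horizontal, each $s_m=k(\blam^{(m)})$ is zero. Hence the enlarged charge $\pmb{\varkappa}$ built in the definition of $\Psi_k$ contains no extra entries $k$, and so coincides with $\bkappa$. Thus $\Psi_k(\blam)=\blam$ as an element of $\Par[\bkappa]_{\alpha}$, with the same residues and the same initial tableau. Consequently $S^{\Psi_k(\blam)}=S^{\blam}$, not merely up to isomorphism: the defining presentation in \autoref{dfn-universal-specht} depends only on $T^{\blam}$, its residue sequence, and its Garnir data, and all of these are unchanged.

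Second, I would substitute this into \autoref{thm:image-of-specht-module}. We have $\Phi_k(\Psi_k(\blam))=\Phi_k(\blam)$, so the right-hand side becomes $\be S^{\Phi_k(\blam)}/\be\badideal\be\,S^{\Phi_k(\blam)}$, and the isomorphism is exactly the one claimed. Gradings are preserved because the theorem already provides a graded isomorphism.

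The only real obstacle is bookkeeping. \autoref{thm:image-of-specht-module} is stated for $\blam\in\Par[\Lambda]_\alpha$, and we must make sure the charge attached to $\Psi_k(\blam)$ is the given $\bkappa$ rather than some reordered charge of $\Lambda$, since $S^{\blam}$ depends on the chosen charge. This is settled by the explicit formula for $\pmb{\varkappa}$ when every $s_m=0$.
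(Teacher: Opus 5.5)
Your proposal is correct and follows the paper's proof, which simply cites \autoref{lm:split-no-change} (so $\Psi_k(\blam)=\blam$) and then substitutes into \autoref{thm:image-of-specht-module}. Your extra check that the enlarged charge $\pmb{\varkappa}$ reduces to $\bkappa$ when every $s_m=0$ is bookkeeping the paper leaves implicit, and it is worth recording.
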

\begin{proof}
    This follows from \autoref{lm:split-no-change} and \autoref{thm:image-of-specht-module}.
\end{proof}


\section{Connection with Runner Removal Theorems}\label{sec:categorification}
In this section, we explain that the combinatorics of the subdivision map developed in the previous section coincides with the runner removal theorems, thereby providing a natural categorification.
\subsection{Level 1 Case}\label{subsec:level-one-runner-removal}
Fix a subdivision datum $\subdatum[\Lambda=\Lambda_x][x]$ such that $\height(\alpha)=n$. By the Brundan--Kleshchev isomorphism \cite[Theorem 1.1]{bk-blocks-iso}, the level $1$ cyclotomic KLR algebras are isomorphic to the corresponding Iwahori--Hecke algebras $\mathcal{H}_q(\Sym_n)$, where $q$ is a primitive $e$-th root of unity. Hence, in this section, the Specht modules and simple modules can be viewed as modules over the classical Iwahori--Hecke algebras, equipped with a grading coming from the KLR grading. See also \cite{bkw-graded-specht,humathas-graded-cellular,kmr-universal-specht-type-A} for three different approaches to construct graded Specht modules.

\medskip
Following \cite{fayers-full-runner-removal}, we define an operation $+$ on partitions. Fix an integer $d\in\Z$. For a partition $\lambda\in\Par[\Lambda]_\alpha$, choose a positive integer $a\geq \ell(\lambda)$ such that $a+d\geq 0$.
Let $c,k$ be the unique non-negative integers satisfying
\[
    a+d=ce+k,\quad k\in I=\{0,1,\cdots,e-1\}.
\]
Form the $e$-abacus of $\lambda$ with $a$ beads, and add a flush runner with $c$ beads on the left of $k$-runner. This new $(e+1)$-abacus with $a+c$ beads corresponds uniquely to a partition, which is denoted by $\lambda^{+}:=\lambda^{+d}$. It is not hard to verify that this definition of $\lambda^+$ is independent of $a$.

The importance of this construction is illustrated by the following theorem.
\begin{Theorem}[{(Runner Removal Theorem)}]\label{thm:runner-removal-thms-level-1}
    Assume that the base field $\bk$ of the Hecke algebras has characteristic $0$. Suppose that $\lambda,\mu\in\Par[\Lambda]_\alpha$ and that $\lambda$ is $e$-regular. Fix $d\in\Z$. If one of the following holds:    \begin{itemize}
        \item $d\ge\lambda_1$ (cf. \textup{\cite[Theorem~3.1]{fayers-full-runner-removal}});
        \item $d< -\ell(\lambda)$ (cf. \textup{\cite[Theorem~4.5]{jamesmathas-empty-runner-removal}} and \textup{\cite[Theorem~4.1]{alice-empty-runner-removal}}).
    \end{itemize}
    Then $d^{e}_{\mu,\lambda}(q)=d^{e+1}_{\mu^{+d},\lambda^{+d}}(q)$. 
\end{Theorem}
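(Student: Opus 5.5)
The statement is a theorem quoted from the literature, so the plan is to reduce it to the cited results rather than re-prove it. Concretely, I would show three things: that the operation $\lambda\mapsto\lambda^{+d}$ defined above is exactly the runner-addition operation used by Fayers and by James--Mathas; that the hypotheses transfer; and that the decomposition numbers $d^e_{\mu,\lambda}(q)$ mean the same thing here as in those papers. The conclusion is then read off directly.

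\emph{First step: independence of $a$.} Replacing $a$ by $a+1$ adds one bead at position $0$ and shifts the others by one. Since $a+d=ce+k$, this either moves $k$ to $k+1$ with the same $c$, or, when $k=e-1$, moves to $k=0$ with $c$ increased by one. In the first case the new runner sits one place further right. In the second case the new runner gains one more flush bead. In both cases the resulting $(e+1)$-abacus is exactly the $(a+c+1)$-bead abacus of the same partition. This is the same computation as in \autoref{lm:subdivision-beta-numbers}, using the map $\iota_k$ and the set $\bT_{e+1,k,c}$ of new bead positions.

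\emph{Second step: identification with the cited constructions.}
\begin{itemize}
\item For $d\ge\lambda_1$, every bead on the $a$-bead abacus of $\lambda$ lies at a position $\beta_i=\lambda_i+a-i\le \lambda_1+a-1<a+d$. Hence the inserted runner with $c$ flush beads has a bead in every row meeting a bead of $\lambda$. This is Fayers' \emph{full} runner, inserted at the position dictated by $d$, and matches \cite[Theorem~3.1]{fayers-full-runner-removal}.
\item For $d<-\ell(\lambda)$, we have $a+d<a-\ell(\lambda)$, so the $c$ beads on the new runner lie strictly above the first gap of the abacus. The runner is therefore effectively \emph{empty} relative to $\lambda$. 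This recovers the James--Mathas construction \cite[Theorem~4.5]{jamesmathas-empty-runner-removal}, and its higher-level form \cite[Theorem~4.1]{alice-empty-runner-removal} specialised to level one.
\end{itemize}
In both cases the same $d$ is used for $\mu$, so $\mu^{+d}$ is formed by the same rule. Since $\lambda$ and $\mu$ lie in the same block (they have the same residue content $\alpha$), their abaci have the same bead count per runner, and the inserted runner is therefore compatible for both partitions.

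\emph{Third step: meaning of the decomposition numbers.} Through the Brundan--Kleshchev isomorphism \cite{bk-blocks-iso} and the graded Specht modules of \cite{bkw-graded-specht}, $d^e_{\mu,\lambda}(q)$ is the graded multiplicity of $D^\lambda$ in $S^\mu$. In characteristic $0$, Ariki's theorem identifies this with the coefficient of the canonical basis element in the Fock space. That Fock-space coefficient is the quantity the cited proofs compute. With the identification of the operations and this interpretation of $d^e_{\mu,\lambda}(q)$, the equality $d^{e}_{\mu,\lambda}(q)=d^{e+1}_{\mu^{+d},\lambda^{+d}}(q)$ is exactly the cited theorems. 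The remaining hypothesis, that $\lambda^{+d}$ is $(e+1)$-regular so that the right-hand side is defined, follows from \autoref{lm:e-regularity-preserving}.

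The main obstacle I expect is the bookkeeping in the second step. Fayers and James--Mathas use differing normalisations: the runner index where the new runner is inserted, abacus conventions (beads at $0,\dots,a-1$ versus other offsets), and possibly $q$ versus $q^{-1}$ or transposed partitions in the decomposition matrix. My first move would be to rewrite each cited construction in terms of beta-sets. I would then check that it produces $\iota_k(B(\lambda;a))\sqcup\bT_{e+1,k,c}$, exactly as in \autoref{lm:subdivision-beta-numbers}, and use an example such as \autoref{eg:subdivision-abacus} as a sanity check on the conventions.
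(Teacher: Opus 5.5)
Your reduction has a gap at the sentence saying that, because $\lambda$ and $\mu$ lie in the same block, the inserted runner is ``compatible for both partitions''. Equal bead counts per runner say nothing about where the gaps of $\mu$ sit relative to the $c$ inserted beads. The theorems you cite need the runner to be full (resp.\ empty) in the abacus of \emph{both} partitions, whereas the hypotheses $d\ge\lambda_1$ and $d<-\ell(\lambda)$ only constrain $\lambda$.

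In the full case the gap can be closed. If $\mu_1\le d$, the runner is full for $\mu$ as well. If $\mu_1>d$, then $\mu$ is not dominated by $\lambda$, so $d^e_{\mu,\lambda}(q)=0$. For the right-hand side, use the same $a$ for both partitions. Every bead of $\lambda$ is at most $ce+k-1$, so every bead of $\lambda^{+d}$ is at most $c(e+1)+k-1$. On the other hand, $\mu$ has a bead at a position $\ge ce+k$, so $\mu^{+d}$ has a bead at a position $\ge \iota_k(ce+k)=c(e+1)+k+1$. Both abaci have $a+c$ beads, hence $\mu^{+d}_1>\lambda^{+d}_1$ and the right-hand side vanishes too.

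In the empty case no such repair exists. Dominance $\mu\unlhd\lambda$ gives $\ell(\mu)\ge\ell(\lambda)$, which is the wrong direction, and in fact the equality can fail. Take $e=3$, $\lambda=(4,4,2)$, $\mu=(3,1^7)$ and $d=-4<-\ell(\lambda)$.
\begin{itemize}
\item The LLT algorithm gives
$G_3((4,4,2))=|(4,4,2)\rangle+q|(3,3,2,1,1)\rangle+q|(3,2,2,1^3)\rangle+q^2|(3,1^7)\rangle$, so $d^3_{\mu,\lambda}(q)=q^2$.
\item With $a=8$ one has $a+d=1\cdot 3+1$, so $c=k=1$. This gives $\lambda^{+d}=(7,7,4,1)$ and $\mu^{+d}=(6,3,2^3,1^4)$.
\item One checks that $(7)^{+9}=(7,7,4,1)$ and $(2^2,1^3)^{+9}=(6,3,2^3,1^4)$, with the runner full for both since $9\ge 7$.
\item The full-runner case therefore gives $d^4_{\mu^{+d},\lambda^{+d}}(q)=d^3_{(2^2,1^3),(7)}(q)$.
\item This is $0$, because $G_3((7))=|(7)\rangle+q|(5,2)\rangle+q|(4,2,1)\rangle+q|(3,2,1^2)\rangle+q^2|(2^3,1)\rangle$.
\end{itemize}
Equivalently, $\epsilon_d(\lambda)=9\neq 8=\epsilon_d(\mu)$: the inserted bead lies above a gap of $\mu$, so neither the empty-runner theorem nor Fayers' general theorem applies to this pair.

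So no bookkeeping with the cited results can prove the second bullet as stated. Your argument needs an extra hypothesis on $\mu$, for example $d<-\ell(\mu)$. That makes the runner empty for both partitions and forces $\epsilon_d(\lambda)=\epsilon_d(\mu)$.
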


The $e$-regular assumption is not necessary, since they work with $q$-Schur algebras. In fact, they prove a stronger statement in terms of the canonical basis of the Fock space.
Let $\mathcal{F}_e(\Lambda)$ be the (level-$1$) $q$-Fock space: the free $\mathbb{Q}(q)$-vector space
with standard basis $\{\,|\lambda\rangle \mid \lambda\in \Par[\Lambda]\,\}$.
It carries an integrable $U_q(\widehat{\mathfrak{sl}}_e)$-module structure, and the submodule
generated by the vacuum vector $|\varnothing\rangle$ is isomorphic to the irreducible
highest-weight module $V(\Lambda)$.

The bar involution on $U_q(\widehat{\mathfrak{sl}}_e)$ induces a bar involution on $V(\Lambda)$,
which extends to a bar involution on the whole of $\mathcal{F}_e(\Lambda)$ by work of \cite{lt-canonical-bases-q-deformed-fock}.
Consequently, for every partition $\mu\in \Par[\Lambda]$ there is a unique bar-invariant vector
\begin{equation}\label{eq:canonical-basis-expansion}
  G_e(\mu)=\sum_{\lambda\in \Par[\Lambda]} d^{e}_{\lambda\mu}(q)\,|\lambda\rangle,
\end{equation}
such that $d^{e}_{\mu\mu}(q)=1$ and $d^{e}_{\lambda\mu}(q)\in q\mathbb{Z}[q]$ for $\lambda\neq\mu$;
moreover $d^{e}_{\lambda\mu}(q)=0$ unless $\lambda\unlhd \mu$ in dominance order of partitions. The set $\{G_e(\mu)\mid \mu\in \Par[\Lambda]\}$ is called the \emph{canonical basis} of $\mathcal{F}_e(\Lambda)$.

When $\mu$ is $e$-regular, the element $G_e(\mu)$ lies in $V(\Lambda)$ and coincides with the canonical basis
element in $V(\Lambda)$ indexed by $\mu$. If the corresponding Iwahori--Hecke algebra is defined over a field of
characteristic $0$ and is specialised at a primitive $e$th root of unity, then Ariki's categorification theorem \cite[Theorem 4.4]{ariki-categorification} identifies
$d^{e}_{\lambda\mu}(1)$ with the decomposition number $[S^\lambda:D^\mu]$ (with $\mu$  $e$-regular). Thanks to the Brundan--Kleshchev isomorphism \cite{bk-blocks-iso}, Brundan and Kleshchev lift Ariki's categorification theorem to the graded case in \cite[Section 5.5]{bk-graded-decomposition-number}: $d_{\lambda\mu}^{e}(q)$ is the graded decomposition number $[S^\lambda:D^\mu]_q$.

Their proof of \autoref{thm:runner-removal-thms-level-1} proceeds by extending
$\lambda\mapsto \lambda^{+d}$ to a linear operator on the whole Fock space and then establishing
\begin{equation}\label{eq:canonical-basis-subdivision-compatibility}
  G_{e+1}(\lambda^{+d}) \;=\; G_{e}(\lambda)^{+d}.
\end{equation}
For standard references on level $1$ Fock spaces, decomposition numbers, quantum groups, and related topics,
see \cite[Chapter 6]{mathas-iwahori-hecke}.

\bigskip
The construction of $\lambda^{+d}$ is very similar to the subdivision on partitions in \autoref{def:subdivision-abacus}, but it has some different features.
In \autoref{def:subdivision-abacus} one starts with a fixed integer $k\in I$, which specifies the position in the abacus where the new runner is inserted, whereas the construction of $\lambda^{+d}$ starts with the parameter $d\in\Z$. The two parameters are linked by the equation $a+d=ce+k$. 

In \autoref{def:subdivision-abacus}, we require $a\equiv x\pmod e$, while this is not required in the definition of $\lambda^{+d}$. This is because, in level $1$, we have $V(\Lambda_0)\cong V(\Lambda_x)$ for any $x\in I$. For convenience, one often also chooses $a\equiv x\pmod e$ in the construction of $\lambda^{+d}$.
Moreover, in \autoref{def:subdivision-abacus}, since $k\in I$, we necessarily have $d\in I$, whereas in the construction of $\lambda^{+d}$ the parameter $d$ can be any integer. Thus \autoref{def:subdivision-abacus} can be viewed as a special case of the construction of $\lambda^{+d}$, with an equivalent description in Young diagrams given by \autoref{def:subdivision-young-diagram}.\footnote{It would be interesting to find a general description of $\lambda^{+d}$ in terms of Young diagrams.}

The most important feature of this special case (namely $d\in I$) is that the hypotheses of \autoref{thm:runner-removal-thms-level-1} are rarely satisfied. In other words, for a general partition $\lambda$, the interval $[0,e-1]$ is an exceptional set on which \autoref{thm:runner-removal-thms-level-1} does not apply.

Nevertheless, there is a general runner removal theorem that applies in this situation. To state it, we first need an object called the extended beta multiset.

Recall that the $a$-beta set $B(\lambda;a)$ of a partition is the set of $a$-beta numbers of $\lambda$,
$B(\lambda;a)=\{\beta_i^a(\lambda)\mid 1\le i\le a\}$. The extended beta multiset can be viewed as a multiplicity-counting version of the beta set. For each $z\in\Z_{\ge 0}$, define the multiplicity $\operatorname{mul}_z$ to be $\#\bigl(B(\lambda;a)\cap\{z,z+e,z+2e,\dots\}\bigr)$. The extended beta multiset $\mathfrak{X}_a^e(\lambda)$ is the union of $\{\underbrace{z,\cdots,z}_{\operatorname{mul}_z}\}$ over all $z\in\Z_{\ge 0}$, taken as a multiset.

For each fixed integer $d\in\Z$, define $\epsilon_d(\lambda)$ to be the number of elements $z\in\mathfrak{X}_a^e(\lambda)$, counted with multiplicity, such that $z\ge a+d$. It is an exercise to show that $\epsilon_d(\lambda)$ does not depend on the choice of $a$.
\begin{Example}\label{eg:extended-beta-number-and-fayers-arithmetic}
    Fix type $\Aone[3]$ and $d=4$. Let $\lambda=(5,4,2,1,1)$ and $\mu=(7,2,2,1,1)$. Take $a=9$. Then $a+d=13=4\cdot 3+1$, so $c=3$ and $k=1$. The $e$-abaci with $9$ beads for $\lambda$ and $\mu$ are as follows:
    \begin{center}
        \Abacus[runner labels={0,1,2,3},entries=betas]{4}{5,4,2,1,1,0,0,0,0}\qquad
        \Abacus[runner labels={0,1,2,3},entries=betas]{4}{7,2,2,1,1,0,0,0,0}
    \end{center}
    The beads are labelled by the corresponding beta numbers. The extended beta multisets of $\lambda$ and $\mu$ are
    \begin{align}
        \mathfrak{X}_a^e(\lambda)=\{0,0,1,1,1,2,2,3,3,4,5,5,6,7,8,9,11,13\},\\
        \mathfrak{X}_a^e(\mu)=\{0,0,1,1,1,2,2,3,3,4,5,5,6,7,8,9,11,15\}.
    \end{align}
    Since $a+d=13$, we have $\epsilon_d(\lambda)=1$ and $\epsilon_d(\mu)=1$. The partitions $\lambda^{+d}$ and $\mu^{+d}$ have the following abacus displays:
    \begin{equation}
        \Abacus[runner labels={0,1,2,3,4},entries=betas]{5}{6,4,*2,2,1,1,*1,0,0,0,*0,0}\qquad
        \Abacus[runner labels={0,1,2,3,4},entries=betas]{5}{8,2,*2,2,1,1,*1,0,0,0,*0,0}
    \end{equation}
    Hence $\lambda^{+d}=(6,4,2,2,1,1,1)$ and $\mu^{+d}=(8,2,2,2,1,1,1)$.
\end{Example}

Now we are able to state the \emph{general runner removal theorem}.
\begin{Theorem}[{\cite[Theorem 3.4]{fayers-general-runner-removal}}]\label{thm:general-runner-removal-level-1}
    Take $d\in\Z$ and let $\lambda,\mu\in\Par[\Lambda]_{\alpha}$ with $\lambda$ $e$-regular. Set $\lambda^+:=\lambda^{+d}$ and $\mu^+:=\mu^{+d}$. If $\epsilon_d(\lambda)=\epsilon_d(\mu)$, then $d^{e}_{\mu,\lambda}(q)=d^{e+1}_{\mu^{+},\lambda^{+}}(q)$.
\end{Theorem}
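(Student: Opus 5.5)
This is Fayers' general runner removal theorem, quoted from \cite[Theorem~3.4]{fayers-general-runner-removal}. In the paper it is cited rather than proved, so the plan below reconstructs the Fock-space argument, not a new proof.

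The approach works entirely in the level-one Fock space. Fix $d$ and a bead count $a\ge\max(\ell(\lambda),\ell(\mu))$ with $a+d\ge 0$. Write $a+d=ce+k$, and extend $\nu\mapsto\nu^{+d}$ to a $\mathbb{Q}(q)$-linear map $\mathcal{F}_e(\Lambda)\to\mathcal{F}_{e+1}(\Lambda')$ by $|\nu\rangle\mapsto|\nu^{+d}\rangle$.

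By \eqref{eq:canonical-basis-expansion} together with Ariki's theorem and its graded lift by Brundan--Kleshchev, the statement reduces to comparing coefficients of canonical basis vectors. Concretely, it suffices to show that the coefficient of $|\mu^{+}\rangle$ in $G_{e+1}(\lambda^{+})$ equals the coefficient of $|\mu\rangle$ in $G_e(\lambda)$ whenever $\epsilon_d(\mu)=\epsilon_d(\lambda)$.

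I would not expect \eqref{eq:canonical-basis-subdivision-compatibility} to hold on the nose when $d$ lies in the exceptional range. Instead the plan is a graded version: decompose $\mathcal{F}_e$ by the statistic $\epsilon_d$ and prove
\[
G_{e+1}(\lambda^{+})\equiv G_e(\lambda)^{+}
\]
modulo vectors $|\nu^{+}\rangle$ with $\epsilon_d(\nu)\neq\epsilon_d(\lambda)$, and modulo partitions not of the form $\nu^{+}$.

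The key steps are these.

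First, describe the image of $\nu\mapsto\nu^{+d}$: it consists exactly of the abacus displays whose inserted runner is flush with $c$ beads. Then check that $\epsilon_d(\nu)$ is a block-type invariant of the image, so that the relevant components are preserved.

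Second, construct $G_e(\lambda)$ via the LLT algorithm as $\overline{\phantom{x}}$-invariant products of divided powers $f_i^{(r)}$ applied to the vacuum. Then find operators on the $(e+1)$-side that intertwine with ${}^{+}$ on each $\epsilon_d$-component. The natural choice is to replace $f_k^{(r)}$ by a product $f_{k+1}^{(r)}f_k^{(r)}$ or $f_k^{(r)}f_{k+1}^{(r)}$, depending on whether beads move across the new runner. This is the same splitting of a $k$-move into a $(k,k+1)$-pair that appears in the $\Phi_k$ construction in \autoref{def:subdivision-young-diagram}.

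Third, show that the resulting vector $A^{+}$ is bar-invariant on the relevant component and unitriangular with off-diagonal coefficients in $q\mathbb{Z}[q]$. Uniqueness of the canonical basis then identifies it with $G_{e+1}(\lambda^{+})$ on that component.

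The main obstacle is bar-invariance. Moving a bead past the inserted runner changes the $q$-power in the Fock-space action by a sign-like term that depends on how many beads lie above on the new runner, and this number is governed exactly by $\epsilon_d$. I would first try to prove that the bar involution commutes with ${}^{+}$ up to a scalar $q^{\text{const}}$ that depends only on $\epsilon_d$. This would use the Leclerc--Thibon straightening rules on semi-infinite wedges, checking that the straightening of a wedge containing the flush runner only produces terms with the same $\epsilon_d$.
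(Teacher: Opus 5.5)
The paper gives no proof of this statement: it is imported from \cite[Theorem~3.4]{fayers-general-runner-removal} and used as a black box, so your opening sentence is all the paper does. The Fock-space reconstruction you sketch, however, has a genuine gap. Your displayed congruence $G_{e+1}(\lambda^{+})\equiv G_e(\lambda)^{+}$ is a restatement of the theorem rather than a step towards it. More seriously, the claims meant to organise the argument are false: $\epsilon_d$ is not a block invariant, and neither the bar involution nor $f_k$ preserves $\epsilon_d$-components.

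Take $e=3$, $d=2$ (inside the range $d\in I$ relevant to subdivision) and $a=3$, so $a+d=1\cdot 3+2$, $c=1$, $k=2$. Then $G_3((3))=\lvert(3)\rangle+q\lvert(2,1)\rangle$, while $\epsilon_2((3))=1$ and $\epsilon_2((2,1))=0$; the extended beta multisets are $\{0,1,2,5\}$ and $\{0,1,2,4\}$. If bar preserved the $\epsilon_d$-grading, every canonical basis vector would be $\epsilon_d$-homogeneous, and this one is not. On the other side, $(3)^{+2}=(4)$, $(2,1)^{+2}=(2,1,1)$, $(1^3)^{+2}=(1^4)$, and
\[
\overline{\lvert(4)\rangle}=\lvert(4)\rangle+(q-q^{-1})\lvert(3,1)\rangle-(1-q^{-2})\lvert(2,1,1)\rangle+(q^{-1}-q^{-3})\lvert(1^4)\rangle .
\]
So straightening the wedge of $(3)^{+2}$ does produce image terms $\lvert\mu^{+}\rangle$ with $\epsilon_d(\mu)\neq\epsilon_d((3))$, which is exactly what you planned to rule out.

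Compare this with $\overline{\lvert(3)\rangle}=\lvert(3)\rangle+(q-q^{-1})\lvert(2,1)\rangle-(1-q^{-2})\lvert(1^3)\rangle$. The diagonal entries agree, but both $\epsilon$-changing entries pick up a factor $-q^{-1}$. Since bar is semilinear with diagonal entries $1$, no twist $\lvert\nu\rangle\mapsto q^{\varphi(\nu)}\lvert\nu^{+}\rangle$ can intertwine the two involutions, even modulo non-image terms. So ``commutes with ${}^{+}$ up to a scalar depending only on $\epsilon_d$'' fails as well. Consistently, $d^{3}_{(2,1),(3)}(q)=q\neq 0=d^{4}_{(2,1,1),(4)}(q)$.

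The operator substitution cannot be carried out either. Moving a bead from runner $k-1$ to runner $k$ into row $j$ raises $\epsilon_d$ by one exactly when $j\ge c$. In that case the bead passes an empty position of the new runner, which is modelled by $f_{k+1}f_k$. When $j<c$ it passes a full position, must be modelled by $f_kf_{k+1}$ (push the runner's bead across first), and leaves $\epsilon_d$ unchanged. A single $f_k^{(r)}\lvert\nu\rangle$ contains both kinds of move. Hence ``choose the order according to whether beads cross'' does not define an operator, and neither uniform choice intertwines with ${}^{+}$.

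The LLT induction also does not close on one component. $G_e(\lambda)$ is a monomial minus bar-invariant multiples of $G_e(\nu)$ with $\nu\lhd\lambda$, and such $\nu$ may have $\epsilon_d(\nu)\neq\epsilon_d(\lambda)$. For those $\nu$, an inductive hypothesis of the same shape gives no control over the $\epsilon_d(\lambda)$-component of $G_{e+1}(\nu^{+})$. This interaction between $\epsilon_d$-components is exactly what the hypothesis $\epsilon_d(\lambda)=\epsilon_d(\mu)$ exists to control, and your plan contains no mechanism for it. The bar-invariance and unitriangularity step, which is the whole content of the theorem, is missing rather than merely unverified.
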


We remark, following \cite[Section 4.3]{fayers-general-runner-removal}, that \autoref{thm:general-runner-removal-level-1} actually \emph{contains} \autoref{thm:runner-removal-thms-level-1} as a special case; this is why it is called the general runner removal theorem. Another key feature is that the condition in \autoref{thm:general-runner-removal-level-1} is a relation between the two partitions under consideration, rather than an absolute requirement as in \autoref{thm:runner-removal-thms-level-1}.

Unlike \autoref{thm:runner-removal-thms-level-1}, \autoref{thm:general-runner-removal-level-1} imposes no restriction on the parameter $d$. By specialising to $0 \le d \le e-1$, it applies to our subdivision map on partitions. In view of \autoref{thm:image-of-specht-module}, the subdivision isomorphism $\Phi_k$ \emph{categorifies} this runner removal theorem over a field of characteristic $0$.

\begin{Remark}\label{rmk:comparison-with-fayers-general-runner-removal}
    There are several differences between the setting of \cite{fayers-general-runner-removal} and ours, and we briefly discuss some of them here.
    Fayers studies runner removal in \cite{fayers-general-runner-removal}, whereas this paper treats runner addition. In other words, we pass from $\lambda$ to $\lambda^{+d}$, that is, from an $e$-abacus to an $(e+1)$-abacus, while Fayers passes from $\lambda$ to $\lambda^{-d}$, that is, from an $(e+1)$-abacus to an $e$-abacus.
    
    For the reader's convenience, we clarify the notation. Let $\overline{d}$ be the unique integer in $[0,e]$ such that $\overline{d}\equiv d\pmod{e+1}$. Then $\lambda^{+d}$ is $\overline{d}$-empty and $\lambda=(\lambda^{+d})^{-\overline{d}}$ in Fayers' notation. It is also straightforward to check that $\epsilon_d(\lambda)=\mathfrak{L}_{\overline{d}}(\lambda^{+d})$, where $\mathfrak{L}_{\overline{d}}(-)$ is defined in \cite[Definition 3.3]{fayers-general-runner-removal}.
    
    The reader should be careful that we use the equation $a+d=ce+k$, whereas \cite{fayers-general-runner-removal} uses $a+k=ce+d$. Thus, when comparing results, the parameters $k$ and $d$ should be swapped.
\end{Remark}

\subsection{Higher Level Case}\label{subsec:higher-level-runner-removal}
The discussion of Fock spaces and canonical bases in \autoref{subsec:level-one-runner-removal} has an analogous, but more complex, higher-level theory; see \cite{bk-graded-decomposition-number} or \cite[Chapter 6]{geckjacon-hecke-algebras-root-of-unity} for a general discussion of graded decomposition numbers of cyclotomic KLR algebras of type $\Aone[e-1]$ and related topics, including higher-level Fock spaces.

Recently, \autoref{thm:runner-removal-thms-level-1} has been generalized to higher levels in \cite{alice-empty-runner-removal,alice-full-runner-removal} via computations in higher-level Fock spaces, whereas \autoref{thm:general-runner-removal-level-1} has not yet been extended. Nevertheless, we formulate a conjecture below concerning our subdivision.

Fix a subdivision datum $\subdatum$ and take an $\ell$-partition $\blam=(\blam^{(1)},\cdots,\blam^{(\ell)})\in\Par[\bkappa]_{\alpha}$. For each $1\le i\le \ell$, take an abacus subdivision datum $\absubdatum=(a_i,c_i,d_i,a_i')$ for $\blam^{(i)}$.

We then extend the level-$1$ construction of $\lambda^{+d}$ to higher levels by defining it componentwise:
$$
  \blam^{+\bbd}
  := \Bigl( \bigl(\blam^{(1)}\bigr)^{+d_{1}},\,\dots,\,\bigl(\blam^{(\ell)}\bigr)^{+d_{\ell}} \Bigr).
$$
Set
$$
  \epsilon_{\bbd}(\blam)
  := \bigl(\epsilon_{d_{1}}(\blam^{(1)}),\,\dots,\,\epsilon_{d_{\ell}}(\blam^{(\ell)})\bigr),
  \quad
  \bigl|\epsilon_{\bbd}(\blam)\bigr|
  := \sum_{i=1}^\ell \epsilon_{d_{i}}(\blam^{(i)}).
$$
Recall from \autoref{sec:combinatorial-subdivision} that the subdivision map on partitions is defined componentwise. As a result, by the discussion in \autoref{subsec:level-one-runner-removal}, if $d_i\in I$ for each $1\le i\le \ell$, then ${(\cdot)}^{+\bbd}$ coincides with the subdivision map, that is, $\Phi_k(\blam)=\blam^{+\bbd}$. In view of \autoref{thm:image-of-specht-module}, the subdivision provides a categorification of this case.

We can now state the following two conjectures:
\begin{Conjecture}\label{conj:general-runner-removal-high-level}
    Let $\bbd=(d_{1},\cdots,d_{\ell})\in I^\ell$ such that $0\leq a_i+d_{i}=c_ie+k$ for each $1\leq i\leq \ell$, take $\blam,\bmu\in\Par[\kappa]_{\alpha}$, if $\epsilon_{\bbd}(\blam)=\epsilon_{\bbd}(\bmu)$, then $d^{e}_{\bmu,\blam}(q)=d^{e+1}_{\bmu^{+\bbd},\blam^{+\bbd}}(q)$.
\end{Conjecture}
\begin{Conjecture}\label{conj:general-runner-removal-level-two}
    Let $\bbd=(d_{1},d_{2})\in I^2$ such that $0\leq a_i+d_{i}=c_ie+k$ for each $1\leq i\leq 2$, take $\blam,\bmu\in\Par[\kappa]_{\alpha}$, if $|\epsilon_{\bbd}(\blam)|=|\epsilon_{\bbd}(\bmu)|$, then $d^{e}_{\bmu,\blam}(q)=d^{e+1}_{\bmu^{+\bbd},\blam^{+\bbd}}(q)$.
\end{Conjecture}
We remark that the hypothesis of \autoref{conj:general-runner-removal-level-two} is much weaker than that of \autoref{conj:general-runner-removal-high-level}. However, it can only reasonably be expected to hold in level two, since analogous statements admit counterexamples in higher levels. We end this section with an example supporting \autoref{conj:general-runner-removal-level-two}.
\begin{Example}\label{eg:general-runner-removal-level-2}
    Fix quiver $\Aone[2]$. Fix $k=1$ and the charge $\bkappa=(0,1)$. Take a $2$-partition $\blam = \bigl((6),\,(5,1,1)\bigr)$.
    By choosing suitable $\mathbf{a}$ and the corresponding $\bbd$ (for example, take $\mathbf{a}=(3,4)$ and $\bbd=(1,0)$), one easily verifies that
    $\blam^{+\bbd} = \bigl((8),\,(7,1,1)\bigr)$ and $\epsilon_{\bbd}(\blam) = (2,2)$.

    The canonical basis element $G_e(\blam)$ is given in \autoref{fig:Ge_expansion}. The terms coloured \textcolor{red}{red} correspond to $2$-partitions $\bmu$ such that $\bmu^{+\bbd}$ \emph{does not} occur in the expansion of $G_{e+1}(\blam^{+\bbd})$.
    The terms coloured \textcolor{cyan}{cyan} are those that \emph{do} occur and satisfy $|\epsilon_{\bbd}(\blam)|=|\epsilon_{\bbd}(\bmu)|=4$.
    The remaining terms likewise occur in the expansion of $G_{e+1}(\blam^{+\bbd})$ but satisfy $|\epsilon_{\bbd}(\blam)|\neq|\epsilon_{\bbd}(\bmu)|$.

    The canonical basis element $G_{e+1}(\blam^{+\bbd})$ is given in \autoref{fig:Ge_plus_1_expansion}:
    the \textcolor{cyan}{cyan} terms in $G_{e+1}(\blam^{+\bbd})$ correspond bijectively to the \textcolor{cyan}{cyan} terms in $G_e(\blam)$. The reader can verify that for all \textcolor{cyan}{cyan} $\bmu$, we have $d^{e}_{\bmu,\blam}(q)=d^{e+1}_{\bmu^{+\bbd},\blam^{+\bbd}}(q)$. For simplicity, we have omitted terms $\bmu$ that are not of the form $\bnu^{+\bbd}$ for some $\bnu$ appearing in $G_e(\blam)$.
\end{Example}

\bigskip
\noindent\textbf{Acknowledgments.}
We thank Andrew Mathas for suggesting this problem and for many helpful discussions, as well as for reading the first manuscript of this paper and providing many helpful comments. We are also grateful to Travis Scrimshaw for communications regarding the SageMath package \texttt{FockSpaces}, which enabled the computation of canonical bases. We thank Matthew Fayers and Ruslan Maksimau for helpful correspondence regarding their papers related to this work. This work was partially supported by the Australian Research Council Discovery Grant DP240101809.
\bibliographystyle{alpha}
\bibliography{reference}  
\newpage
\appendix
\section*{Appendix. Explicit Formulas for Canonical Bases}
\label{app:formulas}
\markboth{APPENDIX. EXPLICIT FORMULAS FOR CANONICAL BASES}{APPENDIX. EXPLICIT FORMULAS FOR CANONICAL BASES}

\noindent
\begin{tcolorbox}[
    colback=white, 
    colframe=black, 
    width=\textwidth, 
    boxrule=0.8pt, 
    sharp corners=all, 
    title=\textbf{Canonical Basis Expansion: $G_e(\boldsymbol{\lambda})$}
]
    \normalsize
    \begin{align*}
    G_e(\blam) &= \textcolor{cyan}{\lvert[6], [5,1^2]\rangle}
    &&+ q\,\textcolor{cyan}{\lvert[6], [3^2,1]\rangle} \\
    &\quad + q\,\textcolor{cyan}{\lvert[5,1], [5,1^2]\rangle}
    &&+ q^{2}\,\textcolor{cyan}{\lvert[5,1], [3^2,1]\rangle} \\
    &\quad + q\,\textcolor{cyan}{\lvert[4,1], [5,2,1]\rangle}
    &&+ q^{2}\,\textcolor{red}{\lvert[4,1], [5,1^3]\rangle} \\
    &\quad + q^{2}\,\textcolor{cyan}{\lvert[4,1], [4,3,1]\rangle}
    &&+ q^{3}\,\textcolor{red}{\lvert[4,1], [3^2,1^2]\rangle} \\
    &\quad + (q^{3}+q)\,\textcolor{cyan}{\lvert[4], [5,3,1]\rangle}
    &&+ q\,\lvert[3,2,1], [5,1^2]\rangle \\
    &\quad + q^{2}\,\lvert[3,2,1], [3^2,1]\rangle
    &&+ q^{2}\,\textcolor{cyan}{\lvert[3,2], [5,2,1]\rangle} \\
    &\quad + q^{3}\,\lvert[3,2], [5,1^3]\rangle
    &&+ q^{3}\,\textcolor{cyan}{\lvert[3,2], [4,3,1]\rangle} \\
    &\quad + q^{4}\,\lvert[3,2], [3^2,1^2]\rangle
    &&+ q\,\textcolor{red}{\lvert[3,1^2], [6,1^2]\rangle} \\
    &\quad + q^{2}\,\textcolor{red}{\lvert[3,1^2], [3^2,2]\rangle}
    &&+ q^{2}\,\textcolor{cyan}{\lvert[3,1], [7,1^2]\rangle} \\
    &\quad + (q^{3}+q)\,\textcolor{cyan}{\lvert[3,1], [6,2,1]\rangle}
    &&+ (q^{4}+q^{2})\,\lvert[3,1], [6,1^3]\rangle \\
    &\quad + q^{2}\,\textcolor{cyan}{\lvert[3,1], [5,2^2]\rangle}
    &&+ q^{3}\,\textcolor{red}{\lvert[3,1], [5,1^4]\rangle} \\
    &\quad + q^{2}\,\textcolor{cyan}{\lvert[3,1], [4^2,1]\rangle}
    &&+ 2q^{3}\,\textcolor{cyan}{\lvert[3,1], [4,3,2]\rangle} \\
    &\quad + q^{4}\,\textcolor{cyan}{\lvert[3,1], [3^3]\rangle}
    &&+ (q^{5}+q^{3})\,\lvert[3,1], [3^2,2,1]\rangle \\
    &\quad + q^{4}\,\textcolor{red}{\lvert[3,1], [3^2,1^3]\rangle}
    &&+ q\,\textcolor{cyan}{\lvert[3], [8,1^2]\rangle} \\
    &\quad + 2q^{2}\,\textcolor{cyan}{\lvert[3], [6,3,1]\rangle}
    &&+ q^{3}\,\textcolor{cyan}{\lvert[3], [5,4,1]\rangle} \\
    &\quad + (q^{4}+q^{2})\,\textcolor{cyan}{\lvert[3], [5,3,2]\rangle}
    &&+ q^{3}\,\textcolor{red}{\lvert[3], [3^2,2^2]\rangle} \\
    &\quad + q^{2}\,\textcolor{red}{\lvert[2^3], [5,1^2]\rangle}
    &&+ q^{3}\,\textcolor{red}{\lvert[2^3], [3^2,1]\rangle} \\
    &\quad + (q^{4}+q^{2})\,\textcolor{cyan}{\lvert[2^2], [5,3,1]\rangle}
    &&+ q^{2}\,\textcolor{cyan}{\lvert[2,1], [8,1^2]\rangle} \\
    &\quad + (2q^{3}+q)\,\textcolor{cyan}{\lvert[2,1], [6,3,1]\rangle}
    &&+ (q^{4}+q^{2})\,\textcolor{cyan}{\lvert[2,1], [5,4,1]\rangle} \\
    &\quad + (q^{5}+2q^{3}+q)\,\textcolor{cyan}{\lvert[2,1], [5,3,2]\rangle}
    &&+ q^{2}\,\textcolor{red}{\lvert[2,1], [5,1^5]\rangle} \\
    &\quad + (q^{4}+q^{2})\,\lvert[2,1], [3^2,2^2]\rangle
    &&+ q^{3}\,\textcolor{red}{\lvert[2,1], [3^2,1^4]\rangle} \\
    &\quad + q^{2}\,\textcolor{red}{\lvert[1^3], [6,3,1]\rangle}
    &&+ q^{3}\,\textcolor{red}{\lvert[1^3], [5,4,1]\rangle} \\
    &\quad + (q^{4}+q^{2})\,\textcolor{red}{\lvert[1^3], [5,3,2]\rangle}
    &&+ q^{3}\,\textcolor{red}{\lvert[1^3], [5,1^5]\rangle} \\
    &\quad + q^{3}\,\textcolor{red}{\lvert[1^3], [3^2,2^2]\rangle}
    &&+ q^{4}\,\textcolor{red}{\lvert[1^3], [3^2,1^4]\rangle} \\
    &\quad + q^{2}\,\textcolor{cyan}{\lvert[1^2], [8,2,1]\rangle}
    &&+ q^{3}\,\textcolor{red}{\lvert[1^2], [8,1^3]\rangle} \\
    &\quad + 2q^{3}\,\textcolor{cyan}{\lvert[1^2], [7,3,1]\rangle}
    &&+ (2q^{4}+q^{2})\,\lvert[1^2], [6,3,1^2]\rangle \\
    &\quad + q^{4}\,\textcolor{cyan}{\lvert[1^2], [5^2,1]\rangle}
    &&+ (q^{5}+q^{3})\,\lvert[1^2], [5,4,1^2]\rangle \\
    &\quad + (q^{5}+q^{3})\,\textcolor{cyan}{\lvert[1^2], [5,3^2]\rangle}
    &&+ (q^{6}+2q^{4}+q^{2})\,\lvert[1^2], [5,3,2,1]\rangle \\
    &\quad + (q^{5}+q^{3})\,\textcolor{red}{\lvert[1^2], [5,3,1^3]\rangle}
    &&+ q^{3}\,\lvert[1^2], [5,2^3]\rangle \\
    &\quad + q^{4}\,\textcolor{red}{\lvert[1^2], [5,2,1^4]\rangle}
    &&+ 2q^{4}\,\lvert[1^2], [4,3,2^2]\rangle \\
    &\quad + q^{5}\,\textcolor{red}{\lvert[1^2], [4,3,1^4]\rangle}
    &&+ q^{5}\,\lvert[1^2], [3^3,2]\rangle \\
    &\quad + (q^{4}+q^{2})\,\textcolor{cyan}{\lvert[1], [8,3,1]\rangle}
    &&+ (q^{5}+q^{3})\,\lvert[1], [5,3,2^2]\rangle \\
    &\quad + q^{2}\,\textcolor{cyan}{\lvert\emptyset, [11,1^2]\rangle}
    &&+ q^{3}\,\textcolor{cyan}{\lvert\emptyset, [9,3,1]\rangle} \\
    &\quad + q^{3}\,\textcolor{cyan}{\lvert\emptyset, [8,3,2]\rangle}
    &&+ q^{4}\,\textcolor{red}{\lvert\emptyset, [6,3,2^2]\rangle} \\
    &\quad + q^{4}\,\textcolor{red}{\lvert\emptyset, [5,3,2^2,1]\rangle}
    &&+ q^{3}\,\textcolor{red}{\lvert\emptyset, [4,3,2^2,1^2]\rangle} \\
    &\quad + q^{4}\,\textcolor{red}{\lvert\emptyset, [3^3,2,1^2]\rangle}
    &&+ q^{5}\,\textcolor{red}{\lvert\emptyset, [3^2,2^3,1]\rangle}
    \end{align*}
\end{tcolorbox}
\captionof{figure}{Expansion of $G_3\big((6),\,(5,1,1)\big)$.}
\label{fig:Ge_expansion}

\clearpage 

\noindent
\begin{tcolorbox}[
    colback=white, 
    colframe=black, 
    width=\textwidth, 
    boxrule=0.8pt, 
    sharp corners=all, 
    title=\textbf{Canonical Basis Expansion: $G_{e+1}(\blam^+)$}
]
    \normalsize
    \begin{align*}
    G_{e+1}(\blam^+) &= \textcolor{cyan}{\lvert[8], [7, 1^2]\rangle}
    &&+ q\,\textcolor{cyan}{\lvert[8], [4^2, 1]\rangle} \\
    &\quad + q\,\textcolor{cyan}{\lvert[7, 1], [7, 1^2]\rangle}
    &&+ q^{2}\, \textcolor{cyan}{\lvert[7, 1], [4^2, 1]\rangle} \\
    &\quad + q\,\textcolor{cyan}{\lvert[5, 1], [7, 3, 1]\rangle}
    &&+ q^{2}\,\textcolor{cyan}{\lvert[5, 1], [6, 4, 1]\rangle} \\
    &\quad + (q^{3}+q)\,\textcolor{cyan}{\lvert[5], [7, 4, 1]\rangle}
    &&+ q\,\lvert[4, 3, 1], [7, 1^2]\rangle \\
    &\quad + q^{2}\,\lvert[4, 3, 1], [4^2, 1]\rangle
    &&+ q^{2}\,\textcolor{cyan}{\lvert[4, 2], [7, 3, 1]\rangle} \\
    &\quad + q^{2}\,\lvert[4, 2], [7, 1^4]\rangle
    &&+ q^{3}\,\textcolor{cyan}{\lvert[4, 2], [6, 4, 1]\rangle} \\
    &\quad + q^{3}\,\lvert[4, 2], [4^2, 1^3]\rangle
    &&+ q^{2}\,\textcolor{cyan}{\lvert[4, 1], [10, 1^2]\rangle} \\
    &\quad + (q^{3}+q)\,\textcolor{cyan}{\lvert[4, 1], [8, 3, 1]\rangle}
    &&+ q^{3}\,\lvert[4, 1], [8, 1^4]\rangle \\
    &\quad + q^{2}\,\textcolor{cyan}{\lvert[4, 1], [7, 3, 2]\rangle}
    &&+ q^{2}\,\textcolor{cyan}{\lvert[4, 1], [6, 5, 1]\rangle} \\
    &\quad + 2q^{3}\,\textcolor{cyan}{\lvert[4, 1], [6, 4, 2]\rangle}
    &&+ q^{4}\,\textcolor{cyan}{\lvert[4, 1], [4^3]\rangle} \\
    &\quad + q^{4}\,\lvert[4, 1], [4^2, 2, 1^2]\rangle
    &&+ q\,\textcolor{cyan}{\lvert[4], [11, 1^2]\rangle} \\
    &\quad + 2q^{2}\,\textcolor{cyan}{\lvert[4], [8, 4, 1]\rangle}
    &&+ q^{3}\,\textcolor{cyan}{\lvert[4], [7, 5, 1]\rangle} \\
    &\quad + (q^{4}+q^{2})\,\textcolor{cyan}{\lvert[4], [7, 4, 2]\rangle}
    &&+ (q^{4}+q^{2})\,\textcolor{cyan}{\lvert[3, 2], [7, 4, 1]\rangle} \\
    &\quad + q^{2}\,\textcolor{cyan}{\lvert[3, 1], [11, 1^2]\rangle}
    &&+ (2q^{3}+q)\,\textcolor{cyan}{\lvert[3, 1], [8, 4, 1]\rangle} \\
    &\quad + (q^{4}+q^{2})\,\textcolor{cyan}{\lvert[3, 1], [7, 5, 1]\rangle}
    &&+ (q^{5}+2q^{3}+q)\,\textcolor{cyan}{\lvert[3, 1], [7, 4, 2]\rangle} \\
    &\quad + q^{3}\,\lvert[3, 1], [4^2, 2^2, 1]\rangle
    &&+ q^{2}\,\textcolor{cyan}{\lvert[1^2], [11, 3, 1]\rangle} \\
    &\quad + 2q^{3}\,\textcolor{cyan}{\lvert[1^2], [10, 4, 1]\rangle}
    &&+ q^{3}\,\lvert[1^2], [8, 4, 1^3]\rangle \\
    &\quad + q^{4}\,\textcolor{cyan}{\lvert[1^2], [7^2, 1]\rangle}
    &&+ q^{4}\,\lvert[1^2], [7, 5, 1^3]\rangle \\
    &\quad + (q^{5}+q^{3})\,\textcolor{cyan}{\lvert[1^2], [7, 4^2]\rangle}
    &&+ (q^{5}+q^{3})\,\lvert[1^2], [7, 4, 2, 1^2]\rangle \\
    &\quad + q^{4}\,\lvert[1^2], [7, 3, 2^2, 1]\rangle
    &&+ (q^{5}+q^{3})\,\lvert[1^2], [6, 4, 2^2, 1]\rangle \\
    &\quad + q^{4}\,\lvert[1^2], [4^3, 2, 1]\rangle
    &&+ (q^{4}+q^{2})\,\textcolor{cyan}{\lvert[1], [11, 4, 1]\rangle} \\
    &\quad + q^{4}\,\lvert[1], [7, 4, 2^2, 1]\rangle
    &&+ q^{2}\,\textcolor{cyan}{\lvert\emptyset, [15, 1^2]\rangle} \\
    &\quad + q^{3}\,\textcolor{cyan}{\lvert\emptyset, [12, 4, 1]\rangle}
    &&+ q^{3}\,\textcolor{cyan}{\lvert\emptyset, [11, 4, 2]\rangle} + \dots
    \end{align*}
\end{tcolorbox}
\captionof{figure}{Expansion of $G_4\big((8),\,(7,1,1)\big)$.}
\label{fig:Ge_plus_1_expansion}

\clearpage
\end{document}